\def\rr{{\mathbb R}}
\def\rn{{{\rr}^d}}
\def\nn{{\mathbb N}}
\def\fz{\infty}
\def\az{\alpha}
\def\vz{\varphi}
\def\tz{\Theta}
\def\wz{\widetilde}
\def\rr{{\mathbb R}}
\def\nn{{\mathbb N}}
\def\fz{\infty}
\def\az{\alpha}
\def\vz{\varphi}
\def\tz{\Theta}
\def\wz{\widetilde}
\def\laz{\langle}
\def\raz{\rangle}
\def\r{\right}
\def\lf{\left}
\newcommand{\re}{\mathbb{R}}\newcommand{\N}{\mathbb{N}}
\newcommand{\C}{\mathbb{C}}
\newcommand{\R}{{\re}^d}
\newcommand{\cs}{{\mathcal S}}
\newcommand{\cd}{{\mathcal D}}
\newcommand{\cl}{{\mathcal L}}
\newcommand{\cm}{{\mathcal M}}
\newcommand{\ce}{{\mathcal E}}
\newcommand{\cn}{{\mathcal N}}
\newcommand{\cf}{{\mathcal F}}
\newcommand{\cfi}{{\cf}^{-1}}
\newcommand{\supp}{{\rm supp \, }}
\newcommand{\ls}{\lesssim}
\newcommand{\be}{\begin{equation}}
\newcommand{\ee}{\end{equation}}
\newcommand{\beq}{\begin{eqnarray}}
\newcommand{\beqq}{\begin{eqnarray*}}
\newcommand{\eeq}{\end{eqnarray}}
\newcommand{\eeqq}{\end{eqnarray*}}
\numberwithin{equation}{section}
\newtheorem{satz}{Theorem}[section]
\newtheorem{defi}[satz]{Definition}
\newtheorem{cor}[satz]{Corollary}\newtheorem{lem}[satz]{Lemma}
\newtheorem{prop}[satz]{Proposition}
\newtheorem{rem}[satz]{Remark}
\begin{document}

\title{Complex Interpolation of Lizorkin-Triebel-Morrey Spaces on Domains}

\date{\today}

\author{
Ciqiang Zhuo \thanks{Corresponding author;
Key Laboratory of Computing and Stochastic Mathematics
(Ministry of Education), School of Mathematics and Statistics, Hunan Normal University, Changsha, Hunan 410081, People's Republic of China;
E-mail: cqzhuo87@hunnu.edu.cn}
\and Marc Hovemann
\thanks{Friedrich-Schiller-University Jena, Ernst-Abbe-Platz 2, 07737 Jena, Germany;
 E-mail: marc.hovemann@uni-jena.de}
\and Winfried Sickel \thanks{
 Friedrich-Schiller-University Jena, Ernst-Abbe-Platz 2, 07737 Jena, Germany;
 E-mail: winfried.sickel@uni-jena.de}}

\maketitle

\begin{abstract}
 In this article the authors study complex  interpolation of Sobolev-Morrey spaces
and their generalizations, Lizorkin-Triebel-Morrey  spaces.
Both scales are considered on bounded domains.
Under certain conditions on the parameters
the outcome belongs to the scale of the  so-called diamond spaces.
\end{abstract}

Keywords: Morrey spaces, Lizorkin-Triebel-Morrey spaces,
$\pm$ method of interpolation, Calder{\'o}n's first and  second complex interpolation method, diamond spaces,
extension operators, Lipschitz domains.

MSC subject class: {46B70,  46E35}

%&&&&&&&&&&&&&&&&&&&&&&&&&&&&&&&&&&&&&&&&&&&&&&&&&&&&&&&&&&
%&&&&&&&&&&&&&&&&&&&&&&&&&&&&&&&&&&&&&&&&&&&&&&&&&&&&&&&&&&

\section{Introduction and Main Results}\label{s1}

%&&&&&&&&&&&&&&&&&&&&&&&&&&&&&&&&&&&&&&&&&&&&&&&&&&&&&&&&&&
%&&&&&&&&&&&&&&&&&&&&&&&&&&&&&&&&&&&&&&&&&&&&&&&&&&&&&&&&&&

One of the most popular formulas in interpolation theory is given by

\be
\label{w-001}
[L_{p_0}(\R),L_{p_1}(\R)]_\tz = L_p(\R)\, ,
\ee
where $1\le p_0 <p_1 \le \infty$, $0 < \Theta < 1$ and
$\frac1p:=\frac{1-\tz}{p_0}+\frac{\tz}{p_1}$.
Here $[X_0,X_1]_\Theta$ denotes Calderon's first complex interpolation method or just the complex method.
Morrey spaces $\cm_p^u (\R)$ are generalizations of the Lebesgue spaces in view of $\cm_p^p (\R) = L_p (\R)$.
Within the larger family of Morrey spaces the formula \eqref{w-001} is a singular point. Essentially as a result of
Lemari{\'e}-Rieusset \cite{LR}, \cite{LR2}
it is known that

\be
\label{w-002}
[\cm^{u_0}_{p_0}(\R),\cm^{u_1}_{p_1}(\R)]_\tz \neq \cm_p^u(\R)\, ,
\ee
except the trivial cases given by either
$u_0 =p_0$, $u_1=p_1$, i.e., the Lebesgue case, or $u_0 =u_1$, $p_0=p_1$.
In \cite{ysy3} and \cite{hns17} different explicit descriptions of the spaces
$[\cm^{u_0}_{p_0}(\R),\cm^{u_1}_{p_1}(\R)]_\tz $ can be found.
The characterization given  in \cite{hns17} is the preferable one. When switching from Lebesgue spaces to Morrey spaces we add two phenomena, one local and one global, see Definition \ref{Morrey1} below.
Hence, when turning to spaces defined on bounded domains, the situation is becoming more easy, because the global condition
plays no role anymore.
Based on this observation, in \cite{ysy3} one can find the formula

\be \label{w-003}
[\cm^{u_0}_{p_0}([0,1]^d),\cm^{u_1}_{p_1}([0,1]^d)]_\tz = \accentset{\diamond}\cm_p^u([0,1]^d)\, ,
\ee
if
\[
1\le p_0 < u_0 <\infty, \quad 1 < p_1< u_1 <\infty, \quad p_0 < p_1,
\quad 0 < \Theta < 1
\]
and
\[
p_0 u_1 = p_1 u_0\, , \quad
\frac1p:=\frac{1-\tz}{p_0}+\frac{\tz}{p_1}\, , \quad
\frac1u:=\frac{1-\tz}{u_0}+\frac{\tz}{u_1}\, .
\]
For a domain $ \Omega \subset \R  $ the space $\accentset{\diamond}\cm_p^u(\Omega)$ is defined as the closure of the smooth functions with respect to
the norm of the space $\cm_p^u(\Omega)$. The aim of this paper will consist in an extension of
\eqref{w-003} to smoothness spaces built on Morrey spaces, namely Lizorkin-Triebel-Morrey spaces $\ce^s_{u,p,q}(\Omega)$, where
$\Omega \subset \R$ is a bounded Lipschitz domain.
For doing that we will only investigate cases where the Lemari{\'e}-Rieusset condition
$p_0 u_1 = p_1 u_0$ is satisfied.
Our main result reads as follows.

\begin{satz}\label{main1}
Let $\Omega \subset \R$ be either  a bounded Lipschitz domain
if $d\ge 2$ or a bounded interval if $d=1$.
Under the following conditions on the parameters
\begin{itemize}
 \item[(a)] $1\le p_0< p_1<\infty$, $p_0 \le u_0<\infty$, $p_1 \le  u_1 <\infty$;
 \item[(b)] $1\le q_0\, ,  q_1 \le \infty$, $\min (q_0,q_1)<\infty$;
 \item[(c)] $p_0 \, u_1 = p_1 \, u_0$;
 \item[(d)] $s_0,s_1\ge 0$; either $s_0 < s_1$ or $0 < s_0 = s_1$ and $q_1\le q_0$;
 \item[(e)] $0 < \Theta <1$,  $ \frac1p:=\frac{1-\tz}{p_0}+\frac{\tz}{p_1}$,
$\frac1u:=\frac{1-\tz}{u_0}+\frac{\tz}{u_1}$,
$\frac1q:=\frac{1-\tz}{q_0}+\frac{\tz}{q_1}$ \\
and\quad
$s:= (1-\tz)s_0 + \tz s_1$;
\end{itemize}
it holds
\be
\label{w-004}
[\ce^{s_0}_{u_0,p_0,q_0}(\Omega),\ce^{s_1}_{u_1,p_1,q_1}(\Omega)]_\tz = \accentset{\diamond}\ce_{u,p,q}^s(\Omega)\, .
\ee
\end{satz}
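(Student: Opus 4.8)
The plan is to transfer the known complex-interpolation formula for Lizorkin-Triebel-Morrey spaces on $\R$ (the full-space analogue of \eqref{w-004}, where the right-hand side becomes a diamond space $\accentset{\diamond}\ce^s_{u,p,q}(\R)$ built on $\R$) to the bounded Lipschitz domain $\Omega$ by means of a linear and bounded extension operator that works simultaneously for both endpoint spaces, together with the retraction-coretraction principle for the complex method. First I would recall (or establish, if it is stated earlier in the paper) the existence of a universal extension operator $\ce_\Omega$ for Lipschitz domains which is bounded from $\ce^{s}_{u,p,q}(\Omega)$ to $\ce^{s}_{u,p,q}(\R)$ for the relevant ranges of parameters, and whose restriction $\mathrm{re}_\Omega$ to $\Omega$ is the left inverse; such an operator is available for Triebel-Lizorkin-type and Morrey-type scales (e.g. via Rychkov's intrinsic construction), and the key point is that it is independent of $s,u,p,q$. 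Then $(\mathrm{re}_\Omega,\ce_\Omega)$ is a retraction-coretraction pair for each endpoint, and the standard interpolation-theoretic lemma gives
\[
[\ce^{s_0}_{u_0,p_0,q_0}(\Omega),\ce^{s_1}_{u_1,p_1,q_1}(\Omega)]_\tz
= \mathrm{re}_\Omega\Big([\ce^{s_0}_{u_0,p_0,q_0}(\R),\ce^{s_1}_{u_1,p_1,q_1}(\R)]_\tz\Big),
\]
so the domain result reduces to the $\R$-case plus an identification of the restriction of the diamond space on $\R$ with the diamond space on $\Omega$.

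Next I would invoke the full-space formula: under hypotheses (a)--(e), $[\ce^{s_0}_{u_0,p_0,q_0}(\R),\ce^{s_1}_{u_1,p_1,q_1}(\R)]_\tz = \accentset{\diamond}\ce^s_{u,p,q}(\R)$. (This is the result of \cite{hns17}-type arguments for the Morrey-smoothness scale; I would cite it if proved earlier in the paper, or reprove it via the $\pm$-method and Calder\'on's second method as indicated by the keywords, using that $s_0<s_1$ or $s_0=s_1>0$ with $q_1\le q_0$ guarantees a genuine interpolation couple with the needed density of Schwartz functions in at least one endpoint, which is where condition (b), $\min(q_0,q_1)<\infty$, enters.) Combined with the retraction step, this yields $[\ce^{s_0}_{u_0,p_0,q_0}(\Omega),\ce^{s_1}_{u_1,p_1,q_1}(\Omega)]_\tz = \mathrm{re}_\Omega\big(\accentset{\diamond}\ce^s_{u,p,q}(\R)\big)$, and it remains to show $\mathrm{re}_\Omega\big(\accentset{\diamond}\ce^s_{u,p,q}(\R)\big) = \accentset{\diamond}\ce^s_{u,p,q}(\Omega)$. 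For ``$\subseteq$'', smooth (Schwartz) functions on $\R$ restrict to smooth functions on $\overline\Omega$, and $\mathrm{re}_\Omega$ is continuous, so the image of the closure of $\cs(\R)$ lands in the closure of $C^\infty(\overline\Omega)$ in $\ce^s_{u,p,q}(\Omega)$. For ``$\supseteq$'', given $g\in C^\infty(\overline\Omega)$ one takes $\ce_\Omega g\in \ce^s_{u,p,q}(\R)$ and approximates it in $\ce^s_{u,p,q}(\R)$ by Schwartz functions (mollification plus truncation) --- valid precisely because $\ce_\Omega g$ lies in the diamond space on $\R$, using $q<\infty$ and $p<u$ or the smoothness to run the approximation --- then restricts back.

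The step I expect to be the main obstacle is the identification $\mathrm{re}_\Omega\big(\accentset{\diamond}\ce^s_{u,p,q}(\R)\big) = \accentset{\diamond}\ce^s_{u,p,q}(\Omega)$, specifically the density statement that $\cs(\R)$ (or $C^\infty_c(\R)$) is dense in $\accentset{\diamond}\ce^s_{u,p,q}(\R)$ \emph{as its very definition requires} while at the same time controlling convergence after restriction to $\Omega$; the delicate point is that Morrey norms are \emph{not} absolutely continuous, so the usual mollification/truncation arguments must be handled with care and one must verify that truncation at infinity does not destroy $\ce$-convergence --- this is where the definition of the diamond space as the $\ce$-closure of smooth functions, rather than a literal Morrey closure, is essential and where the condition $q<\infty$ (forced by (b) and (e)) is used. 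A secondary technical point is checking that the chosen extension operator $\ce_\Omega$ is genuinely simultaneously bounded on \emph{both} endpoint scales with the given parameter constraints (a), (c), (d); if no single ready-made operator covers the whole range, one may need to combine a Rychkov-type intrinsic extension with the Lemari\'e-Rieusset condition (c), which is exactly the compatibility condition under which the two Morrey scales interpolate nicely in the first place.
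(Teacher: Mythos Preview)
Your plan has a genuine gap at its central step: the full-space identity
\[
[\ce^{s_0}_{u_0,p_0,q_0}(\R),\ce^{s_1}_{u_1,p_1,q_1}(\R)]_\tz \;=\; \accentset{\diamond}\ce^s_{u,p,q}(\R)
\]
that you want to ``invoke'' is in fact \emph{false} under hypotheses (a)--(e). This is precisely the content of Proposition~\ref{main4}(i) in the paper: for $s_0,s_1>0$ one has $\accentset{\diamond}\ce^s_{u,p,q}(\R)\not\subset [\ce^{s_0}_{u_0,p_0,q_0}(\R),\ce^{s_1}_{u_1,p_1,q_1}(\R)]_\tz$. The obstruction is global: functions like $h_u(x)=(1-\psi(x))|x|^{-d/u}$ lie in the diamond space on $\R$ but not in the interpolation space. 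The whole point of Theorem~\ref{main1} is that this global obstruction disappears on a \emph{bounded} domain, so the domain result is strictly cleaner than the $\R$ result, not a consequence of it. Your retraction--coretraction reduction is fine (and the paper uses it in Proposition~\ref{shestakov}(ii)), but it only gives you the restriction of the $\R$-interpolation space, and that space is not $\accentset{\diamond}\ce^s_{u,p,q}(\R)$.

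What the paper actually does is use the Shestakov-type identity
\[
[\ce^{s_0}_{u_0,p_0,q_0}(\Omega),\ce^{s_1}_{u_1,p_1,q_1}(\Omega)]_\tz
= \overline{\ce^{s_0}_{u_0,p_0,q_0}(\Omega)\cap\ce^{s_1}_{u_1,p_1,q_1}(\Omega)}^{\|\cdot|\ce^s_{u,p,q}(\Omega)\|}
\]
(Proposition~\ref{shestakov}), and then identify this closure directly with $\accentset{\diamond}\ce^s_{u,p,q}(\Omega)$. One inclusion is soft (smooth functions with bounded derivatives lie in both endpoint spaces; Lemma~\ref{hilfe}). The substantive inclusion is Lemma~\ref{step2}: any $f$ in the intersection, once extended and cut off to have compact support in a ball $B$, satisfies the vanishing conditions \eqref{ws-21}--\eqref{ws-22} that characterise membership in $\accentset{\diamond}\ce^s_{u,p,q}(\R)$ for compactly supported functions (Theorem~\ref{good2}). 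The compact support is what makes the H\"older-type estimate \eqref{ws-34} give decay as $r\to 0$, and this is exactly where boundedness of $\Omega$ enters. Condition~(d) is used here to get the embedding $\ce^{s_1}_{u_1,p_1,q_1}\hookrightarrow\ce^s_{u_1,p_1,q}$ needed in that estimate. You would need to replace your step~2 with this local argument.
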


Lizorkin-Triebel-Morrey spaces $\ce^s_{u,p,q}(\Omega)$
are generalizations of Lizorkin-Triebel spaces $F^s_{p,q}(\Omega)$,
more exactly, if $u=p$ we have $F^s_{p,q}(\Omega) = \ce^s_{p,p,q}(\Omega) $. Hence we get back the well-known formula
\be
\label{w-005}
[F^{s_0}_{p_0,q_0}(\Omega),F^{s_1}_{p_1,q_1}(\Omega)]_\tz =
\accentset{\diamond}{F}^{s}_{p,q} (\Omega) =
F^s_{p,q}(\Omega)\, ,
\ee
but under the extra condition (d). The Lemari{\'e}-Rieusset condition
(c) disappears in this case. There is a certain list of references for \eqref{w-005}. Let us mention at least
Triebel \cite[Thm.~2.4.2.1]{t78} ($\Omega = \R$ or a bounded $C^\infty$ domain), Frazier, Jawerth \cite{fj90} ($\Omega = \R$),
Kalton, Mayboroda, Mitrea \cite{kmm} ($\Omega = \R$) and Triebel \cite{t02} (bounded Lipschitz domains). There is an interesting special case, given by the  Sobolev-Morrey spaces, see Section \ref{LTMSS}
and Lemma \ref{LP}.

\begin{cor}\label{MSM}
Let $0 < \tz <1$, $m_0 \in \N_0, m_1\in\N $, and either  $m_0 < m_1$ or $0< m_0\le m_1$.
Let $1 < p_0 < p_1 <\infty$, $p_0 < u_0< \infty$, $p_1 < u_1 < \infty$ and $p_0\, u_1=p_1\, u_0$.
We define
\[
s:=(1-\tz)m_0 + \tz m_1\, , \quad
\frac1p :=\frac{1-\tz}{p_0}+\frac\tz{p_1}\quad
\mbox{and} \quad \frac1u :=\frac{1-\tz}{u_0}+\frac{\tz}{u_1}\, .
\]
Let $\Omega \subset \R$ be either  a bounded Lipschitz domain
if $d\ge 2$ or a bounded interval if $d=1$.
Then we have
\be\label{ws-09}
\big[ W^{m_0}\cm^{u_0}_{p_0}(\Omega), W^{m_1}\cm^{u_1}_{p_1}(\Omega) \big]_\Theta = \accentset{\diamond}\ce_{u,p,2}^{s}(\Omega) .
\ee
In particular, if $s = m \in\N$, then
 \be\label{ws-09b}
\big[ W^{m_0}\cm^{u_0}_{p_0}(\Omega), W^{m_1}\cm^{u_1}_{p_1}(\Omega) \big]_\Theta = \accentset{\diamond}W^m \cm_{p}^u (\Omega)
\ee
follows.
\end{cor}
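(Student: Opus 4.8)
The plan is to deduce Corollary~\ref{MSM} from Theorem~\ref{main1} by specializing $q_0=q_1=2$, combined with the Littlewood--Paley characterization of Sobolev--Morrey spaces, Lemma~\ref{LP}. First I would verify that the data $p_0,p_1,u_0,u_1$, $q_0=q_1=2$, $s_0:=m_0$, $s_1:=m_1$, $\Theta$ and $\Omega$ satisfy conditions (a)--(e) of Theorem~\ref{main1}. Conditions (a) and (b) follow at once from $1<p_0<p_1<\infty$, $p_0<u_0<\infty$, $p_1<u_1<\infty$ and $q_0=q_1=2$; condition (c) is the assumed Lemari{\'e}-Rieusset relation $p_0u_1=p_1u_0$. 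For (d) one distinguishes: if $m_0<m_1$ we are in the case $s_0<s_1$; if instead $0<m_0=m_1$, then $s_0=s_1>0$ and $q_1=q_0$, which is the other admissible case (note that $m_0=0$ forces $m_0<m_1$ since $m_1\in\N$). Finally, in (e) the choice $q_0=q_1=2$ gives $\tfrac1q=\tfrac{1-\Theta}{2}+\tfrac{\Theta}{2}=\tfrac12$, hence $q=2$, while $s=(1-\Theta)m_0+\Theta m_1$ agrees with the definition made in the corollary. Thus Theorem~\ref{main1} yields
\[
\big[\ce^{m_0}_{u_0,p_0,2}(\Omega),\ce^{m_1}_{u_1,p_1,2}(\Omega)\big]_\Theta=\accentset{\diamond}\ce_{u,p,2}^{s}(\Omega).
\]

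Second, I would invoke Lemma~\ref{LP}, which identifies $W^{m}\cm^{u}_{p}(\Omega)=\ce^{m}_{u,p,2}(\Omega)$ with equivalent norms for every $m\in\N_0$ in the relevant parameter range (in particular for $m=0$, where both sides equal $\cm^u_p(\Omega)$); this is exactly the place where the restriction $p>1$ enters, as it relies on the boundedness of a (vector-valued) maximal operator on Morrey spaces. Applying this with $(m,u,p)\in\{(m_0,u_0,p_0),(m_1,u_1,p_1)\}$ replaces each endpoint of the couple on the left of \eqref{ws-09} by an equivalently normed space; since the complex method $[\cdot,\cdot]_\Theta$ is unaffected by passing to equivalent norms at the endpoints, \eqref{ws-09} follows from the displayed identity.

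For \eqref{ws-09b}, assume in addition $s=m\in\N$. Then Lemma~\ref{LP} gives $\ce^{m}_{u,p,2}(\Omega)=W^{m}\cm^{u}_{p}(\Omega)$ with equivalent norms, so the two spaces have the same closure of the smooth functions, that is, $\accentset{\diamond}\ce_{u,p,2}^{m}(\Omega)=\accentset{\diamond}W^{m}\cm^{u}_{p}(\Omega)$. Inserting this into \eqref{ws-09} yields \eqref{ws-09b}.

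The only step that is not pure bookkeeping is Lemma~\ref{LP} itself --- the Littlewood--Paley description of $W^m\cm^u_p(\Omega)$ on a bounded Lipschitz domain --- which is where the real work (an extension operator together with maximal-function estimates on Morrey spaces) is hidden; given that lemma, the corollary is an immediate consequence of the main theorem.
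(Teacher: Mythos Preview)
Your proof is correct and follows exactly the paper's approach: the authors' proof of Corollary~\ref{MSM} consists of the single sentence ``The Corollary is a direct consequence of Theorem~\ref{main1} and Lemma~\ref{LP},'' and you have simply filled in the verification of hypotheses (a)--(e) with $q_0=q_1=2$. One minor remark: transferring Lemma~\ref{LP} from $\R$ to $\Omega$ does not actually require an extension operator, since both $W^{m}\cm^{u}_{p}(\Omega)$ and $\ce^{m}_{u,p,2}(\Omega)$ are defined by restriction (Definition~\ref{d5.12}), so the identification on $\Omega$ is automatic once it holds on $\R$.
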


\noindent
There is another situation in which one can calculate
$[\ce^{s_0}_{u_0,p_0,q_0}(\Omega),\ce^{s_1}_{u_1,p_1,q_1}(\Omega)]_\tz$ .

\begin{satz}\label{main1b}
Let $\Omega \subset \R$ be as in Theorem \ref{main1}.
Let the parameters satisfy the conditions (a), (b), (c) and (e).
In addition we require
\be \label{wss-20}
\mbox{(d')}\qquad  s_0, s_1\in \re \quad \mbox{and}\quad s_0- \frac{d}{u_0} > s_1 - \frac {d}{u_1}\, .
\ee
Then \eqref{w-004} holds as well.
\end{satz}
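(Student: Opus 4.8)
The plan is to mimic the proof of Theorem \ref{main1} and to isolate the single place where condition (d') takes over the role of (d). First I would pass from $\Omega$ to $\R$ by means of a common universal (Rychkov-type) extension operator $E$, bounded from $\ce^{s_i}_{u_i,p_i,q_i}(\Omega)$ into $\ce^{s_i}_{u_i,p_i,q_i}(\R)$ for $i=0,1$ simultaneously, having the restriction operator $R_\Omega$ as a left inverse; this requires only $s_0,s_1\in\re$, which (d') grants. Since complex interpolation commutes with retracts and $X_0\cap X_1$ is dense in Calderón's first interpolation space, it then suffices to prove: (i) $[\ce^{s_0}_{u_0,p_0,q_0}(\Omega),\ce^{s_1}_{u_1,p_1,q_1}(\Omega)]_\theta\hookrightarrow\ce^s_{u,p,q}(\Omega)$; (ii) $\ce^{s_0}_{u_0,p_0,q_0}(\Omega)\cap\ce^{s_1}_{u_1,p_1,q_1}(\Omega)\hookrightarrow\accentset{\diamond}\ce^s_{u,p,q}(\Omega)$; and (iii) $\accentset{\diamond}\ce^s_{u,p,q}(\Omega)\hookrightarrow[\ce^{s_0}_{u_0,p_0,q_0}(\Omega),\ce^{s_1}_{u_1,p_1,q_1}(\Omega)]_\theta$. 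Indeed, (i) and (ii) together with the density of the intersection force $[\,\cdot\,]_\theta\subseteq\accentset{\diamond}\ce^s_{u,p,q}(\Omega)$, while (iii) is the reverse inclusion; note that $C^\infty(\overline\Omega)$ lies in the intersection, hence by (iii) in $[\,\cdot\,]_\theta$, so the closure argument behind (iii) is meaningful.

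Parts (i) and (ii) are \emph{soft}: they hold already under (a), (b), (c), (e), and are precisely the soft ingredients of Theorem \ref{main1}. For (i) one uses $[\,\cdot\,]_\theta\hookrightarrow[\,\cdot\,]^\theta$ (Calderón's second method) and runs the product estimate, equivalently Stein's interpolation with varying exponents, \emph{inside} the Morrey supremum cube by cube: for an admissible analytic family $F$ with $F(\theta)=f$, interpolating the spaces $L_{p(z)}(P;\ell^{q(z)})$ carrying the weight $|P|^{1/u(z)-1/p(z)}$ over each cube $P\subset\Omega$ bounds $\||P|^{1/u-1/p}f\|_{L_p(P;\ell^{q})}$, uniformly in $P$, by the product of the edge norms, which establishes (i). For (ii) one extends $f$ in the intersection to a compactly supported $\tilde f\in\ce^{s_0}_{u_0,p_0,q_0}(\R)\cap\ce^{s_1}_{u_1,p_1,q_1}(\R)$ and approximates by mollifications; from the log-convexity $\|g\|_{\cm^u_p}\le\|g\|_{\cm^{u_0}_{p_0}}^{1-\theta}\|g\|_{\cm^{u_1}_{p_1}}^{\theta}$ applied to the Littlewood--Paley pieces $g=\Delta_j\tilde f$, together with $q<\infty$ (a consequence of (b) and (e)), one obtains $\|\varphi_\epsilon\ast\tilde f-\tilde f\|_{\ce^s_{u,p,q}(\R)}\to 0$, so $\tilde f\in\accentset{\diamond}\ce^s_{u,p,q}(\R)$ and, after restriction, $f\in\accentset{\diamond}\ce^s_{u,p,q}(\Omega)$.

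The heart of the matter is (iii): the estimate $\|f\|_{[\,\cdot\,]_\theta(\Omega)}\lesssim\|f\|_{\ce^s_{u,p,q}(\Omega)}$ for $f\in C^\infty(\overline\Omega)$ --- exactly the direction that \emph{fails} for the full space $\ce^s_{u,p,q}$, and in which the hypothesis on $s_0,s_1$ is used. Extending $f$ to $\tilde f\in\accentset{\diamond}\ce^s_{u,p,q}(\R)$ and passing to Daubechies wavelet coefficients $\lambda=(\lambda_{j,m})$ (the analysis and synthesis operators being bounded couple maps, again only for $s_0,s_1\in\re$), the claim reduces to $\|\lambda\|_{[e^{s_0}_{u_0,p_0,q_0},e^{s_1}_{u_1,p_1,q_1}]_\theta}\lesssim\|\lambda\|_{e^s_{u,p,q}}$ for finitely supported $\lambda$ in the associated Morrey-type sequence spaces. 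Here I would sandwich $[\,\cdot\,]_\theta$ between the Gustavsson--Peetre $\pm$-method and Calderón's second method, $\langle e^{s_0}_{u_0,p_0,q_0},e^{s_1}_{u_1,p_1,q_1}\rangle_\theta\hookrightarrow[\,\cdot\,]_\theta\hookrightarrow[\,\cdot\,]^\theta$, and construct, for a given $\lambda$, an admissible family of Calderón-product type, $\Lambda_{j,m}(z)=\lambda_{j,m}\,|\lambda_{j,m}|^{\alpha(z)-1}\,2^{j\beta(z)}\,\gamma_P(z)$ with $\alpha,\beta$ affine, $\alpha(\theta)=1$, $\beta(\theta)=0$, tuned so that on the two edges of the strip the $L_{p_i}$-powers, the smoothness weights $2^{js_i}$, and the Morrey normalisations all match simultaneously. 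Estimating $\sup_t\|\Lambda(it)\|_{e^{s_0}_{u_0,p_0,q_0}}$ and $\sup_t\|\Lambda(1+it)\|_{e^{s_1}_{u_1,p_1,q_1}}$ then produces, after rebalancing the scale weights $2^{js_i}$ against the dyadic volume factors, geometric series in $j$ whose convergence is governed precisely by the sign of $(s_0-\frac{d}{u_0})-(s_1-\frac{d}{u_1})$; under (d') this is strictly positive --- and together with (a), (c) it in fact forces $s_0>s_1$ --- so the family is bounded, whence $\langle\cdot\rangle_\theta$, $[\,\cdot\,]_\theta$ and $[\,\cdot\,]^\theta$ coincide on finitely supported sequences. (In Theorem \ref{main1}, where $s_0\le s_1$, this rebalancing is available directly, the borderline $s_0=s_1$ being handled there by $q_1\le q_0$.) I expect this estimate --- making the admissible family bounded \emph{uniformly} across the strip for the nested $\ell^u(\ell^q)$-with-Morrey-supremum structure, and checking that the gap in (d') is exactly the quantitative input needed --- to be the main obstacle; the remaining bookkeeping (undoing the wavelet and extension retracts and identifying $R_\Omega(\accentset{\diamond}\ce^s_{u,p,q}(\R))$ with $\accentset{\diamond}\ce^s_{u,p,q}(\Omega)$) is routine.
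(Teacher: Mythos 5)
Your overall architecture (pass to $\R$ by the universal extension operator, then control the closure of the intersection via Shestakov's identity) matches the paper's, but you have located the role of hypothesis (d') in the wrong half of the argument, and the half where it is actually needed contains a step that is false as stated. Your step (ii) --- the inclusion $\ce^{s_0}_{u_0,p_0,q_0}\cap\ce^{s_1}_{u_1,p_1,q_1}\hookrightarrow\accentset{\diamond}\ce^{s}_{u,p,q}$ --- is declared ``soft'' and claimed to hold under (a), (b), (c), (e) alone. It does not: Proposition \ref{main3} exhibits, under exactly these conditions, a function $f_\alpha$ (with $\alpha=d/u_0-s_0$ and $s_1=s_0-d(1/u_0-1/u_1)$) lying in the intersection but not in the diamond space. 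Concretely, your mollification argument breaks at the last line: log-convexity of the Morrey norms applied to the Littlewood--Paley pieces gives the embedding of the intersection into $\ce^s_{u,p,q}$ (that is Lemma \ref{step1}), but it gives no convergence $\varphi_\epsilon\ast\tilde f\to\tilde f$ in that norm; mollifications of a general element of $\ce^{s_i}_{u_i,p_i,q_i}$ do not converge in the $\ce^{s_i}_{u_i,p_i,q_i}$-norm precisely because $\accentset{\diamond}\ce^{s_i}_{u_i,p_i,q_i}\subsetneq\ce^{s_i}_{u_i,p_i,q_i}$ when $u_i\neq p_i$ (Lemma \ref{diamond3}). This is the step where (d') must enter, and your proof never invokes it there. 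The paper's route is short and entirely different: (d') forces the Sobolev-type embedding $\ce^{s_0}_{u_0,p_0,q_0}(\R)\hookrightarrow\ce^{s_1}_{u_1,p_1,q_1}(\R)$, so the intersection collapses to $\ce^{s_0}_{u_0,p_0,q_0}(\R)$, which in turn embeds into $\ce^{t_\Theta}_{u,p,q}(\R)$ with $t_\Theta=s_0-d(1/u_0-1/u)>s$; since any space of strictly higher smoothness embeds into the diamond space of lower smoothness (Proposition \ref{diamond9}), the required inclusion follows.

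Conversely, your step (iii), into which you invest the analytic-family and Gustavsson--Peetre machinery and where you believe (d') is consumed, is in fact the soft direction on a bounded domain: by Lemma \ref{hilfe}, $E^s_{u,p,q}(\Omega)=E^{s_0}_{u_0,p_0,q_0}(\Omega)\cap E^{s_1}_{u_1,p_1,q_1}(\Omega)$ sits inside the intersection, so its closure $\accentset{\diamond}\ce^s_{u,p,q}(\Omega)$ sits inside the closure of the intersection, which by Proposition \ref{shestakov} is $[\,\cdot\,,\,\cdot\,]_\Theta$; no condition relating $s_0$ and $s_1$ is needed. (Your caution has some basis on $\R$, where Proposition \ref{main4}(i) shows this inclusion can fail, but on a bounded Lipschitz domain Lemma \ref{hilfe} removes the obstruction.) As written, the proposal therefore proves neither inclusion in the place where the hypothesis (d') does genuine work, and the claimed softness of (ii) contradicts the paper's own counterexample.
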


Clearly, in Theorem \ref{main1b} we always have $s_0 >s_1$. So there is no overlap
with Theorem \ref{main1}.
For convenience of the reader we add the consequences for the interpolation of
Sobolev-Morrey spaces.

\begin{cor}\label{MSM2}
Let $0 < \tz <1$, $1 < p_0 < p_1 <\infty$, $p_0 < u_0<\infty$, $p_1 < u_1< \infty $ and $p_0\, u_1=p_1\, u_0$.
Let
$m_0 \in \N, m_1\in\N_0 $ and $m_0 -\frac{d}{u_0} > m_1 - \frac{d}{u_1}$.
We define
\[
s:=(1-\tz)m_0 + \tz m_1\, , \quad
\frac1p :=\frac{1-\tz}{p_0}+\frac\tz{p_1}\quad
\mbox{and} \quad \frac1u :=\frac{1-\tz}{u_0}+\frac{\tz}{u_1}\, .
\]
Let $\Omega \subset \R$ be either  a bounded Lipschitz domain
if $d\ge 2$ or a bounded interval if $d=1$.
Then \eqref{ws-09} holds.
In particular, if $s = m \in\N$, then also \eqref{ws-09b} is true.
\end{cor}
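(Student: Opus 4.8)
The plan is to obtain Corollary~\ref{MSM2} as an immediate specialization of Theorem~\ref{main1b}, combined with the Littlewood-Paley characterization of Sobolev-Morrey spaces provided by Lemma~\ref{LP}. That lemma identifies $W^{m}\cm^{u}_{p}(\Omega)$ with $\ce^{m}_{u,p,2}(\Omega)$, with equivalent norms, for every $m\in\N_0$ and all admissible $p\le u$, with the convention $W^{0}\cm^{u}_{p}(\Omega)=\cm^{u}_{p}(\Omega)$. Since both the complex interpolation method and the passage to the diamond space (the closure of the smooth functions in the given norm) are unaffected if a norm is replaced by an equivalent one, it suffices to compute $[\ce^{m_0}_{u_0,p_0,2}(\Omega),\ce^{m_1}_{u_1,p_1,2}(\Omega)]_\Theta$.

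First I would set $s_0:=m_0$, $s_1:=m_1$, $q_0:=q_1:=2$ and verify the hypotheses (a), (b), (c), (d') and (e) of Theorem~\ref{main1b}. Condition (a) follows from $1<p_0<p_1<\infty$, $p_0<u_0<\infty$, $p_1<u_1<\infty$; condition (b) holds since $\min(q_0,q_1)=2<\infty$; condition (c) is the assumed Lemari{\'e}-Rieusset identity $p_0\,u_1=p_1\,u_0$; and (d') is exactly the hypothesis $m_0-\tfrac{d}{u_0}>m_1-\tfrac{d}{u_1}$. For (e) one computes $\tfrac1q=\tfrac{1-\Theta}{2}+\tfrac{\Theta}{2}=\tfrac12$, hence $q=2$, while $s=(1-\Theta)m_0+\Theta m_1$ is the $s$ of the statement and $\tfrac1p$, $\tfrac1u$ are as prescribed. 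Theorem~\ref{main1b} then yields
\[
\big[\ce^{m_0}_{u_0,p_0,2}(\Omega),\,\ce^{m_1}_{u_1,p_1,2}(\Omega)\big]_\Theta=\accentset{\diamond}\ce^{\,s}_{u,p,2}(\Omega),
\]
and rewriting the two endpoint spaces via Lemma~\ref{LP} gives \eqref{ws-09}.

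For the remaining assertion, assume $s=m\in\N$. Applying Lemma~\ref{LP} once more, now on the right-hand side, shows that $\ce^{m}_{u,p,2}(\Omega)$ and $W^{m}\cm^{u}_{p}(\Omega)$ carry equivalent norms; consequently a sequence of smooth functions is Cauchy in one of these norms if and only if it is Cauchy in the other, and it then has the same limit, so the closures of the smooth functions coincide. This proves $\accentset{\diamond}\ce^{m}_{u,p,2}(\Omega)=\accentset{\diamond}W^{m}\cm^{u}_{p}(\Omega)$, i.e.\ \eqref{ws-09b}.

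There is no genuine obstacle here; all the analytic substance is already contained in Theorem~\ref{main1b} and in Lemma~\ref{LP}. The one point worth recording is a consistency check. From (c) together with $p_0<p_1$ one obtains $u_1/u_0=p_1/p_0>1$, hence $\tfrac{d}{u_0}>\tfrac{d}{u_1}$, so (d') forces $m_0-m_1>\tfrac{d}{u_0}-\tfrac{d}{u_1}>0$; thus $m_0>m_1\ge 0$, in accordance with the remark after Theorem~\ref{main1b} that there always $s_0>s_1$, and in particular $m_0\ge 1$, so the hypothesis $m_0\in\N$ is automatically satisfied.
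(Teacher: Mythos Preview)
Your proof is correct and follows exactly the route the paper takes: the corollary is obtained directly from Theorem~\ref{main1b} combined with Lemma~\ref{LP}. Your write-up simply makes explicit the verification of hypotheses (a)--(e), (d') and the passage to domains and to the diamond space, all of which the paper leaves implicit in its one-line proof.
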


The formula \eqref{w-004} does not hold in general. There are many counterexamples.

\begin{prop}\label{main3}
Let $\Omega \subset \R$ be a domain.
We assume that
\begin{itemize}
 \item[(a)] $1\le p_0< p_1<\infty$, $p_0 < u_0<\infty$, $p_1 < u_1<\infty$;
 \item[(b)] $1\le q_0\, ,  q_1 \le \infty$;
 \item[(c)] $p_0 \, u_1 = p_1 \, u_0$.
\end{itemize}
If $0 < s_0 < d/u_0$ and  if
\be\label{wss-18}
s_1:= s_0 -d \, \Big(\frac 1{u_0} - \frac 1{u_1}\Big) >0\, ,
\ee
then with
$0 < \Theta <1$, $ \frac1p:=\frac{1-\tz}{p_0}+\frac{\tz}{p_1}$,
$\frac1u:=\frac{1-\tz}{u_0}+\frac{\tz}{u_1}$,
$\frac1q:=\frac{1-\tz}{q_0}+\frac{\tz}{q_1}$
and \\
$s:= (1-\tz)s_0 + \tz s_1$ it holds
\be
\label{w-006}
[\ce^{s_0}_{u_0,p_0,q_0}(\Omega),\ce^{s_1}_{u_1,p_1,q_1}(\Omega)]_\tz \not\subset \accentset{\diamond}\ce_{u,p,q}^s(\Omega)\, .
\ee
\end{prop}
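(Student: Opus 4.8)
The plan is to exhibit, for suitable parameters, an explicit function lying in the complex interpolation space $[\ce^{s_0}_{u_0,p_0,q_0}(\Omega),\ce^{s_1}_{u_1,p_1,q_1}(\Omega)]_\tz$ but not in $\accentset{\diamond}\ce_{u,p,q}^s(\Omega)$. The membership in $\accentset{\diamond}\ce_{u,p,q}^s(\Omega)$ is the restrictive one, since that space is the \emph{closure of smooth functions}; so the strategy is to find a function $f$ that is in $\ce^s_{u,p,q}(\Omega)$ (a necessary but not sufficient condition for being in the diamond space) yet cannot be approximated by smooth functions in the $\ce^s_{u,p,q}$-norm — and simultaneously verify $f\in[\,\cdot\,,\,\cdot\,]_\tz$ by producing an analytic vector-valued function on the strip. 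First I would reduce to the model situation by scaling and localization: since $\Omega$ is a domain, I may place a small ball inside it and work with a function supported near the origin, so that the Morrey norms on $\Omega$ effectively reduce to local Morrey norms and the two ``diamond'' phenomena (local/global) collapse to the purely local obstruction. The natural candidate is a homogeneous-type singularity of the form $f(x) = \psi(x)\,|x|^{-\sigma}$ (or a lacunary/self-similar variant) with the exponent $\sigma$ tuned so that $f$ just barely sits in $\ce^{s_0}_{u_0,p_0,q_0}$ and $\ce^{s_1}_{u_1,p_1,q_1}$ at the respective smoothness/integrability levels, using the Sobolev-type relations $s_i - d/u_i$ that appear in condition \eqref{wss-18}.

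The key computation is the Morrey–Triebel regularity of such homogeneous singularities. I would invoke the known embedding/trace-type results for $\ce^s_{u,p,q}$ (the Sobolev–Morrey embedding governed by the differential dimension $s - d/u$, which is why \eqref{wss-18} is phrased exactly as $s_1 = s_0 - d(1/u_0 - 1/u_1)$ — this makes the two endpoint spaces sit on the \emph{same} line $s-d/u = \mathrm{const}$ after using $p_0u_1 = p_1u_0$). Concretely: the condition $0 < s_0 < d/u_0$ places $f = |x|^{-\sigma}\psi$ in a regime where the function is unbounded but still in $\ce^{s_0}_{u_0,p_0,q_0}$; one computes the admissible range of $\sigma$ for each endpoint and checks it is nonempty and that the interpolated parameters $(s,u,p,q)$ also admit this $\sigma$. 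For the interpolation-space membership I would use the retraction/corsetion onto sequence spaces (wavelet or $\varphi$-transform decomposition of $\ce^s_{u,p,q}$) together with the fact that complex interpolation commutes with retractions; on the level of Morrey sequence spaces, Lemari\'e-Rieusset's computation (cited as \cite{LR},\cite{LR2} and refined in \cite{hns17}) shows the interpolation space is strictly \emph{larger} than the naive $\ce^s_{u,p,q}$, and in particular contains functions whose wavelet coefficients realize the borderline singularity. Thus $f$ belongs to the interpolation space.

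The final step is to show $f \notin \accentset{\diamond}\ce_{u,p,q}^s(\Omega)$, i.e. $f$ is not approximable by $C^\infty$ functions in the $\ce^s_{u,p,q}$-norm. Here I would use a characterization of the diamond space by decay/vanishing of a local Morrey-type quantity: a function lies in the closure of smooth functions iff a certain ``local oscillation'' $\sup_{|Q| \le r} |Q|^{1/u - 1/p}\| \cdot \|_{L_p(Q)}$-type expression (applied to the frequency-$j$ pieces, for $j$ large, i.e. near the singularity) tends to $0$ as $r \to 0$ — this is the smooth-approximability criterion analogous to the one underlying \eqref{w-003} for plain Morrey spaces. For our homogeneous $f$, this local Morrey quantity is scale-invariant and stays bounded away from $0$, so $f$ cannot be in the diamond space. \textbf{The main obstacle} I anticipate is making the two halves compatible with a single choice of $\sigma$ while keeping all the strict inequalities in (a) and \eqref{wss-18} active: one must check that the (open) interval of good exponents $\sigma$ at the $s_0$-endpoint, the interval at the $s_1$-endpoint, and the interval for the interpolated space have a common point, \emph{and} that this common $\sigma$ corresponds to a function whose interpolated wavelet-coefficient sequence exhibits the genuinely Morrey (non-Lebesgue) excess — the place where $u_i \neq p_i$ is essential. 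Verifying this non-emptiness, and the sharp Morrey-norm estimates for $|x|^{-\sigma}\psi$ in $\ce^s_{u,p,q}$ (with the correct endpoint $q$-behavior, which is why condition (b) only asks $1 \le q_0, q_1 \le \infty$ with no finiteness), is the technical heart; everything else is packaging via retractions and the smooth-approximation criterion.
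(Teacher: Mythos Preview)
Your approach is essentially the paper's: the test function $f_\alpha(x)=\psi(x)\,|x|^{-\alpha}$ with the critical exponent, shown to lie in the interpolation space but not in the diamond space. Two points deserve correction, though.

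First, the membership $f\in[\ce^{s_0}_{u_0,p_0,q_0}(\Omega),\ce^{s_1}_{u_1,p_1,q_1}(\Omega)]_\tz$ is far simpler than you suggest. You do not need retractions, wavelets, or an analytic strip function: the intersection $X_0\cap X_1$ always embeds into $[X_0,X_1]_\tz$ (it is even dense there), so it suffices to check $f_\alpha\in\ce^{s_0}_{u_0,p_0,q_0}(\Omega)\cap\ce^{s_1}_{u_1,p_1,q_1}(\Omega)$. This is exactly Lemma~\ref{test3}(i), which gives $f_\alpha\in\ce^{s}_{u,p,q}(\R)$ if and only if $\alpha+s\le d/u$. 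Your remark that the interpolation space is ``strictly larger'' than $\ce^s_{u,p,q}$ is wrong --- by \eqref{w-009} the inclusion goes the other way --- but fortunately the argument does not use that direction.

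Second, your ``main obstacle'' about finding a common $\sigma$ in overlapping open intervals is not an obstacle at all: hypothesis \eqref{wss-18} says precisely that $s_0-d/u_0=s_1-d/u_1$, and by linearity also $=s-d/u$. Thus the single critical value $\alpha:=d/u_0-s_0=d/u_1-s_1=d/u-s>0$ works simultaneously for both endpoints (giving $f_\alpha\in\ce^{s_i}_{u_i,p_i,q_i}$ by Lemma~\ref{test3}(i)) and is exactly the borderline exponent at which Lemma~\ref{test3}(ii) yields $f_\alpha\notin\accentset{\diamond}\ce^s_{u,p,q}$. There is no interval to intersect; the hypothesis is rigged so that one exponent does everything. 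After localizing so that $B(0,2)\subset\Omega$, the proof is then two lines.
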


\noindent
Finally, we add a few comments concerning the situation on $\R$.

\begin{itemize}
  \item
The conditions $s_0,s_1 \in \re$, $1\le q_0,q_1\le \infty$
together with
(a),(c),(e) from Theorem \ref{main1} guarantee the continuous embedding
\be
\label{w-009}
[\ce^{s_{0}}_{u_{0},p_{0},q_0}(\R),\ce^{s_{1}}_{u_{1},p_{1},q_1}(\R)]_\tz \hookrightarrow \ce_{u,p,q}^s(\R)\, .
\ee
We refer to  Yang, Yuan, Zhuo \cite{yyz}.
\item
 If  $1<  p\le u <  \infty$, $1< q_0 < q_1 \le \infty$, $s\in\re$,
 $0 < \Theta <1$ and $\frac1q:=\frac{1-\tz}{q_0}+\frac{\tz}{q_1}$, then
\be
\label{w-007}
[\ce^{s}_{u,p,q_0}(\R),\ce^{s}_{u,p,q_1}(\R)]_\tz = \ce_{u,p,q}^s(\R)
\ee
holds. We refer to Sawano and Tanaka \cite{sat}.
\end{itemize}

We supplement these assertions by one negative and one positive result.

\begin{prop}\label{main4}
{\rm (i)}
Let $s_0$ and $s_1$ be positive real numbers.
Let the conditions (a), (b), (c) and (e)  from Theorem \ref{main1} be satisfied.
Then
\[
\accentset{\diamond}\ce_{u,p,q}^{s}(\R) \not \subset [\ce_{u_0,p_0,q_0}^{s_0}(\R), \ce_{u_1,p_1,q_1}^{s_1}(\R)]_\Theta  \,  .
\]
{\rm (ii)} Under the same restrictions as in Theorem \ref{main1b}
we have
\[
[\ce_{u_0,p_0,q_0}^{s_0}(\R), \ce_{u_1,p_1,q_1}^{s_1}(\R)]_\Theta
\hookrightarrow \accentset{\diamond}\ce_{u,p,q}^{s}(\R) \, .
\]
\end{prop}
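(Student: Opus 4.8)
The statements (i) and (ii) are two refinements of the continuous embedding \eqref{w-009}: in (ii) one has to show that the image of $[\ce^{s_0}_{u_0,p_0,q_0}(\R),\ce^{s_1}_{u_1,p_1,q_1}(\R)]_\Theta$ under \eqref{w-009} already lies in the closed subspace $\accentset{\diamond}\ce^{s}_{u,p,q}(\R)$, whereas in (i) one has to produce an element of $\accentset{\diamond}\ce^{s}_{u,p,q}(\R)$ which does not belong to the interpolation space at all. Throughout write $X_0=\ce^{s_0}_{u_0,p_0,q_0}(\R)$ and $X_1=\ce^{s_1}_{u_1,p_1,q_1}(\R)$.

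For (ii) the plan is to combine \eqref{w-009} with Calder{\'o}n's density theorem, by which $X_0\cap X_1$ is dense in $[X_0,X_1]_\Theta$ with respect to the interpolation norm, hence, by \eqref{w-009}, also with respect to the $\ce^{s}_{u,p,q}$-norm. Since $\accentset{\diamond}\ce^{s}_{u,p,q}(\R)$ is closed in $\ce^{s}_{u,p,q}(\R)$, it suffices to show $X_0\cap X_1\subset\accentset{\diamond}\ce^{s}_{u,p,q}(\R)$. Now (a), (c), (e) force $u_0<u<u_1$ and $p_0/u_0=p/u=p_1/u_1$, and (d') gives the \emph{strict} chain $s_0-d/u_0>s-d/u>s_1-d/u_1$; in particular $X_0\hookrightarrow X_1$ by the Sobolev embedding for Lizorkin--Triebel--Morrey spaces, so $X_0\cap X_1=X_0$ and everything reduces to $\ce^{s_0}_{u_0,p_0,q_0}(\R)\hookrightarrow\accentset{\diamond}\ce^{s}_{u,p,q}(\R)$. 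This embedding I would establish by passing to a wavelet (or smooth atomic) decomposition: the coefficient operator identifies $\ce^{s_0}_{u_0,p_0,q_0}(\R)$ and $\ce^{s}_{u,p,q}(\R)$ with their associated Morrey sequence spaces, the strict gain in the differential dimension supplied by (d') turns the induced embedding of sequence spaces into one for which the finitely supported sequences are dense in the target, and synthesising a finitely supported coefficient sequence produces an element of $C^\infty_c(\R)$. The density of the finitely supported sequences in the target sequence space is the heart of the matter and is exactly the point where the strictness in (d') enters; it fails in the borderline case $s_0-d/u_0=s_1-d/u_1$, consistently with the extra $q$-condition in (d) and with Proposition \ref{main3}.

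For (i), observe first that both $\accentset{\diamond}\ce^{s}_{u,p,q}(\R)$ and $[X_0,X_1]_\Theta$ embed continuously into $\ce^{s}_{u,p,q}(\R)$, hence into $\cs'(\R)$, so an inclusion $\accentset{\diamond}\ce^{s}_{u,p,q}(\R)\subset[X_0,X_1]_\Theta$ would, by the closed graph theorem, be a continuous embedding. To rule this out I would invoke the description of the complex interpolation space of Morrey-type spaces --- the analogue for $\ce$-spaces on $\R$ of the characterisations in \cite{hns17}, \cite{ysy3} and \cite{yyz} --- according to which $[X_0,X_1]_\Theta$ consists of those $f\in\ce^{s}_{u,p,q}(\R)$ satisfying in addition a vanishing (uniform-integrability type) condition in the Morrey sense, and then exhibit one function lying in $\accentset{\diamond}\ce^{s}_{u,p,q}(\R)$ but violating that condition. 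I would use a lacunary series $f=\sum_{k\ge 1}\lambda_k\,\psi\bigl(2^{j_k}(\,\cdot-c_k)\bigr)$ built from a fixed $\psi\in C^\infty_c(\R)$, with amplitudes $\lambda_k\to\infty$, dilation exponents $j_k$ chosen so that the $k$-th summand has $\ce^{s}_{u,p,q}$-norm comparable to $1/k$, and centres $c_k\to\infty$ so fast that distinct summands are essentially Morrey-orthogonal. A direct estimate then gives $\bigl\|f-\sum_{k\le N}\lambda_k\psi(2^{j_k}(\cdot-c_k))\bigr\|_{\ce^{s}_{u,p,q}(\R)}\to 0$, so that $f\in\accentset{\diamond}\ce^{s}_{u,p,q}(\R)$, while $\lambda_k\to\infty$ prevents $\bigl\|f\chi_{\{|f|>\lambda\}}\bigr\|_{\ce^{s_0}_{u_0,p_0,q_0}(\R)}$ (respectively the analogous $\ce^{s_1}_{u_1,p_1,q_1}$-quantity as $\lambda\to0$, according to the position of $s_0-d/u_0$ relative to $s-d/u$) from tending to $0$, which contradicts membership in $[X_0,X_1]_\Theta$.

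The principal obstacle in (ii) is precisely the density of finitely supported sequences in the target Morrey sequence space under a strict embedding; the truncation and synthesis around it are routine. In (i) the main work is the parameter tuning: the triple $(\lambda_k,j_k,c_k)$ has to be adapted to the sign of $s-d/u$ and to the relative sizes of the numbers $s_i-d/u_i$ so that a single scheme covers all admissible pairs $s_0,s_1>0$, together with the Morrey-norm bookkeeping for the lacunary sums. (We tacitly assume the genuinely non-classical range $u_0>p_0$, equivalently $u_1>p_1$; if $u_0=p_0$ then by (c) also $u_1=p_1$, the spaces are ordinary Lizorkin--Triebel spaces on $\R$, and \eqref{w-005} shows that the non-inclusion in (i) cannot hold, so this degenerate case is excluded.)
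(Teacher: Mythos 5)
Your overall architecture for (ii) --- density of $X_0\cap X_1$ in $[X_0,X_1]_\Theta$, the Sobolev-type embedding forced by (d') which collapses $X_0\cap X_1$ to $X_0=\ce^{s_0}_{u_0,p_0,q_0}(\R)$, and the closedness of the diamond space --- is exactly the paper's, but the step you yourself call the heart of the matter is false. Density of the finitely supported coefficient sequences, i.e.\ approximation by compactly supported functions, would prove $\ce^{s_0}_{u_0,p_0,q_0}(\R)\hookrightarrow\mathring{\ce}^{s}_{u,p,q}(\R)$, and $\mathring{\ce}^{s}_{u,p,q}(\R)$ is a \emph{proper} closed subspace of $\accentset{\diamond}{\ce}^{s}_{u,p,q}(\R)$ whenever $p<u$ (Lemma \ref{diamond2}); here the gap between the two spaces really bites. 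Indeed, take $\psi\in\cd(B(0,1))$, $\psi\neq 0$, and $g:=\sum_{k}\psi(\cdot-c_k)$ with the centres $c_k$ spreading out so fast that any ball meeting $m$ of the bumps has measure at least $2^{2^m}$. Then $g$ and all its derivatives lie in every Morrey space $\cm^{x}_{y}(\R)$ with $y<x$, so $g\in E^{s_0}_{u_0,p_0,q_0}(\R)\subset\ce^{s_0}_{u_0,p_0,q_0}(\R)$; but for every compactly supported $g_n$ and every $c_k\notin\supp g_n$ one has $\|\,g-g_n\,|\cm^u_p(\R)\|\ge |B(c_k,1)|^{\frac1u-\frac1p}\,\|\,\psi\,|L_p(\R)\|>0$, so (choosing parameters with $s>0$, which (d') permits) $g\notin\mathring{\ce}^{s}_{u,p,q}(\R)$ and no spatial truncation of its wavelet expansion converges to it. The point of the diamond space is precisely that only approximation by \emph{globally supported} elements of $E^s_{u,p,q}(\R)$ is required; the correct approximants are the Littlewood--Paley partial sums $S^Nf$ (truncation in frequency, not in space), and the paper's route is $\ce^{s_0}_{u_0,p_0,q_0}(\R)\hookrightarrow\ce^{t_\Theta}_{u,p,q}(\R)$ with $t_\Theta=s_0-d(\frac1{u_0}-\frac1u)>s$ followed by Proposition \ref{diamond9}.

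For (i) there are two concrete problems. First, your non-membership argument invokes a characterization of $[X_0,X_1]_\Theta$ for the $\ce$-scale on $\R$ by a vanishing/uniform-integrability condition; no such characterization is available (the paper explicitly lists finding one among the open problems), and the only usable description is Shestakov's formula $[X_0,X_1]_\Theta=\overline{X_0\cap X_1}^{\|\,\cdot\,|\ce^{s}_{u,p,q}(\R)\|}$ from Proposition \ref{shestakov}. Second, your candidate cannot exist in general: if $s>d/u$ --- which happens for suitable $s_0,s_1>0$, and the claim must cover all of them --- then $\ce^{s}_{u,p,q}(\R)\hookrightarrow L_\infty(\R)$ by Lemma \ref{emb} and Remark \ref{embbb}, so a lacunary sum with amplitudes $\lambda_k\to\infty$ is never in $\accentset{\diamond}{\ce}^{s}_{u,p,q}(\R)$. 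The paper sidesteps both issues with the single function $h_u=(1-\psi)\,|\cdot|^{-d/u}$: Lemma \ref{count_bound1} places it in $\accentset{\diamond}{\ce}^{\sigma}_{u,p,q}(\R)$ for every $\sigma$, and since $s_0,s_1>0$ give $X_0\cap X_1\subset\cm^{u_0}_{p_0}(\R)\cap\cm^{u_1}_{p_1}(\R)$, Shestakov's formula reduces (i) to the known Morrey-space fact $h_u\notin\overline{\cm^{u_0}_{p_0}(\R)\cap\cm^{u_1}_{p_1}(\R)}^{\|\,\cdot\,|\cm^{u}_{p}(\R)\|}$ from \cite{ysy3}. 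Note that this is where the hypothesis $s_0,s_1>0$ actually enters; your sketch never uses it.
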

This supplements the knowledge about Morrey spaces since it holds
\[
 \accentset{\diamond}\cm_{p}^{u}(\R) \not \subset
 [\cm_{p_0}^{u_0}(\R), \cm_{p_1}^{u_1}(\R)]_\Theta \hookrightarrow
 \accentset{\diamond}\cm_{p}^{u}(\R) \,  ,
\]
 if $ 1\le p_0 <p_1 < \infty$, $p_0 < u_0<\infty$,  $p_1 < u_1<\infty$ and  $p_0 \, u_1 = p_1 \, u_0$,
 see \cite[Cor.~2.38]{ysy3}. So all in all the general picture concerning complex interpolation of Lizorkin-Triebel-Morrey spaces seems to be more complicated than expected.
Below we have tried to make the situation on domains a bit more transparent. We shall plot an $(1/u, s)$ diagram. The influence of the parameters $p_0,q_0,p_1,q_1$ is ignored. First we fix a point $(1/u_0,s_0)$. Then we have indicated for which regions in the plane we may apply
either Theorem \ref{main1} or Theorem \ref{main1b} or Proposition \ref{main3}.

%&&&&&&&&&&&&&&&&&&&&&&&&&&&&&&&&&&&&&&&&&&&&&&&&&
%&&&&&&&&&&&&&&&&&&&&&&&&&&&&&&&&&&&&&&&&&&&&&&&&&&&

\begin{tikzpicture}[thick]
\draw[->] (-2,0) -- (10,0) ;
\draw[->] (-2,-2) -- (-2,6) ;
\draw[dotted] (-2,0) -- (8,6) ;
%\draw (1,-0.1) -- (1,0.1) ;
\draw (7,-0.1) -- (7,0.1) ;
\draw (9,-0.1) -- (9,0.1) ;
\draw (-2.1,4) -- (-1.9,4) ;
\draw (7,4) circle (0.1cm) ;
%\draw (1,0.4) circle (0.1cm) ;
\draw (7,4) -- (7,6) ;

\draw (-2,-1.4) -- (7,4) ;

\draw (-2,4) -- (7,4) ;

\draw[dotted] (7,0) -- (7,4) ;

%\draw[dotted] (1,0) -- (1,0.4) ;

\draw (-2.1,-1.4) -- (-1.9,-1.4) ;

%\draw[dotted] (-2,0.4) -- (1,0.4) ;

\node at (-2.3,-1.4) {$t$} ;
\node at (-2.3,5.8) {$s$} ;
\node at (8.8,6) {$s= d/u$} ;
\node at (-2.3,4) {$s_0$} ;
%\node at (1,-0.5) {$1/u_1$} ;
\node at (7,-0.5) {$1/u_0$} ;
\node at (9,-0.5) {$1$} ;
\node at (-1.8,-0.3) {$0$} ;
\node at (10,-0.5) {$1/u$} ;
\node at (2,5) {Theorem 1.1} ;
\node at (0,2.5) {open} ;
\node at (4.5,0.8) {Theorem 1.3} ;
\end{tikzpicture}
{~}\\
The point $t$ is given by the Sobolev-type embedding as $t:= s_0 -d/u_0$.
In the open rectangle $\{ (1/u_1, s_1):~~ u_0 < u_1\, , ~~s_1 >s_0\}  $
we can apply Theorem \ref{main1}.
In the open triangle with corner points $(0,t)$, $(1/u_0, s_0)$, $(0,s_0)$
we do not know
$[\ce_{u_0,p_0,q_0}^{s_0}(\Omega), \ce_{u_1,p_1,q_1}^{s_1}(\Omega)]_\Theta$.
Below of the line connecting $(0,t)$ and $(1/u_0,s_0)$ we may apply
Theorem \ref{main1b}.
On this critical line Proposition \ref{main3} applies.

%&&&&&&&&&&&&&&&&&&&&&&&&&&&&&&&&&&&&&&&&&&&&&&&&&
%&&&&&&&&&&&&&&&&&&&&&&&&&&&&&&&&&&&&&&&&&&&&&&&&&

{~}\\
This article is organized as follows.
In Section \ref{s-Morrey} we recall the
definitions of  Morrey spaces and Lizorkin-Triebel-Morrey spaces on
the Euclidean space $\R$ as well as on domains. In addition we introduce the diamond spaces. Moreover, a few basic properties of these classes are recalled as well.

Section \ref{s-diamon} is the most important one within this paper.
We investigate the spaces $\accentset{\diamond}{\ce}^{s}_{u,p,q} (\R)$ in detail. In the Lemmas \ref{lem1}
and \ref{lem2} we characterize this space via differences, which is very important
for us. We use this characterization to prove an embedding property
on the intersection of Lizorkin-Triebel-Morrey spaces in Lemma \ref{step2} below.

Section \ref{slava} is devoted to the existence of an universal  bounded  linear extension operator which maps
${\ce}^{s}_{u,p,q} (\Omega)$ into ${\ce}^{s}_{u,p,q} (\rn)$.
Here we employ Rychkov's method and construction.

Interpolation  will be the main topic
in Section \ref{s-interpolation}. Our treatment of the complex
interpolation of Lizorkin-Triebel-Morrey spaces will be reduced
to the calculation of a closure of some intersections by means of a formula due to Shestakov \cite{s74}, \cite{s74m}.

For convenience of the reader, in Section \ref{overview} we will give
a short  overview about interpolation of smoothness Morrey spaces.

Finally, in Section \ref{Ende} a number of open problems
is collected.

\vspace{0,3 cm}

But at first we want to fix some notation.

%&&&&&&&&&&&&&&&&&&&&&&&&&&&&&&&&&&&&&&&&&&&&&&&&&&&&&&&&&&
%&&&&&&&&&&&&&&&&&&&&&&&&&&&&&&&&&&&&&&&&&&&&&&&&&&&&&&&&&&

\subsection*{Notation}

%&&&&&&&&&&&&&&&&&&&&&&&&&&&&&&&&&&&&&&&&&&&&&&&&&&&&&&&&&&
%&&&&&&&&&&&&&&&&&&&&&&&&&&&&&&&&&&&&&&&&&&&&&&&&&&&&&&&&&&

For any $x\in \R$ and $r\in(0,\fz)$ we use $B(x,r)$ to denote the ball in $\R$ centered at $x$ with radius $r$,
namely, $B(x,r):=\{y\in\R:\ |x-y|<r \}$.
%For any $s\in\rr$ the symbol $[s]$ denotes the integer part of $s$.
If $\alpha = (\alpha_1, \ldots \, ,\alpha_d) \in \N_0^d$ and $f:~\Omega \to \C$, then we put
\[
 D^\alpha f (x) = \frac{\partial^{|\alpha|} f}{\partial x_1^{\alpha_1}\, \ldots \, x_d^{\alpha_d}}(x)\, , \qquad x \in \Omega\, .
\]
For a domain $\Omega \subset \R$ we define
$\cd (\Omega)$  as the set of all functions $f$ having derivatives
up to any order
and fulfill $\supp f\subset \Omega$. $\cd' (\Omega)$ is the dual space of $\cd(\Omega)$.
The symbol $\cl (X \to Y)$ denotes the set of all linear bounded operators
from $X$ to $Y$.
%Let $\rho: \R \to \C$ be a fixed function. Then we put
%$\rho_j (x) := 2^{jd} \varrho (2^jx)$, $x \in \R$, $j \in \N$.
By $C^\infty (\R)$
we denote the collection of all complex-valued infinitely differentiable functions
on $\R$, by $C^\infty_0 (\R)$ the subset consisting of those elements having
compact support.
Let $\cs (\R)$ denote the Schwartz space of all complex-valued, rapidly decreasing  and infinitely differentiable functions on $\R$.
By $\cs'(\R)$ we denote the collection of all tempered distributions on
$\R$, i.e., the topological dual of $\cs(\R)$, equipped with
the  weak-$*$ topology.
The symbol $\cf$ refers to  the Fourier transform, $\cfi$ to its inverse transformation,
both defined on $\cs'(\R)$.
All function spaces which  we consider in this paper are subspaces of $\cs'(\R)$,
i.e. spaces of equivalence classes w.r.t. almost everywhere equality.
However, if such an equivalence class  contains a continuous representative,
then usually we work with this representative and call also the equivalence class a continuous function. The symbols  $C, C_1, c, c_{1} \ldots $ denote  positive constants that depend only on the fixed parameters $d,s,u,p,q$ and probably on auxiliary functions. Unless otherwise stated their values may vary from line to line. Sometimes we also use the symbol $ \ls $
instead of $ \le $. The meaning of $A \ls B$ is
given by: there exists a positive constant $C$ such that
 $A \le C \,B$.
Mainly in Section \ref{slava} we will use the abbreviation (with modification if $ q = \infty $)
\[
\|\, \{f_j\}_{j=0}^\infty\, |\cm_p^u (\ell^s_q (\R))\|:=
\Big \| \Big ( \sum_{j=0}^\infty |2^{js}\, f_j(\, \cdot \, )|^q   \Big )^{\frac{1}{q}} \Big| \cm_p^u(\rn)  \Big \| .
\]

%&&&&&&&&&&&&&&&&&&&&&&&&&&&&&&&&&&&&&&&&&&&&&&&&&&&&&&&&&&
%&&&&&&&&&&&&&&&&&&&&&&&&&&&&&&&&&&&&&&&&&&&&&&&&&&&&&&&&&&

\section{Smoothness Morrey spaces\label{s-Morrey}}

%&&&&&&&&&&&&&&&&&&&&&&&&&&&&&&&&&&&&&&&&&&&&&&&&&&&&&&&&&&
%&&&&&&&&&&&&&&&&&&&&&&&&&&&&&&&&&&&&&&&&&&&&&&&&&&&&&&&&&&

In this section we recall the definitions of the function spaces under consideration.

%&&&&&&&&&&&&&&&&&&&&&&&&&&&&&&&&&&&&&&&&&&&&&&&&&&&&&&&&&&
%&&&&&&&&&&&&&&&&&&&&&&&&&&&&&&&&&&&&&&&&&&&&&&&&&&&&&&&&&&

\subsection{Morrey spaces}

%&&&&&&&&&&&&&&&&&&&&&&&&&&&&&&&&&&&&&&&&&&&&&&&&&&&&&&&&&&
%&&&&&&&&&&&&&&&&&&&&&&&&&&&&&&&&&&&&&&&&&&&&&&&&&&&&&&&&&&

Morrey spaces can be understood as a replacement
(or a generalization) of the Lebesgue spaces $L_p (\rr^d)$.
This is immediate in view of their definition.

\begin{defi}\label{Morrey1}
 Let $1 \le p\le u < \infty$.
Then the Morrey space $\mathcal{M}^u_{p}(\R)$
is defined as  the collection of all locally Lebesgue-integrable functions $f$ on $\R$ such that
\begin{equation}\label{morrey7}
\|f \vert \mathcal{M}^u_{p}(\R) \| :=  \sup_{B}
|B|^{\frac{1}{u}- \frac{1}{p}}\lf[\int_B |f(x)|^p\,dx\r]^{\frac{1}{p}}<\infty\, ,
\end{equation}
where the supremum is taken over all balls $B$ in $\R$.
\end{defi}

Clearly, there is a big difference between the cases $|B|>1$
and $|B|\le 1$.
We have a strong local condition combined with a weak global condition. Later we shall need some knowledge about certain subspaces of Morrey spaces. Therefore we give the following definition.

\begin{defi}\label{d-cm}
Let $X$ be a Banach space of distributions or functions.

{\rm (i)} By
$\accentset{\diamond}{X}$ we denote the closure in $X$ of the set of all infinitely often
differentiable functions $f$ that fulfill
$D^\alpha f \in X$ for all $\alpha \in \N_0^d$.

{\rm (ii)}
Let $C_0^\infty (\R) \hookrightarrow X$.
Then by $\mathring{X}$ we denote the closure of $C_0^\infty (\R)$ in $X$.
\end{defi}

The next lemma gives explicit descriptions
of  $\mathring{\cm}^{u}_{p}(\R)$ and
$\accentset{\diamond}{\cm}^{u}_{p}(\R)$, very much in the spirit of the original definition of  Morrey spaces, see \cite[Lemma 2.33]{ysy3}.

\begin{lem}\label{morrey43}
Let $1 \le p < u < \infty$.
\\
{\rm (i)}  $\mathring{\cm}^{u}_{p}(\R)$ is equal to the collection of all $f \in {\cm}^{u}_{p}(\R)$ having the
following properties:
\begin{eqnarray}
&& \qquad \lim_{r \downarrow 0}
|B(y,r)|^{\frac{1}{u}- \frac{1}{p}}\lf[\int_{B(y,r)} |f(x)|^p\,dx\r]^{\frac{1}{p}}   =  0 \, ,
\label{ws-51}
\\
&&
\nonumber
\\
\label{ws-50}
&& \qquad
\lim_{r \to \infty}
|B(y,r)|^{\frac{1}{u}- \frac{1}{p}}\lf[\int_{B(y,r)} |f(x)|^p\,dx\r]^{\frac{1}{p}}  =  0\, ,
\end{eqnarray}
both uniformly in  $y \in \R $, and
\begin{equation}
 \label{ws-74}
 \lim_{|y|\to \infty}
|B(y,r)|^{\frac{1}{u}- \frac{1}{p}}\lf[\int_{B(y,r)} |f(x)|^p\,dx\r]^{\frac{1}{p}}  =  0
\end{equation}
uniformly in $r\in(0,\fz)$.
\\
{\rm (ii)} $\accentset{\diamond}{\cm}^{u}_{p}(\rn)$ is equal to the collection of all $f \in {\cm}^{u}_{p}(\rn)$
such that \eqref{ws-51} holds true
uniformly in  $y \in \R$.
\end{lem}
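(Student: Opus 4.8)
The plan is to prove both characterizations by establishing two inclusions in each case. I begin with part (ii), since it is the one we actually need later, and part (i) follows along the same lines with the extra global conditions tracked carefully. Write $N(f;y,r):=|B(y,r)|^{\frac 1u-\frac 1p}\big[\int_{B(y,r)}|f(x)|^p\,dx\big]^{\frac 1p}$ for brevity.

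For the inclusion ``$\accentset{\diamond}{\cm}^u_p(\R)\subset\{f:\eqref{ws-51}\text{ uniformly}\}$'': first I would check that every smooth $g$ with $D^\alpha g\in\cm^u_p(\R)$ for all $\alpha$ satisfies \eqref{ws-51} uniformly in $y$. Indeed $g\in\cm^u_p(\R)$ combined with $\nabla g\in\cm^u_p(\R)$ gives, via the mean value theorem on $B(y,r)$, a bound of the form $N(g;y,r)\ls r\,\|\nabla g\mid\cm^u_p\|\cdot|B(y,r)|^{\frac1u}/|B(y,r)|^{\frac1u}$ — more precisely one estimates $|g(x)-g_{B(y,r)}|$ and uses that $|B(y,r)|^{\frac1u-\frac1p}\|1\|_{L_p(B)}$ is bounded, so that $N(g;y,r)\to 0$ as $r\downarrow 0$ uniformly. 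The key point is that the quantity $N(g;y,r)$ is \emph{uniformly} (in $y$) controlled by $r$ times a fixed constant. Then one passes to the closure: the set of $f$ satisfying \eqref{ws-51} uniformly in $y$ is closed in $\cm^u_p(\R)$, because $|N(f;y,r)-N(g;y,r)|\le N(f-g;y,r)\le\|f-g\mid\cm^u_p(\R)\|$, so a uniform $\ez/2$ approximation by such a $g$ plus smallness of $N(g;\cdot,r)$ for small $r$ gives smallness of $N(f;\cdot,r)$.

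For the converse inclusion ``$\{f:\eqref{ws-51}\text{ uniformly}\}\subset\accentset{\diamond}{\cm}^u_p(\R)$'', the strategy is to approximate a given such $f$ by smooth functions with all derivatives in $\cm^u_p$. The natural candidates are mollifications $f_t:=\varphi_t*f$ with $\varphi\in C_0^\infty$, $\int\varphi=1$. One checks $D^\alpha f_t=(D^\alpha\varphi_t)*f\in\cm^u_p(\R)$ (Morrey norms are preserved under convolution with $L_1$ kernels up to a constant), so $f_t$ lies in the defining set for $\accentset{\diamond}{\cm}^u_p$. The main obstacle — and the technical heart of the argument — is to show $\|f-f_t\mid\cm^u_p(\R)\|\to 0$ as $t\downarrow 0$; this is exactly where \eqref{ws-51} uniformly in $y$ is needed, since $L_p$-continuity of translations combined with a uniform-in-$y$ control of the local $L_p$ masses on small balls lets one make $N(f-f_t;y,r)$ small simultaneously for all $y$ and all small $r$, while for $r$ bounded below one uses the global finiteness of the Morrey norm. (Recall that on all of $\R$ the translations are \emph{not} strongly continuous on $\cm^u_p$ in general; it is precisely condition \eqref{ws-51}, i.e. membership in the ``vanishing'' subspace, that restores this.) I would organize this as: split $\sup_B$ into balls of radius $\le\dz$ and radius $>\dz$; on the first family use the uniform vanishing \eqref{ws-51} for $f$ (and its stability under convolution) to get $N(f-f_t;y,r)\le N(f;y,r)+N(f_t;y,r)<\ez$; on the second family use $|B|^{\frac1u-\frac1p}\le C_\dz$ so that $N(f-f_t;y,r)\le C_\dz\|f-f_t\mid L_p^{\loc}\|$-type bounds reduce to ordinary $L_p$ mollification estimates.

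Finally, for part (i) one repeats the closure argument with $C_0^\infty(\R)$ in place of the smooth-with-all-derivatives-in-$\cm^u_p$ class: an element $g\in C_0^\infty(\R)$ trivially satisfies \eqref{ws-51}, \eqref{ws-50} uniformly in $y$, and \eqref{ws-74} uniformly in $r$ (compact support makes the last two obvious once $|y|$ exceeds $\diam(\supp g)+r$ resp. is large), and all three conditions define a closed subset of $\cm^u_p(\R)$ by the same $N$-difference estimate. For the converse, given $f$ satisfying all three conditions, one first truncates $f$ to $f\cdot\chi_{B(0,R)}$ — here \eqref{ws-50} and \eqref{ws-74} ensure $\|f-f\chi_{B(0,R)}\mid\cm^u_p\|\to 0$ as $R\to\infty$ — and then mollifies the compactly supported truncation as above, using \eqref{ws-51} for the small-ball estimate; multiplying by a cutoff keeps everything in $C_0^\infty(\R)$. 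The only delicate point is bookkeeping the three limits uniformly; no new idea beyond part (ii) is required. I expect the uniform-in-$y$ mollification estimate to be the main obstacle throughout; everything else is a routine $\ez/3$ argument.
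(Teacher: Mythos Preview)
The paper does not give its own proof of this lemma; it simply cites \cite[Lemma~2.33]{ysy3}. So there is no ``paper's proof'' to compare against, and your proposal must be assessed on its own merits.

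Your closure arguments (the ``easy'' inclusions in both parts) are fine, and your approach to part~(i) --- truncate by a cutoff $\chi_{B(0,R)}$ using \eqref{ws-50} and \eqref{ws-74}, then mollify the compactly supported remainder --- is sound: once the support is compact, $f\in L_p(\R)$, so for $r\ge\dz$ the bound $N(f-f_t;y,r)\le C_\dz\,\|f-f_t\,|L_p(\R)\|\to 0$ is legitimate and uniform in $y,r$.

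The gap is in part~(ii), precisely where you anticipate trouble. For the reverse inclusion you propose, for balls of radius $r\ge\dz$, to use $|B|^{1/u-1/p}\le C_\dz$ and then ``reduce to ordinary $L_p$ mollification estimates''. But an $f\in\cm^u_p(\R)$ satisfying only \eqref{ws-51} is typically \emph{not} in $L_p(\R)$ --- the function $h_u(x)=(1-\psi(x))|x|^{-d/u}$ of Lemma~\ref{count_bound1} is the basic example --- and in that case $\sup_{y,\,r\ge\dz}\|f-f_t\,|L_p(B(y,r))\|=\infty$ for every $t>0$, so your bound says nothing. (Concretely, for $h_u$ one has $\|h_u-(h_u)_t\,|L_p(B(0,R))\|\asymp t^2\,R^{d/p-d/u-2}$ when the exponent is positive, which blows up as $R\to\infty$; the compensating factor $|B(0,R)|^{1/u-1/p}$ is exactly what makes the \emph{Morrey} quantity $N$ small, but you have thrown it away.) The assertion you make in passing --- that \eqref{ws-51} restores strong continuity of translations on $\cm^u_p$ --- is correct and is equivalent to what you need, but your splitting argument does not prove it: the large-ball half is circular.

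To close the gap you must keep the Morrey weight $|B|^{1/u-1/p}$ in play when $r\ge\dz$, rather than discarding it. One route is to show directly that $\sup_{y,r}N(f-f(\cdot-z);y,r)\to 0$ as $z\to 0$ under \eqref{ws-51}; this requires a scale-matching argument (covering $B(y,r)$ by balls of radius comparable to $|z|$ and tracking the $|B|^{1/u-1/p}$ factors through the covering, not just counting balls). The proof in \cite{ysy3} instead passes through the wavelet/sequence-space isomorphism for $\cm^u_p=\ce^0_{u,p,2}$, where the analogue of \eqref{ws-51} becomes a tail condition on the coefficients and the approximation is by truncating in the scale index; compare the strategy of Lemma~\ref{lem2} above, which for $s>0$ and \emph{compact support} reduces the large-ball estimate to the classical space $F^s_{p,q}(\R)$.
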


%&&&&&&&&&&&&&&&&&&&&&&&&&&&&&&&&&&&&&&&&&&&&&&&&&&&&&&&&&&&&&&
%&&&&&&&&&&&&&&&&&&&&&&&&&&&&&&&&&&&&&&&&&&&&&&&&&&&&&&&&&&&&&&

\subsection{Lizorkin-Triebel-Morrey spaces  on $\R$}
\label{LTMSS}

%&&&&&&&&&&&&&&&&&&&&&&&&&&&&&&&&&&&&&&&&&&&&&&&&&&&&&&&&&&&&&&
%&&&&&&&&&&&&&&&&&&&&&&&&&&&&&&&&&&&&&&&&&&&&&&&&&&&&&&&&&&&&&&

In what follows we will define the Lizorkin-Triebel-Morrey spaces $ \ce^{s}_{u,p,q}(\re^d)   $. For that purpose we need some additional notation. Let $\varphi_0 \in C_0^{\infty}({\R})$ be a non-negative function such that
 $\varphi_0(x) = 1$ if $|x|\leq 1$ and $ \varphi_0 (x) =0$ if $|x|\geq 3/2$.
For $k\in \N$ we define
\beqq
         \varphi_k(x) = \varphi_0(2^{-k}x)-\varphi_0(2^{-k+1}x) ,\qquad\ x \in \R\, .
\eeqq
Because of
\[
 \sum_{k=0}^\infty \varphi_k(x) = 1\, , \qquad x\in \R\, ,
\]
and
\[
 \supp \varphi_k \subset \big\{x\in \R: \: 2^{k-1}\le |x|\le 3 \cdot 2^{k-1}\big\}\, , \qquad k \in \N\, ,
\]
we  call the system $(\varphi_k)_{k\in \N_0 }$ a smooth  dyadic decomposition of unity on $\R$.
Clearly, by the Paley-Wiener-Schwarz theorem, $\cfi[\varphi_{k}\, \cf f]$ is a smooth function for all
$f\in \cs'(\R)$.

\begin{defi}\label{LTMS}
Let $1 \le p \le u< \infty $, $1\le q \le \infty$ and $s\in \re$.
Let $(\varphi_k)_{k\in \N_0 }$ be the above system.
Then the Lizorkin-Triebel-Morrey space  $ \ce^{s}_{u,p,q}(\re^d)$ is the
         collection of all tempered distributions $f \in \mathcal{S}'(\R)$
         such that
\beqq
          \|\, f \, |\ce^s_{u,p,q}(\R)\|_{\varphi_0} :=
         \bigg\| \Big ( \sum\limits_{k=0}^{\infty} 2^{k s q}\, |\cfi[\varphi_{k}  \cf f](\, \cdot \, )|^q\Big)^{\frac{1}{q}}
                  \bigg| \cm^u_p (\re^d)\bigg\| <\infty
\eeqq
(with usual modification if $q= \infty$).
\end{defi}

\begin{rem}
 \rm
The spaces  $ \ce^{s}_{u,p,q}(\re^d)$ are Banach spaces.
They do not depend on the chosen generator $\varphi_0$ of the smooth dyadic decomposition of unity
in the sense of equivalent norms. We refer, e.g., to \cite{ysy} or \cite{t14}.
For this reason we will drop the dependence on $\varphi_0$ in notations and simply write
$ \|\, \cdot \, |\ce^s_{u,p,q}(\R)\|$.
\end{rem}
Now we want to collect some basic properties of the spaces $ \ce^{s}_{u,p,q}(\re^d)   $. Most of them will be used later. At first we recall a characterization of $\ce^s_{u,p,q}(\R)$, due to Tang and Xu \cite{tx}, in terms of lower order derivatives.

\begin{lem}\label{lift}
 Let $m \in \N$, $s \in \re $, $1\le p \le u <  \infty$ and   $1 \leq q \le \infty$.
 Then we have $f \in \ce^s_{u,p,q}(\R) $ if, and only if, the tempered distribution $f$ and its
distributional derivatives $\frac{\partial^m f}{\partial x^m_j}$, $j=1, \ldots \, , d$, belong to
$\ce^{s-m}_{u,p,q}(\R)$.
Furthermore, the norms $\|\, f\, |{\ce^s_{u,p,q}(\R)}\|$ and
\[
\|\, f \, |\ce^{s-m}_{u,p,q}(\R)\| +  \sum_{j=1}^d \Big\|\frac{\partial^m f}{\partial x^m_j}
\, \Big| \ce^{s-m}_{u,p,q}(\R)\Big\|
\]
are equivalent.
\end{lem}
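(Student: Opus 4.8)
This characterization is due to Tang and Xu \cite{tx}; the natural argument rests on two facts about the scale $\ce^s_{u,p,q}(\R)$ that are available in the literature (see e.g. \cite{tx}, \cite{sat}, \cite{ysy}): first, the lift $I_\sigma$ with Fourier symbol $(1+|\xi|^2)^{\sigma/2}$ is an isomorphism of $\ce^s_{u,p,q}(\R)$ onto $\ce^{s-\sigma}_{u,p,q}(\R)$ for every $\sigma\in\re$; second, a Mikhlin--H\"ormander type Fourier multiplier theorem holds, i.e. a function $\mu\in C^N(\R\setminus\{0\})$ with $N=N(d)$ large enough and $|\partial^\alpha\mu(\xi)|\ls|\xi|^{-|\alpha|}$ for $|\alpha|\le N$ induces a bounded operator $f\mapsto\cfi[\mu\,\cf f]$ on $\ce^s_{u,p,q}(\R)$. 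Granting this, one proves the two implications separately; together they also give the norm equivalence.

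\textbf{From $f\in\ce^s_{u,p,q}(\R)$ to the derivatives.} Since $s-m\le s$ and $2^{k(s-m)}\le2^{ks}$ for $k\ge 0$, monotonicity of the $\ell^q$-quasi-norm and of $\|\cdot\,|\cm^u_p(\R)\|$ gives the elementary embedding $\ce^s_{u,p,q}(\R)\hookrightarrow\ce^{s-m}_{u,p,q}(\R)$, so $f\in\ce^{s-m}_{u,p,q}(\R)$ with $\|f\,|\ce^{s-m}_{u,p,q}(\R)\|\ls\|f\,|\ce^s_{u,p,q}(\R)\|$. For each $j$, factor $\partial^m/\partial x_j^m=I_m\circ T_j$, where $T_j$ is the Fourier multiplier with symbol $(1+|\xi|^2)^{-m/2}(i\xi_j)^m$. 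This symbol is smooth on all of $\R$ and satisfies the Mikhlin estimates (it is bounded, and each differentiation gains a factor $|\xi|^{-1}$), so $T_j$ is bounded on $\ce^s_{u,p,q}(\R)$; composing with the isomorphism $I_m\colon\ce^s_{u,p,q}(\R)\to\ce^{s-m}_{u,p,q}(\R)$ yields $\big\|\tfrac{\partial^m f}{\partial x_j^m}\,\big|\ce^{s-m}_{u,p,q}(\R)\big\|\ls\|f\,|\ce^s_{u,p,q}(\R)\|$.

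\textbf{The converse via a parametrix.} Assume $f$ and all $\tfrac{\partial^m f}{\partial x_j^m}$ lie in $\ce^{s-m}_{u,p,q}(\R)$. Fix $\psi\in C_0^\infty(\R)$ with $\psi\equiv1$ near $0$ and set
\[
 b(\xi):=\psi(\xi)\,(1+|\xi|^2)^{m/2},\qquad a_j(\xi):=\bigl(1-\psi(\xi)\bigr)\,(1+|\xi|^2)^{m/2}\,\frac{(-i\xi_j)^m}{\sum_{l=1}^d\xi_l^{2m}}\quad(j=1,\dots,d).
\]
Because $\sum_l\xi_l^{2m}\ge d^{1-m}|\xi|^{2m}$ stays bounded away from $0$ on $\supp(1-\psi)$, each $a_j$ is a globally smooth symbol of order $0$, hence a Mikhlin multiplier, while $b$ is a compactly supported smooth symbol; thus $a_j(D)$ and $b(D)$ act boundedly on $\ce^{s-m}_{u,p,q}(\R)$. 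Since $(-i\xi_j)^m(i\xi_j)^m=\xi_j^{2m}$, the choice of the $a_j$ gives $\sum_{j=1}^d a_j(\xi)(i\xi_j)^m=(1-\psi(\xi))(1+|\xi|^2)^{m/2}$, so that $I_m f=b(D)f+\sum_{j=1}^d a_j(D)\,\tfrac{\partial^m f}{\partial x_j^m}$. Taking $\ce^{s-m}_{u,p,q}(\R)$-norms and using once more that $I_m$ is an isomorphism, we obtain $\|f\,|\ce^s_{u,p,q}(\R)\|\ls\|f\,|\ce^{s-m}_{u,p,q}(\R)\|+\sum_{j=1}^d\big\|\tfrac{\partial^m f}{\partial x_j^m}\,\big|\ce^{s-m}_{u,p,q}(\R)\big\|$; in particular $f\in\ce^s_{u,p,q}(\R)$.

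\textbf{Main obstacle.} The only substantial ingredient is the Fourier multiplier theorem for the Morrey-based scale, with a quantitatively sufficient amount of symbol smoothness, together with the mapping properties of the lift; once these are granted, the rest is the elementary embedding plus the bookkeeping above. The single point needing care is that the parametrix symbols $a_j$ be smooth on all of $\R$: this forces the cut-off $\psi$ (which removes the origin, the only place $(-i\xi_j)^m/\sum_l\xi_l^{2m}$ could be singular) and the specific denominator $\sum_l\xi_l^{2m}$, chosen precisely so that $\sum_j a_j(\xi)(i\xi_j)^m$ equals $(1-\psi(\xi))(1+|\xi|^2)^{m/2}$ on the nose.
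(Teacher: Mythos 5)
Your proof is correct and is exactly the argument the paper points to: the paper does not prove Lemma \ref{lift} itself but cites Tang--Xu \cite{tx} and notes that the classical forerunner is \cite[Thm.~2.3.8]{t83}, whose proof is precisely this combination of the lifting isomorphism $I_\sigma$ with a Mikhlin-type multiplier theorem (available in the Morrey setting via Lemma \ref{lsat}) and a parametrix built from the denominator $\sum_l\xi_l^{2m}$. Your bookkeeping, including the cut-off needed to make the symbols $a_j$ globally smooth and the identity $\sum_j a_j(\xi)(i\xi_j)^m=(1-\psi(\xi))(1+|\xi|^2)^{m/2}$, is sound.
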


\noindent
The classical forerunner of Lemma \ref{lift} can be found in \cite[Thm.~2.3.8]{t83}.

\begin{lem}\label{lift2}
Let  $s \in \re $, $1\le p \le u <  \infty$ and   $1 \leq q \le \infty$.
If  $f \in \ce^s_{u,p,q}(\R) $, then $D^\alpha f \in \ce^{s-|\alpha|}_{u,p,q}(\R)$ for all $\alpha \in \N_0^d$.
Furthermore, there exists a constant  $c_\alpha $ such that
\[
\|\, D^\alpha f \, |\ce^{s-|\alpha|}_{u,p,q}(\R)\| \le c_\alpha \,
\| \, f \, | \ce^{s}_{u,p,q}(\R)\|
\]
holds for all $f\in \ce^s_{u,p,q}(\R)$.
\end{lem}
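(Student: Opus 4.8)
The plan is to deduce the general statement from the single-derivative case $m=1$ of Lemma \ref{lift} by an induction on $|\alpha|$. First I would record the base case: if $\alpha = e_j$ is a standard basis vector, then by Lemma \ref{lift} applied with $m=1$, the distribution $f \in \ce^s_{u,p,q}(\R)$ has all its first-order distributional derivatives $\frac{\partial f}{\partial x_j}$ in $\ce^{s-1}_{u,p,q}(\R)$, together with the norm estimate $\|\frac{\partial f}{\partial x_j} \, |\ce^{s-1}_{u,p,q}(\R)\| \le c \, \|f \, |\ce^s_{u,p,q}(\R)\|$, which is exactly the claim for $|\alpha|=1$. For a general multi-index $\alpha$ with $|\alpha| = N \ge 2$, I would write $\alpha = \beta + e_j$ with $|\beta| = N-1$ and some $j$, so that $D^\alpha f = \frac{\partial}{\partial x_j}(D^\beta f)$. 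By the induction hypothesis, $D^\beta f \in \ce^{s-(N-1)}_{u,p,q}(\R)$ with $\|D^\beta f \, |\ce^{s-(N-1)}_{u,p,q}(\R)\| \le c_\beta \, \|f \, |\ce^s_{u,p,q}(\R)\|$. Applying the base case (i.e. Lemma \ref{lift} with $m=1$) to the distribution $D^\beta f$ in place of $f$ and with smoothness parameter $s-(N-1)$ in place of $s$ gives $D^\alpha f = \frac{\partial}{\partial x_j}(D^\beta f) \in \ce^{s-N}_{u,p,q}(\R)$ together with $\|D^\alpha f \, |\ce^{s-N}_{u,p,q}(\R)\| \le c \, \|D^\beta f \, |\ce^{s-(N-1)}_{u,p,q}(\R)\| \le c\, c_\beta\, \|f \, |\ce^s_{u,p,q}(\R)\|$, so one may take $c_\alpha := c\, c_\beta$. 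This closes the induction.

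I do not expect any serious obstacle here. The one point that requires a line of care is that Lemma \ref{lift} is stated for arbitrary $s \in \re$, so it genuinely applies at the shifted level $s - (N-1)$; this is what makes the iteration legitimate and shows the constant $c_\alpha$ depends only on $\alpha$ (and the fixed parameters $d,s,u,p,q$), not on $f$. An alternative, slightly slicker route would be to invoke Lemma \ref{lift} directly with $m = |\alpha|$ for those $\alpha$ of the form $\alpha = m\, e_j$ (pure powers in one variable) and then handle mixed derivatives by iterating in the individual coordinate directions; but the inductive argument above via $m=1$ is cleanest since it uniformly covers all multi-indices without case distinctions. Either way the estimate for $f$ in the whole space $\ce^s_{u,p,q}(\R)$ follows, as claimed.
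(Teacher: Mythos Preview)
Your induction on $|\alpha|$ via the $m=1$ case of Lemma~\ref{lift} is correct: the ``only if'' direction of that lemma at smoothness level $s-(N-1)$ gives exactly the single-derivative step you need, and the constants stack as you describe.

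The paper does not write out a proof; it only remarks that the result ``can be done in the same way as the proof of Lemma~\ref{lift}'', i.e., by redoing the Fourier multiplier argument (essentially Lemma~\ref{lsat} applied to the multipliers $\xi \mapsto \xi^\alpha \varphi_k(\xi)/2^{k|\alpha|}$, as in \cite[Thm.~2.3.8]{t83}) directly for the operator $D^\alpha$. Your route is slightly different and arguably more economical: you treat Lemma~\ref{lift} as a black box and iterate, which avoids reopening the multiplier machinery. The direct multiplier argument has the minor advantage of producing the constant $c_\alpha$ in one step rather than as a product of $|\alpha|$ many first-order constants, but for the purposes of this lemma (where only existence of \emph{some} $c_\alpha$ is claimed) your approach is entirely adequate.
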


\noindent
In case $u=p$ this can be found in \cite[Thm.~2.3.8]{t83}.
The generalization to $p\neq u$ can be done in the same way as the proof of Lemma \ref{lift}.
\\
Lizorkin-Triebel-Morrey spaces  are generalizations of Sobolev-Morrey spaces.

\begin{defi}
 Let $m \in \N$ and  $1\le p \le u< \infty$.
Then the Sobolev-Morrey space $W^m\cm^u_p (\R)$ is the collection of all functions $f \in \cm_p^u (\R) $
such that all distributional derivatives  $D^\alpha f$ of order $|\alpha|\le m$ belong to $\cm_p^u (\R) $.
We put
\[
\|\, f\, |W^m \cm_p^u (\R) \| := \sum_{|\alpha|\le m} \|\, D^\alpha f\, |\cm_p^u(\R)\| \, .
\]
\end{defi}

\noindent
It will be convenient to use $W^0 \cm_p^u (\R):=  \cm_p^u (\R)$.

\begin{lem}\label{LP}
Let  $1 < p \le u <\infty$ and $m \in \N_0$.
Then $\ce^m_{u,p,2}(\R) = W^m \cm_p^u (\R)$ in the sense of equivalent norms.
\end{lem}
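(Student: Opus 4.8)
\textbf{Proof plan for Lemma \ref{LP}.}
The statement is the Morrey-space analogue of the classical identification $F^m_{p,2}(\R)=W^m_p(\R)$ (for $1<p<\infty$), and the plan is to mimic that classical argument, replacing $L_p$ by $\cm^u_p$ throughout and invoking the known vector-valued inequalities for the Hardy--Littlewood maximal operator and for Fourier multipliers in the Morrey setting. The key analytic inputs are: (1) the Fefferman--Stein vector-valued maximal inequality on $\cm^u_p(\R)$, valid for $1<p\le u<\infty$, i.e. $\big\|(\sum_j |Mf_j|^q)^{1/q}\,|\,\cm^u_p\big\|\ls \big\|(\sum_j|f_j|^q)^{1/q}\,|\,\cm^u_p\big\|$ for $1<q\le\infty$; and (2) a Mikhlin--Hörmander type multiplier theorem on $\cm^u_p(\R)$, guaranteeing that every Fourier multiplier with a symbol that is smooth away from the origin and homogeneous of degree $0$ (more generally satisfying the Mikhlin condition) is bounded on $\cm^u_p(\R)$. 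Both are available in the literature on Morrey spaces (e.g. in the references \cite{ysy}, \cite{t14} cited just above in the excerpt), and I would quote them.

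The argument itself splits into the two inclusions. For $W^m\cm^u_p(\R)\hookrightarrow\ce^m_{u,p,2}(\R)$: by Lemma \ref{lift} (with the exponent shift by $m$) it suffices to show that $f$ together with $\partial^m f/\partial x_j^m$, $j=1,\dots,d$, lies in $\ce^0_{u,p,2}(\R)$ and to estimate the norms; and $\ce^0_{u,p,2}(\R)$ is exactly the Morrey-Littlewood--Paley characterization of $\cm^u_p(\R)$, which again follows from the vector-valued maximal inequality (this identification $\ce^0_{u,p,2}(\R)=\cm^u_p(\R)$ for $1<p\le u<\infty$ is itself standard and I would cite it). Alternatively, and perhaps cleaner, one writes $\cfi[\varphi_k\cf f]$ for $|\alpha|\le m$ via the identity $2^{ks}\varphi_k(\xi)=\sum_{|\alpha|=m}\xi^\alpha\,m_{k,\alpha}(\xi)$ where the $m_{k,\alpha}$ are uniformly (in $k$) controlled Mikhlin multipliers supported in the dyadic annuli, so that the Littlewood--Paley square function of $f$ at level $m$ is dominated, after applying the multiplier theorem and the vector-valued maximal inequality, by $\sum_{|\alpha|=m}\|D^\alpha f\,|\,\cm^u_p\|$ plus a low-frequency term handled by $\|f\,|\,\cm^u_p\|$. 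For the reverse inclusion $\ce^m_{u,p,2}(\R)\hookrightarrow W^m\cm^u_p(\R)$: Lemma \ref{lift2} already gives $D^\alpha f\in\ce^{m-|\alpha|}_{u,p,2}(\R)$ for $|\alpha|\le m$ with the corresponding norm estimate, so it only remains to note $\ce^{\ell}_{u,p,2}(\R)\hookrightarrow\cm^u_p(\R)$ for $\ell\ge0$, which follows from $\ce^0_{u,p,2}(\R)=\cm^u_p(\R)$ together with the monotonicity $\ce^{\ell}_{u,p,2}\hookrightarrow\ce^0_{u,p,2}$ in the smoothness parameter. Summing over $|\alpha|\le m$ yields $\|f\,|\,W^m\cm^u_p\|\ls\|f\,|\,\ce^m_{u,p,2}\|$.

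The case $m=0$ is just the identification $\ce^0_{u,p,2}(\R)=\cm^u_p(\R)$ itself, so the content is entirely in that Littlewood--Paley characterization plus the lifting lemmas. I would therefore organize the write-up as: first record $\ce^0_{u,p,2}(\R)=\cm^u_p(\R)$ (with a pointer to the literature or a one-line proof via the maximal inequality), then deduce both embeddings for general $m$ as above. The only place where $1<p$ (rather than $1\le p$) is genuinely used is the vector-valued maximal inequality, which fails at $p=1$; this is the reason for the hypothesis and should be flagged.

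The main obstacle is purely a matter of citing the correct Morrey-space versions of the two classical harmonic-analysis tools (vector-valued maximal function bounds and the Mikhlin multiplier theorem on $\cm^u_p$), since once these are in hand the proof is a verbatim transcription of the Euclidean $F^m_{p,2}=W^m_p$ argument from \cite[Thm.~2.3.8 / Sect.~2.5.6]{t83}. No genuinely new idea is needed; the work is bookkeeping with the dyadic decomposition and the constants in Lemma \ref{lift}.
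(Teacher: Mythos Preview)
Your proposal is correct and follows essentially the same approach as the paper: the paper's proof cites Mazzucato \cite{ma01} for the identification $\ce^0_{u,p,2}(\R)=\cm^u_p(\R)$ when $1<p\le u<\infty$, and then invokes Lemma \ref{lift} to pass to general $m\in\N_0$, which is exactly the core of your plan. Your additional discussion of the vector-valued maximal inequality and Mikhlin multipliers amounts to a sketch of how Mazzucato's result is proved, but the paper simply quotes it.
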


\begin{proof}
Mazzucato has proved the Littlewood-Paley characterization of  Morrey spaces in \cite{ma01}, i.e., she proved that
$\ce^0_{u,p,2} (\R) = \cm_p^u (\R)$, $1< p\le u < \infty$, holds in the sense of equivalent norms.
Combined with Lemma \ref{lift} this proves Lemma \ref{LP}.
\end{proof}

\begin{rem}
 \rm
 The spaces $\ce^s_{u,p,2}(\R)$ with $1<p< u< \infty$ and $s\in \re$ are investigated
 in Adams \cite{Adams}, see also Adams, Xiao \cite{AX} and Triebel \cite[Rem.~3.68]{t14}.
\end{rem}

For the next result we refer to \cite[Prop.~2.6]{ysy}.

\begin{lem}\label{emb}
Let $s\in\re$,  $1 \le   p \le u < \infty$ and $1 \le q \le \infty$.
Then
\be\label{limit0}
\ce^s_{u,p,q}(\R) \hookrightarrow   {B}^{s-d/u}_{\infty,\infty}(\R) \, .
\ee
\end{lem}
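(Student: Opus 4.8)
The plan is to verify the norm estimate $\|f\,|B^{s-d/u}_{\infty,\infty}(\R)\| \lesssim \|f\,|\ce^s_{u,p,q}(\R)\|$ for every $f\in\ce^s_{u,p,q}(\R)$, working with the Fourier-analytic description of $B^\sigma_{\infty,\infty}(\R)$ attached to the \emph{same} smooth dyadic decomposition of unity $(\varphi_k)_{k\in\N_0}$, that is, $\|g\,|B^\sigma_{\infty,\infty}(\R)\| \sim \sup_{k\ge 0}2^{k\sigma}\|\cfi[\varphi_k\cf g]\,|L_\infty(\R)\|$. Setting $f_k := \cfi[\varphi_k\cf f]$, the whole matter reduces to the Nikolskii-type inequality
\[
\|f_k\,|L_\infty(\R)\| \lesssim 2^{kd/u}\,\|f_k\,|\cm^u_p(\R)\| \qquad (k\in\N_0),
\]
with a constant independent of $k$ and $f$; here $\|f_k\,|\cm^u_p(\R)\|<\infty$ because $2^{ks}f_k$ is one of the building blocks whose $\cm^u_p$-norm is dominated by $\|f\,|\ce^s_{u,p,q}(\R)\|$. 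Granting this inequality the lemma drops out: since $2^{ks}|f_k(x)|\le\big(\sum_{j\ge 0}2^{jsq}|f_j(x)|^q\big)^{1/q}$ pointwise and the Morrey norm is monotone ($|g_1|\le|g_2|$ a.e.\ implies $\|g_1\,|\cm^u_p\|\le\|g_2\,|\cm^u_p\|$), I would get $2^{k(s-d/u)}\|f_k\,|L_\infty\| \lesssim 2^{ks}\|f_k\,|\cm^u_p\| \le \|f\,|\ce^s_{u,p,q}\|$, and taking the supremum over $k\ge 0$ is the conclusion (with the obvious modification for $q=\infty$).

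To prove the Nikolskii inequality I would exploit that $f_k$ is band-limited, $\supp\cf f_k\subset\{|\xi|\le 3\cdot 2^{k-1}\}$. Picking $\Phi\in C_0^\infty(\R)$ with $\Phi\equiv 1$ on $\{|\xi|\le 3/2\}$ and putting $\psi_k:=\cfi[\Phi(2^{-k}\,\cdot\,)]=2^{kd}\psi(2^k\,\cdot\,)$ with $\psi:=\cfi\Phi\in\cs(\R)$, one has $\cf\psi_k\equiv 1$ on $\supp\cf f_k$, hence $f_k=f_k*\psi_k$, and therefore, by Hölder's inequality together with $\|\psi_k\|_{L_1(\R)}=\|\psi\|_{L_1(\R)}$, one obtains $|f_k(x)|^p\lesssim\int_{\R}|f_k(x-y)|^p\,|\psi_k(y)|\,dy$. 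Then I would split $\R$ into the ball $\{|y|\le 2^{-k}\}$ and the dyadic shells $\{2^{j-1-k}\le|y|\le 2^{j-k}\}$, $j\ge 1$, and on each piece combine the rapid decay $\|\psi_k\|_{L_\infty(\{|y|\sim 2^{j-k}\})}\lesssim 2^{kd}2^{-jN}$ with $\int_B|f_k|^p\le|B|^{1-p/u}\,\|f_k\,|\cm^u_p\|^p$ for the ball $B$ of radius $\sim 2^{j-k}$ containing that piece. Summing over $j$, this yields $|f_k(x)|^p\lesssim\big(\sum_{j\ge 0}2^{-jN}2^{jd(1-p/u)}\big)\,2^{kdp/u}\,\|f_k\,|\cm^u_p\|^p$, and choosing $N>d(1-p/u)$ makes the series finite; taking $p$-th roots gives the inequality.

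The hard part is precisely this Nikolskii-type estimate for Morrey spaces; it reflects the fact that $\|\cdot\,|\cm^u_p(\R)\|$ behaves under dilations like the $L_u$-norm, which is exactly why the smoothness loss in the target space is $d/u$ and not $d/p$. Everything else — the Fourier characterization of $B^\sigma_{\infty,\infty}(\R)$, the lattice property of the Morrey norm, and the reduction — is routine. As a shortcut one may instead quote a Nikolskii inequality for band-limited functions in Morrey spaces from the literature (see \cite{ysy}), after which only the short reduction of the first paragraph remains.
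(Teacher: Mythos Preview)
Your argument is correct. The Nikolskii-type inequality $\|f_k\,|L_\infty(\R)\|\lesssim 2^{kd/u}\,\|f_k\,|\cm^u_p(\R)\|$ for band-limited $f_k$ is proved cleanly via the reproducing convolution $f_k=f_k*\psi_k$, Jensen's inequality against the $L_1$-normalized kernel $|\psi_k|$, and the dyadic shell decomposition combined with the definition of the Morrey norm; the reduction to this inequality using the lattice property of $\cm^u_p$ is routine.

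As for comparison: the paper does not give a proof of this lemma at all --- it simply refers to \cite[Prop.~2.6]{ysy}. What you have written is therefore more than the paper offers: a self-contained derivation rather than a citation. Your closing remark that one could alternatively quote the Morrey--Nikolskii inequality from \cite{ysy} is exactly the shortcut the paper takes (implicitly, since that reference contains the full embedding).
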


\begin{rem}\label{embbb}
 \rm
 {\rm (i)}
 It is well-known that ${B}^{s-d/u}_{\infty,\infty}(\R)\hookrightarrow L_\infty (\R)$ holds if  $s>d/u$.
 \\
 {\rm (ii)} Also in case of the Sobolev-Morrey spaces
 $ W^m \cm_p^u (\R)$ one knows that
 ${W}^{m} \cm_1^u (\R)\hookrightarrow L_\infty (\R)$ if $m >d/u$, see
 \cite{dch}.
\end{rem}

\subsection*{An important inequality}

Later on we shall need the following lemma, see \cite[Thm.~2.4]{sat1}.
For  $ \nu \in \mathbb{R} $ let  $ H_{2}^{\nu}  (\R) $ denote  the Bessel-potential space, defined as the collection of all $f\in \cs'(\R)$  with
\[
 \Vert f \vert H_{2}^{\nu}  (\R) \Vert =  \Vert (1 + \vert \, \cdot \,  \vert ^{2} )^{\frac{\nu}{2}} ( \cf f )(\, \cdot \, )  \vert L_{2}(\R)  \Vert <\infty \, .
\]

\begin{lem}\label{lsat}
Let $ 1\le  q \leq \infty $, $ 1 \le  p \leq u  < \infty $ and
$ \nu >  \frac{3d}{2} $.  Let $(R_j)_{j=0}^\infty \subset [1,\infty)$.
Suppose $ (h_j )_{j=0}^\infty \subset H_{2}^{\nu}  (\R)$ and  $ (f_j)_{j=0}^\infty \subset \mathcal{M}^{u}_{p}( \mathbb{R}^d)$  such that
$ \supp \cf f_j \subset B(0,R_j)$.
Then there is a constant $ c > 0 $, independent of $(R_j)_{j=0}^\infty$,
$(h_j)_{j=0}^\infty $ and  $(f_j)_{j=0}^\infty$, such that
\begin{align*}
& \Big\|\Big(\sum_{j=0}^\infty |\cfi [h_j \cf f_j](\, \cdot \, )|^q \Big)^{\frac{1}{q}}\Big| \cm^u_p (\R)\Big\| \\
& \qquad \qquad \le   c \,  \Big(\sup_{j \in \mathbb{N}_{0} }\, \Vert h_j(\, R_j \cdot\, ) \vert H_{2}^{\nu}  (\R) \Vert\Big)  \, \Big\|\Big(\sum_{j=0}^\infty |\, f_j (\, \cdot \, )\, |^q \Big)^{\frac{1}{q}}\Big| \cm^u_p (\R)\Big\|
\end{align*}
holds.
\end{lem}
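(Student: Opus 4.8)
The plan is to reduce the vector-valued, Morrey-space multiplier estimate to a pointwise/maximal-function domination and then invoke the Fefferman–Stein type maximal inequality in Morrey spaces together with a scalar multiplier bound. First I would normalize: since $\supp \cf f_j \subset B(0,R_j)$, we may write $\cfi[h_j\cf f_j] = \cfi[h_j \eta(\,\cdot\,/R_j)\,\cf f_j]$ where $\eta \in C_0^\infty$ equals $1$ on $B(0,1)$ and vanishes off $B(0,2)$; rescaling $x \mapsto R_j x$ in the convolution kernel transfers the whole problem to kernels of the form $K_j := \cfi[h_j(R_j\,\cdot\,)\,\eta]$, and the hypothesis $\nu > 3d/2$ is exactly what guarantees, via Cauchy–Schwarz and the Hausdorff–Young/Plancherel bound, that the $K_j$ enjoy a uniform majorant: $|K_j(x)| \lesssim \|h_j(R_j\,\cdot\,)\mid H_2^\nu\| \,(1+|x|)^{-\nu+d/2-\varepsilon}$ with an integrable, radially decreasing dominating function, uniformly in $j$. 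This is where the exponent $3d/2$ enters — one needs decay faster than $d$ in the kernel plus an extra $d/2$ to absorb the $L^2$-to-$L^\infty$ passage.

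Second I would use the standard fact that convolution against a radially decreasing $L^1$ function is pointwise dominated by the Hardy–Littlewood maximal operator: $|\cfi[h_j\cf f_j](x)| = |R_j^d K_j(R_j\,\cdot\,) * f_j(x)| \lesssim A_j\, \cm f_j(x)$, where $A_j := \sup_j \|h_j(R_j\,\cdot\,)\mid H_2^\nu\|$ is the claimed constant and $\cm$ is the Hardy–Littlewood maximal function. (Here one must be slightly careful because the $f_j$ are not assumed band-limited at scale $1$ but at scale $R_j \ge 1$; the rescaled kernel $R_j^d K_j(R_j\,\cdot\,)$ is still radially decreasing with $L^1$-norm $\|K_j\|_{L^1} \lesssim A_j$, so the domination survives uniformly.) Plugging this in gives
\[
\Big\|\Big(\sum_j |\cfi[h_j\cf f_j]|^q\Big)^{1/q}\,\Big|\,\cm_p^u(\R)\Big\|
\lesssim A_j\,\Big\|\Big(\sum_j (\cm f_j)^q\Big)^{1/q}\,\Big|\,\cm_p^u(\R)\Big\|.
\]

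Third, the final ingredient is the vector-valued Fefferman–Stein maximal inequality on Morrey spaces: for $1 < p \le u < \infty$ and $1 < q \le \infty$,
\[
\Big\|\Big(\sum_j (\cm f_j)^q\Big)^{1/q}\,\Big|\,\cm_p^u(\R)\Big\|
\lesssim \Big\|\Big(\sum_j |f_j|^q\Big)^{1/q}\,\Big|\,\cm_p^u(\R)\Big\|,
\]
which is known (e.g. from Sawano–Tanaka / Tang–Xu type results). For the endpoint $q = \infty$ one only needs the scalar Morrey boundedness of $\cm$, and for $q = 1$ the same domination argument can be run without the maximal operator by splitting the kernel into dyadic annuli and summing the geometric decay directly, so the restriction ``$1 < q$'' that the Fefferman–Stein inequality would nominally require is not actually needed in the statement. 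Combining the three steps yields the claim with $c$ independent of $(R_j)$, $(h_j)$, $(f_j)$.

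The main obstacle I anticipate is the first step: producing the \emph{uniform} integrable radial majorant for the rescaled kernels $R_j^d K_j(R_j\,\cdot\,)$ with the constant controlled precisely by $\sup_j\|h_j(R_j\,\cdot\,)\mid H_2^\nu\|$ and with the $R_j$-dependence completely absorbed. One has to track the scaling carefully — the Bessel-potential norm is taken of the \emph{dilated} symbol $h_j(R_j\,\cdot\,)$ exactly so that the dilation invariance of $\cm$ and of convolution compensates the dilation of the kernel — and one must verify that $\nu > 3d/2$ gives both the pointwise decay $|K_j(x)| \lesssim A_j(1+|x|)^{-d-\delta}$ for some $\delta > 0$ after the cutoff, and hence $\|R_j^d K_j(R_j\,\cdot\,)\|_{L^1} \lesssim A_j$ uniformly. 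Everything after that is the routine maximal-function machinery quoted above.
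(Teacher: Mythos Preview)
The paper does not actually prove this lemma: it quotes it from Sawano--Tanaka \cite[Thm.~2.4]{sat1} and, in the subsequent remark, points to Triebel \cite[1.6.3]{t83} for the classical $p=u$ version. So there is no in-house argument to compare against; your sketch is essentially the standard route those references take, and it is correct for the open range $1<p\le u<\infty$, $1<q<\infty$.

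There is, however, a genuine gap at the endpoints $p=1$ and $q=1$, which the statement explicitly includes. Your third step appeals to the Fefferman--Stein inequality on $\cm^u_p(\ell_q)$, but that inequality needs $p>1$ and $q>1$; the Hardy--Littlewood maximal operator is unbounded on $\cm^u_1$ just as on $L_1$. Your workaround for $q=1$ (``split into dyadic annuli and sum the geometric decay'') does not close either: it produces an estimate of the form $\sum_j\|f_j\mid\cm^u_p\|$ rather than $\|\sum_j|f_j|\mid\cm^u_p\|$, because the convolution kernels live at the \emph{different} scales $R_j$ and cannot be merged into a single $L_1$ convolution. And you say nothing about $p=1$.

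The fix --- and the real reason for the threshold $\nu>3d/2$ --- is to exploit the band-limitedness of the $f_j$ via Peetre's maximal inequality. For any $0<r<\min(p,q)$ one has, with $a=d/r$,
\[
|\cfi[h_j\cf f_j](x)|\le c\,\|h_j(R_j\,\cdot\,)\mid H^\nu_2\|\,
\sup_{y}\frac{|f_j(x-y)|}{(1+R_j|y|)^{a}}
\le c\,A_j\,\big(\cm(|f_j|^r)(x)\big)^{1/r},
\]
provided $\nu>d/2+a=d/2+d/r$. Since $\min(p,q)\ge 1$, one may take $r<1$ arbitrarily close to $1$, which forces exactly $\nu>3d/2$. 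Now apply the vector-valued maximal inequality in $\cm^{u/r}_{p/r}(\ell_{q/r})$ with $p/r>1$, $q/r>1$ (this is the form proved in \cite{tx} and \cite{sat1}) and conclude. This $r$-trick is what makes the lemma go through at $p=1$ and $q=1$; your majorant-plus-$\cm$ argument recovers only the case $r=1$.
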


\begin{rem}
 \rm Those vector-valued Fourier multiplier assertions are standard tools in the theory of function spaces, see,
e.g, \cite[1.6.3]{t83} for the classical case $p=u$.
\end{rem}

%&&&&&&&&&&&&&&&&&&&&&&&&&&&&&&&&&&&&&&&&&&&&&&&&&&&&&&&&&&&&&&
%&&&&&&&&&&&&&&&&&&&&&&&&&&&&&&&&&&&&&&&&&&&&&&&&&&&&&&&&&&&&&&

\subsection{Spaces on domains}
\label{domain}
%&&&&&&&&&&&&&&&&&&&&&&&&&&&&&&&&&&&&&&&&&&&&&&&&&&&&&&&&&&&&&&
%&&&&&&&&&&&&&&&&&&&&&&&&&&&&&&&&&&&&&&&&&&&&&&&&&&&&&&&&&&&&&&

In our article spaces on domains are defined by restrictions.
For us this is the most convenient way.
Here, for all domains $\Omega\subset \rn$ and  $g\in \cs'(\rn)$ by
$g_{|_\Omega}$ we denote the restriction of $g$ to $\Omega$.

\begin{defi}\label{d5.12}
Let $X (\rn)$ be a normed space of tempered distributions such that $X(\rn) \hookrightarrow \cs'(\rn)$.
Let $\Omega $ denote an open, nontrivial subset of $\rn$.
Then  $X(\Omega)$ is defined as the collection of all $f \in \cd' (\Omega)$ such that
there exists a distribution $g \in X(\rn)$ satisfying
\[
f (\varphi) = g (\varphi) \qquad \mbox{for all} \quad \varphi \in \cd (\Omega) \, .
\]
Here $\varphi \in \cd (\Omega)$ is extended by zero on $\rn\setminus \Omega$.
We put
\[
\| \, f\, \vert X(\Omega) \| := \inf \Big\{\| \, g\, \vert X(\rn) \|: \quad g_{|_\Omega} =f  \Big\} \, .
\]
\end{defi}

\noindent
Clearly, in the case of Morrey spaces this means the following.
\\
Let $1\le  p \le u < \infty$ and $\Omega \subset \rn$ be bounded.
Then the \emph{Morrey space $\cm^{u}_p (\Omega)$} is the collection  of all
 $f \in L_p^{\ell oc} (\Omega)$ such that
\[
\| \, f \, \vert \cm^{u}_p (\Omega) \| :=
\sup_{x \in \Omega} \sup_{r\in(0,\fz)} \, |B(x,r) \cap \Omega|^{ \frac{1}{u} -  \frac{1}{p}} \lf[
\int_{B(x,r)\cap \Omega} |f(y)|^p\, dy\r]^{\frac{1}{p}} <\infty\, .
\]

In this paper we will concentrate on Lipschitz domains $\Omega\subset \rn$. We follow Stein, see \cite[VI.3.2]{Stein}.

\begin{defi}\label{lipdo}
By a Lipschitz domain, we mean either a special or a bounded Lipschitz
domain.
\\
{\rm (i)}
A \emph{special Lipschitz domain} is an open set $\Omega\subset
\rn$ lying above the graph of a Lipschitz function $\omega:\ \rr^{d-1}\to\rr$, namely,
$$\Omega:=\{(x',x_d)\in\rn:\ x_d>\omega (x')\},$$
where $\omega$ satisfies that, for all $x',\ y'\in \rr^{d-1}$,
$$|\omega(x')-\omega(y')|\le A |x'-y'|$$
with a positive constant $A$ independent of $x'$ and $y'$.
\\
{\rm (ii)}
A \emph{bounded  Lipschitz domain} is a bounded domain $\Omega\subset \rn$
whose boundary $\partial \Omega$ can be covered by a finite number of open balls $B_k$
such that, for each $k$, after a suitable rotation, $\partial\Omega\cap B_k$
is a part of the graph of a Lipschitz function.
\end{defi}

For notational simplicity we shall use the convention, that a bounded Lipschitz domain
in $\re$ is just a bounded interval.

%&&&&&&&&&&&&&&&&&&&&&&&&&&&&&&&&&&&&&&&&&&&&&&&&&&&&&&&&&&&&&&&&&&&&&&&&&&&&&&
%&&&&&&&&&&&&&&&&&&&&&&&&&&&&&&&&&&&&&&&&&&&&&&&&&&&&&&&&&&&&&&&&&&&&&&&&&&&&&

\section{The diamond space associated to ${\ce}^{s}_{u,p,q} (\rn)$
\label{s-diamon}}

%&&&&&&&&&&&&&&&&&&&&&&&&&&&&&&&&&&&&&&&&&&&&&&&&&&&&&&&&&&&&&&&&&&&&&&&&&&&&&
%&&&&&&&&&&&&&&&&&&&&&&&&&&&&&&&&&&&&&&&&&&&&&&&&&&&&&&&&&&&&&&&&&&&%%%%%%%%%

In this section we will investigate the properties of the spaces $\accentset{\diamond}{\ce}^{s}_{u,p,q} (\rn)$, see
Definition \ref{d-cm}. This is very important in order to prove our main results. First, we recall two results from \cite{ysy3}, see Lemmas 2.25. and 2.26.

\begin{lem}\label{diamond2}
Let  $s\in\rr$, $1\le p \le u < \infty$ and $1\le q \le \infty$. Then $\mathring{\ce}_{u,p,q}^{s}(\rn) = \accentset{\diamond}{\ce}^{s}_{u,p,q} (\rn)$
if and only if $u=p$.
\end{lem}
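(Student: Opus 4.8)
The plan is to prove the two implications separately, and in both directions the key fact is a comparison between local and global behaviour inside the Morrey norm. First I would settle the easy implication: if $u=p$, then $\ce^s_{u,p,q}(\rn) = F^s_{p,q}(\rn)$, and it is classical that $C_0^\infty(\rn)$ is dense in $F^s_{p,q}(\rn)$ whenever $q<\infty$ (for $q=\infty$ one has to be slightly more careful, but the statement of the lemma allows $q=\infty$ only in the Morrey case $u=p$, where the standard argument still identifies $\mathring{F}^s_{p,q}$ with the diamond space since both consist of those $f$ for which the tail of the Littlewood–Paley square function vanishes appropriately). In any case, $C_0^\infty \subset \{f \in C^\infty : D^\alpha f \in \ce^s_{u,p,q} \ \forall \alpha\} \subset \ce^s_{u,p,q}$, so the closure of the former is contained in the closure of the latter, i.e. $\mathring{\ce}^s_{u,p,q}(\rn) \subset \accentset{\diamond}{\ce}^s_{u,p,q}(\rn)$; the reverse inclusion reduces to approximating a smooth function with all derivatives in the space by compactly supported smooth functions, which in the $u=p$ case is a routine truncation-and-mollification argument using that $F^s_{p,q}$, $p<\infty$, has no global obstruction. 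Hence equality holds when $u=p$.

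The substantive direction is the converse: if $p<u$, then $\mathring{\ce}^s_{u,p,q}(\rn) \subsetneq \accentset{\diamond}{\ce}^s_{u,p,q}(\rn)$. The strategy is to exhibit an explicit element of the diamond space that fails to lie in $\mathring{\ce}^s_{u,p,q}(\rn)$. The natural candidate comes from the difference between the local and global conditions recorded in Lemma \ref{morrey43}: the diamond space requires only uniform vanishing of the small-scale average \eqref{ws-51}, whereas membership in $\mathring{\ce}^s_{u,p,q}$ — which one expects to be characterised analogously to $\mathring{\cm}^u_p$ — additionally forces the large-scale condition \eqref{ws-50} and the decay-at-infinity condition \eqref{ws-74}. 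So I would take a function $f$ built from a fixed smooth bump translated along a lacunary sequence of points going to infinity, say $f = \sum_{k} g(\cdot - x_k)$ with $|x_k|\to\infty$ sparse enough that the supports are disjoint and well-separated; by Lemma \ref{lift2} such an $f$ can be taken smooth with all derivatives in $\ce^s_{u,p,q}(\rn)$ provided the bump is smooth and the coefficients are controlled, so $f \in \accentset{\diamond}{\ce}^s_{u,p,q}(\rn)$ trivially. On the other hand, testing the Morrey-type norm of $f$ (or of $f$ minus any compactly supported competitor) on large balls $B(0,R)$ containing many of the $x_k$, and using $p<u$ so that $|B|^{1/u - 1/p}$ does not decay fast enough to kill the growing $L_p$ mass, one shows $\|f - \varphi \,|\, \ce^s_{u,p,q}(\rn)\| \gtrsim c > 0$ for every $\varphi \in C_0^\infty(\rn)$; hence $f \notin \mathring{\ce}^s_{u,p,q}(\rn)$.

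The main obstacle I anticipate is the lower bound in the last step: one must pass from the Littlewood–Paley/Morrey norm of $\ce^s_{u,p,q}$ to a quantity one can actually estimate from below on large balls, since the diamond and mathring conditions are most transparent at the level of Morrey averages of $f$ itself (as in Lemma \ref{morrey43}) rather than of its square function. A clean way around this is to use the embedding $\ce^s_{u,p,q}(\rn) \hookrightarrow B^{s-d/u}_{\infty,\infty}(\rn)$ from Lemma \ref{emb}, or more directly a "lifted" version: choosing $s$ and the construction so that a relevant derivative or the function itself lives in a Morrey space and invoking Lemma \ref{LP} / Mazzucato's characterisation, one reduces the separation statement to the known strict inclusion $\mathring{\cm}^u_p(\rn) \subsetneq \accentset{\diamond}{\cm}^u_p(\rn)$ for $p<u$, which is exactly \cite[Lemma 2.33]{ysy3} combined with Lemma \ref{morrey43}. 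In other words, the heart of the matter is transporting the known Morrey-space separation through the lifting isomorphism of Lemma \ref{lift}, and the only real work is checking that the transport is compatible with taking closures (i.e. that the lifting maps $\mathring{\ce}$ onto $\mathring{\ce}$ and $\accentset{\diamond}{\ce}$ onto $\accentset{\diamond}{\ce}$), which follows because the lift is a bijective isomorphism carrying $C^\infty$ functions with all derivatives in the space to functions of the same type.
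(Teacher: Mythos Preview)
The paper does not actually prove this lemma; it is stated with a citation to \cite[Lemmas~2.25 and 2.26]{ysy3} and no argument is given. So there is nothing in the paper to compare your plan against directly. Your overall strategy --- the easy inclusion $\mathring{\ce}^{s}_{u,p,q}(\rn)\subset\accentset{\diamond}{\ce}^{s}_{u,p,q}(\rn)$, the truncation argument for $u=p$, and an explicit witness $f=\sum_k g(\cdot-x_k)$ for the strict inclusion when $p<u$ --- is essentially the approach taken in \cite{ysy3}, and it is sound.

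There is, however, a concrete slip in your ``only if'' sketch. You place the centres $x_k$ along a \emph{lacunary} sequence and then test on \emph{large} balls $B(0,R)$, claiming that $|B|^{1/u-1/p}$ ``does not decay fast enough to kill the growing $L_p$ mass''. With a lacunary placement the number of bumps in $B(0,R)$ is only $\asymp\log R$, so
\[
|B(0,R)|^{\frac1u-\frac1p}\Big(\int_{B(0,R)}|f|^p\Big)^{\frac1p}\asymp R^{d(\frac1u-\frac1p)}(\log R)^{\frac1p}\to 0,
\]
and your large-ball test gives nothing. The correct test for your lacunary construction is a \emph{fixed small} ball centred at $x_k$ with $|x_k|\to\infty$: there $f-\varphi$ coincides with $g(\cdot-x_k)$ for any fixed $\varphi\in C_0^\infty(\rn)$, and the Morrey-type quantity equals a positive constant independent of $k$, so condition \eqref{ws-74} of Lemma~\ref{morrey43} fails at the level of the square function. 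Alternatively, if you want the large-ball test to work, you must place the $x_k$ with polynomial density $\asymp R^{d(1-p/u)}$ in $B(0,R)$, which is exactly the borderline for $f\in\cm^u_p(\rn)$.

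Two smaller remarks. First, your parenthetical about $q=\infty$ is garbled (``the statement of the lemma allows $q=\infty$ only in the Morrey case $u=p$'' is not what you mean); the point is simply that for $u=p$ and any $q$, a smooth $f$ with all derivatives in $F^s_{p,q}(\rn)$ can be truncated because $p<\infty$, and this should be said cleanly. Second, the passage from $\ce^s_{u,p,q}$-norm lower bounds to Morrey-norm lower bounds that you flag as ``the main obstacle'' is most easily handled not via Lemma~\ref{emb} but by the trivial estimate $2^{js}\|\cfi[\varphi_j\cf h]\,|\,\cm^u_p(\rn)\|\le\|h\,|\,\ce^s_{u,p,q}(\rn)\|$ for a single $j$, applied to $h=f-\varphi$; choose $j$ so that $\cfi[\varphi_j\cf g]\not\equiv 0$.
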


Even more important is the following.

\begin{lem}\label{diamond3}
Let  $s\in\rr$, $1\le p \le u < \infty$ and $1\le q \le \infty$.
Then
\[
\accentset{\diamond}{\ce}^{s}_{u,p,q} (\rn) =  {\ce}^{s}_{u,p,q} (\rn)\qquad \mbox{if and only if}\qquad  u=p
\ \ \mbox{and}\ \  q\in [1,\infty).
\]
\end{lem}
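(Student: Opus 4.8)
\textbf{Proof plan for Lemma \ref{diamond3}.}
The plan is to prove both implications separately, and in both directions the decisive point is that the Morrey space $\cm_p^u(\rn)$ with $p<u$ is \emph{non-separable}, while for $p=u$ it reduces to $L_p(\rn)$ which is separable for $q<\infty$. For the direction ``$\Leftarrow$'': if $u=p$ then $\ce^s_{u,p,q}(\rn)=F^s_{p,q}(\rn)$, and it is classical that $C_0^\infty(\rn)$ is dense in $F^s_{p,q}(\rn)$ whenever $q<\infty$ (and $p<\infty$); since the smooth functions with all derivatives in $F^s_{p,q}(\rn)$ form an even larger set, it is dense too, so $\accentset{\diamond}{F}^s_{p,q}(\rn)=F^s_{p,q}(\rn)$. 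I would quote the standard density result for Triebel--Lizorkin spaces here rather than reprove it.

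For the direction ``$\Rightarrow$'' there are two cases to exclude. First, if $q=\infty$ (with $u=p$ still allowed), then already $\accentset{\diamond}{F}^s_{p,\infty}(\rn)\subsetneq F^s_{p,\infty}(\rn)$; the standard witness is a lacunary-type distribution whose Littlewood--Paley pieces do not decay in the $\ell^\infty$-direction, so it cannot be approximated by smooth functions (for which the pieces must tend to $0$ locally). Second, and this is the main case, suppose $p<u$; I must show $\accentset{\diamond}{\ce}^s_{u,p,q}(\rn)\neq\ce^s_{u,p,q}(\rn)$ even when $q<\infty$. The cleanest route is the differences/lifting machinery: by Lemma \ref{lift} it suffices to treat the case $s$ negative enough (or to use the lift operator $(1-\Delta)^{-s/2}$, which is an isomorphism on the scale and commutes with the diamond construction) and reduce to $s=0$, where $\ce^0_{u,p,2}(\rn)=\cm_p^u(\rn)$ by Lemma \ref{LP}; then invoke Lemma \ref{morrey43}(ii), which characterizes $\accentset{\diamond}{\cm}_p^u(\rn)$ by the vanishing condition \eqref{ws-51} uniformly in $y$. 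It is easy to exhibit $f\in\cm_p^u(\rn)$ violating \eqref{ws-51}: e.g. a sum $\sum_k a_k \mathbf{1}_{B(y_k,r_k)}$ with $r_k\downarrow 0$, widely separated centres $y_k$, and amplitudes $a_k$ tuned so that each term has comparable Morrey norm — this stays in $\cm_p^u$ but the small-ball averages do not go to zero uniformly in $y$. Hence $f\notin\accentset{\diamond}{\cm}_p^u(\rn)$. For general $s$, I would instead argue directly that the dilation/translation-invariant ``bad'' behaviour persists: pick a nonzero $g\in\ce^s_{u,p,q}(\rn)$ with $\supp g\subset B(0,1)$ and form $f=\sum_k 2^{-k\cdot(\text{scaling exponent})} g(2^k(\cdot-y_k))$ with separated $y_k$; by the support-disjointness and the scaling of the $\ce$-norm this lies in $\ce^s_{u,p,q}(\rn)$, but any smooth $h$ approximating it would have to approximate each sharply-localized bump, which is impossible uniformly — giving a positive lower bound for $\|f-h\,|\ce^s_{u,p,q}(\rn)\|$.

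I expect the main obstacle to be making this last lower-bound estimate fully rigorous for general (possibly negative) $s$ and $q<\infty$: one has to quantify, via the Littlewood--Paley or difference norm, that a globally smooth function cannot simultaneously be close (in the Morrey-type quasi-norm, which only sees a supremum over balls) to infinitely many mutually singular, ever-shrinking bumps. The reduction to $s=0$ via Lemma \ref{lift} and Lemma \ref{LP}, combined with the explicit description in Lemma \ref{morrey43}(ii), is the way I would sidestep most of this technical burden; the remaining work is then just the elementary construction of the counterexample $f\in\cm_p^u(\rn)\setminus\accentset{\diamond}{\cm}_p^u(\rn)$ and the observation, using Lemma \ref{lift}, that applying $(1-\Delta)^{s/2}$ transports it to the desired counterexample in $\ce^s_{u,p,q}(\rn)$, while the $q=\infty$ subcase is handled separately by the lacunary example noted above.
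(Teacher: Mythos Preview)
The paper does not prove Lemma \ref{diamond3}; it is quoted from \cite{ysy3} (``we recall two results from \cite{ysy3}, see Lemmas 2.25.\ and 2.26'') and no argument is given here.  So there is nothing to compare your outline against within the paper itself.  On its own merits, your ``$\Leftarrow$'' direction and the $q=\infty$ subcase of ``$\Rightarrow$'' are fine.

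The real gap is in the case $p<u$, $q<\infty$.  The Bessel lift $(1-\Delta)^{\sigma/2}$ is indeed an isomorphism $\ce^{s}_{u,p,q}(\rn)\to\ce^{s-\sigma}_{u,p,q}(\rn)$ that respects the diamond construction, but it keeps $q$ fixed.  Hence you can only reduce to $\ce^{0}_{u,p,q}(\rn)$, not to $\ce^{0}_{u,p,2}(\rn)$, so invoking Lemma~\ref{LP} (which identifies $\ce^{0}_{u,p,2}$ with $\cm^{u}_{p}$, and moreover only for $p>1$) is not legitimate unless $q=2$ and $p>1$.  Your fallback ``direct'' argument with rescaled, translated bumps is not worked out, and as you yourself note, producing a uniform lower bound for $\|f-h\,|\ce^{s}_{u,p,q}(\rn)\|$ over all smooth $h$ is the crux; as written it is a sketch, not a proof.

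A clean fix, using tools the paper actually develops: lift not to $s=0$ but to some $s'\in(0,d/u)$ (still keeping your $q$), and then use the explicit function $f_{\alpha}(x)=\psi(x)\,|x|^{-\alpha}$ with $\alpha=d/u-s'$.  Lemma~\ref{test3} shows $f_{\alpha}\in\ce^{s'}_{u,p,q}(\rn)\setminus\accentset{\diamond}{\ce}^{s'}_{u,p,q}(\rn)$ for \emph{all} $q\in[1,\infty]$ and $1\le p<u$, and Lemma~\ref{lem1} (the necessary condition \eqref{ws-21}) gives the obstruction without ever reducing to $q=2$.  This replaces the unjustified step and covers $p=1$ as well.
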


\begin{rem}\label{gleich}
 \rm
 In particular this implies
 $\accentset{\diamond}{F}^{s}_{p,q} (\rn) =  \mathring{F}_{p,q}^{s}(\rn) =
 {F}^{s}_{p,q} (\rn)$ if $1\le p,q<\infty$.
\end{rem}

Now we turn to some further descriptions of the diamond spaces.
The diamond spaces are defined as a closure. So it is most natural to look for characterizations in form of approximations.
A first characterization is using the Littlewood-Paley decomposition.

%&&&&&&&&&&&&&&&&&&&&&&&&&&&&&&&&&&&&&&&&&&&&&&&&&&&&&&&&&&&&&&&&&&&&&
%&&&&&&&&&&&&&&&&&&&&&&&&&&&&&&&&&&&&&&&&&&&&&&&&&&&&&&&&&&&&&&&&&&&

\subsection{A characterization using the Littlewood-Paley decomposition}

%&&&&&&&&&&&&&&&&&&&&&&&&&&&&&&&&&&&&&&&&&&&&&&&&&&&&&&&&&&&&&&&&&&&&&
%&&&&&&&&&&&&&&&&&&&&&&&&&&&&&&&&&&&&&&&&&&&&&&&&&&&&&&&&&&&&&&&&&&&

Let $(\varphi_j)_{j=0}^\infty$ be a smooth dyadic decomposition of unity.
Then we put
\[
 S^N f (x):= \sum_{j=0}^N \cfi [\varphi_j \cf f ](x)\, , \qquad N \in \mathbb{N}_{0} .
\]
Of course, by the Paley-Wiener-Schwarz Theorem,
$S^N f$ are smooth functions.

\begin{lem}\label{properties}
Let $1\le p\le u  < \infty$, $1\le q \le \infty$ and $s\in \re$.
Let $f \in {\ce}^{s}_{u,p,q} (\rn)$. Then the sequence
$(S^N f)_{N=0}^\infty$ has the following properties:
\begin{itemize}
 \item[(i)] $S^Nf \in {\ce}^{\sigma}_{u,p,q} (\rn)$ for all $\sigma \in \re$.
 \item[(ii)] For all $\alpha \in \N_0^d$ we have $D^\alpha (S^N f) \in {\ce}^{s}_{u,p,q} (\rn)$.
 \item[(iii)] For all $\alpha \in \N_0^d$ we have $D^\alpha (S^N f) \in L_\infty (\R)$.
 \item[(iv)] The following identity holds
 \[
  S^N f (x) = \cfi [\varphi_0 (2^{-N}\xi)\cf f(\xi)](x)\, , \qquad x \in \R, \quad N \in \mathbb{N}_{0} .
 \]
  \item[(v)] There exists a constant $c$, independent on $f$,  such that
\be\label{ws-11}
\sup_{N \in \mathbb{N}_{0}}\,
\| \, S^N f \, |{\ce}^{s}_{u,p,q} (\rn)\| \le c \, \| \,  f \, |{\ce}^{s}_{u,p,q} (\rn)\|\, .
\ee
\end{itemize}
\end{lem}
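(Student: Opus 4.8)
The plan is to verify the five assertions essentially in the order stated, since each is either an immediate consequence of standard properties of Lizorkin-Triebel-Morrey spaces or follows from the one before it. Throughout I fix $f\in\ce^s_{u,p,q}(\R)$ and use the key observation in (iv): since $\varphi_0(2^{-N}\xi)=\sum_{j=0}^N\varphi_j(\xi)$ (because the $\varphi_j$ form a dyadic decomposition of unity with $\varphi_0(2^{-N}\cdot)$ equal to $1$ on the support of $\varphi_j$ for $j\le N$ and vanishing on the support of $\varphi_j$ for $j\ge N+2$, while the $j=N+1$ term is absorbed), we get
\[
S^Nf(x)=\sum_{j=0}^N\cfi[\varphi_j\cf f](x)=\cfi[\varphi_0(2^{-N}\cdot)\cf f](x).
\]
I would actually prove (iv) first, as it drives everything else: it shows $S^Nf$ is a band-limited distribution with $\supp\cf(S^Nf)\subset B(0,3\cdot2^{N-1})$, hence (Paley--Wiener--Schwarz) a smooth function of at most polynomial growth, and it is the natural vehicle for applying the Fourier-multiplier inequality of Lemma \ref{lsat}.

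For (v), which is the technical heart, I would apply Lemma \ref{lsat}. Write $S^Nf=\cfi[\varphi_0(2^{-N}\cdot)\cf f]$ and, more to the point, note that for each frequency block $j\le N$ one has $\cfi[\varphi_j\cf(S^Nf)]=\cfi[\varphi_j\cf f]$ while the blocks with $j>N$ vanish; thus the Littlewood--Paley pieces of $S^Nf$ are a truncation of those of $f$. Strictly, to stay inside the framework of Lemma \ref{lsat} I would set $h_j:=\varphi_0(2^{-N}\cdot)\,\varphi_j^\ast$ where $\varphi_j^\ast$ is a fattened version of $\varphi_j$ (so $\varphi_j^\ast\equiv1$ on $\supp\varphi_j$), $f_j:=\cfi[\varphi_j\cf f]$, and $R_j:=3\cdot2^{j-1}$ for $j\le N$, with $h_j\equiv0$ for $j>N$; then $\cfi[h_j\cf f_j]=2^{js}$-weighted pieces are controlled and the crucial point is that $\sup_j\|h_j(R_j\cdot)\,|\,H^\nu_2(\R)\|$ is bounded \emph{uniformly in $N$} by a constant depending only on $\varphi_0$, because $\varphi_0(2^{-N}\cdot)$ equals $1$ on $\supp\varphi_j^\ast$ for all $j\le N-1$ and for $j=N$ one gets a fixed dilate. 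Feeding this into Lemma \ref{lsat} with the $2^{js}$ weights built into $f_j$ yields $\|S^Nf\,|\,\ce^s_{u,p,q}(\R)\|\le c\|f\,|\,\ce^s_{u,p,q}(\R)\|$ with $c$ independent of $N$ and $f$. I expect this multiplier bookkeeping — choosing the $h_j$ so that the hypotheses of Lemma \ref{lsat} literally apply and the $H^\nu_2$-norms stay bounded independently of $N$ — to be the main obstacle; it is routine in the classical case $p=u$ but needs the Morrey-adapted Lemma \ref{lsat} here.

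Item (i) then follows from (v): since $S^Nf$ has $\cf(S^Nf)$ supported in $B(0,3\cdot2^{N-1})$, changing $s$ to any $\sigma\in\re$ only costs a factor comparable to $2^{N|\sigma-s|}$ in the norm (one reruns the argument of (v) with weight $2^{j\sigma}=2^{j(\sigma-s)}2^{js}$, and $2^{j(\sigma-s)}\le 2^{N|\sigma-s|_+}$ for $j\le N$), so $S^Nf\in\ce^\sigma_{u,p,q}(\R)$ for every $\sigma$; the bound is not uniform in $N$, which is fine since (i) only claims membership. For (ii): by Lemma \ref{lift2}, $D^\alpha$ maps $\ce^\sigma_{u,p,q}(\R)$ into $\ce^{\sigma-|\alpha|}_{u,p,q}(\R)$; combined with (i) applied with $\sigma=s+|\alpha|$ we get $D^\alpha(S^Nf)=D^\alpha$ of an element of $\ce^{s+|\alpha|}_{u,p,q}(\R)$, hence lies in $\ce^s_{u,p,q}(\R)$. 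For (iii): start from (ii), so $D^\alpha(S^Nf)\in\ce^s_{u,p,q}(\R)$; actually use (i) with $\sigma$ large, say $\sigma>d/u+|\alpha|$, so $D^\alpha(S^Nf)\in\ce^{\sigma-|\alpha|}_{u,p,q}(\R)$ with $\sigma-|\alpha|>d/u$, and then invoke the embedding chain $\ce^{\sigma-|\alpha|}_{u,p,q}(\R)\hookrightarrow B^{\sigma-|\alpha|-d/u}_{\infty,\infty}(\R)\hookrightarrow L_\infty(\R)$ from Lemma \ref{emb} and Remark \ref{embbb}(i). This completes all five parts.
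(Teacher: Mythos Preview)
Your overall strategy is sound and reaches the same conclusions, but you take a noticeably harder road for part (v) and there are a few bookkeeping slips along the way.

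For (iv), the identity $\sum_{j=0}^N\varphi_j=\varphi_0(2^{-N}\cdot)$ is simply the telescoping
\[
\varphi_0+\sum_{j=1}^N\bigl[\varphi_0(2^{-j}\cdot)-\varphi_0(2^{-j+1}\cdot)\bigr]=\varphi_0(2^{-N}\cdot);
\]
your support argument does not quite work (in particular $\varphi_0(2^{-N}\cdot)$ is \emph{not} identically $1$ on all of $\supp\varphi_N$, and the ``$j=N+1$ term is absorbed'' remark is unexplained). For (v), your claim that the Littlewood--Paley blocks of $S^Nf$ agree with those of $f$ for $j\le N$ and vanish for $j>N$ is inaccurate: the $j=N$ block picks up the factor $\varphi_0(2^{-N}\cdot)$, and the $j=N+1$ block is generally nonzero (the supports of $\varphi_{N+1}$ and $\varphi_0(2^{-N}\cdot)$ overlap on $\{2^N\le|\xi|\le 3\cdot 2^{N-1}\}$). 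Hence you cannot take $h_j\equiv 0$ for $j=N+1$; you need $h_{N+1}=\varphi_0(2^{-N}\cdot)\varphi_{N+1}^*$ as well, and must check that $h_{N+1}(R_{N+1}\cdot)$ has bounded $H^\nu_2$-norm independently of $N$ (it does, being a fixed compactly supported smooth function after rescaling). With these corrections Lemma~\ref{lsat} does yield the uniform bound.

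The paper, however, bypasses Lemma~\ref{lsat} entirely for (v): writing $S^Nf=\cfi[\varphi_0(2^{-N}\cdot)]\ast f$ and observing that the $L_1$-norm of $\cfi[\varphi_0(2^{-N}\cdot)]=2^{Nd}(\cfi\varphi_0)(2^N\cdot)$ equals $\|\cfi\varphi_0\,|\,L_1(\R)\|$ for every $N$, the bound follows from the generalized Minkowski inequality (convolution with an $L_1$ function is bounded on $\ce^s_{u,p,q}(\R)$ with operator norm at most the $L_1$-norm of the kernel). This is a one-line argument and avoids all the multiplier bookkeeping you flagged as ``the main obstacle.'' Conversely, the paper \emph{does} invoke Lemma~\ref{lsat} for part (i), obtaining directly $\|S^Nf\,|\,\ce^\sigma_{u,p,q}(\R)\|\lesssim\|(\sum_{j=0}^{N+1}2^{j\sigma q}|\cfi[\varphi_j\cf f]|^q)^{1/q}\,|\,\cm^u_p(\R)\|$; your derivation of (i) from (v) plus band-limitedness is an equally valid alternative. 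Parts (ii) and (iii) match the paper's reasoning.
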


\begin{proof}
Part (i) is a consequence of the estimate
\[
 \| \, S^N f  \, |{\ce}^{\sigma}_{u,p,q} (\rn)\|
\le  c\,
\Big\| \Big(\sum_{j=0}^{N+1} 2^{j\sigma q} |\cfi [\varphi_j \, \cf f](\, \cdot\, )|^q\Big)^{\frac{1}{q}}\Big|
\cm^u_p (\R)\Big\|
\]
with some $c$ independent of $f$ and $N \in \mathbb{N}_{0} $, see Lemma \ref{lsat}.
From (i) we derive that
$S^N f \in {\ce}^{s+m}_{u,p,q} (\rn)$ with $ m \in \mathbb{N}_{0}  $. Next we use Lemma \ref{lift} obtaining
$D^\alpha (S^N f) \in {\ce}^{s}_{u,p,q} (\rn)$ for $|\alpha|=m$.
To show (iii) it is enough to apply Lemma \ref{emb}.
The next part (iv) is an elementary conclusion of the definition of the functions $\varphi_j$ with
$j \in \{ 1,2,\ldots  , N \}$.
Finally, (v) follows from the generalized Minkowski inequality combined with a standard convolution inequality:
\beqq
\| \, S^N f \, |{\ce}^{s}_{u,p,q} (\rn)\|
&  =  & \| \, \cfi [\varphi_{0} (2^{-N}\xi)\, \cf f(\xi)](\, \cdot\, ) \, |{\ce}^{s}_{u,p,q} (\rn)\|
\\
&\le &
 \| \, \cfi \varphi_{0}\, |L_1 (\R)\|\,
 \|\,  f \, |{\ce}^{s}_{u,p,q} (\rn)\|\, .
\eeqq
The proof is complete.
\end{proof}

Associated to the definition of $\accentset{\diamond}{\ce}^{s}_{u,p,q} (\rn)$ we need a further abbreviation.

\begin{defi}\label{diamond7}
 Let $1\le p < u <\infty$, $1 \le  q \le \infty$ and $s \geq 0$.
The set $E^s_{u,p,q} (\R)$ is the collection of all functions $f \in \ce^s_{u,p,q} (\R)$
such that $D^\alpha f \in \ce^s_{u,p,q} (\R)$ for all $\alpha \in \N_0^d$.
\end{defi}

As an immediate consequence we get
\[
 \overline{E^s_{u,p,q} (\R)}^{\|\, \cdot \, |\ce^s_{u,p,q} (\R)\|} = \accentset{\diamond}{\ce}^{s}_{u,p,q} (\rn) \, .
\]
Moreover, by  Lemma \ref{properties}, for any $f\in \ce^s_{u,p,q} (\R)$ we have $S^N f \in E^s_{u,p,q} (\R)$.
This will be of some use later on.

\begin{prop}\label{diamond1}
Let $1\le p \le u <  \infty$, $1\le q \le \infty$ and $s\in \re$.
Then $\accentset{\diamond}{\ce}^{s}_{u,p,q} (\rn)$
 is the collection of all $f\in {\ce}^{s}_{u,p,q} (\rn)$
 such that
 \be\label{ws-10}
  \lim_{N\to \infty} \, \| \, f - S^N f\, |{\ce}^{s}_{u,p,q} (\rn)\| =0\, .
 \ee
\end{prop}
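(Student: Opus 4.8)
The plan is to prove both inclusions. For the easy direction, suppose $f \in {\ce}^{s}_{u,p,q}(\rn)$ satisfies \eqref{ws-10}. By Lemma \ref{properties}, each $S^N f$ lies in $E^s_{u,p,q}(\R)$, so $f$ is the limit in ${\ce}^{s}_{u,p,q}(\rn)$ of a sequence from $E^s_{u,p,q}(\R)$, hence $f \in \overline{E^s_{u,p,q}(\R)} = \accentset{\diamond}{\ce}^{s}_{u,p,q}(\rn)$. (In the case $u=p$ with $q<\infty$, Lemma \ref{diamond3} makes the statement trivial anyway, so the content is in the Morrey case $p<u$.)

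For the converse, let $f \in \accentset{\diamond}{\ce}^{s}_{u,p,q}(\rn)$. The standard $3\varepsilon$-argument: given $\varepsilon>0$, pick $g \in E^s_{u,p,q}(\R)$ with $\|f-g\,|{\ce}^{s}_{u,p,q}(\rn)\| < \varepsilon$. Then estimate
\[
\|f - S^N f\,|{\ce}^{s}_{u,p,q}(\rn)\| \le \|f-g\,|{\ce}^{s}_{u,p,q}(\rn)\| + \|g - S^N g\,|{\ce}^{s}_{u,p,q}(\rn)\| + \|S^N(g-f)\,|{\ce}^{s}_{u,p,q}(\rn)\|.
\]
The first term is $<\varepsilon$, and the third is $\le c\,\varepsilon$ by the uniform bound \eqref{ws-11} from Lemma \ref{properties}(v). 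So it remains to show $\|g - S^N g\,|{\ce}^{s}_{u,p,q}(\rn)\| \to 0$ as $N\to\infty$ for every fixed $g \in E^s_{u,p,q}(\R)$; then choosing $N$ large makes the middle term $<\varepsilon$, and letting $\varepsilon \downarrow 0$ finishes the proof.

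The main obstacle is thus this convergence $S^N g \to g$ for $g \in E^s_{u,p,q}(\R)$. The idea is to exploit the extra smoothness: since $D^\alpha g \in \ce^s_{u,p,q}(\R)$ for all $\alpha$, Lemma \ref{lift} (applied iteratively) shows $g \in \ce^{s+m}_{u,p,q}(\R)$ for every $m\in\N_0$. Now $g - S^N g = \cfi[(1-\varphi_0(2^{-N}\xi))\cf g(\xi)]$ is spectrally supported in $\{|\xi| \ge 2^N\}$, so on that frequency region one gains a factor $2^{-Nm}$ when trading $\ce^{s+m}$-regularity for the $\ce^s$-norm. Concretely one writes, using that the high-frequency pieces $\varphi_j$ with $j > N$ are the only ones contributing,
\[
\|g - S^N g\,|{\ce}^{s}_{u,p,q}(\rn)\| \le c \,\Big\|\Big(\sum_{j=N}^\infty 2^{jsq}|\cfi[\wt\varphi_j\,\cf g]|^q\Big)^{1/q}\Big|\cm^u_p(\R)\Big\| \le c\, 2^{-Nm}\,\|g\,|\ce^{s+m}_{u,p,q}(\R)\|,
\]
with $\wt\varphi_j$ a slightly fattened version of $\varphi_j$, where the first inequality uses the Fourier-multiplier Lemma \ref{lsat} to replace $1-\varphi_0(2^{-N}\cdot)$ by the tail of the decomposition and the second is the $2^{-jm}$-decay of $2^{js}$ against $2^{j(s+m)}$ summed as a geometric series over $j\ge N$. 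Taking $m=1$ already suffices. Letting $N\to\infty$ gives $S^N g \to g$, completing the argument.
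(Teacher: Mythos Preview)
Your proof is correct and follows the same strategy as the paper: the $3\varepsilon$-argument reduces matters to showing $S^N g \to g$ for $g \in E^s_{u,p,q}(\R)$, and this follows from the tail estimate $\|g-S^N g\,|\ce^s_{u,p,q}(\R)\| \ls 2^{N(s-\sigma)}\|g\,|\ce^\sigma_{u,p,q}(\R)\|$ for $\sigma>s$ (you take $\sigma=s+1$, the paper uses a generic $\sigma$). Your organization is in fact more efficient than the paper's: by fixing one approximant $g$ and noting that the tail estimate makes $\|g-S^N g\|<\varepsilon$ for \emph{all} $N\ge N_0(\varepsilon)$, you conclude $\|f-S^N f\|<(2+c)\varepsilon$ for all large $N$ and hence full-sequence convergence directly; the paper instead diagonalizes over a sequence $(f_\ell)_\ell$ and associated thresholds $N(\ell)$, obtaining at first only convergence along the subsequence $(S^{N(\ell)}f)_\ell$, and then spends an additional page of Fourier-multiplier computations (based on the support structure of the $\varphi_j$ and Lemma~\ref{lsat}) to upgrade this to convergence of the whole sequence $(S^N f)_N$.
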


\begin{proof}
Clearly, if \eqref{ws-10} holds, then   $f\in \accentset{\diamond}{\ce}^{s}_{u,p,q} (\rn)$
follows.\\
Now, let us suppose that  $f\in \accentset{\diamond}{\ce}^{s}_{u,p,q} (\rn)$.
By $(f_\ell)_\ell$ we denote a sequence in $E^s_{u,p,q}(\R)$ such that
\[
\lim_{\ell \to \infty} \, \| \, f -  f_\ell\, |{\ce}^{s}_{u,p,q} (\rn)\| =0 \, .
\]
Without loss of generality we may assume
\[
 \| \, f -  f_\ell\, |{\ce}^{s}_{u,p,q} (\rn)\| < \frac 1 \ell \, , \qquad \ell \in \N\, .
\]
Let $\sigma \in \re$ with $\sigma > s$. We use a standard Fourier multiplier assertion from Lemma \ref{lsat}. Then we obtain
\beqq
 \| \, f_\ell &-&  S^N f_\ell\, | {\ce}^{s}_{u,p,q} (\rn)\| =  \Big\|\, \sum_{j=N+1}^\infty
 \cfi [\varphi_j \cf f_\ell ](\, \cdot\, ) \Big|{\ce}^{s}_{u,p,q} (\rn) \Big\|
\\
&\le & c_1\,
\Big\|\, \Big(\sum_{j=N}^\infty 2^{jsq} |\cfi [\varphi_j \cf f_\ell ](\, \cdot\, )|^q
\Big)^{\frac{1}{q}} \Big|{\cm}^{u}_{p} (\rn) \Big\|
\\
&\le & c_2\, 2^{N(s-\sigma)}\,
\Big\|\, \Big(\sum_{j=N}^\infty 2^{j\sigma q} |\cfi [\varphi_j \cf f_\ell ](\, \cdot\, )|^q\Big)^{\frac{1}{q}} \Big|{\cm}^{u}_{p} (\rn) \Big\|
\\
& \le & c_2\, 2^{N(s-\sigma)}\,  \|\, f_\ell\, | {\ce}^{\sigma}_{u,p,q} (\rn)\|\, .
\eeqq
This shows that
\[
 \lim_{N \to \infty} \, \| \, f_\ell  -  S^N f_\ell\, | {\ce}^{s}_{u,p,q} (\rn)\| =0 \qquad \mbox{for any}
 \quad \ell\in \N.
\]
Hence, for $\ell \in \N$ there exists some $N(\ell)$ such that
\[
 \| \, f_\ell - S^{N(\ell)} f_\ell\, |{\ce}^{s}_{u,p,q} (\rn)\| < \frac 1 \ell \, .
\]
This yields
\beqq
\| \, f  -  S^{N(\ell)} f\, |{\ce}^{s}_{u,p,q} (\rn)\|  &\le &
\| \, f - f_\ell\, |{\ce}^{s}_{u,p,q} (\rn)\| +
\| \, f_\ell - S^{N(\ell)} f_\ell\, |{\ce}^{s}_{u,p,q} (\rn)\|
\\
&& \quad   + \quad \| \, S^{N(\ell)}f_\ell  - S^{N(\ell)} f\, |{\ce}^{s}_{u,p,q} (\rn)\|
\\
&\le &
\frac {2}{\ell} +
\| \, S^{N(\ell)} (f_\ell  -f)\, |{\ce}^{s}_{u,p,q} (\rn)\|
\\
&\le &
\frac {2+c}{\ell} \, ,
\eeqq
where $c$ is the constant from \eqref{ws-11}.
Hence, we have the convergence of an appropriate subsequence $(S^{N(\ell)}f)_{\ell=1}^\infty$.
It remains to switch from a  subsequence to the whole sequence.
Therefore we assume that our sequence $(N(\ell))_\ell$ satisfies
$$N(\ell+1)-N(\ell) > 5\quad {\rm for\ all}\ \ell.$$
Furthermore we will use the following identity
\beqq
\Big\| \sum_{j=M}^N \cfi [\varphi_j \, \cf f]\, && \hspace{-0.2cm} \Big |{\ce}^{s}_{u,p,q} (\rn) \Big \|
=  \bigg\| \bigg(\sum_{m=M+1}^{N-1} 2^{msq} |\cfi [\varphi_m \, \cf f](\, \cdot\, )|^q
\\
& + &
2^{Msq} |\cfi [\varphi_{M} \, (\varphi_M + \varphi_{M+1})\,  \cf f](\, \cdot\, )|^q
\\
& + & 2^{(M-1)sq} |\cfi [\varphi_{M-1}  \, \varphi_{M} \, \cf f](\, \cdot\, )|^q
\\
& + &
2^{Nsq} |\cfi [\varphi_N \, (\varphi_{N-1} + \varphi_N)\,  \cf f](\, \cdot\, )|^q
\\
& + & 2^{(N+1)sq} |\cfi [\varphi_{N+1}\,  \varphi_N \, \cf f](\, \cdot\, )|^q
\bigg)^{\frac{1}{q}}\, \bigg|{\cm}^{u}_{p} (\rn)\bigg\|\, ,
\eeqq
valid for all natural numbers $M$ and $N$ such that $2\le M+1<  N-1$.
This follows from
\[
\varphi_m \, \cdot \Big( \sum_{j=M}^N \varphi_j \Big) = \left\{\begin{array}{lll}
\varphi_m & \qquad & \mbox{if}\quad M < m < N;\\
\varphi_{M-1}  \, \varphi_{M} && \mbox{if}\quad m=M-1;
\\
\varphi_{M}  \, (\varphi_{M} + \varphi_{M+1}) && \mbox{if}\quad m=M;
\\
\varphi_{N}  \, (\varphi_{N-1} + \varphi_N) && \mbox{if}\quad m=N;
\\
\varphi_{N+1}  \, \varphi_{N} && \mbox{if}\quad  m=N + 1;
\\
0&& \mbox{otherwise}\, .
\end{array}
\right.
\]
A standard convolution inequality  combined with the generalized Minkowski inequality yields
\begin{align*}
&\|\, \cfi [\varphi_{j}\,  \varphi_\ell \, \cf f](\, \cdot\, )\, |{\cm}^{u}_{p} (\rn)\|\\
&\quad \le
 \| \, \cfi \varphi_{j}\, |L_1 (\R)\|\,
 \|\, \cfi [ \varphi_\ell \, \cf f](\, \cdot\, )\, |{\cm}^{u}_{p} (\rn)\|\, .
\end{align*}
Applying a  homogeneity argument we find
\[
\| \, \cfi \varphi_{j}\, |L_1 (\R)\| =
\| \, \cfi \varphi_{1}\, |L_1 (\R)\| , \qquad j \in \mathbb{N}  .
\]
Alltogether this shows
\begin{align}\label{ws-20}
&\Big\| \Big(\sum_{m=M+1}^{N-1} 2^{msq}
|\cfi [\varphi_m \, \cf f](\, \cdot\, )|^q\Big)^{\frac{1}{q}}\Big|\cm^u_p (\R)\Big\|
\nonumber
\\
&\quad \le
\Big\| \sum_{j=M}^N \cfi [\varphi_j \, \cf f]\, \Big |{\ce}^{s}_{u,p,q} (\rn) \Big  \|
\nonumber
\\
&\quad \le c_3\,
\Big\| \Big(\sum_{m=M-1}^{N+1} 2^{msq} |\cfi [\varphi_m \, \cf f](\, \cdot\, )|^q\Big)^{\frac{1}{q}}\Big|
\cm^u_p (\R)\Big\|
\end{align}
with some constant $c_3$ independent on $M,N$ and $f$.
Let
\[
1 \le N(\ell) \le M-3 < N-3 < N+2 \le N(\ell+ 1)\, .
\]
Then \eqref{ws-20} implies
\beqq
\| \, S^N f &-& S^{M-1} f \, |{\ce}^{s}_{u,p,q} (\rn)\|  =
\Big\| \sum_{j=M}^N  \cfi [\varphi_j \, \cf f]\, \Big|{\ce}^{s}_{u,p,q} (\rn)\Big\|
\\
&\le & c_3\,
\Big\| \Big(\sum_{m=M-1}^{N+1} 2^{msq} |\cfi [\varphi_m \, \cf f]\, |^q\Big)^{\frac{1}{q}}\Big|
\cm^u_p (\R)\Big\|
\\
&\le & c_3\,
\Big\| \sum_{j=N(\ell)+1}^{N(\ell+1)}  \cfi [\varphi_j \, \cf f]\, \Big|{\ce}^{s}_{u,p,q} (\rn)\Big\|
\\
&=& c_3\,
\| \, S^{N(\ell+1)} f - S^{N(\ell)} f \, |{\ce}^{s}_{u,p,q} (\rn)\|\, .
\eeqq
Repeating the argument we conclude that
\[
 \| \, S^N f - S^{M-1} f \, |{\ce}^{s}_{u,p,q} (\rn)\| \le c_{4}\,
\| \, S^{N(\ell+2)} f - S^{N(\ell-1)} f \, |{\ce}^{s}_{u,p,q} (\rn)\|\,
\]
for all $M,N$ such that $N(\ell) \le M \le N \le N(\ell+1)$ with $c_4$ independent of $\ell$.
Consequently $(S^N f)_{N=0}^\infty$ is a Cauchy sequence in ${\ce}^{s}_{u,p,q} (\rn)$.
This proves the claim.
\end{proof}

\begin{rem}
 \rm
 Proposition \ref{diamond1} is not new, we refer to
Hakim, Nogayama  and  Sawano \cite[Thm.~1.1]{hns19}.
However, our proof is slightly different and covers the cases $p=1\le u < \infty$.
In fact, it extends without any change to $0 <p\le u < \infty$.
\end{rem}

Later on we shall need the following consequence of Proposition \ref{diamond1}.

\begin{prop}\label{diamond9}
Let $1\le p \le u <  \infty$, $1\le q_0,q_1 \le \infty$ and $s_0,s_1\in \re$ with $s_1 < s_0$.
Then we have the continuous embedding
\[
{\ce}^{s_0}_{u,p,q_0} (\rn)
\hookrightarrow
\accentset{\diamond}{\ce}^{s_1}_{u,p,q_1} (\rn)\, .
 \]
\end{prop}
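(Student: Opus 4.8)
The plan is to combine three ingredients: the elementary embedding ${\ce}^{s_0}_{u,p,q_0}(\rn)\hookrightarrow{\ce}^{s_1}_{u,p,q_1}(\rn)$ for $s_1<s_0$, the uniform boundedness of the partial-sum operators $S^N$ established in Lemma \ref{properties}(v), and the characterization of the diamond space by $\lim_{N\to\infty}\|f-S^Nf\,|\ce^{s_1}_{u,p,q_1}(\rn)\|=0$ from Proposition \ref{diamond1}. So let $f\in{\ce}^{s_0}_{u,p,q_0}(\rn)$. First I would recall (or quickly verify, via monotonicity of $\ell^q$ quasi-norms in $q$ when $q_1\ge q_0$, and via $2^{j s_1}\le 2^{j s_0}$ on the relevant frequency annuli when $q_1<q_0$, using $s_1<s_0$ to sum the geometric factor) that indeed $f\in{\ce}^{s_1}_{u,p,q_1}(\rn)$ with $\|f\,|\ce^{s_1}_{u,p,q_1}(\rn)\|\lesssim\|f\,|\ce^{s_0}_{u,p,q_0}(\rn)\|$; this is the standard Triebel--Lizorkin-Morrey embedding and may be cited. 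Thus the map $\iota\colon{\ce}^{s_0}_{u,p,q_0}(\rn)\to{\ce}^{s_1}_{u,p,q_1}(\rn)$ is continuous, and it remains to show its range lies in $\accentset{\diamond}{\ce}^{s_1}_{u,p,q_1}(\rn)$.

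By Proposition \ref{diamond1} it suffices to prove $\|f-S^Nf\,|\ce^{s_1}_{u,p,q_1}(\rn)\|\to0$ as $N\to\infty$. Here I would mimic the estimate already used inside the proof of Proposition \ref{diamond1}: write $f-S^Nf=\sum_{j=N+1}^\infty\cfi[\varphi_j\cf f]$, apply the Fourier-multiplier Lemma \ref{lsat} (the supports $\supp\cf(\varphi_j\cf f)$ are contained in dyadic annuli, so the rescaled $H^\nu_2$-norms of the cut-offs are uniformly bounded) to pass to the sequence norm over $\cm^u_p$, and then insert the gain $2^{j s_1}=2^{j(s_1-s_0)}2^{j s_0}$. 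Since $s_1-s_0<0$, the factor $2^{N(s_1-s_0)}$ comes out front and one is left with (a constant times) $2^{N(s_1-s_0)}\|f\,|\ce^{s_0}_{u,p,q_0}(\rn)\|$, up to the passage between $q_0$- and $q_1$-sequence norms on a single scale, which is harmless. This tends to $0$.

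A clean alternative, which avoids reproving the tail estimate, is to note that $S^Nf\in E^{s_1}_{u,p,q_1}(\rn)$ for every $N$ (this is recorded right after Definition \ref{diamond7}, as a consequence of Lemma \ref{properties}), so $S^Nf\in\accentset{\diamond}{\ce}^{s_1}_{u,p,q_1}(\rn)$; then one only needs $S^Nf\to f$ in ${\ce}^{s_1}_{u,p,q_1}(\rn)$, and since $\accentset{\diamond}{\ce}^{s_1}_{u,p,q_1}(\rn)$ is closed in ${\ce}^{s_1}_{u,p,q_1}(\rn)$, the limit $f$ lies in it. Either way the one genuinely non-cosmetic point — and the step I expect to be the main obstacle — is establishing the \emph{quantitative} convergence $S^Nf\to f$ in the lower-smoothness, possibly smaller-$q$ target space; the regularizing Fourier-multiplier inequality of Lemma \ref{lsat} together with the strictly positive smoothness drop $s_0-s_1>0$ is exactly what makes this work, and it is worth spelling out the annulus bookkeeping (the $\varphi_j$ overlap only with $\varphi_{j-1},\varphi_j,\varphi_{j+1}$) so that the sequence-norm manipulations are rigorous.
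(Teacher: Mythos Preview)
Your proposal is correct and follows essentially the same route as the paper: show $\|f-S^Nf\,|\ce^{s_1}_{u,p,q_1}(\rn)\|\to 0$ via Lemma~\ref{lsat} and the geometric gain $2^{N(s_1-s_0)}$, then invoke Proposition~\ref{diamond1}. The only cosmetic difference is that the paper sidesteps the $q_0$/$q_1$ bookkeeping by passing through $\ell_\infty$, i.e.\ bounding the tail by $c\,2^{-(s_0-s_1)N}\|\sup_{j\ge N}2^{js_0}|\cfi[\varphi_j\cf f]|\,|\cm^u_p\|$ and using ${\ce}^{s_0}_{u,p,q_0}\hookrightarrow{\ce}^{s_0}_{u,p,\infty}$.
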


\begin{proof}
Let $f \in {\ce}^{s_0}_{u,p,q_0} (\rn)$.
 Lemma  \ref{lsat} yields
 \beqq
\| f-S^N f\, |{\ce}^{s_1}_{u,p,q_1} (\rn)\| & \le & c_1
\, \Big\|\Big(\sum_{j=N}^\infty 2^{js_1q_1} |\cfi [\varphi_j \cf f ]\, |^{q_1}\Big)^{\frac{1}{q_1}}\, \Big|\cm_p^u (\R)\Big\|
\\
&\le & c_2 \, 2^{-(s_0-s_1)N} \,
\, \Big\|\, \sup_{j\ge N}\, 2^{js_0} |\cfi [\varphi_j \cf f ]\, |\, \Big|\cm_p^u (\R)\Big\|
 \eeqq
 with constants $c_1,c_2$ independent of $f$ and $N$.
 Because of ${\ce}^{s_0}_{u,p,q_0} (\rn) \hookrightarrow {\ce}^{s_0}_{u,p,\infty} (\rn)$,
 for $N \to \infty$ this implies \eqref{ws-10} and therefore $f\in \accentset{\diamond}{\ce}^{s_1}_{u,p,q_1} (\rn)$.
 \end{proof}

%&&&&&&&&&&&&&&&&&&&&&&&&&&&&&&&&&&&&&&&&&&&&&&&&&&&&&&&&&&&&&&&&&&&&&
%&&&&&&&&&&&&&&&&&&&&&&&&&&&&&&&&&&&&&&&&&&&&&&&&&&&&&&&&&&&&&&&&&&&

\subsection{A characterization using mollifiers}

%&&&&&&&&&&&&&&&&&&&&&&&&&&&&&&&&&&&&&&&&&&&&&&&&&&&&&&&&&&&&&&&&&&&&&
%&&&&&&&&&&&&&&&&&&&&&&&&&&&&&&&&&&&&&&&&&&&&&&&&&&&&&&&&&&&&&&&&&&&

It is possible to describe the spaces $ \accentset{\diamond}{\ce}^{s}_{u,p,q} (\rn)  $ by using mollifiers. In this section we will briefly collect the main ideas concerning that topic. For that purpose we need some more notation. Therefore let $\varrho \in \cd (\R)$ be a function satisfying
\[
 \int_{\R} \varrho (x)\, dx =1 \qquad \mbox{and}\qquad \supp \varrho \subset B(0,1)\, .
\]
We put $\rho_j (x) := 2^{jd} \varrho (2^jx)$ with $x \in \R$ and $j \in \N$.
For a Banach space  $X$ that is continuously embedded into $ \cs' (\R)$ we define $X^{\ell oc}$ as the collection of all $f\in \cs' (\R)$ such that the pointwise product fulfills $\psi \, \cdot \, f \in X$ for all $\psi \in \cd (\R)$.
Convergence of a sequence $\{f_j\}_{j=1}^\infty$ with limit $f$ in $X^{\ell oc}$ is defined as
\[
\lim_{j\to \infty}\, \|\, f\, \psi - f_j\, \psi|X\|=0  \qquad \mbox{for all}\quad \psi \in \cd (\R)\,.
\]

\begin{lem}\label{properties2}
Let $1\le p \leq u < \infty$, $1\le q \le \infty$ and $s>0$.
Let $f \in {\ce}^{s}_{u,p,q} (\rn)$. Then the sequence
$\{f \ast \rho_j\}_{j=1}^\infty$ has the following properties:
\begin{itemize}
  \item[(i)] For all $\alpha \in \N_0^d$ and all $j \in \N$ we have $D^\alpha (f\ast \rho_j) \in {\ce}^{s}_{u,p,q} (\rn)$, i.e.,
  $(f\ast \rho_j) \in E^s_{u,p,q}(\R)$.
 \item[(ii)] For all $\alpha \in \N_0^d$  and all $j \in \N$ we have $D^\alpha (f\ast \rho_j) \in L_\infty (\R)$.
 \item[(iii)]  For all $j \in \N$ we have $(f\ast \rho_j) \in C^\infty (\R)$.
 \item[(iv)]  For all $j \in \N$ we have $(f\ast \rho_j) \in {\ce}^{\sigma}_{u,p,q} (\rn)$ for all $\sigma \in \re$.
 \item[(v)] There exists a constant $c$, independent on $f$,  such that
\be\label{ws-11b}
\sup_{j \in \mathbb{N}}\,
\| \, f\ast \rho_j \, |{\ce}^{s}_{u,p,q} (\rn)\| \le c \, \| \,  f \, |{\ce}^{s}_{u,p,q} (\rn)\|\, .
\ee
\end{itemize}
\end{lem}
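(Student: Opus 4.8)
The plan is to establish the five properties in the order (iii), (ii), (iv), (i), (v), since the later items build on the earlier ones. First I would record the elementary fact that $\varrho \in \cd(\R)$ implies $\rho_j \in \cd(\R) \subset \cs(\R)$, so that $f \ast \rho_j$ is a well-defined tempered distribution whenever $f\in \cs'(\R)$, and that it is in fact a $C^\infty$ function because differentiation can be moved onto $\rho_j$: $D^\alpha(f\ast \rho_j) = f \ast D^\alpha \rho_j$, which proves (iii). For (iv), observe that $\cf(f\ast \rho_j) = (\cf f)\cdot \widehat{\rho_j}$ has the same support behaviour as $\cf f$ near infinity only after cutting: more precisely, $\widehat{\rho_j}$ is entire of exponential type, so $f\ast \rho_j$ need not have compactly supported Fourier transform, hence a direct ``band-limited'' argument is not available. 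Instead I would argue that $f\ast \rho_j$ is a bounded smooth function with all derivatives bounded (see below), and combine this with the fact that $f\ast \rho_j$ inherits the decay of $f$; the cleanest route is to write $f\ast\rho_j = \sum_{k=0}^\infty \cfi[\varphi_k\cf f]\ast\rho_j$ and use the Fourier multiplier estimate of Lemma \ref{lsat} with $h_k = \widehat{\rho_j}\,\varphi_k$-type bump functions together with the fact that $\|\widehat{\rho_j}(R_k\cdot)\varphi_0\mid H_2^\nu\|$ is uniformly bounded to gain arbitrary smoothness $\sigma$; one uses $s>0$ only mildly here, but it is harmless.

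For (ii), I would apply $D^\alpha(f\ast\rho_j) = f\ast D^\alpha\rho_j$ and then invoke Lemma \ref{emb} together with Remark \ref{embbb}(i): since by part (iv) (or directly) $f\ast D^\alpha\rho_j \in \ce^\sigma_{u,p,q}(\R)$ for every $\sigma$, in particular for $\sigma > d/u$, the embedding $\ce^\sigma_{u,p,q}(\R) \hookrightarrow B^{\sigma-d/u}_{\infty,\infty}(\R) \hookrightarrow L_\infty(\R)$ gives boundedness. For (i) it then suffices to show $D^\alpha(f\ast\rho_j) = f\ast D^\alpha \rho_j \in \ce^s_{u,p,q}(\R)$; writing $D^\alpha\rho_j = 2^{j(d+|\alpha|)}(D^\alpha\varrho)(2^j\cdot)$ and using that convolution with an $L_1$-normalised bump is bounded on $\ce^s_{u,p,q}(\R)$ — which follows from the generalized Minkowski inequality and the standard convolution inequality, exactly as in the proof of Lemma \ref{properties}(v) — one obtains $\|f\ast D^\alpha\rho_j \mid \ce^s_{u,p,q}(\R)\| \le c(\alpha,j)\,\|f\mid \ce^s_{u,p,q}(\R)\|$. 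Combined with (iii) this yields $f\ast\rho_j \in E^s_{u,p,q}(\R)$, which is (i).

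Finally (v): the constant must be independent of $f$, which is automatic from the $\alpha = 0$ case of the previous computation, namely $\|f\ast\rho_j \mid \ce^s_{u,p,q}(\R)\| \le \|\cfi\widehat{\rho_j}\mid L_1(\R)\|\cdot\|f\mid\ce^s_{u,p,q}(\R)\| = \|\varrho\mid L_1(\R)\|\cdot\|f\mid\ce^s_{u,p,q}(\R)\|$ by the change of variables $x\mapsto 2^{-j}x$ which leaves the $L_1$ norm of $\rho_j = 2^{jd}\varrho(2^j\cdot)$ invariant; here I would spell out the generalized Minkowski / Young-type step on the Morrey-valued $\ell^s_q$ norm. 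The main obstacle I anticipate is part (iv): unlike the Littlewood–Paley operators $S^Nf$ in Lemma \ref{properties}, the mollified functions $f\ast\rho_j$ are not frequency-localized, so the gain of smoothness from $\ce^s$ to $\ce^\sigma$ for arbitrary $\sigma>s$ has to be extracted from the rapid decay of $\widehat{\rho_j}$ via the multiplier lemma rather than from a support argument; once that is set up carefully the remaining items are routine adaptations of the arguments already given for $S^Nf$.
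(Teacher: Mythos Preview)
The paper does not actually prove this lemma: immediately after the statement it says ``Essentially all of Lemma \ref{properties2} is known. So we skip the proof.'' Your plan is therefore more than the paper provides, and it is essentially correct.

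One organizational remark: your stated order (iii), (ii), (iv), (i), (v) is slightly tangled, since your argument for (ii) invokes (iv). A cleaner route is (iii) $\to$ (v) $\to$ (i) $\to$ (iv) $\to$ (ii). The point is that (i) needs nothing more than the convolution inequality you already use for (v): with $D^\alpha(f\ast\rho_j)=f\ast D^\alpha\rho_j$ one has
\[
\|D^\alpha(f\ast\rho_j)\mid \ce^s_{u,p,q}(\R)\|\le \|D^\alpha\rho_j\mid L_1(\R)\|\,\|f\mid \ce^s_{u,p,q}(\R)\|<\infty,
\]
so $f\ast\rho_j\in E^s_{u,p,q}(\R)$. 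Once (i) is in hand, (iv) is immediate from Lemma \ref{lift} (applied with $s$ replaced by $s+m$ for any $m\in\N$), and (ii) then follows from (iv) together with Lemma \ref{emb} and Remark \ref{embbb}(i). This avoids the Fourier multiplier argument via Lemma \ref{lsat} that you sketch for (iv); that approach also works, but it is more laborious than necessary here since, unlike the band-limited case of $S^Nf$, the gain in smoothness for $f\ast\rho_j$ can be read off directly from membership of all derivatives in $\ce^s_{u,p,q}(\R)$.
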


\noindent
Essentially all of Lemma \ref{properties2} is known. So we skip the proof. There is a counterpart of Proposition \ref{diamond1} that
reads as follows.

\begin{prop}\label{diamond4}
 Let $1\le p \le u < \infty$, $1\le q \le \infty$ and $s > 0$. Let $f \in {\ce}^{s}_{u,p,q} (\rn)$.
 Then the following assertions are equivalent.
 \begin{itemize}
  \item[(i)] $f \in \accentset{\diamond}{\ce}^{s}_{u,p,q} (\rn)$;
  \item[(ii)] $\lim_{j \to \infty} \, \| \, f\ast\varrho_j-f\, |\ce^{s}_{u,p,q}(\R)\| =0$.
 \end{itemize}
\end{prop}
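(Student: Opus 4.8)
The implication (ii)$\,\Rightarrow\,$(i) is immediate: by Lemma~\ref{properties2}(i) every $f\ast\rho_j$ lies in $E^s_{u,p,q}(\R)$, so if (ii) holds then $f$ belongs to the closure of $E^s_{u,p,q}(\R)$ in $\ce^s_{u,p,q}(\R)$, which is $\accentset{\diamond}{\ce}^s_{u,p,q}(\R)$.

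For (i)$\,\Rightarrow\,$(ii) the plan is to imitate the three--term splitting used in the proof of Proposition~\ref{diamond1}. Given $f\in\accentset{\diamond}{\ce}^s_{u,p,q}(\R)$, I would first choose $(g_\ell)_\ell\subset E^s_{u,p,q}(\R)$ with $\|\,f-g_\ell\,|\ce^s_{u,p,q}(\R)\|<1/\ell$ and write
\[
\|\,f\ast\rho_j-f\,|\ce^s_{u,p,q}(\R)\|
\le\|\,(f-g_\ell)\ast\rho_j\,|\ce^s_{u,p,q}(\R)\|
+\|\,g_\ell\ast\rho_j-g_\ell\,|\ce^s_{u,p,q}(\R)\|
+\|\,g_\ell-f\,|\ce^s_{u,p,q}(\R)\|\,.
\]
By the uniform bound~\eqref{ws-11b} of Lemma~\ref{properties2}(v), applied to $f-g_\ell$, the first term is $\le c/\ell$ with $c$ independent of $\ell$ and $j$, and the last term is $<1/\ell$. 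Hence it remains to prove that, for each fixed $\ell$, $\|\,g_\ell\ast\rho_j-g_\ell\,|\ce^s_{u,p,q}(\R)\|\to 0$ as $j\to\infty$; this gives $\limsup_{j\to\infty}\|\,f\ast\rho_j-f\,|\ce^s_{u,p,q}(\R)\|\le(c+1)/\ell$ for every $\ell$, hence (ii). Note that here, in contrast to Proposition~\ref{diamond1}, there is no need to pass from a subsequence back to the full sequence, since the middle term genuinely tends to $0$ for each fixed $\ell$.

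The hard part will be the estimate: for every $g\in E^s_{u,p,q}(\R)$,
\[
\|\,g\ast\rho_j-g\,|\ce^s_{u,p,q}(\R)\|\le c\,2^{-j}\sum_{i=1}^d\Big\|\,\frac{\partial g}{\partial x_i}\,\Big|\,\ce^s_{u,p,q}(\R)\Big\|\,,
\]
with $c$ depending only on $\varrho$. This is the one place where the full strength of $g\in E^s_{u,p,q}(\R)$ (rather than $g\in\ce^s_{u,p,q}(\R)$) is used, and it must be, since by Lemma~\ref{diamond3} the conclusion fails for general $g\in\ce^s_{u,p,q}(\R)$ when $u>p$. I would derive it from
\[
(g\ast\rho_j)(x)-g(x)=\int_{\R}\rho_j(y)\,[\,g(x-y)-g(x)\,]\,dy
=-\int_{\R}\rho_j(y)\int_0^1\sum_{i=1}^d y_i\,\frac{\partial g}{\partial x_i}(x-ty)\,dt\,dy
\]
by decomposing both sides into Littlewood--Paley blocks, pulling the $y$-- and $t$--integrations outside by the generalized Minkowski inequality (first for $\ell^q$ in the dyadic index, then for the Morrey norm), and exploiting the exact translation invariance $\|\,h(\,\cdot\,-a)\,|\cm^u_p(\R)\|=\|\,h\,|\cm^u_p(\R)\|$ together with $\int_{\R}|\rho_j(y)|\,|y|\,dy=2^{-j}\int_{\R}|\varrho(z)|\,|z|\,dz$. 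I expect the main technical subtlety to be that these manipulations have to be carried out at the level of the dyadic blocks and then on the Morrey norm (where translation acts isometrically), and \emph{not} through a Bochner integral in $\ce^s_{u,p,q}(\R)$, since on Morrey spaces with $u>p$ translation is only bounded, not strongly continuous. Applying this estimate to $g=g_\ell$, whose first derivatives lie in $\ce^s_{u,p,q}(\R)$ precisely because $g_\ell\in E^s_{u,p,q}(\R)$, and letting $j\to\infty$ then finishes the argument.

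A possible alternative route to the key estimate, closer to the proof of Proposition~\ref{diamond1}, is to write $g\ast\rho_j-g=\cfi[\,(m_j-1)\,\cf g\,]$ with $m_j:=(\cf\varrho)(2^{-j}\,\cdot\,)$, split $g=S^Ng+(g-S^Ng)$ and control the high--frequency tail by Proposition~\ref{diamond1}, and then apply the Fourier multiplier Lemma~\ref{lsat} to the finitely many remaining blocks, using that the relevant $H^\nu_2$--norm of $m_j-1$ restricted to each fixed dyadic annulus tends to $0$ as $j\to\infty$.
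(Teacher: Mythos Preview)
The paper itself explicitly omits the proof of this proposition: immediately after the statement it says ``We will not use Proposition~\ref{diamond4} in what follows. Therefore we will drop the proof.'' So there is nothing to compare against, and your proposal must be judged on its own.

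Your argument is correct. The direction (ii)$\Rightarrow$(i) is exactly as you say. For (i)$\Rightarrow$(ii), the three-term splitting together with the uniform bound \eqref{ws-11b} reduces matters to the key estimate for $g\in E^s_{u,p,q}(\R)$, and your derivation of that estimate is sound: the Littlewood--Paley blocks $g_k=\cfi[\varphi_k\cf g]$ are smooth, differentiation commutes with the blocks, the $\ell_q$--Minkowski inequality (valid for $1\le q\le\infty$) moves the $(y,t)$--integration outside, and then the generalized Minkowski inequality for the Morrey norm together with its exact translation invariance gives the bound $c\,2^{-j}\sum_i\|\partial_i g\,|\,\ce^s_{u,p,q}(\R)\|$. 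One small remark: the mean value formula can in fact be applied directly to $g$ (not only to the blocks), since $g\in E^s_{u,p,q}(\R)$ forces $g\in\ce^\sigma_{u,p,q}(\R)$ for every $\sigma$ by Lemma~\ref{lift}, hence $g$ and all its derivatives are bounded by Lemma~\ref{emb} and Remark~\ref{embbb}. Your comment that no subsequence argument is needed here is also apt: the middle term is controlled by an explicit $2^{-j}$ factor, unlike in Proposition~\ref{diamond1}.

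The alternative Fourier--multiplier route you sketch would also work, but the first argument is cleaner and more self-contained.
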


We will not use Proposition \ref{diamond4} in what follows.
Therefore we will drop the proof.

%&&&&&&&&&&&&&&&&&&&&&&&&&&&&&&&&&&&&&&&&&&&&&&&&&&&&&&&&&&&&&&&&&&&&
%&&&&&&&&&&&&&&&&&&&&&&&&&&&&&&&&&&&&&&&&&&&&&&&&&&&&&&&&&&&&&&&&&&&

\subsection{A characterization in terms of differences}

%&&&&&&&&&&&&&&&&&&&&&&&&&&&&&&&&&&&&&&&&&&&&&&&&&&&&&&&&&&&&&&&&&&&&&
%&&&&&&&&&&&&&&&&&&&&&&&&&&&&&&&&&&&&&&&&&&&&&&&&&&&&&&&&&&&&&&&&&&&

It is possible to describe the spaces ${\ce}^{s}_{u,p,q} (\R)$ and also the spaces $  \accentset{\diamond}{\ce}^{s}_{u,p,q} (\R)  $ in terms of differences. For that purpose we need some additional notation. Let $ f : \mathbb{R}^d \rightarrow \mathbb{C}$ be a function. Then for $ x, h \in \mathbb{R}^d $ we define the difference of the first order by $ \Delta_{h}^{1}f (x) := f ( x + h ) - f (x) $. Let $ N \in \mathbb{N}$ with $ N > 1 $. Then we define the difference of the order $ N $ by $\Delta_{h}^{N}f (x) :=   (\Delta_{h} ^1  ( \Delta_{h} ^{N-1}f   ) ) (x) $. Now at first we recall the following characterization of
${\ce}^{s}_{u,p,q} (\rn)$.
We refer to \cite{Ho19} and \cite[4.3.1]{ysy}.

\begin{prop}\label{diff}
Let $1\le p < u <\infty$, $1 \le  q \le \infty$ and $s>0$. Let $N \in \N$ such that $s<N$. Then
${\ce}^{s}_{u,p,q} (\R)$ is the collection of all $f \in {\cm}^{u}_{p} (\R)$
such that
\beqq
\|\, f \, |\cm_{p}^u (\R)\| + \bigg\|\bigg(
  \int_0^1 t^{-sq} \Big( t^{-d} \int_{B(0,t)} |\Delta_h^N f(x)| \, dh\Big) ^q \frac{dt}{t}\bigg)^{\frac{1}{q}}\bigg| \cm^u_p (\R)\bigg\|
\eeqq
is finite (with equivalent norms). In the case $ q = \infty  $ the usual modifications have to be made.
\end{prop}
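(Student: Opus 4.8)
The plan is to reduce the difference characterization to the Littlewood--Paley definition of $\ce^s_{u,p,q}(\R)$ by the classical two-sided estimate, carefully keeping track of the Morrey norm $\cm^u_p$ in place of the Lebesgue norm. The structure follows the scheme used by Triebel for classical Lizorkin--Triebel spaces (see \cite[2.5.10--2.5.12]{t83}), but every scalar $L_p$-estimate has to be replaced by a vector-valued maximal-function or Fourier-multiplier estimate valid on $\cm^u_p$. The two key tools are: first, the Fourier multiplier assertion Lemma~\ref{lsat}, which handles sequences of band-limited functions; second, the Hardy--Littlewood maximal operator acts boundedly on $\cm^u_p(\R)$ together with its vector-valued (Fefferman--Stein type) version on $\cm^u_p(\ell_q)$, which is known for $1<p\le u<\infty$ --- here one must be a bit careful since $p=1$ is excluded in the statement precisely for this reason, and the case $p=1$ would instead be handled through the $t^{-d}\int_{B(0,t)}|\Delta_h^N f|\,dh$ averaging, which is itself a (bounded) averaging operator and so behaves well on $\cm^u_1$. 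Indeed the ball-average formulation in the statement is chosen exactly so that the maximal-function machinery can be bypassed where necessary.

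First I would prove the estimate ``$\ge$'', i.e. that the right-hand side is bounded by $c\,\|f|\ce^s_{u,p,q}(\R)\|$. Write $f=\sum_{j=0}^\infty f_j$ with $f_j:=\cfi[\varphi_j\cf f]$ in $\cs'$, and split the difference $\Delta_h^N f = \sum_{j} \Delta_h^N f_j$. For the ``high frequency'' part one uses the trivial bound $|\Delta_h^N f_j(x)|\le 2^N \sup|f_j|$ applied after pulling the average $t^{-d}\int_{B(0,t)}\cdot\,dh$ inside; for the ``low frequency'' part (those $j$ with $2^j t\le 1$) one uses the mean-value-type estimate $|\Delta_h^N f_j(x)|\le |h|^N \sup_{|y|\le N|h|}|D^N f_j(x+y)|$ together with the Bernstein inequality $\|D^N f_j\|\lesssim 2^{jN}\|f_j\|$ in the pointwise sense, which is available because $f_j$ is band-limited. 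Both contributions reduce, after summing the geometric series in $t$, to an expression of the form $\bigl\|(\sum_j 2^{jsq}(\cm_k f_j)^q)^{1/q}\bigr|\cm^u_p\bigr\|$ where $\cm_k$ is either the identity, a ball average, or the Hardy--Littlewood maximal operator; the vector-valued maximal inequality on $\cm^u_p(\ell_q)$ (for $1<p$) then bounds this by $\|f|\ce^s_{u,p,q}(\R)\|$.

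Next I would prove the reverse estimate ``$\le$'', i.e. recover $\|f|\ce^s_{u,p,q}(\R)\|$ from the difference expression. This is the harder and more delicate direction. The standard device is a Calder\'on-type reproducing identity: choose an auxiliary kernel $\psi$ with enough vanishing moments so that $f_j = \cfi[\varphi_j\cf f]$ can be written as an integral average of $\Delta_h^N f$ over $h$ in a dyadic annulus at scale $2^{-j}$, up to a rapidly decaying convolution correction. Concretely, using that $\widehat{\Delta_h^N f}(\xi)=(e^{ih\xi}-1)^N\hat f(\xi)$ and that $\varphi_j$ is supported in $\{|\xi|\sim 2^j\}$, one builds a multiplier $m_j(\xi)=\varphi_j(\xi)(e^{ih\xi}-1)^{-N}$-type object integrated against a suitable measure in $h$; the key point is that after the substitution $h=2^{-j}h'$ the rescaled multipliers lie in a bounded set of $H_2^\nu(\R)$ with $\nu>3d/2$, uniformly in $j$, so Lemma~\ref{lsat} applies and transfers the $\cm^u_p(\ell_q^s)$ bound from $\{\Delta_h^N f\}$ (integrated in $h$) back to $\{f_j\}$. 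One also has to dispose of the lowest frequency term $f_0=\cfi[\varphi_0\cf f]$ separately: since $s>0$, $f_0$ is controlled by $\|f|\cm^u_p\|$ alone via a convolution estimate, which is why that term appears explicitly on the right-hand side.

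The main obstacle I anticipate is the reverse direction, specifically constructing the reproducing formula and verifying the uniform $H_2^\nu$-membership of the rescaled multipliers so that Lemma~\ref{lsat} is genuinely applicable --- in the classical $L_p$ setting one can afford cruder Fourier-multiplier tools, but here one is forced to produce band-limited pieces with explicit dilation structure to feed into Lemma~\ref{lsat}. A secondary technical point is that, since the statement uses the averaged quantity $t^{-d}\int_{B(0,t)}|\Delta_h^N f|\,dh$ rather than a supremum over $|h|\le t$, one should check (by Jensen / monotonicity in $t$) that the averaged and the sup-type difference norms are equivalent on the relevant function classes, so that the literature results quoted from \cite{Ho19} and \cite[4.3.1]{ysy} --- which may be phrased with the sup --- can be invoked directly. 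Apart from that, the argument is a routine, if lengthy, adaptation of the Triebel scheme, so in the paper one would most likely just cite \cite{Ho19} and \cite[4.3.1]{ysy} and indicate these two points.
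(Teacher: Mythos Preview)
The paper does not prove this proposition at all: it simply states the result and refers the reader to \cite{Ho19} and \cite[4.3.1]{ysy}. Your closing sentence anticipates this exactly. The detailed two-direction plan you outline (high/low frequency splitting with Bernstein-type bounds for one direction, a Calder\'on reproducing formula fed into Lemma~\ref{lsat} for the reverse) is the standard route and is essentially what is carried out in the cited references, so your sketch is consistent with what the paper outsources; but it goes well beyond what the paper itself contains.
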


Next we turn to the spaces $ \accentset{\diamond}{\ce}^{s}_{u,p,q} (\R)  $. Very much in the spirit of Lemma \ref{morrey43} is the following observation.

\begin{lem}\label{lem1}
 Let $1\le p < u <\infty$, $1\le  q \le \infty$ and $s>0$. Let $N \in \N$ such that $s<N$. Then
$ \accentset{\diamond}{\ce}^{s}_{u,p,q} (\R)$ is contained  in the set of all $f \in {\ce}^{s}_{u,p,q} (\R)$
such that
\be\label{ws-21}
 \lim_{r \downarrow 0}
|B(y,r)|^{\frac{1}{u}- \frac{1}{p}}\Big(\int_{B(y,r)} |f(x)|^p\,dx\Big)^{\frac{1}{p}}   =  0
\ee
and
\beq\label{ws-22}
\lim_{r\downarrow  0} \, |B(y,r)|^{\frac 1u - \frac 1p} \, \bigg[\int_{B(y,r)} \bigg(
  \int_0^1 t^{-sq} \Big( t^{-d} \int_{B(0,t)} |\Delta_h^N f(x)| \, dh\Big) ^q \frac{dt}{t}\bigg)^{\frac{p}{q}}
 dx\bigg]^{\frac{1}{p}} =  0\, ,
\eeq
both uniformly in $y\in \R$.
\end{lem}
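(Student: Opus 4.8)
The plan is to exploit the characterization of $\accentset{\diamond}{\ce}^{s}_{u,p,q}(\R)$ as the closure of $E^s_{u,p,q}(\R)$ together with the difference norm from Proposition \ref{diff}. First I would observe that, by Proposition \ref{diff}, for every $g\in {\ce}^{s}_{u,p,q}(\R)$ the quantity
\[
 \Phi_g(x):=\bigg(
  \int_0^1 t^{-sq} \Big( t^{-d} \int_{B(0,t)} |\Delta_h^N g(x)| \, dh\Big) ^q \frac{dt}{t}\bigg)^{\frac{1}{q}}
\]
(with the usual modification when $q=\infty$) belongs to $\cm^u_p(\R)$, and $\|\,|g|+\Phi_g\,|\cm^u_p(\R)\|$ is an equivalent norm on ${\ce}^{s}_{u,p,q}(\R)$. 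So the two assertions \eqref{ws-21} and \eqref{ws-22} say precisely that the two functions $|f|$ and $\Phi_f$ lie in $\mathring{\cm}^u_p(\R)$'s "local piece", i.e. they satisfy the vanishing-mean condition \eqref{ws-51} uniformly in $y$; this is exactly the condition isolated in Lemma \ref{morrey43}(ii) for membership in $\accentset{\diamond}{\cm}^u_p(\R)$.

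Next I would reduce to the dense subclass. Let $f\in\accentset{\diamond}{\ce}^{s}_{u,p,q}(\R)$ and pick $g\in E^s_{u,p,q}(\R)$ with $\|f-g\,|{\ce}^{s}_{u,p,q}(\R)\|<\ez$. Since $g$ and all its derivatives lie in ${\ce}^{s}_{u,p,q}(\R)$, Lemma \ref{emb} and Remark \ref{embbb} give $g,D^\alpha g\in L_\infty(\R)$; hence $|\Delta^N_h g(x)|\le c\,|h|\,\||\nabla g|\cdots|L_\infty\|$ locally, from which $\Phi_g(x)\le C$ uniformly and in fact $\Phi_g\in L_\infty$, and also $|g|\in L_\infty$. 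For bounded functions the local averages in \eqref{ws-51} tend to $0$ like $r^{d(1/u-1/p)}\to 0$, so $|g|$ and $\Phi_g$ satisfy \eqref{ws-21}, \eqref{ws-22}. It remains to transfer this to $f$ by an approximation argument: using the sublinearity of $\Delta^N_h$ one gets the pointwise bound $\Phi_f(x)\le \Phi_{g}(x)+\Phi_{f-g}(x)$, and likewise $|f|\le|g|+|f-g|$. Therefore
\[
 |B(y,r)|^{\frac1u-\frac1p}\Big\|\,|f|+\Phi_f\,\Big|L_p(B(y,r))\Big\|
 \le |B(y,r)|^{\frac1u-\frac1p}\Big\|\,|g|+\Phi_g\,\Big|L_p(B(y,r))\Big\| + \|f-g\,|{\ce}^{s}_{u,p,q}(\R)\|\,,
\]
where the last term is $<C\ez$ by Proposition \ref{diff}. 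Letting $r\downarrow 0$ kills the first term uniformly in $y$, and since $\ez>0$ was arbitrary we conclude \eqref{ws-21} and \eqref{ws-22}.

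\textbf{Main obstacle.} The delicate point is making the estimate $\Phi_{f-g}(x)\ls \|f-g\,|{\ce}^{s}_{u,p,q}(\R)\|$ rigorous at the level of the Morrey-norm of $\Phi$: the difference characterization of Proposition \ref{diff} controls $\|\Phi_{h}\,|\cm^u_p(\R)\|$ by $\|h\,|{\ce}^{s}_{u,p,q}(\R)\|$, so one must be careful that what appears on the right-hand side of the displayed inequality is really the $\cm^u_p$-norm over the ball $B(y,r)$ of $\Phi_{f-g}$, which is bounded by the full norm $\|\Phi_{f-g}\,|\cm^u_p(\R)\|\ls\|f-g\,|{\ce}^{s}_{u,p,q}(\R)\|$ uniformly in $y$ and $r$; this uniformity (independence of the ball) is exactly why the $\cm^u_p$-framework, rather than $L_p$, is the right one here. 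A secondary technicality is the case $q=\infty$, where the inner integral becomes an essential supremum and the triangle inequality for $\Phi$ must be stated accordingly; this is routine but should be mentioned. Everything else is a soft closure argument of the same flavour as Lemma \ref{morrey43}.
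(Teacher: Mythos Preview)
Your proposal is correct and follows essentially the same two-step strategy as the paper: first show the vanishing conditions for the dense subclass $E^s_{u,p,q}(\R)$ using that such functions and all their derivatives are bounded, then transfer to general $f\in\accentset{\diamond}{\ce}^{s}_{u,p,q}(\R)$ via a triangle-inequality/closure argument controlled by the equivalent difference norm of Proposition~\ref{diff}. The paper uses the specific approximants $S^M f$ (which belong to $E^s_{u,p,q}(\R)$ by Lemma~\ref{properties}), but your use of an arbitrary $g\in E^s_{u,p,q}(\R)$ with $\|f-g\,|\ce^s_{u,p,q}(\R)\|<\ez$ is equivalent.

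One small correction: your estimate $|\Delta_h^N g(x)|\le c\,|h|\,\|\nabla g\,|L_\infty\|$ gives only first-order decay in $|h|$, which makes $\Phi_g(x)$ diverge whenever $s\ge 1$ (the inner integral behaves like $\int_0^1 t^{(1-s)q}\,dt/t$). You need the $N$-th order mean value estimate
\[
 |\Delta_h^N g(x)|\le c\,|h|^N\,\max_{|\gamma|=N}\,\|D^\gamma g\,|L_\infty(\R)\|\,,
\]
which is available since $D^\alpha g\in L_\infty(\R)$ for all $\alpha$; with $s<N$ this indeed gives $\Phi_g\in L_\infty(\R)$, and the rest of your argument goes through.
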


\begin{proof}
{\em Step 1.}
In a first step we deal with functions $f$ belonging to $E^s_{u,p,q} (\R)$. Clearly, those functions
are uniformly Lipschitz continuous on $\R$, see the proof of Lemma \ref{properties}.
To see  \eqref{ws-21} in this situation we argue as follows.
Obviously we have
\[
\Big(\int_{B(y,r)} |f(x)|^p\,dx\Big)^{\frac{1}{p}} \le   \|\, f\, |L_\infty (\R)\|\, |B(y,r)|^{\frac{1}{p}} .
\]
Multiplying this inequality by $|B(y,r)|^{1/u-1/p}$ it follows for $u< \infty$ that
the right-hand side tends to $0$ (uniformly in $y$) if $r\downarrow 0$.
The argument for deriving \eqref{ws-22} is quite similar.
Recall that for a smooth function we have with $ N \in \mathbb{N}   $
\[
|\Delta_h^N f(x)| \le c_1\, \Big(\max_{|\alpha|\le N} \sup_{y\in \R} \, | \, D^\alpha f (y)\, |\Big) \, |h|^N\, , \qquad x,h \in \R\, ,
\]
with a constant $c_1$ independent of $f,x$ and $h$.
Hence
\[
\bigg(
  \int_0^1 t^{-sq} \Big( t^{-d} \int_{B(0,t)} |\Delta_h^N f(x)| \, dh\Big) ^q \frac{dt}{t}\bigg)^{\frac{1}{q}} \le  c_2\, \Big(
  \int_0^1 t^{-sq} t^{Nq} \, \frac{dt}{t}\Big)^{\frac{1}{q}}\le c_3 < \infty
\]
for some $c_3$ independent of $x$.
This implies
\beqq
|B(y,r)|^{\frac 1u - \frac 1p}\,
\bigg[\int_{B(y,r)}  \bigg(
  \int_0^1 t^{-sq} \Big( t^{-d} \int_{B(0,t)} |\Delta_h^N f(x)| \, dh\Big) ^q \frac{dt}{t}\bigg)^{\frac{p}{q}}
 dx\bigg]^{\frac{1}{p}} \le  c_3 \, |B(y,r)|^{\frac 1u}
\eeqq
and therefore the claim follows.
\\
{\em Step 2.} Now we turn to the general case.
Let $f \in \accentset{\diamond}{\ce}^s_{u,p,q} (\R)$ and let $\varepsilon >0$  be given.
Then with $ M \in \mathbb{N}  $ it follows
\begin{align*}
& |B(y,r)|^{\frac 1u - \frac 1p}\,
\bigg[\int_{B(y,r)}  \bigg(
  \int_0^1 t^{-sq} \Big( t^{-d} \int_{B(0,t)} |\Delta_h^N f(x)| \, dh\Big) ^q \frac{dt}{t}\bigg)^{\frac{p}{q}}
 dx\bigg]^{\frac{1}{p}}
 \\
& \qquad \le  \|\, f-S^M f\, |{\ce}^s_{u,p,q} (\R)\| \\
& \qquad \qquad + |B(y,r)|^{\frac 1u - \frac 1p}\bigg[\int_{B(y,r)}  \bigg(
  \int_0^1 t^{-sq} \Big( t^{-d} \int_{B(0,t)} |\Delta_h^N (S^M f)(x)| \, dh\Big) ^q \frac{dt}{t}\bigg)^{\frac{p}{q}}
 dx\bigg]^{\frac{1}{p}} \, .
 \end{align*}
 The second term on the righ-hand side becomes smaller than $\varepsilon >0$ if $r\le r_0 (\varepsilon)$ since $S^M f \in E^s_{u,p,q}(\R)$ and therefore we may use Step 1.
 The first term on the right-hand side will be smaller than $\varepsilon >0$ if $M\ge  M_0 (\varepsilon)$ thanks to Proposition \ref{diamond1}.
 Both statements hold uniformly in $y$.
 This proves \eqref{ws-22}.
 The convergence in \eqref{ws-21} can be proved in a similar way.
\end{proof}

Now we turn to the converse.

\begin{lem}\label{lem2}
 Let $1\le p < u <\infty$, $1\le  q < \infty$ and $s>0$. Let $N \in \N$ such that $s<N$.
 Let $f \in {\ce}^{s}_{u,p,q} (\R)$ be a function with compact support and such that
\eqref{ws-21}, \eqref{ws-22} hold uniformly in $y\in \R$.
Then $f\in  \accentset{\diamond}{\ce}^{s}_{u,p,q} (\R)$.
\end{lem}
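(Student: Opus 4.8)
The plan is to reverse the approximation scheme used in Lemma \ref{lem1}: there we started from $f\in\accentset{\diamond}{\ce}^s_{u,p,q}(\R)$ and derived the vanishing-integral conditions; now we must show that these conditions, together with compact support, force $f$ to be approximable in $\ce^s_{u,p,q}(\R)$ by functions in $E^s_{u,p,q}(\R)$. The natural candidates for the approximating sequence are the mollifications $f\ast\rho_j$, which by Lemma \ref{properties2}(i) already lie in $E^s_{u,p,q}(\R)$; so by Proposition \ref{diamond4} it suffices to prove $\|f\ast\rho_j-f\,|\ce^s_{u,p,q}(\R)\|\to 0$, or equivalently, using Proposition \ref{diamond1}, that $\|f-S^Nf\,|\ce^s_{u,p,q}(\R)\|\to 0$. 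Either way the real work is to estimate a tail/remainder norm and show it is controlled by the two hypotheses \eqref{ws-21}, \eqref{ws-22}.

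The key step is to pass to the difference characterization of the norm from Proposition \ref{diff}. Write $g:=f-S^Mf$ (or $g:=f-f\ast\rho_j$). One wants to bound
\[
\|\, g \, |\cm_{p}^u (\R)\| + \bigg\|\bigg(
  \int_0^1 t^{-sq} \Big( t^{-d} \int_{B(0,t)} |\Delta_h^N g(x)| \, dh\Big) ^q \frac{dt}{t}\bigg)^{\frac{1}{q}}\bigg| \cm^u_p (\R)\bigg\|\, .
\]
Because $f$ has compact support, only balls $B(y,r)$ that meet a fixed bounded set contribute to these Morrey norms, and for such balls of small radius the hypotheses \eqref{ws-21} and \eqref{ws-22} give, uniformly in $y$, that the local contributions are $o(1)$ as $r\downarrow 0$. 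For balls of radius bounded away from zero one uses instead the convergence $S^Mf\to f$ (resp.\ $f\ast\rho_j\to f$) in $\ce^s_{u,p,q}(\R^d)$-type quantities on the fixed compact set, i.e.\ a ``splitting into small scales and bulk scales'' argument: fix $\varepsilon>0$, choose $r_0$ so the small-ball terms are $<\varepsilon$ via \eqref{ws-21}--\eqref{ws-22}, then on the finitely many remaining (comparable-to-$1$) scales let $M\to\infty$ (resp.\ $j\to\infty$). One must check that mollification/truncation does not destroy the uniform smallness in \eqref{ws-21}--\eqref{ws-22}; this follows from the standard fact that $f\mapsto f\ast\rho_j$ and $f\mapsto S^Mf$ act as uniformly bounded Fourier multipliers on $\cm^u_p(\R^d)$ (Lemma \ref{lsat}) and commute appropriately with the difference operator $\Delta_h^N$.

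I expect the main obstacle to be the bookkeeping in the Morrey supremum when $g=f-S^Mf$: unlike $f$, the remainder $S^Mf$ is not compactly supported, so one cannot naively restrict attention to a fixed bounded region for $g$ itself. The remedy is to keep $f$ (with its compact support) on one side of a triangle inequality and treat $\|S^Mf-f\|$ and the local difference integrals of $S^Mf$ separately, exactly mirroring Step 2 of the proof of Lemma \ref{lem1} but run in the opposite logical direction; alternatively, work directly with $f\ast\rho_j$, which \emph{does} have support in a fixed enlargement of $\supp f$, thereby sidestepping the issue entirely. A secondary technical point, needed to invoke Proposition \ref{diff}, is that the hypothesis $1\le q<\infty$ (rather than $q\le\infty$) must be used, since the diamond space is the \emph{closure} of $E^s_{u,p,q}$ and the mollified functions genuinely converge only when $q<\infty$; this is why the statement of Lemma \ref{lem2} restricts to $q<\infty$.
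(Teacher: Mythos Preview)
Your overall architecture---reduce via Proposition \ref{diamond1} to $\|f-S^Mf\,|\ce^s_{u,p,q}(\R)\|\to 0$, pass to the difference characterization of Proposition \ref{diff}, and split the Morrey supremum into small balls (controlled by the hypotheses \eqref{ws-21}, \eqref{ws-22} together with the uniform boundedness of $S^M$) and large balls---is exactly what the paper does. The small-ball part is fine.

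The genuine gap is your treatment of the large-ball part. You write that for balls of radius bounded below one should ``use the convergence $S^Mf\to f$ in $\ce^s_{u,p,q}$-type quantities on the fixed compact set,'' but as stated this is circular: that convergence is precisely what you are trying to establish. The missing idea, which the paper supplies, is to drop down to the \emph{classical} Lizorkin--Triebel space. Since $f$ has compact support and belongs to $\ce^s_{u,p,q}(\R)$, a one-line comparison of norms on the (fixed) support shows $f\in F^s_{p,q}(\R)=\ce^s_{p,p,q}(\R)$. For balls $B$ with $|B|\ge\delta$ the weight satisfies $|B|^{1/u-1/p}\le\delta^{1/u-1/p}$, so the large-ball contribution is dominated by $\delta^{1/u-1/p}\,\|f-S^Mf\,|F^s_{p,q}(\R)\|$. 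Now, because $q<\infty$, Lemma \ref{diamond3} gives $F^s_{p,q}(\R)=\accentset{\diamond}F^s_{p,q}(\R)$, and hence Proposition \ref{diamond1} yields $\|f-S^Mf\,|F^s_{p,q}(\R)\|\to 0$. This is what closes the argument, and it is also the true reason the hypothesis $q<\infty$ is needed.

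Your alternative of using mollifiers $f\ast\rho_j$ does preserve compact support, but it does not sidestep the issue: you would still need the difference-norm of $f-f\ast\rho_j$ to tend to zero in $L_p$ on the fixed enlargement of $\supp f$, and that is again exactly the $F^s_{p,q}$-convergence above.
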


\begin{proof}
 Because of Proposition \ref{diamond1} it is enough to prove
\[
\lim_{M \to \infty} \Vert f - S^M f \vert {\ce}^{s}_{u,p,q} (\R) \Vert =0 .
\]
Using Proposition \ref{diff} this can be reduced  to show
\begin{equation}\label{di_dif_2_e2}
\lim_{M \to \infty} \Vert f - S^M f \vert \mathcal{M}^{u}_{p}(\R) \Vert =0
\end{equation}
and
\begin{equation}\label{di_dif_2_e3}
\lim_{M \to \infty} \Big \Vert \Big (
  \int_0^1 t^{-sq} \Big( t^{-d} \int_{B(0,t)} |\Delta_h^N (f - S^M f)(x)| \, dh\Big) ^q \frac{dt}{t}\Big)^{\frac{1}{q}}\Big| \mathcal{M}^u_p (\R)\Big \Vert = 0.
\end{equation}
{\em Step 1.} We shall show \eqref{di_dif_2_e2}. Let $ f \in  {\ce}^{s}_{u,p,q} (\R) $. Let $ M \in \mathbb{N}  $ and $ 0 < \sigma < s   $. Then we find
\beqq
\| \, f-S^M f\, |  \cm^u_p (\R) \| & \le &  \, \Big \| \sum_{j= M + 1}^\infty
|\cfi [\varphi_j \cf f](\, \cdot \, )|\, \Big |\cm^u_p (\R) \Big  \|
\\
& \leq &   \, c_{1} 2^{-M \sigma} \Big \| \sum_{j= M + 1}^\infty 2^{j \sigma}
|\cfi [\varphi_j \cf f](\, \cdot \, )|\, \Big |\cm^u_p (\R) \Big  \|
\\
& \le &   \, c_{1} 2^{-M \sigma } \,  \| \, f \, |\ce^{\sigma}_{u,p,1} (\R)\|  \, ,
\\
& \le &   \, c_{2} 2^{-M \sigma } \,  \| \, f \, |\ce^{s}_{u,p,q} (\R)\|  \, .
\eeqq
We used Definition \ref{LTMS}.
Here $c_2$ is independent of $f$ and $M \in \mathbb{N} $.
So because of $ f \in  {\ce}^{s}_{u,p,q} (\R)   $ if $ M $ tends to infinity  \eqref{di_dif_2_e2} follows.
\\
{\em Step 2.} Next we prove \eqref{di_dif_2_e3}. Let $B$ stand for every ball in $\R$.
Since $ f $ satisfies \eqref{ws-22}, for every $ \varepsilon > 0 $, we find some  $ \delta > 0 $ such that
\begin{align*}
\sup_{|B| < \delta}   |B|^{\frac 1u - \frac 1p}\,
\bigg[\int_{B} \bigg(
  \int_0^1 t^{-sq} \Big( t^{-d} \int_{B(0,t)} |\Delta_h^N f(x)| \, dh\Big) ^q \frac{dt}{t}\bigg)^{\frac{p}{q}}
 dx\bigg]^{\frac{1}{p}} \leq \varepsilon .
\end{align*}
The generalized Minkowski inequality and a standard convolution inequality yield
\beqq
\bigg[\int_{B} \bigg(&& \hspace{-0.4cm}
  \int_0^1 t^{-sq} \Big( t^{-d} \int_{B(0,t)} |\Delta_h^N S^M f(x)| \, dh\Big) ^q \frac{dt}{t}\bigg)^{\frac{p}{q}}
 dx\bigg]^{\frac{1}{p}}
 \\
 &\le & c_3 \, \bigg[\int_{B} \bigg(
  \int_0^1 t^{-sq} \Big( t^{-d} \int_{B(0,t)} |\Delta_h^N  f(x)| \, dh\Big) ^q \frac{dt}{t}\bigg)^{\frac{p}{q}}
 dx\bigg]^{\frac{1}{p}}
\eeqq
with $c_3$ independent of $B$ and $f$. Consequently we get
\beq\label{wss-00}
&&\hspace*{-0.5cm} \Big \Vert \Big (
  \int_0^1 t^{-sq} \Big( t^{-d} \int_{B(0,t)} |\Delta_h^N (f - S^M f)(x)| \, dh\Big) ^q \frac{dt}{t}\Big)^{\frac{1}{q}}\Big| \mathcal{M}^u_p (\R)\Big \Vert
\\
 & & \hspace*{-0.5cm} \leq c_4 \,  \varepsilon  +   \sup_{|B| \geq \delta}   |B|^{\frac 1u - \frac 1p}\,
\bigg[\int_{B} \bigg(
  \int_0^1 t^{-sq} \Big( t^{-d} \int_{B(0,t)} |\Delta_h^N (f - S^{M} f)(x)| \, dh\Big) ^q \frac{dt}{t}\bigg)^{\frac{p}{q}}
 dx\bigg]^{\frac{1}{p}}. \nonumber
\eeq
Since $f \in {\ce}^{s}_{u,p,q} (\R)$ the supremum on the right-hand side is finite.
By the definition of the supremum there exists a sequence of balls $B_j:=B(y_{j},r_{j})$ with $j \in \N$ and $ \vert B(y_{j},r_{j})   \vert \geq \delta   $ such that
\beq\label{wss-01}
&& \sup_{|B| \geq \delta}   |B|^{\frac 1u - \frac 1p}\,
\bigg[\int_{B} \bigg(
  \int_0^1 t^{-sq} \Big( t^{-d} \int_{B(0,t)} |\Delta_h^N (f - S^{M} f)(x)| \, dh\Big) ^q \frac{dt}{t}\bigg)^{\frac{p}{q}}
 dx\bigg]^{\frac{1}{p}} \nonumber
 \\
&& \qquad  \leq  \frac{1}{j} +   |B_j|^{\frac 1u - \frac 1p}\,
\bigg[\int_{B_j} \bigg(
  \int_0^1 t^{-sq} \Big( t^{-d} \int_{B(0,t)} |\Delta_h^N (f - S^{M} f)(x)| \, dh\Big) ^q \frac{dt}{t}\bigg)^{\frac{p}{q}}
 dx\bigg]^{\frac{1}{p}} \nonumber
 \\
&& \qquad \leq  \frac{1}{j} +   |B_j|^{\frac 1u - \frac 1p}\,
\bigg[\int_{\R} \bigg(
  \int_0^1 t^{-sq} \Big( t^{-d} \int_{B(0,t)} |\Delta_h^N (f - S^{M} f)(x)| \, dh\Big) ^q \frac{dt}{t}\bigg)^{\frac{p}{q}}dx\bigg]^{\frac{1}{p}} \nonumber
  \\
&& \qquad \leq  \frac{1}{j} +  c_5  \,   \delta ^{\frac 1u - \frac 1p}\,
\Vert  f - S^{M} f  \vert  F^{s}_{p,q}(\R)  \Vert.
\eeq
Here in the last step we used Proposition \ref{diff} for the original Lizorkin-Triebel spaces $ F^{s}_{p,q}(\R)  $, i.e., in case $p=u$, see also \cite[2.5.11]{t83}.
\\
{\em Substep 2.1}. We claim that a function $ f \in {\ce}^{s}_{u,p,q} (\R)  $ with
compact support belongs to  $    F^{s}_{p,q}(\R)  $ as well.
We may assume $\supp f \subset B(0,R)$ for some $R>1$. Based on Proposition \ref{diff}
we observe that
\beq\label{wss-02}
&&
\Vert f  \vert L_{p}(\R) \Vert  +  \Big \Vert \Big ( \int_0^1 t^{-sq} \Big( t^{-d} \int_{B(0,t)} |\Delta_h^N f(x)| \, dh\Big) ^q \frac{dt}{t}\Big)^{\frac{1}{q}}\Big| L_{p} (\R)\Big \Vert
\nonumber
\\
&& \qquad =   \Vert f  \vert L_{p}(B(0,R)) \Vert +  \Big \Vert \Big ( \int_0^1 t^{-sq} \Big( t^{-d} \int_{B(0,t)} |\Delta_h^N f(x)| \, dh\Big) ^q \frac{dt}{t}\Big)^{\frac{1}{q}}\Big| L_{p} (B(0,R + N) )\Big \Vert
\nonumber
\\
&& \qquad \leq    |B(0,R)|^{-\frac 1u + \frac 1p}  \Vert f  \vert \mathcal{M}^{u}_{p}(\R) \Vert
\nonumber
\\
&& \qquad \qquad +     |B(0,R+N)|^{-\frac 1u + \frac 1p} \Big \Vert \Big ( \int_0^1 t^{-sq} \Big( t^{-d} \int_{B(0,t)} |\Delta_h^N f(x)| \, dh\Big) ^q \frac{dt}{t}\Big)^{\frac{1}{q}}\Big| \mathcal{M}^{u}_{p}(\R) \Big \Vert
\nonumber
\\
&& \qquad \leq  c_6 \, (R + N)^{d(\frac 1p-\frac 1u )} \Vert f  \vert {\ce}^{s}_{u,p,q} (\R) \Vert.
\eeq
Hence $f \in   F^{s}_{p,q}(\R)$.
\\
{\em Substep 2.2.} Next we shall use Lemma \ref{diamond3} and Proposition \ref{diamond1}. Because of
$ f \in   F^{s}_{p,q}(\R) =\ce^s_{p,p,q}(\R) = \accentset{\diamond}{\ce}^{s}_{p,p,q} (\R)$
and $1\le  p, q < \infty  $ we get
\be\label{wss-03}
\lim_{M \rightarrow \infty } \Vert  f - S^{M} f  \vert  F^{s}_{p,q}(\R)  \Vert = 0.
\ee
Finally, we collect \eqref{wss-00}-\eqref{wss-03} together and find for fixed $\varepsilon$
and associated $\delta$
\beqq
&&\hspace*{-0.5cm} \Big \Vert \Big (
  \int_0^1 t^{-sq} \Big( t^{-d} \int_{B(0,t)} |\Delta_h^N (f - S^M f)(x)| \, dh\Big) ^q \frac{dt}{t}\Big)^{\frac{1}{q}}\Big| \mathcal{M}^u_p (\R)\Big \Vert
  \\
 && \qquad \qquad \qquad \leq   c_4 \,  \varepsilon  +  \frac{1}{j} +  c_5 \,   \delta ^{\frac 1u - \frac 1p}\,
\Vert  f - S^{M} f  \vert  F^{s}_{p,q}(\R)  \Vert \le c_7 \, \varepsilon + \frac{1}{j}
\eeqq
if $M$ is chosen large enough. So if $ j $ tends to infinity this proves \eqref{di_dif_2_e3}. The proof is complete.
\end{proof}

To continue we have to deal with the following subspaces of $\ce^s_{u,p,q} (\R)$.

\begin{defi}
Let $1\le p < u <\infty$, $1\le  q < \infty$ and $s>0$.
Let $B$ be a ball in $\R$. Then $\ce^s_{u,p,q} (\R;B)$ is the collection of all $f\in \ce^s_{u,p,q}(\R)$ satisfying $\supp f \subset B$.
\end{defi}

Putting together Lemma \ref{lem1} and \ref{lem2} we obtain the following theorem which will be our main tool for what follows.

\begin{satz}\label{good2}
 Let $1\le p < u <\infty$, $1\le  q < \infty$, $s>0$ and   let $B$ be a ball in $\R$.
Then
$f \in {\ce}^{s}_{u,p,q} (\R;B)$  belongs to
$ \accentset{\diamond}{\ce}^{s}_{u,p,q} (\R)$
if and only if \eqref{ws-21} and \eqref{ws-22} hold uniformly in $y\in \R$.
\end{satz}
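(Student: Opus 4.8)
The plan is to simply combine the two preceding lemmas, which together almost immediately give the equivalence. The statement of Theorem~\ref{good2} restricts attention to functions $f \in {\ce}^{s}_{u,p,q}(\R;B)$, i.e.\ to functions in ${\ce}^{s}_{u,p,q}(\R)$ with support contained in the fixed ball $B$; in particular every such $f$ has compact support. Thus the hypotheses have been arranged precisely so that both Lemma~\ref{lem1} and Lemma~\ref{lem2} apply to such an $f$.

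For the ``only if'' direction I would invoke Lemma~\ref{lem1}: if $f \in \accentset{\diamond}{\ce}^{s}_{u,p,q}(\R)$, then $f$ lies in the set of all elements of ${\ce}^{s}_{u,p,q}(\R)$ satisfying \eqref{ws-21} and \eqref{ws-22} uniformly in $y \in \R$. This uses only $f \in \accentset{\diamond}{\ce}^{s}_{u,p,q}(\R)$ and the standing parameter restrictions $1 \le p < u < \infty$, $1 \le q \le \infty$, $s > 0$, $s < N$ (the last two come free since $q < \infty$ here and $N$ is chosen with $s<N$), so no use of the compact support is even needed in this direction.

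For the ``if'' direction I would invoke Lemma~\ref{lem2}: assume $f \in {\ce}^{s}_{u,p,q}(\R;B)$ satisfies \eqref{ws-21} and \eqref{ws-22} uniformly in $y \in \R$. Since $\supp f \subset B$, the function $f$ has compact support, and with $1 \le q < \infty$ the hypotheses of Lemma~\ref{lem2} are met, so $f \in \accentset{\diamond}{\ce}^{s}_{u,p,q}(\R)$. Combining the two implications yields the asserted equivalence, completing the proof. There is essentially no obstacle here beyond checking that the parameter ranges of Theorem~\ref{good2} are contained in those of both lemmas (they are, once one notes $q < \infty$), and that ``$f \in \ce^s_{u,p,q}(\R;B)$'' supplies the ``compact support'' assumption required by Lemma~\ref{lem2}. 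All the real work was done in the proofs of Lemmas~\ref{lem1} and~\ref{lem2}, and in the characterization of $\accentset{\diamond}{\ce}^{s}_{u,p,q}(\R)$ via $S^N$ (Proposition~\ref{diamond1}) and via differences (Proposition~\ref{diff}) on which those lemmas rest.
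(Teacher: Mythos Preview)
Your proposal is correct and matches the paper's approach exactly: the paper simply states that the theorem follows by ``putting together Lemma~\ref{lem1} and Lemma~\ref{lem2}'', which is precisely the combination you describe, with the compact-support hypothesis of Lemma~\ref{lem2} supplied by the restriction to ${\ce}^{s}_{u,p,q}(\R;B)$.
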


%&&&&&&&&&&&&&&&&&&&&&&&&&&&&&&&&&&&&&&&&&&&&&&&&&&&&&&&&&&&&&&&&&&&&
%&&&&&&&&&&&&&&&&&&&&&&&&&&&&&&&&&&&&&&&&&&&&&&&&&&&&&&&&&&&&&&&&&&&

\subsection{On the intersection of Lizorkin-Triebel-Morrey spaces}

%&&&&&&&&&&&&&&&&&&&&&&&&&&&&&&&&&&&&&&&&&&&&&&&&&&&&&&&&&&&&&&&&&&&&&
%&&&&&&&&&&&&&&&&&&&&&&&&&&&&&&&&&&&&&&&&&&&&&&&&&&&&&&&&&&&&&&&&&&&

Intersections of Lizorkin-Triebel-Morrey spaces will play a role in the description
of the interpolation spaces, see Section \ref{s-interpolation}.
In particular we are interested in properties of
${\ce}^{s_0}_{u_0,p_0,q_0} (\R) \cap {\ce}^{s_1}_{u_1,p_1,q_1} (\R)$.

\begin{lem}\label{step1}
Let $\tz\in(0,1)$, $s_i\in\rr$,
$p_i \in [1,\infty)$, $q_i\in[1,\fz]$ and $u_i\in[p_i,\fz)$ with $i\in\{0,1\}$
such that $s=(1-\tz)s_0+\tz s_1$,
\[
\frac1p=\frac{1-\tz}{p_0}+\frac\tz{p_1}\, , \quad \frac1q=\frac{1-\tz}{q_0}+\frac\tz{q_1}\, \quad
\mbox{and} \quad \frac1u=\frac{1-\tz}{u_0}+\frac{\tz}{u_1}\, .
\]
Then we have
\[
{\ce}^{s_0}_{u_0,p_0,q_0} (\R) \cap {\ce}^{s_1}_{u_1,p_1,q_1} (\R) \hookrightarrow
{\ce}^{s}_{u,p,q} (\R) \, .
\]
\end{lem}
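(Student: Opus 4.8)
The statement is an interpolation-type embedding for the intersection of two Lizorkin-Triebel-Morrey spaces, with exponents $(s,p,q,u)$ obtained by the usual convex-combination recipe from $(s_i,p_i,q_i,u_i)$, $i=0,1$. The natural strategy is to reduce everything to a pointwise/termwise estimate on the Littlewood-Paley pieces $f_j := \cfi[\varphi_j \cf f]$, and then apply a H\"older-type inequality adapted to Morrey spaces. Concretely, I would fix $f \in {\ce}^{s_0}_{u_0,p_0,q_0}(\R) \cap {\ce}^{s_1}_{u_1,p_1,q_1}(\R)$ and write $g_j := 2^{js}|f_j|$, $g_j^{(i)} := 2^{js_i}|f_j|$ for $i=0,1$. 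Then, since $s = (1-\tz)s_0 + \tz s_1$, we have the pointwise identity
\[
g_j = \big(g_j^{(0)}\big)^{1-\tz}\big(g_j^{(1)}\big)^{\tz}\, ,
\]
so that, by H\"older's inequality in the $\ell^q$ sense (with exponents coming from $\frac1q = \frac{1-\tz}{q_0} + \frac{\tz}{q_1}$), one gets the pointwise bound
\[
\Big(\sum_{j} g_j^q\Big)^{1/q} \le \Big(\sum_j \big(g_j^{(0)}\big)^{q_0}\Big)^{(1-\tz)/q_0}\Big(\sum_j \big(g_j^{(1)}\big)^{q_1}\Big)^{\tz/q_1}\, .
\]

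**The Morrey H\"older step.** It then remains to take the $\cm_p^u(\R)$-quasinorm of the left-hand side and bound it by the product of the $\cm_{p_0}^{u_0}$-quasinorm of the first factor and the $\cm_{p_1}^{u_1}$-quasinorm of the second. This is exactly the H\"older inequality for Morrey spaces: if $\frac1p = \frac{1-\tz}{p_0} + \frac{\tz}{p_1}$ and $\frac1u = \frac{1-\tz}{u_0} + \frac{\tz}{u_1}$, then for nonnegative $G_0, G_1$ one has $\|G_0^{1-\tz}G_1^{\tz} \mid \cm_p^u(\R)\| \le \|G_0 \mid \cm_{p_0}^{u_0}(\R)\|^{1-\tz}\|G_1 \mid \cm_{p_1}^{u_1}(\R)\|^{\tz}$. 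This is proved ball-by-ball: on a fixed ball $B$, apply the classical H\"older inequality with exponents $\frac{p_0}{p(1-\tz)}$ and $\frac{p_1}{p\tz}$ (which are conjugate precisely because of the relation on $p$) to $\int_B G_0^{p(1-\tz)}G_1^{p\tz}$, and then check that the powers of $|B|$ match up using the relation on $u$. Applying this with $G_i = \big(\sum_j (g_j^{(i)})^{q_i}\big)^{1/q_i}$ yields
\[
\|f \mid {\ce}^s_{u,p,q}(\R)\| \le \|f \mid {\ce}^{s_0}_{u_0,p_0,q_0}(\R)\|^{1-\tz}\,\|f \mid {\ce}^{s_1}_{u_1,p_1,q_1}(\R)\|^{\tz}\, ,
\]
which is stronger than the asserted embedding (and implies it via $a^{1-\tz}b^{\tz} \le (1-\tz)a + \tz b \le a+b$).

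**Main obstacle.** The only genuinely delicate point is the case $q = \infty$, which forces $q_0 = q_1 = \infty$; here the $\ell^q$ part degenerates to a supremum and the "H\"older in $\ell^q$" step becomes the trivial bound $\sup_j g_j \le (\sup_j g_j^{(0)})^{1-\tz}(\sup_j g_j^{(1)})^{\tz}$, so no real difficulty arises — one just has to state the usual modification. More care is needed with the $\cm_p^u$ H\"older inequality when some $p_i = u_i$ (the Lebesgue endpoint), but this is covered by the same computation since the exponent arithmetic is unaffected. I expect the bookkeeping of the two H\"older inequalities (one in the sequence variable $j$, one in the space variable, over each ball) to be the part requiring the most attention, but both are entirely standard; no Fourier-analytic input beyond the definition of the $\ce$-quasinorm in terms of the $\varphi_j$ is needed.
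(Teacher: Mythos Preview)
Your proposal is correct and follows essentially the same approach as the paper: first apply H\"older's inequality in the $\ell_q$-variable using $s=(1-\tz)s_0+\tz s_1$ and the relation on $q$, then apply H\"older's inequality in $L_p$ over each ball $B(y,r)$ (with the relation on $p$ and $u$ making the powers of $|B|$ match). The paper's proof is slightly terser and stops at the ball-by-ball $L_p$ estimate without explicitly stating the resulting multiplicative inequality, but the content is identical.
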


\begin{proof}
Because of our assumptions and H\"older's inequality we have
\[
 \|\, (2^{js}\, a_j)_{j=0}^\infty|\ell_q\|\le
  \|\, (2^{js_0}\, a_j)_{j=0}^\infty|\ell_{q_0}\|^{1-\Theta}\,
  \|\, (2^{js_1}\, a_j)_{j=0}^\infty|\ell_{q_1}\|^\Theta\, .
\]
This will be applied with $a_j := \cfi [\varphi_j \cf f]$ and $j \in \mathbb{N}_{0}$.
We continue by a further application of  H\"older's inequality and find
\beqq
\Big\| \, \|\,
&& \hspace{-0.5cm}(2^{js}\, a_j)_{j=0}^\infty|\ell_q\|\, \Big|L_p (B(y,r))\Big\|
\\
& \le &
\Big\| \,  \|\, (2^{js_0}\, a_j)_{j=0}^\infty|\ell_{q_0}\|^{1-\Theta}\,
  \|\, (2^{js_1}\, a_j)_{j=0}^\infty|\ell_{q_1}\|^\Theta\, \Big|L_p (B(y,r))\Big\|
\\
& \le &
\Big\| \,  \|\, (2^{js_0}\, a_j)_{j=0}^\infty|\ell_{q_0}\|\, \Big | L_{p_0}(B(y,r))\Big\|^{1-\Theta}\,\Big\|\,  \|\, (2^{js_1}\, a_j)_{j=0}^\infty|\ell_{q_1}\|\, \Big|L_{p_1} (B(y,r))\Big\|^\Theta\, ,
  \eeqq
which proves the claim.
\end{proof}

We need to improve Lemma \ref{step1}. Therefore we have to accept stronger restrictions.

\begin{lem}\label{step2}
Let $\tz\in(0,1)$, $0\le s_0 \le s_1$,
$1\le p_0 < p_1 <\infty $, $1 \le q_0,q_1 \le \fz$, $\min(q_0,q_1) <\infty$, $p_0 < u_0$, $p_1 < u_1 $ and
$u_0 < u_1$, such that $s=(1-\tz)s_0+\tz s_1$,
\[
\frac1p=\frac{1-\tz}{p_0}+\frac\tz{p_1}\, , \quad \frac1q=\frac{1-\tz}{q_0}+\frac\tz{q_1}\, \quad
\mbox{and} \quad \frac1u=\frac{1-\tz}{u_0}+\frac{\tz}{u_1}\, .
\]
In addition we assume
either $s_0 < s_1$ or $0 < s_0 = s_1$ and $q_1\le q_0$. Let $B$ be a ball in $\R$. Then we have
\[
{\ce}^{s_0}_{u_0,p_0,q_0} (\R;B) \cap {\ce}^{s_1}_{u_1,p_1,q_1} (\R;B) \hookrightarrow
\accentset{\diamond}{\ce}^{s}_{u,p,q} (\R) \, .
\]
\end{lem}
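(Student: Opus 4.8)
The strategy is to combine the embedding from Lemma \ref{step1} with the characterization of the diamond space given in Theorem \ref{good2}. Let $f \in {\ce}^{s_0}_{u_0,p_0,q_0}(\R;B)\cap {\ce}^{s_1}_{u_1,p_1,q_1}(\R;B)$. By Lemma \ref{step1} we already know $f \in {\ce}^{s}_{u,p,q}(\R)$, and since $\supp f \subset B$ we have $f\in {\ce}^{s}_{u,p,q}(\R;B)$. Under our parameter assumptions $1\le p < u < \infty$, $1\le q < \infty$ and $s>0$ (the latter because $s=(1-\tz)s_0+\tz s_1$ with $s_0\le s_1$ and either $s_0<s_1$ or $0<s_0$, so $s>0$), Theorem \ref{good2} applies: it suffices to verify \eqref{ws-21} and \eqref{ws-22} for $f$, uniformly in $y\in\R$. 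Fix $N\in\N$ with $s<N$; enlarging $N$ we may also assume $s_1<N$ if $s_1>0$, and handle the case $s_0=0$ or $s_1=0$ separately via the Morrey-norm term only.

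For \eqref{ws-21}, the plan is an interpolation-type Hölder estimate at the level of the local $L_p$-average. Writing $\frac1p = \frac{1-\tz}{p_0}+\frac{\tz}{p_1}$ and $\frac1u = \frac{1-\tz}{u_0}+\frac{\tz}{u_1}$, Hölder's inequality on $B(y,r)$ gives
\[
|B(y,r)|^{\frac1u-\frac1p}\Big(\int_{B(y,r)}|f|^p\Big)^{\frac1p}
\le \Big(|B(y,r)|^{\frac1{u_0}-\frac1{p_0}}\|f\|_{L_{p_0}(B(y,r))}\Big)^{1-\tz}
\Big(|B(y,r)|^{\frac1{u_1}-\frac1{p_1}}\|f\|_{L_{p_1}(B(y,r))}\Big)^{\tz}.
\]
The second factor is bounded by $\|f\,|{\ce}^{s_1}_{u_1,p_1,q_1}(\R)\|^{\tz}$ (via $\ce^{s_1}_{u_1,p_1,q_1}\hookrightarrow\cm^{u_1}_{p_1}$ when $s_1\ge 0$; for $s_1>0$ this embedding holds, and the case $s_1=0$ only occurs if $s_0=s_1=0$, which is excluded by hypothesis). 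The first factor tends to $0$ uniformly in $y$ as $r\downarrow 0$: indeed $\ce^{s_0}_{u_0,p_0,q_0}(\R;B)$ functions, having compact support and lying in a Lizorkin-Triebel-Morrey space with smoothness $\ge 0$, satisfy the limit in \eqref{ws-51} — this is essentially Lemma \ref{morrey43}(ii) applied after noting such an $f$ lies in $\mathring{\cm}^{u_0}_{p_0}$ or $\accentset{\diamond}{\cm}^{u_0}_{p_0}$; alternatively one estimates directly $|B(y,r)|^{\frac1{u_0}-\frac1{p_0}}\|f\|_{L_{p_0}(B(y,r))}$ using that $f$ has higher integrability coming from the embedding $\ce^{s_0}_{u_0,p_0,q_0}\hookrightarrow\ce^{0}_{u_0,p_0,q_0}\hookrightarrow\cm^{u_0}_{p_0}$ and a genuine gain in local integrability from $s_0>0$ (Sobolev embedding) when $s_0>0$, while for $s_0=0$ one uses only the first strict inequality $u_0<u_1$ together with Hölder to trade integrability. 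Multiplying two bounded/vanishing factors yields \eqref{ws-21}.

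The analogous but heavier estimate handles \eqref{ws-22}. Introduce the difference functional
\[
g(x) := \bigg(\int_0^1 t^{-sq}\Big(t^{-d}\int_{B(0,t)}|\Delta_h^N f(x)|\,dh\Big)^q\frac{dt}{t}\bigg)^{1/q},
\]
and similarly $g_0, g_1$ with $(s_0,q_0)$ and $(s_1,q_1)$ in place of $(s,q)$, and $N$ chosen larger than $\max(s_0,s_1,s)$. Since $\Delta_h^N$ is linear in $f$, a pointwise Hölder/interpolation inequality in the mixed norm over $(t,h)$ — using $\frac1q=\frac{1-\tz}{q_0}+\frac{\tz}{q_1}$ and $s=(1-\tz)s_0+\tz s_1$, exactly as in the proof of Lemma \ref{step1} with $a_j$ replaced by $\Delta_h^N f(x)$ — gives the pointwise bound $g(x)\le g_0(x)^{1-\tz}g_1(x)^{\tz}$. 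Then Hölder on $B(y,r)$ with exponents $p_0/p$ and $p_1/p$ (against the weights built from $1-\tz$ and $\tz$) yields
\[
|B(y,r)|^{\frac1u-\frac1p}\|g\|_{L_p(B(y,r))}
\le \Big(|B(y,r)|^{\frac1{u_0}-\frac1{p_0}}\|g_0\|_{L_{p_0}(B(y,r))}\Big)^{1-\tz}
\Big(|B(y,r)|^{\frac1{u_1}-\frac1{p_1}}\|g_1\|_{L_{p_1}(B(y,r))}\Big)^{\tz}.
\]
By Proposition \ref{diff} the second factor is controlled by $\|f\,|\ce^{s_1}_{u_1,p_1,q_1}(\R)\|^\tz<\infty$ (when $s_1>0$; if $s_1=0$ this case is excluded as above). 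For the first factor one must show it tends to $0$ uniformly in $y$ as $r\downarrow 0$. This is where the main work lies: one wants that every $f\in\ce^{s_0}_{u_0,p_0,q_0}(\R;B)$ — without any diamond hypothesis, but compactly supported — automatically satisfies the vanishing property \eqref{ws-22} relative to the parameters $(s_0,u_0,p_0,q_0)$. \textbf{The main obstacle is precisely this claim}, since a priori an arbitrary element of $\ce^{s_0}_{u_0,p_0,q_0}(\R;B)$ need not lie in $\accentset{\diamond}{\ce}^{s_0}_{u_0,p_0,q_0}(\R)$, so Lemma \ref{lem1} does not apply directly. The resolution must exploit the strict inequality $u_0<u_1$ together with $s_0<s_1$ (or $q_1\le q_0$ when $s_0=s_1$): one re-runs the same interpolation estimate one level down, comparing $(s_0,u_0,p_0,q_0)$ against $(s_1,u_1,p_1,q_1)$ with a different interpolation parameter $\eta\in(0,1)$, to show that the $(s_0,u_0,p_0)$-localized difference norm on a small ball is bounded by $|B|^{\varepsilon}\cdot(\text{finite})$ for some $\varepsilon>0$ coming from the gap $\frac1{u_0}>\frac1u>\frac1{u_1}$, hence vanishes as $r\downarrow0$; when $s_0<s_1$ one may alternatively absorb a positive power of $t$ (or of $2^{-M}$ after passing to $S^M f$) exploiting the smoothness surplus $s_1-s_0>0$, as in the proof of Proposition \ref{diamond9}. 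Once both factors are shown to behave correctly — one bounded, one vanishing — their product vanishes uniformly in $y$, establishing \eqref{ws-22}. Combining \eqref{ws-21} and \eqref{ws-22} with Theorem \ref{good2} gives $f\in\accentset{\diamond}{\ce}^{s}_{u,p,q}(\R)$, and the boundedness of the embedding follows by tracking constants through the Hölder estimates and the norm $\|f\,|\ce^{s_0}_{u_0,p_0,q_0}(\R)\|^{1-\tz}\|f\,|\ce^{s_1}_{u_1,p_1,q_1}(\R)\|^{\tz}$. $\qquad\Box$
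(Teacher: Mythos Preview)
Your overall strategy---reduce to Theorem \ref{good2} and verify \eqref{ws-21}, \eqref{ws-22}---is the right one, and coincides with the paper's. The gap is in the execution: the $(1-\tz,\tz)$-splitting via H\"older forces you to show that the \emph{first} factor, namely the localized quantity at level $(s_0,u_0,p_0,q_0)$, tends to $0$ as $r\downarrow 0$. But this is precisely asking that $f$ satisfy \eqref{ws-22} with parameters $(s_0,u_0,p_0,q_0)$, i.e.\ (via Theorem \ref{good2}) that $f\in\accentset{\diamond}{\ce}^{s_0}_{u_0,p_0,q_0}(\R)$---which you do not know. Your proposed ``resolution'' of re-running the interpolation with a new parameter $\eta$ goes the wrong way: interpolation bounds the intermediate norm by a product of the endpoint norms, not an endpoint norm by something smaller. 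The alternative via Proposition \ref{diamond9} also does not help here, since you would need higher smoothness at the $(u_0,p_0)$-level, which is not given.

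The paper avoids this trap entirely by \emph{not} splitting. Since $p<p_1$, a single application of H\"older on $B(y,r)$ gives
\[
|B(y,r)|^{\frac1u-\frac1p}\|g\|_{L_p(B(y,r))}
\le |B(y,r)|^{\frac1u-\frac1{u_1}}\cdot
|B(y,r)|^{\frac1{u_1}-\frac1{p_1}}\|g\|_{L_{p_1}(B(y,r))},
\]
where $g$ is either $|f|$ (for \eqref{ws-21}) or the $(s,q)$-difference functional (for \eqref{ws-22}). The prefactor $|B(y,r)|^{\frac1u-\frac1{u_1}}$ tends to $0$ uniformly since $u<u_1$, and the remaining factor is bounded by the $\cm^{u_1}_{p_1}$-Morrey norm of $g$; for the difference functional this is controlled by $\|f\,|\ce^{s}_{u_1,p_1,q}(\R)\|$ via Proposition \ref{diff}, and then by $\|f\,|\ce^{s_1}_{u_1,p_1,q_1}(\R)\|$ using the elementary embedding $\ce^{s_1}_{u_1,p_1,q_1}\hookrightarrow\ce^{s}_{u_1,p_1,q}$ (valid because either $s<s_1$, or $s=s_1$ and $q_1\le q$). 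No property at level $(s_0,u_0,p_0,q_0)$ beyond membership is ever invoked.
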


\begin{proof}
By Lemma \ref{step1} we already know that
\[
 {\ce}^{s_0}_{u_0,p_0,q_0} (\R) \cap {\ce}^{s_1}_{u_1,p_1,q_1} (\R)
 \hookrightarrow {\ce}^{s}_{u,p,q} (\R)\, .
\]
Now we want to employ Theorem \ref{good2}. This is possible because we have $s>0$ and $q<\infty$. Let $ f \in  {\ce}^{s_0}_{u_0,p_0,q_0} (\R;B) \cap {\ce}^{s_1}_{u_1,p_1,q_1} (\R;B)    $.
Using $p_0 < p < p_1 $ and  H\"older's inequality we find
\begin{eqnarray}\label{2.36x1}
 |B(y,r)|^{\frac 1u - \frac 1p} && \lf[\int_{B(y,r)} |f(x)|^p dx\r]^{\frac{1}{p}}
\le  |B(y,r)|^{\frac 1u - \frac{1}{p_1}} \lf[\int_{B(y,r)} |f(x)|^{p_1} dx\r]^{\frac{1}{p_1}}\nonumber
\\
 && = |B(y,r)|^{\frac 1u - \frac{1}{u_1}}
|B(y,r)|^{\frac{1}{u_1} - \frac{1}{p_1}}
\lf[\int_{B(y,r)} |f(x)|^{p_1} dx\r]^{\frac{1}{p_1}}\nonumber
\\
 && \le  |B(y,r)|^{\frac 1u - \frac{1}{u_1}}
\|\, f \, \vert  \cm_{p_1}^{u_1}(\rn) \| \, ,
\end{eqnarray}
which tends to zero if $r\to 0$ due to $u_0 < u < u_1$.
Now we proceed similarly with the term $I(f,y,r,s,u,p,q)$
given by
\beqq
&& \hspace{- 1 cm} I(f,y,r,s,u,p,q):=
\\
&& |B(y,r)|^{\frac 1u - \frac 1p}\,
\bigg[\int_{B(y,r)} \bigg(
  \int_0^1 t^{-sq} \Big( t^{-d} \int_{B(0,t)} |\Delta_h^N f(x)| \, dh\Big) ^q \frac{dt}{t}\bigg)^{\frac{p}{q}}
 dx\bigg]^{\frac{1}{p}}\,
\eeqq
with $ N > s  $. We observe
\beq\label{ws-34}
&& \hspace{- 1 cm} I(f,y,r,s,u,p,q)  \nonumber \\
&& \le   |B(y,r)|^{\frac 1u - \frac{1}{p_1}} \,
\bigg\|\bigg(
  \int_0^1 t^{-sq} \Big( t^{-d} \int_{B(0,t)} |\Delta_h^N f(x)| \, dh\Big) ^q \frac{dt}{t}\bigg)^{\frac{1}{q}}\bigg| L_{p_1} (\R)\bigg\|
\nonumber \\
&& \le
|B(y,r)|^{\frac 1u - \frac{1}{u_1}} \, \bigg\|\bigg(
  \int_0^1 t^{-sq} \Big( t^{-d} \int_{B(0,t)} |\Delta_h^N f(x)| \, dh\Big) ^q \frac{dt}{t}\bigg)^{\frac{1}{q}}\bigg| \cm_{p_1}^{u_1} (\R)\bigg\|
\nonumber
  \\
&& \le  c_1 \,  |B(y,r)|^{\frac 1u - \frac{1}{u_1}} \, \| \, f\, |\ce^{s}_{u_1,p_1,q}(\R)\|  \nonumber
\\
&& \le   c_2\,  |B(y,r)|^{\frac 1u - \frac{1}{u_1}} \, \| \, f\, |\ce^{s_1}_{u_1,p_1,q_1}(\R)\| \, ,
\eeq
where we used Proposition \ref{diff} and
the elementary embedding $\ce^{s_1}_{u_1,p_1,q_1}(\R) \hookrightarrow \ce^{s}_{u_1,p_1,q}(\R)$, see Proposition 2.1 in \cite{ysy}.
As in \eqref{2.36x1} it is obvious that
the right-hand side tends to zero for $r\to 0$ uniformly in $y$.
Hence, by Theorem \ref{good2}, \eqref{2.36x1} and  \eqref{ws-34} we finally proved
$f\in \accentset{\diamond}{\ce}^{s}_{u,p,q} (\R)$.
\end{proof}

%&&&&&&&&&&&&&&&&&&&&&&&&&&&&&&&&&&&&&&&&&&&&&&&&&&&&&&&&&&&&&&&&&&&&
%&&&&&&&&&&&&&&&&&&&&&&&&&&&&&&&&&&&&&&&&&&&&&&&&&&&&&&&&&&&&&&&&&&&

\subsection{Some test functions}
\label{test}

%&&&&&&&&&&&&&&&&&&&&&&&&&&&&&&&&&&&&&&&&&&&&&&&&&&&&&&&&&&&&&&&&&&&&&
%&&&&&&&&&&&&&&&&&&&&&&&&&&&&&&&&&&&&&&&&&&&&&&&&&&&&&&&&&&&&&&&&&&&

In this subsection we shall investigate some families of test functions. There are two reasons for doing that.
On the one hand it allows to get a feeling for the spaces under consideration. On the other hand these families will be used in the proofs of the Propositions \ref{main3} and \ref{main4}. It is well-known that the function $f(x):= |x|^{-d/u}$, $x\in \R\setminus\{0\}$,
is an extremal function for $\cm_p^u(\R)$.
Here we shall deal with a few modifications of this extremal function.
Within this subsection we shall work with a smooth cut-off function $\psi$ supported around the origin.
More exactly, $\psi \in C_0^\infty (\R)$, radial-symmetric, real-valued, $0 \le \psi (x)\le 1$ for all $x$, $\psi (x)=1$
if $|x|\le 1$ and $\psi (x)=0$ if $|x|\ge 3/2$.

\begin{lem}\label{count_bound1}
Let $1\le p < u <\infty$ and  $1\le q \le \infty$.
\\
{\rm (i)}
Then the function
\begin{equation}\label{count_1}
h_u(x) : = ( 1 - \psi(x))\,  |x|^{- \frac{d}{u}}\, , \qquad x\in \R\, ,
\end{equation}
belongs to $ \accentset{\diamond}W^m \cm_p^u (\R)$ for all $m \in \N$.
\\
{\rm (ii)} For all $s \in \re $ we have
$h_u \in  \accentset{\diamond}\ce_{u,p,q}^{s}(\R)$.
\end{lem}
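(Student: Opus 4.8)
My proposal rests on a simple observation: by Definition~\ref{d-cm}(i), to prove both (i) and (ii) it suffices to show that $h_u$ itself \emph{lies in the defining set} of the respective diamond space, i.e. that $h_u\in C^\infty(\R)$ and that $D^\alpha h_u$ belongs to $W^m\cm_p^u(\R)$ (for (i)), respectively to $\ce^s_{u,p,q}(\R)$ (for (ii)), for \emph{every} $\alpha\in\N_0^d$; for then $h_u$ lies in the set whose closure defines the diamond space, hence in that closure. Two preliminary facts are routine. First, $h_u\in C^\infty(\R)$: since $1-\psi\equiv0$ on $B(0,1)$, the function $h_u$ vanishes on $B(0,1)$ and agrees with the smooth function $|\cdot|^{-d/u}$ on $\{|x|\ge1\}$. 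Second, Leibniz' rule together with the homogeneity of $|\cdot|^{-d/u}$ provides, for each $\alpha$, a constant $C_\alpha$ with
\[
|D^\alpha h_u(x)|\le C_\alpha\,(1+|x|)^{-d/u-|\alpha|}\,,\qquad x\in\R\,;
\]
in particular $D^\alpha h_u\in L_\infty(\R)$ and $|D^\alpha h_u(x)|\le C_\alpha(1+|x|)^{-d/u}$.

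The technical heart of the argument is an auxiliary Morrey estimate that I would isolate as a claim: \emph{if $g$ is measurable, $g\in L_\infty(\R)$ and $|g(x)|\le C(1+|x|)^{-d/u}$, then $g\in\cm_p^u(\R)$, with norm controlled by $d,p,u,C$ and $\|g|L_\infty(\R)\|$.} To prove this I would split the supremum in \eqref{morrey7} over balls $B=B(x_0,r)$ into three regimes. For $r\le1$ the $L_\infty$-bound gives $|B|^{1/u-1/p}(\int_B|g|^p)^{1/p}\le\|g|L_\infty(\R)\|\,|B|^{1/u}\le c$. For $r\ge1$ and $|x_0|\le2r$ we have $B\subset B(0,3r)$, and since $p<u$ forces $dp/u<d$ one gets $\int_{B(0,3r)}(1+|y|)^{-dp/u}\,dy\lesssim r^{d-dp/u}$; combined with $|B|^{1/u-1/p}\sim r^{d/u-d/p}$ this again yields a bound $\le c$. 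For $r\ge1$ and $|x_0|>2r$ one has $|y|\ge|x_0|/2$ on $B$, hence $|g|\lesssim|x_0|^{-d/u}$ there, and the contribution is $\lesssim(r/|x_0|)^{d/u}\le c$. Applying this claim to $g=D^\alpha h_u$ (whose decay $(1+|x|)^{-d/u-|\alpha|}$ is even stronger than required) shows $D^\alpha h_u\in\cm_p^u(\R)$ for all $\alpha$; consequently $\|D^\beta h_u|W^m\cm_p^u(\R)\|=\sum_{|\alpha|\le m}\|D^{\alpha+\beta}h_u|\cm_p^u(\R)\|<\infty$ for every $\beta$, so $h_u$ lies in the defining set of $\accentset{\diamond}W^m\cm_p^u(\R)$. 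This settles (i).

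For (ii) the plan is first to establish $h_u\in\ce^\sigma_{u,p,q}(\R)$ for every $\sigma>0$ by means of the difference characterization of Proposition~\ref{diff} (with the usual supremum modification when $q=\infty$). The term $\|\cdot|\cm_p^u(\R)\|$ is finite by (i). For the second term, fix $N\in\N$ with $N>\sigma$; the $C^\infty$-smoothness of $h_u$ together with the decay of its $N$-th order derivatives yields, for $|h|\le1$, a Taylor-type bound $|\Delta_h^N h_u(x)|\le c\,|h|^N(1+|x|)^{-d/u-N}$. Using $\int_{B(0,t)}|h|^N\,dh\sim t^{N+d}$ and $\int_0^1 t^{(N-\sigma)q}\,\frac{dt}{t}<\infty$ (here $N>\sigma$ is used), the intrinsic function
\[
G(x):=\Big(\int_0^1 t^{-\sigma q}\Big(t^{-d}\int_{B(0,t)}|\Delta_h^N h_u(x)|\,dh\Big)^q\,\frac{dt}{t}\Big)^{1/q}
\]
satisfies $G\in L_\infty(\R)$ and $G(x)\lesssim(1+|x|)^{-d/u-N}\le(1+|x|)^{-d/u}$, so $G\in\cm_p^u(\R)$ by the auxiliary claim, whence $h_u\in\ce^\sigma_{u,p,q}(\R)$ by Proposition~\ref{diff}. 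The elementary embedding $\ce^{\sigma'}_{u,p,q}(\R)\hookrightarrow\ce^\sigma_{u,p,q}(\R)$ for $\sigma<\sigma'$ (a special case of Proposition~\ref{diamond9}) extends this to all $\sigma\in\re$. Finally, Lemma~\ref{lift2} applied to $h_u\in\ce^{s+|\alpha|}_{u,p,q}(\R)$ gives $D^\alpha h_u\in\ce^{s}_{u,p,q}(\R)$ for every $\alpha$, so $h_u$ lies in the defining set of $\accentset{\diamond}\ce^s_{u,p,q}(\R)$, proving (ii).

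The one genuinely computational point is the auxiliary Morrey claim, i.e. the three-regime ball estimate and its application to $G$; this is precisely the verification that the $|\cdot|^{-d/u}$-tail is a bona fide Morrey function, and it is where the hypothesis $p<u$ is essential. Everything else — smoothness and decay of $h_u$, the convergence of the $t$-integral, and the invocations of Propositions~\ref{diff} and \ref{diamond9} and Lemma~\ref{lift2} — is routine. Note that the argument for (i) is self-contained and valid for all $1\le p<u$; in particular it does not go through Lemma~\ref{LP}, which is unavailable when $p=1$.
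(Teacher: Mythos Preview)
Your proposal is correct and follows the same overall strategy as the paper: show that $h_u$ lies in the defining set of the diamond space by verifying $h_u\in C^\infty(\R)$ and $D^\alpha h_u\in\ce^s_{u,p,q}(\R)$ (resp.\ $\cm_p^u(\R)$) for all $\alpha$, via the decay $|D^\alpha h_u(x)|\lesssim(1+|x|)^{-d/u-|\alpha|}$ and the difference characterization of Proposition~\ref{diff}.

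The organizational differences are minor but worth noting. You isolate a clean auxiliary Morrey claim (\emph{$L_\infty$ plus $|x|^{-d/u}$-tail implies $\cm_p^u$}) and handle arbitrary balls by a three-regime split in $r$ and $|x_0|$; the paper instead invokes radial symmetry to reduce to balls centered at the origin and computes the resulting radial integrals $I_1,I_2$ directly. Your treatment of the difference term is also more compressed: you bound the intrinsic function $G(x)$ pointwise by $(1+|x|)^{-d/u-N}$ in one line and feed it back into the auxiliary claim, whereas the paper carries out a more elaborate estimate of $I_2$ involving the shifted point $z'=x(|x|-N)/|x|$. Finally, you show $h_u\in\ce^\sigma_{u,p,q}(\R)$ for all $\sigma$ once and then invoke Lemma~\ref{lift2} to obtain $D^\alpha h_u\in\ce^s_{u,p,q}(\R)$; the paper instead applies Proposition~\ref{diff} directly to each $D^\alpha h_u$. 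Both routes are valid; yours is somewhat more modular. One small quibble: the embedding $\ce^{\sigma'}_{u,p,q}\hookrightarrow\ce^\sigma_{u,p,q}$ for $\sigma<\sigma'$ is the elementary monotonicity in $s$, not really a special case of Proposition~\ref{diamond9}, which concerns the \emph{diamond} target.
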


\begin{proof}
We will concentrate on (ii). Temporarily we assume $s>0$.
Clearly, $h_u$ is a $C^\infty (\R)$ function.
Let $\alpha \in \N_0^d$ be a multi-index. Then we claim
that $D^\alpha h_u $ belongs to $\ce_{u,p,q}^{s}(\R)$ for all $\alpha$, i.e.,
we claim that $h_u \in E_{u,p,q}^{s}(\R)\subset \accentset{\diamond} \ce_{u,p,q}^{s}(\R)$,
see Definition \ref{diamond7}. We shall work with Proposition \ref{diff}. Therefore  we have to deal with
\begin{equation}\label{count_2}
\|\, D^{\alpha}h_u \, |\cm_{p}^u (\R)\|
\end{equation}
and
\begin{equation}\label{count_3}
\bigg\|\bigg(
  \int_0^1 t^{-sq} \Big( t^{-d} \int_{B(0,t)} |\Delta_h^N D^{\alpha}h_u(x)| \, dh\Big) ^q \frac{dt}{t}\bigg)^{\frac{1}{q}}\bigg| \cm^u_p (\R)\bigg\|
\end{equation}
with   $ N > s$. Let us start with \eqref{count_2}. Since $h_u$ is smooth, estimates with respect to   small balls are no problem.
By means of the radial symmetry and $h_u \in C^\infty (\R)$ elementary calculations show that
it will be sufficient to estimate
\begin{align*}
I_{1} = \sup_{r > 2} |B(0,r)|^{\frac{1}{u}-\frac{1}{p}} \Big ( \int_{2 < |x| < r } \vert  D^{\alpha}  |x|^{- \frac{d}{u}}  \vert^{p} dx    \Big )^{\frac{1}{p}}.
\end{align*}
By induction one can prove for any $\alpha$ the existence of a constant $C_\alpha$
such that
\be\label{wss-11}
\vert D^{\alpha}  |x|^{- \frac{d}{u}} \vert \leq C_\alpha \,
\vert x \vert^{-\frac{d}{u} - |\alpha|}\, , \qquad |x|>0\, .
\ee
Hence
\[
I_{1}  \leq c_{1}\,  \sup_{r > 2} r^{\frac{d}{u}-\frac{d}{p}} \Big ( \int_{2 < t < r }   t^{-\frac{dp}{u} - \vert \alpha \vert p}  t^{d-1}   dt    \Big )^{\frac{1}{p}}
 < \infty.
\]
Therefore \eqref{count_2} is finite.
\\
Now we turn to \eqref{count_3}.
Because of $ h_u $ is radial symmetric,  $ h_u \in C^{\infty}(\R)  $ and $\supp h_u \cap B(0,1)= \emptyset$  again some elementary calculations show that it will be sufficient to
estimate
\begin{align*}
I_{2} = \sup_{r > 2 + N} r^{\frac{d}{u}-\frac{d}{p}} \Big ( \int_{2 + N < |x| < r }  \bigg( \int_0^1 t^{-sq} \Big( t^{-d} \int_{B(0,t)} |\Delta_h^N D^{\alpha} |x|^{- \frac{d}{u}}
| \, dh\Big) ^q \frac{dt}{t}\bigg)^{\frac{p}{q}} dx   \Big )^{\frac{1}{p}}.
\end{align*}
Therefore we can apply a consequence of the Mean Value Theorem consisting in
\be\label{wss-12}
\vert \Delta^{N}_{h} D^{\alpha} |x|^{- \frac{d}{u}}   \vert \leq C_{\alpha,N} \vert h \vert^{N} \max_{\vert \gamma \vert = N} \sup_{\vert x - z \vert \leq N \vert h \vert } \vert D^{\gamma} D^{\alpha} |z|^{- \frac{d}{u}}   \vert
\ee
for some constant $C_{\alpha,N}$ independent of $x$ with $|x|> 2+N$ and $h$ with $|h|<1$.
Using this and $s<N$ we obtain
\beqq
I_{2} & \leq &  c_{2} \, \sup_{r > 2 + N} r^{\frac{d}{u}-\frac{d}{p}}
\Big ( \int_{2 + N < |x| < r }  \bigg( \int_0^1 t^{-sq+Nq}   \max_{\vert \gamma \vert = N} \sup_{\vert x - z \vert \leq N  } \vert D^{\gamma + \alpha} |z|^{- \frac{d}{u}}   \vert^q   \frac{dt}{t}\bigg)^{\frac{p}{q}}     dx    \Big )^{\frac{1}{p}}
\\
& \leq & c_{3} \, \sup_{r > 2 + N} r^{\frac{d}{u}-\frac{d}{p}}
\Big ( \int_{2 + N < |x| < r }    \max_{\vert \gamma \vert = N} \sup_{\vert x - z \vert \leq N  }   |z|^{- \frac{dp}{u}- p| \gamma| - p|\alpha |}     \bigg( \int_0^1 t^{-sq+Nq}     \frac{dt}{t}\bigg)^{\frac{p}{q}}     dx    \Big )^{\frac{1}{p}}
\\
& \leq & c_{4}\,  \sup_{r > 2 + N} r^{\frac{d}{u}-\frac{d}{p}} \Big ( \int_{2 + N < |x| < r }    \sup_{\vert x - z \vert \leq N  }   |z|^{- \frac{dp}{u}- pN - p|\alpha |}      dx    \Big )^{\frac{1}{p}}.
\eeqq
For $ 2 + N < |x| < r   $ we define $ z' := \frac{x(|x|-N)}{|x|}   $. Then because of
\begin{align*}
| z' | = |x| - N \qquad \qquad \mbox{and} \qquad \qquad |z'-x| = N
\end{align*}
we obtain
\begin{align*}
\sup_{\vert x - z \vert \leq N  }   |z|^{- \frac{dp}{u}- pN - p|\alpha |} =  \Big | \frac{x(|x|-N)}{|x|} \Big  |^{- \frac{dp}{u}- pN - p|\alpha |} =  (|x|-N)^{- \frac{dp}{u}- pN - p|\alpha |}.
\end{align*}
Now we insert this in our estimate and use $ \vert \alpha \vert = M \in \mathbb{N}_{0}$ to find
\begin{align*}
I_{2} & \leq c_{4}\,  \sup_{r > 2 + N} r^{\frac{d}{u}-\frac{d}{p}} \Big ( \int_{2 + N < |x| < r }    (|x|-N)^{- \frac{dp}{u}- pN - pM}     dx    \Big )^{\frac{1}{p}}
\\
& \leq c_{4}\,  \sup_{r > 2 } r^{\frac{d}{u}-\frac{d}{p}} \Big ( \int_{2  < |x| < r }    |x|^{- \frac{dp}{u}- pN - pM}     dx    \Big )^{\frac{1}{p}}
\\
& \leq c_{5}\,  \sup_{r > 2 } r^{\frac{d}{u}-\frac{d}{p}} \Big ( \int_{2  < t < r }    t^{- \frac{dp}{u}- pN - pM}  t^{d-1}   dt    \Big )^{\frac{1}{p}} \, .
\end{align*}
But this term is almost the same as in the estimate of $  I_{1} $. So like before we
find $ I_{2} < \infty   $. This proves the claim with $s>0$.
In the case $s\le 0$ we may use the continuous embedding \\
$\accentset{\diamond}\ce_{u,p,q}^{s_0}(\R) \hookrightarrow  \accentset{\diamond}\ce_{u,p,q}^{s_1}(\R)$ with $s_1 < s_0$.
\\
Observe that (i) follows from \eqref{count_2}.
\end{proof}

Next we consider the function
\begin{equation}\label{0619b}
f_\az(x):=\psi(x)\, |x|^{-\az},\qquad x\in \R\setminus \{0\}\, , \quad \az >0\, .
\end{equation}
Already in \cite{ysy3}, page 1849, one can find
that in case $1\le p <u <\infty$ we have
 $f_\az \in \cm_p^u(\rn)$ if and only if $\az\le \frac du$.
Moreover we have  $f_{d/u} \notin \accentset{\diamond} \cm_p^u(\rn)$.

As a consequence of  Lemma \ref{morrey43}
one obtains a characterization of $\accentset{\diamond} W^m\cm_p^u(\rn)$.

\begin{lem}\label{stern}
Let $1\le p<u<\fz$ and $m\in\nn$. Then
$\accentset{\diamond} W^m\cm_p^u(\rn)$ is equal to the collection of all
$f\in W^m\cm_p^u(\rn)$ such that, for any $\beta\in\N_0^d$ with $|\beta|\le m$,
\begin{equation}\label{0611a}
\lim_{r \downarrow 0}
|B(y,r)|^{\frac{1}{u}- \frac{1}{p}}\lf[\int_{B(y,r)} |D^\beta f(x)|^p\,dx\r]^{\frac{1}{p}}   =  0
\end{equation}
uniformly in $y\in\rn$.
\end{lem}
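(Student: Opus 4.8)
\emph{Plan.} The idea is to push everything back to the description of $\accentset{\diamond}\cm^u_p(\R)$ already available from Lemma \ref{morrey43}(ii). By that lemma a function $g\in\cm^u_p(\R)$ lies in $\accentset{\diamond}\cm^u_p(\R)$ exactly when $\lim_{r\downarrow0}|B(y,r)|^{\frac1u-\frac1p}\big[\int_{B(y,r)}|g(x)|^p\,dx\big]^{1/p}=0$ uniformly in $y\in\rn$, so the set appearing on the right-hand side of the asserted identity (condition \eqref{0611a} for all $|\beta|\le m$) is precisely
\[
\cv:=\big\{f\in W^m\cm^u_p(\R):\ D^\beta f\in\accentset{\diamond}\cm^u_p(\R)\ \text{for all}\ \beta\in\N_0^d\ \text{with}\ |\beta|\le m\big\},
\]
and the claim becomes $\accentset{\diamond}W^m\cm^u_p(\R)=\cv$. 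Let $\cx$ denote the family of all $f\in C^\infty(\R)$ with $D^\gamma f\in\cm^u_p(\R)$ for every $\gamma\in\N_0^d$. Since ``$D^\alpha f\in W^m\cm^u_p(\R)$ for all $\alpha$'' is the same as ``$D^\gamma f\in\cm^u_p(\R)$ for all $\gamma$'', Definition \ref{d-cm}(i) identifies $\accentset{\diamond}\cm^u_p(\R)$ with the $\cm^u_p(\R)$-closure of $\cx$ and $\accentset{\diamond}W^m\cm^u_p(\R)$ with the $W^m\cm^u_p(\R)$-closure of the \emph{same} set $\cx$.

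\emph{The easy inclusion.} To get $\accentset{\diamond}W^m\cm^u_p(\R)\subseteq\cv$ I would first note $\cx\subseteq\cv$ (if $g\in\cx$ and $|\beta|\le m$ then $D^\beta g\in\cx\subseteq\accentset{\diamond}\cm^u_p(\R)$), and then that $\cv$ is closed in $W^m\cm^u_p(\R)$, being the intersection over $|\beta|\le m$ of the preimages of the closed subspace $\accentset{\diamond}\cm^u_p(\R)$ under the bounded maps $f\mapsto D^\beta f$. Passing to closures gives the inclusion.

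\emph{The harder inclusion, reduced to one claim.} For $\cv\subseteq\accentset{\diamond}W^m\cm^u_p(\R)$ I would fix $f\in\cv$ and mollify, $f_j:=f\ast\varrho_j$ with $\varrho_j(x):=2^{jd}\varrho(2^jx)$ as in Section \ref{s-diamon}. From $D^\gamma f_j=f\ast D^\gamma\varrho_j$ and the standard convolution estimate $\|F\ast\phi\,|\cm^u_p(\R)\|\le\|\phi\,|L_1(\R)\|\,\|F\,|\cm^u_p(\R)\|$ one sees $f_j\in\cx$. Since $D^\beta f_j=(D^\beta f)\ast\varrho_j$ for $|\beta|\le m$, convergence $f_j\to f$ in $W^m\cm^u_p(\R)$ — which finishes the proof, because $f$ is then a $W^m\cm^u_p(\R)$-limit of elements of $\cx$ — is equivalent to the claim:\ for every $g\in\accentset{\diamond}\cm^u_p(\R)$ one has $\|g\ast\varrho_j-g\,|\cm^u_p(\R)\|\to0$ as $j\to\infty$.

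\emph{Proof of the claim and main obstacle.} This claim is the heart of the matter, and the obstacle is that translation is \emph{not} continuous on $\cm^u_p(\R)$ when $p<u$, so it cannot be proved at the level of all of $\cm^u_p(\R)$. The point is that translation \emph{is} Lipschitz at the smooth functions $g_0\in\cx$, with a bound uniform over \emph{all} balls. Concretely: given $\varepsilon>0$ choose $g_0\in\cx$ with $\|g-g_0\,|\cm^u_p(\R)\|<\varepsilon$, dispose of $(g-g_0)\ast\varrho_j$ by the convolution estimate, and reduce to $\|g_0\ast\varrho_j-g_0\,|\cm^u_p(\R)\|$. Writing $g_0\ast\varrho_j-g_0=\int\varrho_j(h)\big(g_0(\,\cdot\,-h)-g_0(\,\cdot\,)\big)\,dh$, the generalized Minkowski inequality bounds this by $\|\varrho\,|L_1(\R)\|\sup_{|h|\le2^{-j}}\|g_0(\,\cdot\,-h)-g_0(\,\cdot\,)\,|\cm^u_p(\R)\|$; and from $g_0(x-h)-g_0(x)=-\int_0^1 h\cdot\nabla g_0(x-th)\,dt$, Minkowski's integral inequality, and the translation invariance of $\|\cdot\,|\cm^u_p(\R)\|$ one checks, for every ball $B$,
\[
|B|^{\frac1u-\frac1p}\Big[\int_B|g_0(x-h)-g_0(x)|^p\,dx\Big]^{\frac1p}\le|h|\sum_{i=1}^d\|\partial_i g_0\,|\cm^u_p(\R)\|,
\]
so that $\|g_0(\,\cdot\,-h)-g_0(\,\cdot\,)\,|\cm^u_p(\R)\|\le|h|\sum_{i=1}^d\|\partial_i g_0\,|\cm^u_p(\R)\|\to0$, the right-hand side being finite because $\partial_i g_0\in\cm^u_p(\R)$. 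Letting $j\to\infty$ and then $\varepsilon\downarrow0$ proves the claim, and with it the lemma; everything apart from this Lipschitz-in-translation estimate is routine bookkeeping with Definition \ref{d-cm} and standard convolution / Minkowski inequalities.
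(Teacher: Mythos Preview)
Your proof is correct. The paper itself does not supply a proof of this lemma; it simply introduces it with the sentence ``As a consequence of Lemma~\ref{morrey43} one obtains a characterization of $\accentset{\diamond} W^m\cm_p^u(\rn)$'' and then states the result. Your reformulation of the right-hand side as $\cv=\{f\in W^m\cm_p^u(\R):\ D^\beta f\in\accentset{\diamond}\cm_p^u(\R)\ \text{for all}\ |\beta|\le m\}$ via Lemma~\ref{morrey43}(ii) is exactly the reduction the paper has in mind, and the observation that the same set $\cx$ serves as the dense class for both $\accentset{\diamond}\cm_p^u(\R)$ and $\accentset{\diamond}W^m\cm_p^u(\R)$ is the right way to organize the comparison.

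The only genuinely nontrivial step the paper glosses over is the converse inclusion, where one needs a \emph{single} approximating sequence that works simultaneously for all $D^\beta f$, $|\beta|\le m$. Your mollification argument $f_j=f\ast\varrho_j$ together with the claim that $g\ast\varrho_j\to g$ in $\cm_p^u(\R)$ for every $g\in\accentset{\diamond}\cm_p^u(\R)$ is a clean way to do this, and your Lipschitz-in-translation estimate on the smooth approximants $g_0\in\cx$ correctly circumvents the failure of strong continuity of translations on the full Morrey space. This is precisely the kind of argument one would expect behind the paper's one-line attribution.
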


Now it is easy to check the regularity of $f_\alpha$ with respect
to the scale  $\accentset{\diamond} W^m\cm_p^u(\rn)$.

\begin{lem}\label{lem0611}
Let $1\le p<u<\fz$, $m\in\nn$ and $m<\frac du$. Then

{\rm(i)} $f_\az\in W^m \cm_p^u(\rn)$ if and only if $m+\az\le \frac du$;

{\rm(ii)}  $f_\az\notin \accentset{\diamond} W^m \cm_p^u(\rn)$ if  $m+\az = \frac du$.
\end{lem}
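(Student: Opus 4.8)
The plan is to work directly with the derivatives $D^\beta f_\az$, $|\beta|\le m$, and then to invoke the characterization of $\accentset{\diamond} W^m\cm_p^u(\rn)$ provided by Lemma \ref{stern}. Since $\psi\equiv 1$ on $B(0,1)$ and $\supp\psi\subset B(0,3/2)$, the function $f_\az$ is smooth on $\rn\setminus\{0\}$, supported in $B(0,3/2)$, and on $B(0,1)$ it coincides with $|x|^{-\az}$, so that $D^\beta f_\az=D^\beta|x|^{-\az}$ there. By the Leibniz rule together with the elementary estimate $|D^\gamma|x|^{-\az}|\le C_\gamma|x|^{-\az-|\gamma|}$, obtained by induction exactly as for \eqref{wss-11}, one first records the pointwise bound
\[
|D^\beta f_\az(x)|\le C_\beta\,|x|^{-\az-|\beta|}\qquad\text{for all }x\in\supp f_\az,\ x\ne 0,\ |\beta|\le m\, .
\]

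Next I would establish the sufficiency in (i). Assume $m+\az\le d/u$; since $p<u$ this already forces $\az+|\beta|\le d/u<d/p$ for every $|\beta|\le m$, so no borderline logarithmic case occurs. A computation in polar coordinates, splitting the balls $B(y,r)$ into those with $|y|\le 2r$ (contained in $B(0,3r)$) and those with $|y|>2r$ (on which $|x|\sim|y|$), yields
\[
|B(y,r)|^{\frac1u-\frac1p}\Big[\int_{B(y,r)}|D^\beta f_\az(x)|^p\,dx\Big]^{1/p}\lesssim r^{\frac du-\az-|\beta|}\qquad(0<r\le 1)\, ,
\]
which is bounded; for $r\ge 1$ boundedness is immediate because $|B(y,r)|^{1/u-1/p}\lesssim 1$ and $D^\beta f_\az\in L_p(\rn)$ (again thanks to $\az+|\beta|<d/p$). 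Hence $D^\beta f_\az\in\cm_p^u(\rn)$ for all $|\beta|\le m$, i.e.\ $f_\az\in W^m\cm_p^u(\rn)$.

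For the necessity in (i) and for (ii) I would isolate the single derivative $g:=D^{(m,0,\dots,0)}f_\az$, which on $B(0,1)$ equals $\partial_1^m|x|^{-\az}$: this is smooth on $\rn\setminus\{0\}$, homogeneous of degree $-\az-m$, and on the positive $x_1$-axis it equals $(-1)^m\az(\az+1)\cdots(\az+m-1)\,x_1^{-\az-m}$, hence does not vanish identically since $\az>0$. Writing $|g(x)|=|x|^{-\az-m}G(x/|x|)$ with $G$ continuous and nonnegative on $S^{d-1}$ and $G(e_1)>0$, so that $\|\,G\,|L_p(S^{d-1})\,\|>0$, a polar-coordinate computation gives, for $0<r\le 1$,
\[
|B(0,r)|^{\frac1u-\frac1p}\Big[\int_{B(0,r)}|g(x)|^p\,dx\Big]^{\frac1p}=c_0\,r^{\frac du-\az-m}\, ,\qquad c_0:=\Big(\frac{\|\,G\,|L_p(S^{d-1})\,\|^p}{d-(\az+m)p}\Big)^{\frac1p}|B(0,1)|^{\frac1u-\frac1p}>0\, ,
\]
valid whenever $(\az+m)p<d$; if instead $(\az+m)p\ge d$ the integral is already infinite. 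Thus if $m+\az>d/u$ the left-hand side tends to $\infty$ as $r\downarrow 0$, so $g\notin\cm_p^u(\rn)$ and therefore $f_\az\notin W^m\cm_p^u(\rn)$, completing (i). If $m+\az=d/u$, the left-hand side equals the positive constant $c_0$ for every $r\in(0,1]$, so \eqref{0611a} with $\beta=(m,0,\dots,0)$ and $y=0$ has limit $c_0\ne 0$; since by (i) we already know $f_\az\in W^m\cm_p^u(\rn)$, Lemma \ref{stern} forces $f_\az\notin\accentset{\diamond} W^m\cm_p^u(\rn)$, which is (ii).

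Everything here is elementary. The only point that needs a little care is the \emph{lower} bound: checking that $\partial_1^m|x|^{-\az}$ is of exact order $|x|^{-\az-m}$ rather than smaller, which is handled by its homogeneity together with non-vanishing on the $x_1$-axis; and keeping track of the strict inequality $\az+m<d/p$, which comes for free from $p<u$ and removes any borderline integrability issue.
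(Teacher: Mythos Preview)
Your argument is correct and follows the same overall strategy as the paper: the upper bound via the Leibniz rule and the pointwise estimate $|D^\gamma|x|^{-\az}|\le C_\gamma|x|^{-\az-|\gamma|}$ for sufficiency, and a lower bound on the particular derivative $\partial_1^m|x|^{-\az}$ for necessity and for (ii), combined with Lemma~\ref{stern}.

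The only genuine difference lies in how the lower bound is extracted. The paper writes out $\partial_1^m|x|^{-\az}$ explicitly as a finite sum of terms $x_1^{j}/|x|^{\az+m+j}$, distinguishes the cases $m$ even and $m$ odd, and then restricts to a narrow cone around the $x_1$-axis on which the leading term dominates, yielding $|\partial_1^m|x|^{-\az}|\gtrsim |x|^{-\az-m}$ on that cone; the integral over $B(0,r)\cap A$ then gives \eqref{0610b}. You instead exploit the homogeneity of $\partial_1^m|x|^{-\az}$ to write $|\partial_1^m|x|^{-\az}|=|x|^{-\az-m}G(x/|x|)$ and verify $G\not\equiv 0$ by the one-line observation that $\partial_1^m|x|^{-\az}$ restricted to the positive $x_1$-axis equals $(d^m/dt^m)\,t^{-\az}$; the polar-coordinate integral over the full ball $B(0,r)$ then produces the exact constant $c_0>0$. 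Your route is a bit cleaner (no even/odd split, no cone), while the paper's version makes the size of $\partial_1^m|x|^{-\az}$ visible on a set of positive angular measure via explicit formulas. Both lead to the same inequality $r^{d/u-\az-m}$ and hence to the same conclusions.
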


\begin{proof}
{\em Step 1.} Proof of (i). Let $\beta\in\N_0^d$ with $|\beta|\le m$. It follows from the Leibniz rule, \eqref{wss-11} and the smoothness of $\psi$ that
\[
|D^\beta(f_\az)(x)|\le C_{\alpha,\beta} \,  |x|^{-(\az+|\beta|)}\, ,\qquad |x|< \frac 32\, ,
\]
with an appropriate constant $C_{\alpha, \beta}$. Hence with $m+\az\le \frac du$ we find $D^\beta f_\az\in \cm_p^u (\R)$ and therefore $f_\az \in W^m \cm_p^u(\rn)$.

Conversely, let $f_\az\in W^m\cm_p^u(\rn)$. We fix $\beta:=(m,0,\cdots,0)$.
We need to distinguish $m$ even and $m$ odd.
If $m=  2m'$, then
\[
D^\beta(f_\az)(x)=D^\beta(|x|^{-\az})=
\sum_{i=0}^{m'} \, c_i \frac{x_1^{2i}}{|x|^{\az+2m'+ 2i}},\qquad |x|<1\, ,
\]
where $\{c_i\}_{i=0}^{m'}$ are appropriate constants independent of $x$.
If  $m= 2m'+1$, then
\[
D^\beta(f_\az)(x)=D^\beta(|x|^{-\az})=
\sum_{i=0}^{m'} \, d_i \frac{x_1^{2i+1}}{|x|^{\az+2m'+ 1+  2i}},
\]
where $\{d_i\}_{i=0}^{m'}$ are appropriate constants independent of $x$.
Observe that the terms $\frac{x_1^{j}}{|x|^{\az+2m'+ j}}$ are ordered, i.e.,
\[
\frac{|x_1|^{j+2}}{|x|^{\az+2m'+ j+2}} \le \frac{|x_1|^{j}}{|x|^{\az+2m'+ j}}\, .
\]
Now we choose a subset $A$ of $\R$  and a constant $c>0$ by
\[
A:=\Big\{x\in\rn:~~|x|<1\, , ~~ |x_1|\ge \frac{\max (|x_2|,\cdots,|x_d|)}{c} \Big\}\, .
\]
Let $E$ denote the minimum of those constants $c_0, \ldots \, , c_{m'}, d_0, \ldots \, , d_{m'}$, which are positive.
Then  $c \ge 1$ is chosen in such a way that
\[
 \Big|\sum_{i=0}^{m'} \, c_i \frac{x_1^{2i}}{|x|^{\az+2m'+ 2i}}\Big|
 \ge \frac{E}{2}\, \frac{|x_1^{2m'}|}{|x|^{\az+4m'}}\, , \qquad x\in A\, ,
\]
if $m=2m'$ and
\[
 \Big|\sum_{i=0}^{m'} \, d_i \frac{|x_1|^{2i+1}}{|x|^{\az+2m'+ 2i + 1}}\Big|
 \ge \frac{E}{2}\, \frac{|x_1|^{2m' + 1}}{|x|^{\az+4m'+1}}\, , \qquad x\in A\, ,
\]
if $m=2m'+1$.
Then for $r\in(0,1)$ and $\beta$ as above we have
\begin{align}\label{0610b}
\|f_\az|W^m\cm_p^u(\rn)\|
&\ge |B(0,r)|^{\frac1 u -\frac 1p }\, \Big ( \int_{B(0,r) \cap A}\lf|D^\beta (|x|^{-\az})\r|^p\,dx \Big )^{\frac{1}{p}}
\nonumber
\\
&\ge E_1\, |B(0,r)|^{\frac1 u -\frac 1p }\,
\Big(\int_{B(0,r)\cap A} |x|^{-(\az+m)p}\,dx \Big)^{\frac{1}{p}}
\nonumber\\
&\ge E_2 \, r^{\frac{d}u -(\alpha + m)}
\end{align}
for appropriate positive constants $E_1,E_2$ independent of $r$.
On the one hand this yields necessity of $\alpha + m \le \frac du$ in (i), on the other hand we get $f_{\frac du -m} \not \in \accentset{\diamond} W^m \cm_p^u(\rn)$, see Lemma \ref{stern}.
\end{proof}

Now we turn to the case of fractional smoothness. This will be a little bit more technical
than the previous proof.

\begin{lem}\label{test3}
Let $ s > 0  $, $ 1 \leq p < u < \infty    $ and $ 1 \leq q \leq \infty   $.
Then we have
\begin{itemize}
\item[(i)] $ f_{\alpha} \in \mathcal{E}^{s}_{u,p,q}(\R)  $ if and only if $ \alpha + s \leq d/u  $.
\item[(ii)] $  f_{\alpha} \not \in  \accentset{\diamond}\ce_{u,p,q}^{s}(\R)   $ if $ \alpha + s = d/u   $.
\end{itemize}
\end{lem}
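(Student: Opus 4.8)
The plan is to use the difference characterization of Proposition \ref{diff} as the main tool for (i), and Theorem \ref{good2} (supplemented by Proposition \ref{diamond1}) for (ii). Since $\supp f_\alpha\subset B(0,3/2)$ and $f_\alpha\in C^\infty(\R\setminus\{0\})$, all quantities occurring in these characterizations reduce --- exactly as in the proof of Lemma \ref{count_bound1} --- to the behaviour of $f_\alpha$ near the origin, i.e.\ to estimates for
\[
 g(x):=\bigg(\int_0^1 t^{-sq}\Big(t^{-d}\int_{B(0,t)}|\Delta_h^N f_\alpha(x)|\,dh\Big)^q\frac{dt}{t}\bigg)^{\frac1q}\qquad(|x|\ \text{small}),
\]
with the usual modification if $q=\infty$ and a fixed $N\in\N$, $N>s$. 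I shall also use the fact recalled above from \cite{ysy3} that $f_\alpha\in\cm^u_p(\R)$ if and only if $\alpha\le d/u$.

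For the ``if'' part of (i), assuming $\alpha+s\le d/u$ we have $\alpha<d/u$ (as $s>0$), hence $\|\,f_\alpha\,|\cm^u_p(\R)\|<\infty$. For the difference term I would prove the pointwise bound $g(x)\ls|x|^{-(\alpha+s)}$ by splitting the $t$-integral at $t\sim|x|$: on $\{Nt<|x|/2\}$ the whole segment $[x,x+Nh]$ stays at distance $\gtrsim|x|$ from $0$, so Taylor's formula gives $|\Delta_h^N f_\alpha(x)|\ls|h|^N|x|^{-\alpha-N}$ (cf.\ \eqref{wss-12}) and the $t$-integral converges near $0$ since $N>s$; on $\{t\gtrsim|x|\}$ the crude bound $|\Delta_h^N f_\alpha(x)|\ls\sum_{j=0}^N|f_\alpha(x+jh)|$ together with $\int_{B(0,\,Ct)}|y|^{-\alpha}\,dy\sim t^{d-\alpha}$ (finite because $\alpha<d$) yields $t^{-d}\int_{B(0,t)}|\Delta_h^N f_\alpha(x)|\,dh\ls|x|^{-\alpha}$ and hence a contribution $\ls|x|^{-(\alpha+s)}$. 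Since $\alpha+s\le d/u<d/p$, the function $|x|^{-(\alpha+s)}$ is locally in $\cm^u_p(\R)$; combining this with the compact support of $g$ and the smoothness of $f_\alpha$ away from $0$ gives $g\in\cm^u_p(\R)$, so Proposition \ref{diff} yields $f_\alpha\in\ce^s_{u,p,q}(\R)$.

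For the remaining implications I would establish the matching lower bound $g(x)\gtrsim|x|^{-(\alpha+s)}$ for small $|x|$. By homogeneity $\Delta_h^N f_\alpha(x)=|x|^{-\alpha}\,\Delta^N_{h/|x|}(|\,\cdot\,|^{-\alpha})(x/|x|)$ for $|x|,|h|$ small; since $|\,\cdot\,|^{-\alpha}$ is real-analytic off the origin and, as $\alpha>0$, is not a polynomial of degree $<N$, a compactness argument on $S^{d-1}$ produces constants $c_0,\delta_0>0$ so that for every $\omega\in S^{d-1}$ there is a ball of radius $\delta_0|x|$ contained in $B(0,c_0|x|)$ on which $|\Delta_h^N f_\alpha(x)|\ge c_0|x|^{-\alpha}$. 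Integrating over such $h$ and then over $t\in(|x|,2|x|)$ gives $g(x)\gtrsim|x|^{-(\alpha+s)}$, whence
\[
 |B(0,r)|^{\frac1u-\frac1p}\Big(\int_{B(0,r)}g(x)^p\,dx\Big)^{\frac1p}\gtrsim r^{\frac du-(\alpha+s)}\, .
\]
If $f_\alpha\in\ce^s_{u,p,q}(\R)$ the left-hand side is bounded as $r\downarrow 0$, which forces $\alpha+s\le d/u$; together with the previous step this proves (i). For (ii), in the critical case $\alpha+s=d/u$ the same estimate (note $dp/u<d$, so $\int_{B(0,r)}|x|^{-dp/u}\,dx\sim r^{d-dp/u}$) shows the left-hand side of \eqref{ws-22} at $y=0$ stays $\gtrsim 1$ as $r\downarrow 0$ --- while \eqref{ws-21} does hold, since $\alpha<d/u$ --- so \eqref{ws-22} fails and Theorem \ref{good2} gives $f_\alpha\notin\accentset{\diamond}\ce^s_{u,p,q}(\R)$ when $q<\infty$. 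For $q=\infty$ one argues directly via Proposition \ref{diamond1}: the high-frequency part $f_\alpha-S^Mf_\alpha$ still carries the critical singularity at scales $\ls 2^{-M}$, so the same local lower bound gives $\|\,f_\alpha-S^Mf_\alpha\,|\ce^s_{u,p,\infty}(\R)\|\gtrsim1$ for all $M$, and $f_\alpha\notin\accentset{\diamond}\ce^s_{u,p,\infty}(\R)$ follows.

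The main obstacle is the lower bound in the previous paragraph: converting the qualitative statement ``$|\,\cdot\,|^{-\alpha}$ is not a polynomial of degree $<N$'' into a quantitative bound $|\Delta_h^N f_\alpha(x)|\gtrsim|x|^{-\alpha}$ valid, uniformly in the direction $x/|x|$, for $h$ ranging over a set of measure $\gtrsim|x|^d$. A secondary, routine point is the reduction --- as in Lemma \ref{count_bound1} --- that only balls close to the origin contribute: the region $\{|x|>2\}$ contributes nothing by the support condition, and the region where $f_\alpha$ is smooth is harmless.
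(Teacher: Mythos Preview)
Your proposal is essentially correct, and for the sufficiency in (i) it follows the same route as the paper: split according to whether $|h|$ (equivalently $t$) is small or large compared with $|x|$, use the mean-value estimate in the first regime and the crude bound $|\Delta_h^N f_\alpha|\ls\sum_j|x+jh|^{-\alpha}$ in the second, arriving at $g(x)\ls |x|^{-(\alpha+s)}$. The paper carries out a three-case split (adding an intermediate zone $|x|/(2N)\le |h|<2|x|$), but the substance is the same.

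The lower bound is where you and the paper genuinely diverge. The paper does \emph{not} appeal to a compactness argument on $S^{d-1}$; instead it gives a completely explicit construction: restrict to $x$ in the positive orthant and to $h$ in a cone $\Omega(t)\subset B(0,t)$ with all coordinates $\ge t/(2\sqrt d)$, and take $t\gg |x|$ (concretely $2^{-i}\le t<2^{-i+1}$ with $i$ much smaller than $j$ where $2^{-j-1}\le|x|\le 2^{-j}$). Then $|x+\ell h|\ge 2^{L/2}|x|$ for $\ell\ge 1$, so the term $|x|^{-\alpha}$ dominates the alternating sum and $|\Delta_h^N f_\alpha(x)|\ge\tfrac12|x|^{-\alpha}$. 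Your compactness route also works --- the radial $N$-th derivative of $|\cdot|^{-\alpha}$ never vanishes, so $\eta\mapsto\Delta_\eta^N(|\cdot|^{-\alpha})(\omega)$ is not identically zero for any $\omega$, and continuity plus compactness of $S^{d-1}$ finishes --- but it is less elementary and you still have to pin down that the $h$-ball sits inside $B(0,t)$ for the $t$'s you integrate over (so $t\in(c_0|x|,2c_0|x|)$ rather than $t\in(|x|,2|x|)$).

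For (ii) you make life harder than necessary. You invoke Theorem~\ref{good2} for $q<\infty$ and then switch to a direct $\|f_\alpha-S^Mf_\alpha\|\gtrsim 1$ argument via Proposition~\ref{diamond1} for $q=\infty$; that last step is only sketched and needs the observation that $S^Mf_\alpha$ is smooth, hence negligible at scales $t\ll 2^{-M}$, with constants independent of $M$. The paper avoids this split entirely: it uses the elementary embedding $\accentset{\diamond}\ce^s_{u,p,q}\hookrightarrow\accentset{\diamond}\ce^s_{u,p,\infty}$ to reduce to $q=\infty$, and then applies Lemma~\ref{lem1} (the necessary direction only, valid for \emph{all} $q\in[1,\infty]$), together with the explicit lower bound from the necessity step. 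In your write-up you could do the same: cite Lemma~\ref{lem1} rather than Theorem~\ref{good2}, and the case distinction on $q$ disappears.
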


\begin{proof}
\textit{Step 1.} Proof of (i). We will use Proposition \ref{diff}.
\\
\textit{Substep 1.1.} Sufficiency. By means of the elementary embedding
$\ce^s_{u,p,q}(\R) \hookrightarrow \ce^s_{u,p,\infty}(\R)$ we may restrict us to the case $q<\infty$.
The membership of $f_\alpha$ in Morrey spaces is already investigated. It remains to
deal with
\begin{equation}\label{lsing_1}
\bigg\|\bigg(
  \int_0^1 t^{-sq} \Big( t^{-d} \int_{B(0,t)} |\Delta_h^N f_{\alpha}(x)  | \, dh\Big) ^q \frac{dt}{t}\bigg)^{\frac{1}{q}}\bigg| \cm^u_p (\R)\bigg\|\, ,
\end{equation}
where we assume $  \alpha + s \leq d/u   $ and $ N > s$.
Because of the compact support it will be enough to deal with small balls.
Furthermore, because of the radial symmetry, it will be sufficient to study the balls $B(0,r)$ with $0 < r <1$, i.e., we are interested in
\begin{equation}\label{lsing_2}
\sup_{0 < r < 1} r^{d(\frac{1}{u}- \frac{1}{p})} \bigg (  \int_{|x| < r}  \bigg(
  \int_0^1 t^{-sq} \Big( t^{-d} \int_{B(0,t)} |\Delta_h^N |x|^{- \alpha }  | \, dh\Big) ^q \frac{dt}{t}\bigg)^{\frac{p}{q}} dx \bigg )^{\frac{1}{p}}.
\end{equation}
We split the integral with respect to $h$ into three parts, namely (a) $|h|< |x|/(2N)$ and \\
(b) $|x|/(2N)\le |h|< 2|x|$ as well as (c) $|h|\ge 2 |x|$.
\\
{\em Case (a)}.
For $ 1 \leq l \leq N  $ we have
\begin{align*}
| x + l h | \geq |x| - l |h| \geq |x| - N |h| \geq |x| - \frac{ |x|}{2} = \frac{ |x|}{2}.
\end{align*}
The Mean Value Theorem yields
\be\label{wss-14}
\vert \Delta^{N}_{h}  |x|^{- \alpha}   \vert \leq C_{\alpha,N} \vert h \vert^{N} \max_{\vert \gamma \vert = N} \sup_{\vert x - y \vert \leq N \vert h \vert } \vert D^{\gamma}  |y|^{- \alpha}   \vert \le c_1\, |h|^N\,  |x|^{-\alpha - N} \, .
\ee
We  find
\begin{align*}
& \int_0^{|x|} t^{-sq} \Big( t^{-d} \int_{\substack{|h| < t, \\ |h|< |x|/(2N)}} |\Delta_h^N |x|^{- \alpha }  | \, dh\Big) ^q \frac{dt}{t} \\
& \qquad \leq c_{1}^q \, |x|^{-(\alpha  +N) q}\,
\int_0^{|x|} t^{-sq} \Big( t^{-d} \int_{|h| < t}  |h|^N \, dh\Big) ^q \frac{dt}{t}
\\
& \qquad \leq c_{2}\,  |x|^{-(\alpha  +N) q}  \int_0^{|x|} t^{-sq + Nq}  \frac{dt}{t} \\
& \qquad \leq c_{3}\,  |x|^{-(\alpha + s) q}\,  .
\end{align*}
In the case $ 0 < |x| < t < 1$ we use the trivial estimate
\[
|\Delta_h^N |x|^{-\alpha}|\le 2^N \, \max_{0 \le l\le N}\,  |x+lh|^{-\alpha}\le
C |x|^{-\alpha}
\]
and obtain
\beqq
 \int_{|x|}^{1} t^{-sq} \Big( t^{-d} \int_{\substack{|h| < t, \\ |h|< |x|/(2N)}} |\Delta_h^N |x|^{- \alpha }  | \, dh\Big) ^q \frac{dt}{t}
& \leq & c_{4}\,  |x|^{-\alpha q}\,  \int_{|x|}^{1} t^{-sq}  \frac{dt}{t}
\\
&  \leq & c_{5} \, |x|^{-(\alpha  + s )q }.
\eeqq
Combining both estimates in Case (a) we get
\begin{equation}\label{(lsing_5)}
\int_{0}^{1} t^{-sq} \Big( t^{-d} \int_{\substack{|h| < t, \\ |h|< |x|/(2N)}} |\Delta_h^N |x|^{- \alpha }  | \, dh\Big) ^q \frac{dt}{t} \leq c_{6} \, |x|^{-(\alpha  + s )q }.
\end{equation}
{\em Case (c)}.
Next we look at the case $ 2 |x| \leq |h| < t \leq 1    $. Here we observe
\begin{align*}
& \int_{0}^{1} t^{-sq} \Big( t^{-d} \int_{2 |x| \leq |h| < t} |\Delta_h^N |x|^{- \alpha }  | \, dh\Big) ^q \frac{dt}{t} \\
& \qquad = \int_{2 |x|}^{1} t^{-sq} \Big( t^{-d} \int_{2 |x| \leq |h| < t} |\Delta_h^N |x|^{- \alpha }  | \, dh\Big) ^q \frac{dt}{t} \\
& \qquad \leq c_{7}\,  \int_{2 |x|}^{1} t^{-sq} \Big( t^{-d} \int_{2 |x| \leq |h| < t} (  |x|^{- \alpha } + |x + Nh|^{- \alpha} ) \, dh\Big) ^q \frac{dt}{t}
\\
& \qquad \leq c_{8}\, \bigg [ \int_{2 |x|}^{1} t^{-sq} \Big( t^{-d} \int_{2 |x| \leq |h| < t}  |x + Nh|^{- \alpha}  \, dh\Big) ^q \frac{dt}{t} +  |x|^{- (\alpha + s) q }\bigg ].
\end{align*}
Now since
\begin{align*}
| x + Nh | \geq N |h| - |x| \geq N |h| - \frac{|h|}{2} \geq \frac{ (2 N - 1) |h|}{2} = c_{9}\,  |h|
\end{align*}
we obtain
\begin{align}\label{lsing_6}
& \int_{0}^{1} t^{-sq} \Big( t^{-d} \int_{2 |x| \leq |h| < t} |\Delta_h^N |x|^{- \alpha }  | \, dh\Big) ^q \frac{dt}{t}
\nonumber
\\
& \qquad \leq c_{10} \bigg [ \int_{2 |x|}^{1} t^{-sq} \Big( t^{-d} \int_{2 |x| \leq |h| < t}  |h|^{- \alpha}  \, dh\Big) ^q \frac{dt}{t} +  |x|^{- (\alpha + s) q } \bigg ]
\nonumber
\\
& \qquad \leq c_{11} \, \bigg [ |x|^{- \alpha q}\,  \int_{2 |x|}^{1} t^{-sq}  \frac{dt}{t} +
|x|^{- (\alpha + s) q }\bigg ]
\nonumber
\\
& \qquad \leq c_{12}\,  |x|^{- (\alpha + s) q} .
\end{align}
{\em Case (b).} It remains to deal with  $ |x|/(2N)\le |h|< 2|x|  $.
Temporarily we assume $2|x|<1$.
In analogy to Case (c) we find
\beqq
\int_{0}^{1} t^{-sq} && \hspace{-0.3cm} \Big( t^{-d} \int_{\substack{|h| < t, \\ |x|/(2N)\le |h|< 2|x|}} |\Delta_h^N |x|^{- \alpha }  | \, dh\Big) ^q \frac{dt}{t}
\\
&=&
\int_{|x|/(2N)}^{1} t^{-sq} \Big( t^{-d} \int_{\substack{|h| < t, \\ |x|/(2N)\le |h|< 2|x|}} |\Delta_h^N |x|^{- \alpha }  | \, dh\Big) ^q \frac{dt}{t}
\\
& \le & c_{13}\,
\int_{|x|/(2N)}^{1} t^{-sq} \Big( t^{-d} \int_{\substack{|h| < t, \\ |x|/(2N)\le |h|< 2|x|}} ( |x|^{- \alpha } + |x+ Nh|^{-\alpha}  ) \, dh\Big) ^q \frac{dt}{t} \, .
\eeqq
Clearly,
\[
\int_{|x|/(2N)}^{1} t^{-sq} \Big( t^{-d} \int_{\substack{|h| < t, \\ |x|/(2N)\le |h|< 2|x|}}  |x|^{- \alpha } \, dh\Big) ^q \frac{dt}{t}
\le c_{14}\, |x|^{-(\alpha + s)q}\,  .
\]
On the other hand we  observe that
\beqq
&& \hspace{-1,2 cm}
\int_{|x|/(2N)}^{1} t^{-sq}  \Big( t^{-d} \int_{\substack{|h| < t, \\ |x|/(2N)\le |h|< 2|x|}}  |x+Nh|^{- \alpha } \, dh\Big) ^q \frac{dt}{t}
\\
&\le &
\int_{|x|/(2N)}^{1} t^{-sq}  \Big( t^{-d} \int_{|h| < \min (t,2|x|)}
|x+Nh|^{- \alpha } \, dh\Big) ^q \frac{dt}{t}
\\
&\le &
\int_{|x|/(2N)}^{1} t^{-sq}  \Big( t^{-d} \int_{|y| < \min (|x|+Nt, (2N+1)|x|)}
|y|^{- \alpha } \, dh\Big) ^q \frac{dt}{t}
\\
&\le &
\int_{|x|/(2N)}^{1} t^{-sq}   t^{-dq} (\min (|x|+Nt, (2N+1)|x|))^{(-\alpha + d)q}  \,
\frac{dt}{t}
\\
& = &
\int_{|x|/(2N)}^{2|x|} t^{-sq}   t^{-dq} (|x|+Nt)^{(-\alpha + d)q}  \,
\frac{dt}{t} +
\int^{1}_{2|x|} t^{-sq}   t^{-dq} ((2N+1)|x|)^{(-\alpha + d)q}
\frac{dt}{t} \, ,
\eeqq
where we used $\alpha <d$.
Since
\beqq
\int_{|x|/(2N)}^{2|x|} t^{-sq}   t^{-dq} (|x|+Nt)^{(-\alpha + d)q}  \,
\frac{dt}{t}  \le  c_{15}\, |x|^{-(s+\alpha)q}
\eeqq
and
\beqq
\int^{1}_{2|x|} t^{-sq}   t^{-dq} ((2N+1)|x|)^{(-\alpha + d)q}
\frac{dt}{t} \le c_{16}\, |x|^{-(\alpha + s)q}\,
\eeqq
it follows also for the case $ |x|/(2N)\le |h|< 2|x|  $
\begin{equation}\label{(lsing_7)}
 \int_{0}^{1} t^{-sq} \Big( t^{-d} \int_{\substack{|h| < t, \\ |x|/(2N)\le |h|< 2|x|}} |\Delta_h^N |x|^{- \alpha }  | \, dh\Big) ^q \frac{dt}{t} \leq c_{17}\,  |x|^{- (\alpha + s) q} .
\end{equation}
The needed modifications for the case $|x|\le 1 < 2|x|$ are obvious.
Now we are well prepared to deal with \eqref{lsing_2}.
When we combine \eqref{(lsing_5)} - \eqref{(lsing_7)} we obtain
\begin{align*}
& \sup_{0 < r < 1} r^{d(\frac{1}{u}- \frac{1}{p})} \bigg (  \int_{|x| < r}  \bigg(
  \int_0^1 t^{-sq} \Big( t^{-d} \int_{B(0,t)} |\Delta_h^N |x|^{- \alpha }  | \, dh\Big) ^q \frac{dt}{t}\bigg)^{\frac{p}{q}} dx \bigg )^{\frac{1}{p}}
  \\
	& \qquad \leq c_{18}\,  \sup_{0 < r < 1} r^{d(\frac{1}{u}- \frac{1}{p})} \bigg (  \int_{|x| < r}  |x|^{- (\alpha + s) p}  dx \bigg )^{\frac{1}{p}} \\
& \qquad \leq c_{19}\,  \sup_{0 < r < 1} r^{d(\frac{1}{u}- \frac{1}{p})} \bigg (  \int_{0}^{r}  t^{- (\alpha + s) p} t^{d-1}  dt \bigg )^{\frac{1}{p}} .	
\end{align*}
Since $   \alpha + s \leq d/u < d/p     $ this integral exists and  we find
\begin{align*}
& \sup_{0 < r < 1} r^{d(\frac{1}{u}- \frac{1}{p})} \bigg (  \int_{|x| < r}  \bigg(
  \int_0^1 t^{-sq} \Big( t^{-d} \int_{B(0,t)} |\Delta_h^N |x|^{- \alpha }  | \, dh\Big) ^q \frac{dt}{t}\bigg)^{\frac{p}{q}} dx \bigg )^{\frac{1}{p}}
  \\
& \qquad \leq c_{20}\,  \sup_{0 < r < 1} r^{d(\frac{1}{u}- \frac{1}{p})}    r^{- (\alpha + s) } r^{\frac{d}{p}}
\\
& \qquad \leq c_{21} \sup_{0 < r < 1} r^{\frac{d}{u} - \alpha - s} < \infty .
\end{align*}
This proves $ f_{\alpha} \in \mathcal{E}^{s}_{u,p,q}(\R)  $ in the case $ \alpha + s \leq d/u  $.\\
\textit{Substep 1.2.} Necessity.
Let $ \alpha + s > d/u  $. By means of the elementary embedding
$\ce^s_{u,p,q}(\R) \hookrightarrow \ce^s_{u,p,\infty}(\R) $ it will be enough to consider
the case $q= \infty$.  We claim that
\begin{equation}\label{lsing_8}
\sup_{0 < r < 1} r^{d(\frac{1}{u}- \frac{1}{p})} \bigg (  \int_{|x| < r}  \bigg(
  \sup_{0< t < 1}\,  t^{-s}
t^{-d} \int_{B(0,t)} |\Delta_h^N |x|^{- \alpha }  | \, dh\bigg)^{p} dx \bigg )^{\frac{1}{p}} = \infty\, .
\end{equation}
Write $ x = (x_{1}, x_{2}, \ldots , x_{d}) \in \R   $ and $ h = (h_{1}, h_{2}, \ldots , h_{d}) \in \R   $.
We put
\[
 \Omega (t) := \{h \in \R: ~ |h|< t\, , ~~ \frac{t}{2\sqrt{d}} \le  \min_k h_k \}\, , \qquad t>0\, .
\]
Then, for all $  0 < t \leq 1 $, it follows the existence of a positive constant $C $ such that
\be\label{wss-15}
 |\Omega (t)|\ge C \, t^{d} \, .
\ee
Let  $2^{-j-1} \le  |x| \le  2^{-j}$ for some $j \in \N$ and $\min_k x_k \ge 0$.
Moreover we assume $ 2^{-i} \leq t < 2^{-i + 1}    $ for some $ i \in \mathbb{N}  $ with $1 \le i < j-L'$, where  $L' \in \mathbb{N} $ will be chosen later.
Now let $h \in \Omega ( 2^{-i}) $. Then for $ k \in \{ 1, 2, \ldots , d   \}  $, because of $ j - i  > L' $, we observe
\[
 2 x_k h_k \ge  x_k \frac{ 2^{-i}}{ \sqrt{d}} \ge  x_k^2 \, \frac{ 2^{j-i}}{ \sqrt{d}}
 \ge x_k^2 \, 2^{L'} \, \frac{1}{ \sqrt{d}}\, .
\]
We choose $L \in \N$ such that $2^{L'} \, \frac{1}{ \sqrt{d}} \ge 2^L$. Hence
\[
 (x_k + h_k)^2 = x_{k}^{2} + 2 x_{k} h_{k} + h_{k}^2 \ge 2x_k h_k \ge 2^L x_k^2
\]
and therefore
\[
 |x+h|^\alpha \ge 2^{\alpha L/2} |x|^\alpha \, .
\]
The restrictions  $x_k, h_k \ge 0$ for all $ k \in \{ 1, 2, \ldots , d   \} $
also imply
\[
 |x+\ell h|^\alpha \ge  |x+h|^\alpha  \geq 2^{\alpha L/2} |x|^\alpha \, , \qquad \ell \in \{ 1, \ldots \, , N \} ,
\]
which results in
\be
  |x|^{-\alpha} \ge 2^{\alpha L/2} |x+ \ell h|^{-\alpha} \, , \qquad \ell \in \{ 1, \ldots \, , N \} .
\ee
Now we are able to find an appropriate estimate of $|\Delta_h^N f_\alpha|$.
Under the constraints collected above  we obtain
\beqq
|\Delta_h^N f_\alpha (x)|& \ge & |x|^{-\alpha} - \Big(\sum_{\ell=0}^{N-1} \binom{N}{\ell} |x+ (N-\ell)h|^{-\alpha} \Big)
\\
&\ge & |x|^{-\alpha} - 2^N |x+h|^{-\alpha} \\
&\ge & |x|^{-\alpha} - 2^N 2^{- \alpha L/2} |x|^{-\alpha} \\
& = & |x|^{-\alpha} ( 1 -  2^{N - \alpha L/2} ).
\eeqq
Now we choose $L \in \mathbb{N} $ as small as possible such that $ 1 -  2^{N - \alpha L/2} \geq 1/2  $ is fulfilled. Then $ L $ only depends on $ N $ and $ \alpha  $ and we get
\begin{align*}
|\Delta_h^N f_\alpha (x)| \geq \frac 12 \, |x|^{-\alpha}.
\end{align*}
Choosing $  L' \in \mathbb{N} $ as the smallest number that fulfills
$2^{L'} \, \frac{1}{ \sqrt{d}} \ge 2^L$ we derive with
\[
2^{-j-1} \le  |x| \le  2^{-j} < 2^{- 2 - L'}  \, ,  \qquad \min_k x_k \ge 0 \, ,
\]
that
\beqq
 \sup_{0< t < 1}\,  t^{-s}
t^{-d} \int_{B(0,t)} |\Delta_h^N |x|^{- \alpha }  | \, dh
& \ge &   \sup_{i \in \mathbb{N} }\,   2^{i(s+d)}
\int_{B(0,2^{-i})} |\Delta_h^N |x|^{- \alpha }  | \, dh
\\
&  \geq &   \sup_{i \in \{ 1, 2, \ldots ,j - L' - 1 \} }  \,
2^{i(s+d)} \int_{\Omega ( 2^{-i})} |\Delta_h^N |x|^{- \alpha }  | \, dh
\\
&\geq &  c_{22} \, 2^{(j-L'-1)s} \,  |x|^{- \alpha}
\eeqq
for some positive $c_{22}$ (independent of $x$) by
taking into account \eqref{wss-15}.
Next since $ 2^{-j-1} \le  |x| \le  2^{-j}    $ this can be rewritten as
\be\label{wss-16}
 \sup_{0< t < 1}\,  t^{-s}
t^{-d} \int_{B(0,t)} |\Delta_h^N |x|^{- \alpha }  | \, dh
\ge  c_{23} \,  |x|^{- (\alpha+s)}\,  .
\ee
We need one more notation
\[
 B_j^+:=  \Big(B(0, 2^{-j}) \setminus B(0 , 2^{-j-1})\Big) \cap \Big\{ x \in \R:~~ x_{k} \geq 0 \;
\mbox{for all} \; k = 1, 2, \ldots , d  \Big\}, \qquad j \in \N\, .
\]
By construction and \eqref{wss-16} it follows
\begin{align*}
 \sup_{0 < r < 1} r^{d(\frac{1}{u}- \frac{1}{p})}  &\bigg (  \int_{|x| < r}  \bigg(
  \sup_{0<t<1} \, t^{-(s+d)} \, \int_{B(0,t)} |\Delta_h^N |x|^{- \alpha }  | \, dh
\bigg)^{p} dx \bigg )^{\frac{1}{p}}
\\
& \geq c_{23}\,  \sup_{0 < r < 1} r^{d(\frac{1}{u}- \frac{1}{p})} \bigg (
\sum_{j = L' + 2}^{\infty}  \int_{B(0,r) \cap B_j^+}
   |x|^{- (\alpha + s) p }  dx \bigg )^{\frac{1}{p}}
\\
& \geq c_{24}\,  \sup_{0 < r < 1} r^{d(\frac{1}{u}- \frac{1}{p})} \bigg (  \int_{0 }^{\min(r , 2^{-L' - 2})}
   t^{- (\alpha + s) p } t^{d-1}  dt \bigg )^{\frac{1}{p}} .
\end{align*}
Now there are two possibilities. Either this integral is infinite or it is finite.
In the first case our claim follows.
In the second case, i.e., if $ -(\alpha + s) + d/p > 0 $,  we conclude
\begin{align}\label{wss-17}
& \sup_{0 < r < 1} r^{d(\frac{1}{u}- \frac{1}{p})}  \bigg (  \int_{|x| < r}  \bigg(
  \sup_{0<t<1} \, t^{-(s+d)} \, \int_{B(0,t)} |\Delta_h^N |x|^{- \alpha }  | \, dh
\bigg)^{p} dx \bigg )^{\frac{1}{p}}
\nonumber
\\
& \qquad \geq c_{25}\,  \sup_{0 < r < 1}\,  r^{d(\frac{1}{u}- \frac{1}{p})}
   \min(r , 2^{-L' - 2})^{- (\alpha + s)  + \frac{d}{p} }
\nonumber
\\
& \qquad \geq c_{26}\,  \sup_{0 < r < 2^{-L' - 2}}\,  r^{\frac{d}{u}- \alpha - s   } \, .
\end{align}
Because of  $ \alpha + s > d/u  $ the right-hand side is not finite and therefore we have $  f_{\alpha} \not \in \mathcal{E}^{s}_{u,p,q}(\R)$.
\\
\textit{Step 2}. Proof of (ii).
We fix $ \alpha: = \frac du-s>0$. By means of the elementary embedding
$\accentset{\diamond}\ce^s_{u,p,q}(\R) \hookrightarrow \accentset{\diamond}\ce^s_{u,p,\infty}(\R)$
it will be enough to concentrate on $q= \infty$.
Here we can apply Step 1.2, in particular \eqref{wss-17}.
It follows
\beqq
 \sup_{0 < r < 1}\, && \hspace{-0.2cm}   r^{d(\frac{1}{u}- \frac{1}{p})}  \bigg (  \int_{|x| < r}  \bigg(
  \sup_{0<t<1} \, t^{-(s+d)} \, \int_{B(0,t)} |\Delta_h^N |x|^{- \alpha }  | \, dh
\bigg)^{p} dx \bigg )^{\frac{1}{p}}
\\
& \geq &  c_{26}\,  \sup_{0 < r < 2^{-L' - 2}}\,  r^{\frac{d}{u}- \alpha - s} = c_{27} >0
\eeqq
and hence, by Lemma \ref{lem1} we find  $ f_{\frac du - s} \not \in \accentset{\diamond}\ce_{u,p,q}^{s}(\R)$.
The proof is complete.
\end{proof}

\begin{rem}
 \rm
The regularity of the functions $f_\alpha$, defined in \eqref{0619b}, attracted some attention in the literature.
Membership in $\ce^s_{p,p,q}(\R)= F^s_{p,q} (\R)$ has been investigated in
\cite[Lemma~2.3.1/1]{rs96}. However, there a totally different method has been applied.
In \cite[p.~97]{rs96} additional references are given.
\end{rem}

%&&&&&&&&&&&&&&&&&&&&&&&&&&&&&&&&&&&&&&&&&&&&&&&&&&&&&&&&&&&&&&&&&&&&
%&&&&&&&&&&&&&&&&&&&&&&&&&&&&&&&&&&&&&&&&&&&&&&&&&&&&&&&&&&&&&&&&&&&

\section{Extension operators for the spaces ${\ce}^{s}_{u,p,q} (\Omega)$}
\label{slava}

%&&&&&&&&&&&&&&&&&&&&&&&&&&&&&&&&&&&&&&&&&&&&&&&&&&&&&&&&&&&&&&&&&&&&&
%&&&&&&&&&&&&&&&&&&&&&&&&&&&&&&&&&&&&&&&&&&&&&&&&&&&&&&&&&&&&&&&&&&&

The main subject of this section  will be Rychkov's extension operator, see \cite{ry99}.
To adopt our approach to what has been done in \cite{ry99}, we mention  the following
generalization of our Definition \ref{LTMS}.
First we need some further notation.
For any function $h$, we use $L_h\in \N_0$ to denote the maximal number such that $h$ has
vanishing moments up to order $L_h$, namely,
\[
\int_{\R} x^\az h(x)\,dx=0 \qquad \mbox{for all multi-indices $\az$ with $|\az|\le L_h$.}
\]
If either no or all moments vanish, then we put $L_h=-1$ or
$L_h=\fz$, respectively.
For a given function $\lambda$ we define  $\lambda_j(x):= 2^{jd}\, \lambda (2^jx)$ with $x\in \R$ and $j\in \N$.

\begin{lem}\label{ry1}
Let $1 \le p \le u< \infty $, $1\le q \le \infty$ and $s\in \re$.
Let $\lambda_0 \in \cs (\R)$ be a function such that
\beq\label{system1}
&&
\int_{\R} \lambda_0 (x)\, dx  \neq   0\, ,
\\
\label{system2}
&&
L_\lambda  \ge  [s]\, , \quad \mbox{where}\quad
\lambda (\cdot):=\lambda_0 (\cdot)-2^{-d}\, \lambda_0(\cdot/2)\, .
\eeq
Then the Lizorkin-Triebel-Morrey space  $ \ce^{s}_{u,p,q}(\re^d)$ is the
collection of all tempered distributions $f \in \mathcal{S}'(\R)$
such that
\beqq
\|\, f \, |\ce^s_{u,p,q}(\R)\|_{\lambda_0} =
 \bigg\| \Big ( \sum\limits_{k=0}^{\infty} 2^{k s q}\, |\cfi[\lambda_{k}  \cf f](\, \cdot \, )|^q\Big)^{\frac{1}{q}}
\bigg| \cm^u_p (\re^d)\bigg\| <\infty
\eeqq
in the sense of equivalent norms
(with usual modification if $q= \infty$).
\end{lem}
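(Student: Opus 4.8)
The plan is to deduce Lemma \ref{ry1} from Definition \ref{LTMS} by proving that the quasi-norm $\|\,\cdot\,|\ce^s_{u,p,q}(\R)\|_{\lambda_0}$ is equivalent to the norm $\|\,\cdot\,|\ce^s_{u,p,q}(\R)\|_{\varphi_0}$ attached to a fixed smooth dyadic decomposition of unity $(\varphi_k)_{k\in\N_0}$; since the latter is already known to be independent of the chosen generator, this suffices. The main device is a Calder\'on-type reproducing identity in the spirit of Rychkov \cite{ry99}: from \eqref{system1} and \eqref{system2} one constructs $\psi_0\in\cs(\R)$ and $\psi\in\cs(\R)$, the latter with as many vanishing moments as desired, such that with $\psi_k(x):=2^{kd}\psi(2^kx)$ for $k\ge1$ one has the identity $f=\sum_{k=0}^\infty \psi_k*\lambda_k*f$ in $\cs'(\R)$ for every $f\in\cs'(\R)$; the non-degeneracy \eqref{system1} is what makes this inversion possible, and the matching of vanishing moments rests on $L_\lambda\ge[s]$. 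Throughout I would move between Morrey norms by the generalized Minkowski inequality and the standard $L_1$-convolution inequality, exactly as in the proofs of Lemma \ref{properties} and Lemma \ref{lem2}, and the key quantitative input is Lemma \ref{lsat}, which plays the role that the Fefferman--Stein maximal inequality plays in the classical theory and, crucially, is available for all $1\le p\le u<\infty$ (including $p=1$).

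First I would treat the ``easy'' estimate $\|f|\ce^s_{u,p,q}(\R)\|_{\lambda_0}\lesssim\|f|\ce^s_{u,p,q}(\R)\|_{\varphi_0}$. Writing $\lambda_k*f=\sum_{m\ge0}\cfi[\cf\lambda_k\,\varphi_m\,\cf f]$ and observing that $\cf\lambda_k\,\varphi_m\,\cf f$ is supported in $\supp\varphi_m$, each summand equals $\cfi[\cf\lambda_k\,\varphi_m]*(\cfi[\varphi_m\cf f])$, i.e.\ a Fourier multiplier applied to the band-limited function $\cfi[\varphi_m\cf f]$. After the substitution $m=k-l$ and a dilation of the frequency variable by $R_k\sim2^k$, Lemma \ref{lsat} reduces everything to the convergence of the geometric-type series $\sum_{l\in\zz}2^{ls}\,\|\,\cf\lambda(2^{-l}\,\cdot\,)\,\varphi_1(c\,\cdot\,)\,|H_2^\nu\|$. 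This converges because $\cf\lambda$ vanishes at the origin to order $L_\lambda+1$, and $L_\lambda+1>s$ always holds under \eqref{system2} (note $L_\lambda\ge0$ automatically since $\int_{\R}\lambda\,dx=0$), which gives decay $2^{-l(L_\lambda+1)}$ as $l\to+\infty$; the Schwartz decay of $\cf\lambda$ gives decay of arbitrary order as $l\to-\infty$.

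For the converse I would insert the reproducing identity: $\cfi[\varphi_m\cf f]=\sum_{k\ge0}\varphi_m^{\vee}*\psi_k*(\lambda_k*f)$, and then split $2^{ms}\cfi[\varphi_m\cf f]=\sum_{j}2^{-js}\,\varphi_m^{\vee}*\psi_{m+j}*\bigl(2^{(m+j)s}\lambda_{m+j}*f\bigr)$. For each fixed $j$ the operator $h\mapsto\cfi[\varphi_m\cf\psi_{m+j}]*h$ has a kernel whose Fourier transform is supported in $\supp\varphi_m$, and after the dilation by $R_m\sim2^m$ the normalized multiplier $\varphi_m(R_m\,\cdot\,)\,\cf\psi_{m+j}(R_m\,\cdot\,)\,\cf\lambda_{m+j}(R_m\,\cdot\,)$ becomes independent of $m$ and has $H_2^\nu$-norm $\lesssim 2^{-|j|\mu}$ with $\mu$ as large as we wish (from the vanishing moments of $\psi$ together with those of $\lambda$ when $j\to+\infty$, and from Schwartz decay when $j\to-\infty$). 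The main obstacle, and the point requiring the most care, is that $\lambda_{m+j}*f$ is \emph{not} band-limited when $\lambda_0$ is a general Schwartz function, so Lemma \ref{lsat} cannot be applied to it directly, and the Hardy--Littlewood maximal operator is not available as a substitute when $p=1$. I would resolve this by decomposing $\cf\lambda_{m+j}=\cf\lambda(2^{-(m+j)}\,\cdot\,)$ on the Fourier side into pieces supported in dyadic shells $\{|\xi|\sim2^{m+j+i}\}$; because such a piece overlaps $\supp\varphi_m$ only for $i=-j+O(1)$, for each $j$ only a bounded range of $i$ survives, the corresponding pieces of $\lambda_{m+j}*f$ are genuinely band-limited at scale $2^m$, and the gain from the shell decomposition (Schwartz decay of $\cf\lambda$ for the outer shells, vanishing of $\cf\lambda$ at the origin for the inner ones) keeps the bookkeeping summable while allowing Lemma \ref{lsat} to be applied term by term. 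Summing over $j$ the factors $2^{-js}2^{-|j|\mu}$ and re-indexing $k=m+j$ then yields $\|f|\ce^s_{u,p,q}(\R)\|_{\varphi_0}\lesssim\|f|\ce^s_{u,p,q}(\R)\|_{\lambda_0}$. As always the case $q=\infty$ only requires replacing the $\ell^q$-sums by suprema.
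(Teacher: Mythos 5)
Your proposal is correct and follows essentially the route the paper itself invokes: the paper skips the proof with a reference to the local-means arguments of Bui--Paluszy\'nski--Taibleson and Rychkov \cite{bpt96,bpt97,ry99}, and your two-sided comparison via the Calder\'on reproducing identity, with the moment condition $L_\lambda\ge[s]$ entering exactly through the $2^{ls}2^{-l(L_\lambda+1)}$ bookkeeping, is precisely that argument transplanted to the Morrey setting with Lemma \ref{lsat} as the multiplier tool. The one place where you add something beyond a transcription is the care taken to pre-localize $\lambda_{m+j}\ast f$ in frequency (a single fattened cutoff $\widetilde\varphi_m$ equal to $1$ on $\supp\varphi_m$ already suffices, via the $L_1$-convolution and generalized Minkowski inequalities, in place of the full shell decomposition) so that Lemma \ref{lsat} applies to genuinely band-limited inputs; this correctly avoids the vector-valued maximal inequality and keeps the case $p=1$ within reach.
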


\begin{proof}
 In principle the proof follows the same lines as in case $p=u$, see  \cite{bpt96}, \cite{bpt97}
and \cite[Prop.~1.2]{ry99}. So we skip the details.
\end{proof}

%&&&&&&&&&&&&&&&&&&&&&&&&&&&&&&&&&&&&&&&&&&&&&&&&&&&&&&&&&&&&&&&&&&&&&&&&&&&&&&&&&&

For a special Lipschitz domain $\Omega$, one can find a narrow vertically
directed cone $K$ with vertex at origin that its shifts $x+K$ are
in $\Omega$ for every $x\in \Omega$. For example, we may take
$$K:=\{(x',x_d)\in\R:\ |x'|<A^{-1}x_d\},$$
where $A$ denotes the Lipschitz constant of $\omega$.
Let $-K:=\{-x:\ x\in K\}$ be the ''reflected'' cone. Then for every test function
$\gamma\in \cd(-K)$ and $f\in \cd'(\Omega)$, the convolution
$\gamma\ast f(x)=\langle f,\gamma(x-\cdot)\rangle$ is well defined in $\Omega$
since $\supp \gamma(x-\cdot)\subset\Omega$ for $x\in\Omega$.

\begin{prop}\label{p3-19}
Let $\Omega \subset \R$ be a special Lipschitz domain and let $K$ be one associated cone as above.
Let $\vz_0\in\cd (-K)$ have nonzero integral and let
$\vz(\cdot):=\vz_0(\cdot)-2^{-d}\vz_0(\cdot/2)$. Then for any given
$L\in \N_0$ there exist functions $\psi_0,\ \psi\in \cd(-K)$ such that
$L_\psi\ge L$ and
\begin{equation}\label{3-22x}
f=\sum_{j=0}^\fz \psi_j\ast \vz_j\ast f
\end{equation}
for all $f\in \cd'(\Omega)$.
\end{prop}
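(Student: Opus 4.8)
The plan is to follow Rychkov's construction from \cite{ry99}; the function-space aspect is irrelevant here, since \eqref{3-22x} is a statement purely about the cone $-K$ and the domain $\Omega$ and the passage from Lebesgue to Morrey spaces does not enter. First I would pass to Fourier transforms and reduce the claim, via Paley--Wiener--Schwarz, to the following: it is enough to produce $\psi_0,\psi\in\cd(-K)$ with $L_\psi\ge L$ such that
\be
\label{p3-19-fourier}
\cf\psi_0(\xi)\,\cf\vz_0(\xi)+\sum_{j=1}^\infty\cf\psi(2^{-j}\xi)\,\cf\vz(2^{-j}\xi)=1\, ,\qquad \xi\in\R\, .
\ee
Granting \eqref{p3-19-fourier}, one recovers \eqref{3-22x} by duality. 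Writing $\wt g(x):=g(-x)$, for $\theta\in\cd(\R)$ one has $\langle\sum_{j=0}^N\psi_j\ast\vz_j\ast f,\theta\rangle=\langle f,\sum_{j=0}^N\wt\psi_j\ast\wt\vz_j\ast\theta\rangle$, and the Fourier transform of $\sum_{j=0}^N\wt\psi_j\ast\wt\vz_j\ast\theta$ equals $\cf\theta(\xi)$ times the $N$th partial sum in \eqref{p3-19-fourier} evaluated at $-\xi$. Since $\cf\vz$ vanishes at the origin, the remainder $\cf\theta(\xi)\sum_{j>N}(\cf\psi\,\cf\vz)(-2^{-j}\xi)$ tends to $0$ in $\cs(\R)$; hence $\sum_{j=0}^\infty\wt\psi_j\ast\wt\vz_j\ast\theta\to\theta$ in $\cs(\R)$, which is exactly what is needed on the right-hand side.

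The construction of $\psi_0,\psi$ satisfying \eqref{p3-19-fourier} is Rychkov's lemma and is, I expect, the main obstacle. After normalising $\int_\R\vz_0=1$ one gets $\cf\vz_0(0)=1$ and $\cf\vz(0)=0$, and the telescoping identity $\sum_{j=1}^N\cf\vz(2^{-j}\xi)=\cf\vz_0(2^{-N}\xi)-\cf\vz_0(\xi)$ shows that \eqref{p3-19-fourier} holds in the degenerate case $\psi_0=\psi=\delta$; the point is to deform this into a genuine pair of test functions, supported in $-K$, with $\psi$ carrying $L+1$ vanishing moments. I would fix $\psi\in\cd(-K)$ with $L_\psi\ge L$ first; then $G(\xi):=1-\sum_{j\ge1}(\cf\psi\,\cf\vz)(2^{-j}\xi)$ is a well-defined $C^\infty$ function with $G(0)=1$, and it remains to solve the division problem $\cf\psi_0\cdot\cf\vz_0=G$ inside $\cd(-K)$. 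This is handled by a Neumann-type iteration exploiting that $-K$ is a closed convex cone, so that convolutions of functions supported in $-K$ stay supported in $-K$ and there is enough room to place $\psi_0$. For this step I would simply quote \cite{ry99}; alternatively one may first obtain \eqref{p3-19-fourier} with $\psi_0,\psi\in\cs(\R)$ by the classical Calder\'on construction and then replace them by cone-supported functions.

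It remains to verify that \eqref{3-22x} makes sense and holds in $\cd'(\Omega)$ rather than just in $\cs'(\R)$; this is where the cone is used. For $f\in\cd'(\Omega)$ and $x\in\Omega$ the convolutions $\vz_j\ast f$ are well defined near $x$ since $\supp\vz_j(x-\cdot)\subset x+K\subset\Omega$, and then $\psi_j\ast(\vz_j\ast f)$ is a distribution on $\Omega$. Testing against $\theta\in\cd(\Omega)$ and transferring all operators onto $\theta$, a routine support computation gives $\supp(\wt\psi_j\ast\wt\vz_j\ast\theta)\subset\supp\theta+(K\cap B(0,R_0))$ with $R_0$ independent of $j$, while $\supp\theta+K\subset\Omega$; hence every term, and so the limit, is supported in one fixed compact subset of $\Omega$, so the $\cs(\R)$-convergence from the first step is in fact convergence in $\cd(\Omega)$. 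Pairing with $f$ then yields $f=\sum_{j=0}^\infty\psi_j\ast\vz_j\ast f$ in $\cd'(\Omega)$, i.e.\ \eqref{3-22x}.
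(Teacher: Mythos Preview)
The paper does not prove this proposition at all; immediately after the statement it simply records that ``Proposition~\ref{p3-19} was established by Rychkov in \cite{ry99}.'' Since your proposal also, at the decisive moment, writes ``For this step I would simply quote \cite{ry99},'' you are in effect doing the same thing. Your surrounding reductions --- passing to the Fourier identity \eqref{p3-19-fourier}, and the verification that the convergence takes place in $\cd'(\Omega)$ via the support inclusions $\supp(\wt\psi_j\ast\wt\vz_j\ast\theta)\subset\supp\theta+(K\cap B(0,R_0))\subset\Omega$ --- are correct and are indeed what one finds in Rychkov's paper.

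One caveat on your sketch of the construction itself: the plan ``fix $\psi\in\cd(-K)$ with $L_\psi\ge L$ first, then solve the division problem $\cf\psi_0\cdot\cf\vz_0=G$'' is not how the argument works and would not succeed as written. For a generic choice of $\psi$ the quotient $G/\cf\vz_0$ is not the Fourier transform of a compactly supported function, let alone one supported in $-K$; equivalently, $\cfi G=\delta-\sum_{j\ge1}\psi_j\ast\vz_j$ contains a point mass, and deconvolving by $\vz_0$ destroys compact support. Rychkov's actual construction produces $\psi_0$ and $\psi$ \emph{simultaneously} from iterated convolutions of $\vz_0$ (raising $\cf\vz_0$ to a high power $M\ge L+1$ and telescoping $(\cf\vz_0)^M(2^{-j}\xi)-(\cf\vz_0)^M(2^{-j+1}\xi)$), so that the required vanishing moments of $\psi$ and the cone support of both $\psi_0,\psi$ come out for free from the convolution structure. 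Your ``Neumann-type iteration'' remark points in the right direction, but the preceding division heuristic should be dropped.
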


Proposition \ref{p3-19} was established by Rychkov in \cite{ry99}.
In the following, for any $f:\ \Omega\to\C$, denote by $f_{\Omega}$ its extension
from $\Omega$ to all of $\rn$ by zero.
In addition, if $g:~\R \to \C $, then $g_{|_\Omega}$ denotes the restriction of $g$ to $\Omega$. This notation will be also used for distributions.

\begin{satz}\label{thm3-19x}
Let $\Omega \subset \R$ be a special Lipschitz domain and  $K$ its associated cone.
Let $s\in\re$, $q\in [1,\fz]$ and $1 \le p\le u<\fz$.
Let $\vz_0\in\cd(-K)$ satisfy \eqref{system1} and \eqref{system2}.
Let $\psi_0,\ \psi\in\cd(-K)$ be given by Proposition \ref{p3-19} such that
 $L_\psi>d/\min(p,q)$. Then the map $E$
 defined by
 \begin{equation}\label{3-19x}
 E f :=  \sum_{j=0}^\fz  \psi_j\ast (\vz_j\ast f)_{\Omega}\, , \qquad f\in \cd'(\Omega)\, ,
 \end{equation}
 induces a linear and bounded  extension operator from
 $\mathcal E_{u,p,q}^s(\Omega)$ into the space $\mathcal E_{u,p,q}^s(\R)$.
 Moreover, for any $f\in\cd'(\Omega)$ we have  $ E (f)_{|_\Omega}=f$.
\end{satz}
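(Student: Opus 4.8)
The plan is to treat the two assertions separately: the reproducing property $E(f)|_\Omega=f$ is easy, while the boundedness of $E$ requires the bulk of the work and is carried out along the lines of Rychkov \cite{ry99}, with the $L_p$-tools replaced by their Morrey counterparts.

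\emph{Step 1 (reproducing property).} Since $\psi_j,\vz_j\in\cd(-K)$ and $x+K\subset\Omega$ for every $x\in\Omega$, for such $x$ the function $\psi_j(x-\cdot)$ is supported in $x+K\subset\Omega$, so that
\[
\big(\psi_j\ast(\vz_j\ast f)_\Omega\big)(x)=\big\langle(\vz_j\ast f)_\Omega,\psi_j(x-\cdot)\big\rangle=\big\langle\vz_j\ast f,\psi_j(x-\cdot)\big\rangle=(\psi_j\ast\vz_j\ast f)(x),
\]
because $(\vz_j\ast f)_\Omega$ coincides with $\vz_j\ast f$ on $\Omega$. Summing over $j$ and invoking \eqref{3-22x} of Proposition \ref{p3-19} gives $E(f)|_\Omega=f$; hence, once boundedness is established, $E$ is indeed an extension operator, and it is linear by construction.

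\emph{Step 2 (reduction of the boundedness).} Let $f\in\ce^s_{u,p,q}(\Omega)$ and let $g\in\ce^s_{u,p,q}(\R)$ be any extension of $f$, i.e.\ $g|_\Omega=f$. Since $\vz_j\in\cd(-K)$ and $x+K\subset\Omega$ for $x\in\Omega$, one has $(\vz_j\ast f)(x)=\langle f,\vz_j(x-\cdot)\rangle=\langle g,\vz_j(x-\cdot)\rangle=(\vz_j\ast g)(x)$ for $x\in\Omega$, so that, as functions on $\R$,
\[
(\vz_j\ast f)_\Omega=\mathbf{1}_\Omega\cdot(\vz_j\ast g),\qquad j\in\N_0,
\]
and therefore $Ef=\sum_{j\ge0}\psi_j\ast\big(\mathbf{1}_\Omega\cdot(\vz_j\ast g)\big)$ independently of the choice of $g$. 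To estimate the $\ce^s_{u,p,q}(\R)$-norm I would use the local means characterization of Lemma \ref{ry1}: fix $\lambda_0\in\cs(\R)$ with $\int\lambda_0\neq0$ and $L_\lambda\ge[s]$, where $\lambda(\cdot):=\lambda_0(\cdot)-2^{-d}\lambda_0(\cdot/2)$, so that $\|Ef|\ce^s_{u,p,q}(\R)\|\approx\big\|(\sum_{k\ge0}2^{ksq}|\lambda_k\ast Ef|^q)^{1/q}\,\big|\,\cm^u_p(\R)\big\|$; moreover, since $\vz_0$ has nonzero integral, Lemma \ref{ry1} also applies to the system $(\vz_j)_j$, giving $\|g|\ce^s_{u,p,q}(\R)\|\approx\big\|(\sum_{j\ge0}2^{jsq}|\vz_j\ast g|^q)^{1/q}\,\big|\,\cm^u_p(\R)\big\|$. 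Writing $\lambda_k\ast Ef=\sum_j\lambda_k\ast\psi_j\ast(\mathbf{1}_\Omega\cdot(\vz_j\ast g))$, everything then reduces to controlling this doubly indexed sum.

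\emph{Step 3 (kernel estimate and conclusion).} Here I would reuse Rychkov's pointwise estimates, since they are scale invariant and do not see the norm: using the vanishing moments $L_\psi>d/\min(p,q)$ of $\psi$ when $j\ge k$, and the vanishing moments $L_\lambda\ge[s]$ of $\lambda$ together with the Schwartz decay of $\lambda_0$ when $j<k$, one obtains for every large $N$ and some $t$ with $0<t<\min(p,q)$ that
\[
\big|\lambda_k\ast\psi_j\ast\big(\mathbf{1}_\Omega\cdot(\vz_j\ast g)\big)(x)\big|\ls 2^{-|k-j|N}\big(M(|\vz_j\ast g|^t)(x)\big)^{1/t},\qquad x\in\R,
\]
$M$ being the Hardy--Littlewood maximal operator; the cutoff $\mathbf{1}_\Omega$ destroys band-limitedness, so this real-variable bound, made possible by the cone geometry, replaces the Fourier-multiplier reasoning of the classical Littlewood--Paley setting. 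Inserting this into the $\cm^u_p$-norm of the $\ell^q$-sums, I would first apply the vector-valued Fefferman--Stein maximal inequality on Morrey spaces (valid because $p/t>1$ and $q/t>1$; cf.\ the tools behind Lemma \ref{lsat} and \cite{tx}) and then Young's inequality for the discrete convolution $\sum_j 2^{-|k-j|N}(\,\cdot\,)_j$ in $k$ (using $N>|s|$), to arrive at
\[
\|Ef|\ce^s_{u,p,q}(\R)\|\ls\Big\|\Big(\sum_{j\ge0}2^{jsq}|\vz_j\ast g|^q\Big)^{1/q}\Big|\cm^u_p(\R)\Big\|\approx\|g|\ce^s_{u,p,q}(\R)\|.
\]
Taking the infimum over all extensions $g$ of $f$ yields $\|Ef|\ce^s_{u,p,q}(\R)\|\ls\|f|\ce^s_{u,p,q}(\Omega)\|$, and the same estimates show that the partial sums of $\sum_j\psi_j\ast(\mathbf{1}_\Omega\cdot(\vz_j\ast g))$ are Cauchy in $\ce^s_{u,p,q}(\R)$, so that $Ef$ is a well-defined element of that space.

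I expect Step 3 to be the main obstacle: one has to bookkeep carefully which part of the vanishing-moment/decay budget is spent in the kernel estimate, so that after absorbing the Hardy--Littlewood maximal operator one can still keep the auxiliary exponent $t$ strictly below $\min(p,q)$ and invoke the vector-valued maximal inequality on $\cm^u_p(\R)$. This is exactly where the hypothesis $L_\psi>d/\min(p,q)$ enters, precisely as in Rychkov's original argument for $p=u$.
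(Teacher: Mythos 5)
Your Step 1 (reproducing property) and the reduction $(\vz_j\ast f)_\Omega=\mathbf{1}_\Omega\cdot(\vz_j\ast g)$ agree in substance with the paper. The genuine gap is in Step 3. The pointwise bound
$\big|\lambda_k\ast\psi_j\ast\big(\mathbf{1}_\Omega(\vz_j\ast g)\big)(x)\big|\ls 2^{-|k-j|N}\big(M(|\vz_j\ast g|^t)(x)\big)^{1/t}$
is only available for $t\ge 1$: for $t=1$ it follows from $|\mathbf{1}_\Omega(\vz_j\ast g)|\le|\vz_j\ast g|$ and the pointwise decay of the kernel $\lambda_k\ast\psi_j$, but for $t<1$ one needs the Peetre--Fefferman--Stein inequality, which requires $\vz_j\ast g$ to be band-limited at scale $2^j$ --- and it is not, since $\vz_j\in\cd(-K)$ is compactly supported in space, not in frequency. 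So your argument is forced to take $t=1$, and the vector-valued Fefferman--Stein maximal inequality on $\cm_p^u(\ell_q)$ then requires $p>1$ \emph{and} $q>1$. The theorem, however, admits $p=1$ and $q=1$ (and these endpoints are actually used later, e.g.\ Theorem \ref{main1} allows $p_0=1$ and $q_i=1$), so your proof misses exactly the cases for which Rychkov's machinery was designed to avoid the Hardy--Littlewood maximal operator. A secondary issue: ``for every large $N$'' overstates the decay available in the direction $j>k$, which is limited by $L_\psi$, and the hypotheses only give $L_\psi>d/\min(p,q)$; the correct bookkeeping (as in \eqref{3-19z}) places the weight $2^{js}$ on the right-hand side so that only \emph{some} $\sigma>0$ is needed, rather than $N>|s|$.

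For comparison, the paper never invokes $M$. It keeps the Peetre maximal functions $G(g_j)(x)=\sup_y|g_j(y)|(1+2^j|x-y|)^{-N}$ of the truncated local means $g_j=(\vz_j\ast f)_\Omega$ throughout, uses Rychkov's estimate $2^{ls}|\vz_l\ast\psi_j\ast g_j(x)|\ls 2^{-|l-j|\sigma}2^{js}G(g_j)(x)$, and then controls $G\big((\vz_j\ast f)_\Omega\big)(x)$ by $G(\vz_j\ast g)(x)$ for $x\in\Omega$ and by $G(\vz_j\ast g)(\widetilde x)$ for $x\notin\overline{\Omega}$, where $\widetilde x=(x',2\omega(x')-x_d)$ is the reflection across $\partial\Omega$. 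The Morrey norm of the reflected contribution is handled by an a.e.\ measure-preserving change of variables (the step where the Morrey structure, as opposed to $L_p$, genuinely requires extra care: one checks that the reflected ball $B(\widetilde z,\max(2A,1)r)$ has comparable measure), and one concludes with the characterization of $\ce^s_{u,p,q}(\R)$ via Peetre maximal functions of local means for $N>d/\min(p,q)$. To repair your route you would have to replace $(M(|\vz_j\ast g|^t))^{1/t}$ by $G(\vz_j\ast g)$ and then prove $\|\{G(\vz_j\ast g)\}_j\,|\,\cm_p^u(\ell_q^s(\R))\|\ls\|g\,|\,\ce^s_{u,p,q}(\R)\|$ --- which is precisely the cited maximal characterization --- at which point you have reconstructed the paper's argument.
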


\begin{proof}
We follow Rychkov \cite{ry99}. Only a few modifications have to be made.
The parameters $s,p,u,q,d$ are considered to be fixed in what follows.\\
{\em Step 1.}
Let $X$ be the space of all sequences
$\{g_j\}_{j\in \N_0}$ of locally integrable functions on $\rn$ such that
\[
\|\, \{g_j\}_{j=0}^ \infty\, |X\|
:=\Big   \|  \Big ( \sum_{j=0}^\infty |2^{js}G(g_j)|^q   \Big )^\frac1q \Big| \cm_p^u(\rn) \Big  \|<\fz,
\]
where $G(g_j)$ denotes the Peetre maximal function of $g_j$, namely,
$$
G(g_j)(x):=\sup_{y\in\rn}\frac{|g_j(y)|}{(1+2^j|x-y|)^N},\qquad x\in\rn \, .
$$
The natural number $N$ will be chosen such that
\be\label{wss-05}
\frac{d}{\min (p,q)} < N \le L_\psi \, .
\ee
We claim that, for any $\{g_j\}_{j=0}^\infty \in X$,
the series $\sum_{j=0}^\infty \psi_j \ast g_j$ converges in $\cs'(\rn)$
and
\begin{equation}\label{3-20x}
\Big\| \, \sum_{j=0}^\infty\, \psi_j \ast g_j\, \Big| {\ce_{u,p,q}^{s}(\rn)}\Big\|
\ls \|\, \{g_l\}_{l=0}^\infty\,  |X\| .
\end{equation}
By \cite[(2.14)]{ry99} we know that, if $L_\vz\ge [s]$ and $L_\psi\ge N$, then there
exists some $\sigma\in(0,\fz)$, such that
\begin{equation}\label{3-19z}
2^{ls}|\vz_l\ast\psi_j\ast g_j(x)| \ls 2^{-|l-j|\sigma}2^{js}G(g_j)(x)
\end{equation}
with hidden constant independent of $x\in \R$, $l,j\in \N_0$ and $\{g_j\}_{j=0}^\infty$.
By Lemma \ref{ry1} we may assume that $\ce^s_{u,p,q}(\R)$ is equipped with
the norm generated by $\varphi_0$.
Thus, for any $j\in \mathbb{N}_{0}$, we have
\begin{align*}
\|\psi_j\ast g_j \vert  \ce_{u,p,q}^{s-2\sigma}(\rn)  \|
\ls \bigg\| \lf ( \sum_{l=0}^\fz 2^{-(2l+|l-j|)\sigma q}[2^{js}G(g_j)]^q
\r ) ^\frac1q \, \bigg| {\cm_p^u(\rn)} \bigg\|\, .
\end{align*}
From this we conclude that, for any $j\in \N_0$,
\beqq
\|\, \psi_j \ast g_j \, |{\ce_{u,p,q}^{s-2\sigma}(\rn)}\|
& \ls &  2^{-j\sigma}\, \|\, 2^{js}\,  G(g_j) \vert \cm_p^u(\rn)  \|
\\
& \ls &  2^{-j\sigma} \, \|\, \{G(g_l)\}_{l=0}^\infty \, |{\cm_p^u(\ell_q^s(\rn))}\| \, .
\eeqq
This implies that, for any $k_1,\ k_2\in \nn, \,  k_1<k_2$, we find
\[
\Big\| \, \sum_{j=k_1}^{k_2}\psi_j\ast g_j\Big| {\ce_{u,p,q}^{s-2\sigma}(\rn)}\Big\|
 \ls \sum_{j=k_1}^{k_2} \, 2^{-j\sigma}\, \|\, \{G(g_l)\}_{l=0}^\infty \, |\cm_p^u(\ell_q^s(\rn))\|
 \ls 2^{-k_1 \sigma}\, .
\]
Hence,  $\sum_{j=0}^\infty \psi_j\ast g_j$ converges
in $\ce_{u,p,q}^{s-2\sigma}(\rn)$ and therefore in $\cs'(\rn)$, since
$\ce_{u,p,q}^{s-2\sigma}(\rn)\hookrightarrow \cs'(\rn)$. Now we turn to the norm estimate.
By \eqref{3-19z} we also have for any $l\in \N_0$ and any $x\in\rn$,
\[
2^{ls}\, \bigg| \vz_l\ast \Big(\sum_{j=0}^\infty \psi_j\ast g_j\Big)(x)\bigg|
\ls \sum_{j=0}^\infty 2^{-|l-j|\sigma} 2^{js}\,  G(g_j)(x)\, .
\]
Taking the $\cm^u_p (\ell_q)$-norm on both sides it is easy to see that
\eqref{3-20x} holds true.
\\
{\em Step 2.} Now we aim to prove that $f\in \ce_{u,p,q}^s(\Omega)$ implies
\begin{equation}\label{3-20y}
 E (f)\in \ce_{u,p,q}^s(\rn)\qquad \mbox{and} \qquad\| \, E (f)\, |{\ce_{u,p,q}^s(\rn)}\|
\ls \|\, f\, |{\ce_{u,p,q}^s(\Omega)}\|\, .
\end{equation}
By definition, for any $\varepsilon\in(0,\fz)$, there
exists $g\in \ce_{u,p,q}^s(\rn)$ such that $g_{|_\Omega}=f$ in $\cd'(\Omega)$
and
\begin{equation}\label{3-21x}
\|\, g \, |{\ce_{u,p,q}^s(\rn)}\| \le \|\, f\, |{\ce_{u,p,q}^s(\Omega)}\| + \varepsilon.
\end{equation}
Let $g_j:=(\vz_j\ast f)_\Omega$ with $j\in\N_0$. We will show that
\begin{equation}\label{3-20z}
\|\, \{(\vz_j\ast f)_\Omega\}_{j=0}^\infty \,|X\|\ls \|\, g\, |{\ce_{u,p,q}^s(\rn)}\| \, .
\end{equation}
Again we apply an inequality due to Rychkov \cite[p.\,248]{ry99}.
We have
\[
\sup_{y\in\rn}\, \frac{|(\vz_j\ast f)_{\Omega}(y)|}{(1+2^j|x-y|)^N}
\ls
\left\{ \begin{array}{lll}
\sup\limits_{y\in\Omega}\, \frac{|\vz_j\ast f(y)|}{(1+2^j|x-y|)^N}  & \quad & \mbox{if}\quad  x\in \Omega ;
\\
\sup\limits_{y\in\Omega} \, \frac{|\vz_j\ast f(y)|}{(1+2^j|\widetilde x-y|)^N}
&& \mbox{if}\quad  x \notin \overline{\Omega}\, .
        \end{array}
\right.
\]
Here $\widetilde{x}:=(x',2w(x')-x_d)\in \Omega$ is the point symmetric to
$x\notin \overline{\Omega}$ with respect to $\partial \Omega$.
Since the convolution of $\varphi_j$ with $f$ in  $\Omega$ is only using values in $\Omega$ we obtain
\[
\vz_j\ast f(x)=\vz_j\ast g(x) \qquad \mbox{for any $x\in\Omega$}\, .
\]
Hence
\[
\sup_{y\in\rn}\, \frac{|(\vz_j\ast f)_{\Omega}(y)|}{(1+2^j|x-y|)^N}
\ls
\left\{ \begin{array}{lll}
G(\vz_j\ast g)(x) &\quad & \mbox{if}\quad  x\in \Omega ;
\\
G(\vz_j\ast g)(\widetilde x)
&& \mbox{if}\quad  x\notin \overline{\Omega}\, .
\end{array} \right.
\]
Obviously, for any ball $B(z,r) \subset \rn$, we know that
\beqq
&& \hspace*{-0.2cm}|B(z,r)|^{\frac{1}{u}- \frac{1}{p}} \Big \|\, \Big ( \sum_{j=0}^\infty \Big|2^{js}\, \sup_{y\in\rn}
\frac{(\vz_j\ast f)_{\Omega}}{(1+2^j|\cdot-y|)^N}\Big|^q\Big )^{\frac1q}\Big | {L^p(B(z,r))}\Big\|
\\
&& \quad \ls |B(z,r)|^{\frac{1}{u}- \frac{1}{p}} \Big\|\, \Big ( \sum_{j=0}^\infty \Big|2^{js}\, G(\varphi_j \ast g)(x)\Big|^q\Big )^{\frac1q}\Big| {L^p(B(z,r) \cap \Omega)}\Big\|
\\
&& \quad + |B(z,r)|^{\frac{1}{u}- \frac{1}{p}} \Big\|\, \Big ( \sum_{j=0}^\infty \Big|2^{js}\,
G(\varphi_j \ast g)(\widetilde{x})\Big|^q\Big )^{\frac1q}\Big| {L^p(B(z,r)\cap
\overline{\Omega}^{\complement})}\Big\|
\\
&& \quad =: \mbox{I} + \mbox{II}\, .
\eeqq
Clearly,
\[
\mbox{I} \ls \|\, \{G(\vz_j\ast g)\}_{j=0}^\infty \, |{\cm_p^u(\ell_q^s(\rn))}\|\, .
\]
Concerning II we argue as follows.
Let $x\in B(z,r)\cap \overline{\Omega}^\complement$.
Independent on the situation ($z\in \Omega $ or $z\not \in \Omega$) we associate to $z$
the vector $\widetilde{z}:= (z',2\omega(z')-z_d)$. Here $\omega$ refers to the function
occuring in the definition of a special Lipschitz domain, see Definition \ref{lipdo}.
It follows that
\[
 |\widetilde{z} - \widetilde{x}|^2 \le  |z'-x'|^2 + (2 \, A \, |z'- x'|+ |z_d-x_d|)^2 <
 \max (2A, 1)^2 \, r^2\, ,
\]
i.e., $\widetilde{x} \in B(\widetilde{z}, \max (2A,1) r)$.
By Rademacher's Theorem $\omega$ is differentiable almost everywhere in $\re^{d-1}$.
Using this  we observe that the transformation
$T(x) = \widetilde{x}$ with $x\in \R$ has Jacobi determinant $ |\det J_T (x)| = 1 $ almost everywhere.
Thus, it follows from a change of variable formula, see, e.g., \cite{Haj}, \cite{BI}, that
\beqq
\int_{B(z,r)\cap \overline{\Omega}^{\complement}} && \Big ( \sum_{j=0}^\infty \Big|2^{js}\,
G(\varphi_j \ast g) (T(x))\Big|^q\Big )^{\frac pq}\, dx
\\
& \ls &
\int_{B(\widetilde{z},\max (2A,1)r)} \Big ( \sum_{j=0}^\infty \Big|2^{js}\,
G(\varphi_j \ast g) (\widetilde{x})\Big|^q\Big )^{\frac pq}\, d\widetilde{x}\, .
\eeqq
Applying this inequality we derive
\begin{eqnarray*}
{\rm II}
&\ls & |B(z,r)|^{\frac1u-\frac1p}
\bigg(\int_{B(\widetilde{z},\max (2A,1)r)}
\Big ( \sum_{j=0}^\infty \Big|2^{js}\,
G(\varphi_j \ast g) (\widetilde{x})\Big|^q\Big )^{\frac pq}\, d\widetilde{x}\bigg)^{\frac{1}{p}}
\\
&\ls & |B(\wz z,\max(2A,1)r)|^{\frac1u-\frac1p}\\
&&\quad\times\bigg(\int_{B(\widetilde{z},\max (2A,1)r)}
\Big (\sum_{j=0}^\infty \Big|2^{js}\,
G(\varphi_j \ast g) (\widetilde{x})\Big|^q\Big )^{\frac pq}\, d\widetilde{x}\bigg)^{\frac{1}{p}}
\\
&\ls & \|\, \{G(\vz_j\ast g)\}_{j=0}^\infty \, |{\cm_p^u(\ell_q^s(\R))}\| \, .
\end{eqnarray*}
From this, combined with the characterization of $\ce_{u,p,q}^s(\rn)$
via the Peetre maximal function  with $N>d/\min (p,q)$ (see, for example, \cite[Subsection 11.2]{lsuyy}),
we further deduce that ${\rm II}\ls \|g \vert \ce_{u,p,q}^s(\rn)  \|$. Thus,
\eqref{3-20z} is proved.
By Step 1, \eqref{3-20y}, and \eqref{3-20z}  we conclude that
\begin{equation*}
\| E (f) \vert \ce_{u,p,q}^s(\rn) \|
\ls \| \{(\vz_j\ast f)_{\Omega} \}_{j=0}^\infty  \vert \cm_p^u(\ell_q^s(\rn))  \|
\ls \|f \vert \ce_{u,p,q}^s(\Omega) \| +\varepsilon.
\end{equation*}
When $\varepsilon $ tends to zero, we find that $ E $ is a bounded linear
operator from $\ce_{u,p,q}^s(\Omega)$ into $\ce_{u,p,q}^s(\rn)$.
\\
{\em Step 3.} Let $\rho \in \cd (\Omega)$. Then
\[
\supp \int_{\R}\, \psi_j (x- \cdot )\rho (x)\, dx \subset \overline{\Omega}\, ,
\]
where we used that the supports of $\psi_0$ and $\psi$ are lying in $-K$.
Hence
\beqq
\int_{\R}  \Big(\int_{\R}\, \psi_j (x-y)\rho (x)\, dx\Big) && \hspace{-0.4cm}\,
(\varphi_j \ast f)_{\Omega} (y)\, dy
\\
&= &\int_{\R}  \Big(\int_{\R}\, \psi_j (x-y)\rho (x)\, dx\Big) \, (\varphi_j \ast f)(y)\, dy\, .
\eeqq
Finally, from Proposition \ref{p3-19}  we conclude that
$$ E (f)_{|_\Omega} = \sum_{j=0}^\infty \psi_j\ast\vz_j\ast f=f \qquad{\rm in}\ \cd'(\Omega).$$
This finishes the proof of Theorem \ref{thm3-19x}.
\end{proof}

We remark that the extension operator $E$ in Theorem \ref{thm3-19x}
depends on $p,\ q$ and $s$. More precisely, we need to have
\begin{equation}\label{4.1}
[s]\le L_\vz \qquad \mbox{and} \qquad \min(p,q)> \frac{d}{L_\psi}.
\end{equation}
However, Rychkov \cite{ry99} has shown how to overcome these restrictions.
He constructed an universal extension operator, i.e., an extension operator, which works for all admissible parameter constellations simultaneously.
In view of \eqref{4.1}, one is tempted to take $L_\varphi = L_\psi =\infty$, which is
certainly impossible for compactly supported functions,
but can be achieved with $\varphi, \psi$ rapidly decreasing at infinity.

Let $\Omega$ and $K$ be as above. By $\cs'(\Omega)$ we denote the subset of $\cd'(\Omega)$
consisting of all distributions having finite order and at most polynomial growth
at infinity. More precisely, $f\in \cs'(\Omega)$ if and only if  the estimate
\begin{equation}
|\langle f,\gamma\rangle |\le c\sup_{x\in\Omega,|\alpha|\le M} |D^\alpha\gamma(x)|(1+|x|)^M,
\qquad\mbox{for all}\quad \,\gamma\in\cd(\Omega),
\end{equation}
is true with some constants $c$ and $M\in \N_0$ depending on $f$.

\begin{rem}\label{rem3-23x}
\rm
By \cite[p.\,250]{ry99}, we find that $f\in \cs'(\Omega)$ if and only if there exists
a $g\in\cs'(\rn)$ such that $g_{|_\Omega}=f$. In particular, $\ce_{u,p,q}^s(\Omega)$
is a subset of $\cs'(\Omega)$.
\end{rem}
The following lemma is just \cite[Theorem 4.1]{ry99}.

\begin{lem}\label{lem3-22x}
Let $\Omega \subset \R$ be a special Lipschitz domain and  $K$ its associated cone.
There exist four functions $\vz_0,\ \vz,\ \psi_0,\ \psi\in\cs(\rn)$ supported in
$-K$ such that $L_\vz=L_\psi=\fz$ and \eqref{3-22x} holds in $\cd'(\Omega)$
for any $f\in \cs'(\Omega)$.
\end{lem}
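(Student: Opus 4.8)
The plan is to follow Rychkov's construction \cite[Theorem~4.1]{ry99}. Proposition \ref{p3-19} already produces a reproducing identity of the form \eqref{3-22x}, but with \emph{compactly supported} $\vz_0,\vz,\psi_0,\psi\in\cd(-K)$ and with $L_\psi$ as large as we like, yet finite; via Theorem \ref{thm3-19x} this yields an extension operator only for those $(s,p,q,u)$ with $[s]\le L_\vz$ and $\min(p,q)>d/L_\psi$. To obtain a \emph{single} operator valid for all admissible parameters simultaneously — and, together with Lemma \ref{ry1}, to keep full freedom in the choice of generators — one is forced to demand $L_\vz=L_\psi=\fz$, hence to leave the class of compactly supported functions. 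This is possible precisely because $-K$ is an \emph{unbounded} cone, so that $\cs(\R)$ contains nontrivial functions supported in $-K$ (take the product of a $C^\infty$ function supported in $-K$ with a rapidly decreasing factor).

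First I would fix $\vz_0,\psi_0\in\cs(\R)$, both supported in $-K$ and both with nonzero integral, such that, writing $\vz:=\vz_0-2^{-d}\vz_0(\cdot/2)$ and $\psi:=\psi_0-2^{-d}\psi_0(\cdot/2)$ (again Schwartz and supported in $-K$, since $-K$ is a cone), one has: (i) $\cf\vz$ and $\cf\psi$ vanish to infinite order at the origin — equivalently $\cf\vz_0-\cf\vz_0(0)$ and $\cf\psi_0-\cf\psi_0(0)$ do, which is exactly $L_\vz=L_\psi=\fz$; and (ii) the discrete Calderón identity
\be\label{cald-1}
\cf\psi_0(\xi)\,\cf\vz_0(\xi)+\sum_{j=1}^{\fz}\cf\psi(2^{-j}\xi)\,\cf\vz(2^{-j}\xi)=1,\qquad \xi\in\R ,
\ee
which is just the Fourier transform of \eqref{3-22x}. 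Producing functions with property (i) is the analytic core: a Schwartz function supported in the proper cone $-K$ has Fourier transform equal to the boundary value of a bounded holomorphic function on a tube domain over a cone, and — unlike a genuinely real-analytic function — such a boundary value may be flat at the apex $0$ of $-K$ without vanishing identically there; one exhibits such a function by the classical $\exp(-(1+z)/(1-z))$-type (Denjoy--Carleman) device, adapted to the geometry of $-K$ by a suitable change of variables. Once $\vz_0$ is chosen, \eqref{cald-1} is solved for $\psi_0$: away from the origin the tail series in \eqref{cald-1} is bounded below by a positive constant, and a routine check shows that the resulting $\cf\psi_0$ is again the Fourier transform of a Schwartz function supported in $-K$ with $\cf\psi_0-\cf\psi_0(0)$ flat at $0$. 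All these steps are carried out in detail in \cite[\S4]{ry99}.

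It then remains to verify that \eqref{3-22x} holds in $\cd'(\Omega)$ for every $f\in\cs'(\Omega)$. Since $\vz_0,\vz,\psi_0,\psi$ are supported in $-K$, which is a narrow downward cone with $x+K\subset\Omega$ for all $x\in\Omega$, for $x\in\Omega$ the support of $\vz_j(x-\cdot)$ lies in $x+K\subset\Omega$, and the pairing $\vz_j\ast f(x)=\langle f,\vz_j(x-\cdot)\rangle$ is legitimate because $f\in\cs'(\Omega)$ has at most polynomial growth and finite order; likewise $\psi_j\ast(\vz_j\ast f)(x)$ makes sense for $x\in\Omega$, so each term of \eqref{3-22x} is a well-defined element of $\cd'(\Omega)$. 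For the convergence one invokes Remark \ref{rem3-23x}: $f\in\cs'(\Omega)$ means $f=g_{|_\Omega}$ for some $g\in\cs'(\R)$, so it suffices to prove $g=\sum_{j=0}^\fz \psi_j\ast\vz_j\ast g$ in $\cs'(\R)$; the latter is \eqref{cald-1} paired with $\cf g$, and convergence of the series in $\cs'(\R)$ follows from the almost-orthogonality estimate \eqref{3-19z}, which the infinite-order vanishing of moments makes available with an arbitrarily large decay exponent $\sigma$.

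The main obstacle is the construction in the second paragraph: exhibiting a nontrivial Schwartz function supported in the cone $-K$ whose Fourier transform is flat at the origin, and then closing up the Calderón identity \eqref{cald-1} within this class. Everything else is a routine adaptation of the arguments already used in Proposition \ref{p3-19} and Theorem \ref{thm3-19x}.
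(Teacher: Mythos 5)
Your proposal takes the same route as the paper, whose entire proof of this lemma is the sentence ``The following lemma is just \cite[Theorem 4.1]{ry99}.'' You correctly isolate the structural content of Rychkov's construction: $L_\vz=L_\psi=\fz$ is equivalent to flatness of $\cf\vz$ and $\cf\psi$ at the origin; this is impossible for compactly supported generators (their Fourier transforms are entire, so flatness at a point forces identical vanishing) but attainable for Schwartz functions supported in the unbounded cone $-K$; and the identity \eqref{3-22x} for $f\in\cs'(\Omega)$ reduces, via an extension $g\in\cs'(\rn)$ of $f$ and the inclusion $\supp\vz_j(x-\cdot)\subset x+K\subset\Omega$, to the corresponding identity in $\cs'(\rn)$, whose convergence is controlled by the almost-orthogonality estimates behind \eqref{3-19z}. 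All of this matches \cite[\S 4]{ry99}, to which you, like the paper, defer the details.

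There is, however, one step that would fail as you describe it. You impose the dilation-difference form $\psi=\psi_0-2^{-d}\psi_0(\cdot/2)$ and then propose to solve the Fourier-side Calder\'on identity for $\cf\psi_0$ by dividing out the already-chosen factors, calling it ``a routine check'' that the quotient is again the Fourier transform of a Schwartz function supported in $-K$. Neither the statement of the lemma nor Rychkov's construction requires $\psi$ to have that form, and the division argument is not available: the condition $\supp\psi_0\subset -K$ is encoded in holomorphic extendability of $\cf\psi_0$ to a tube over the dual cone together with growth bounds, and these are exactly the properties that Fourier-side division destroys ($\cf\vz_0$ is Schwartz, hence decays at infinity and may vanish, so $1/\cf\vz_0$ is not even bounded). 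Preserving the support in $-K$ when producing the dual family $\psi_0,\psi$ is the second genuinely nontrivial point of \cite[Thm.~4.1]{ry99}; Rychkov handles it by explicit algebraic manipulation of the telescoping identity $\cf\vz_0(\xi)+\sum_{j\ge1}\cf\vz(2^{-j}\xi)=1$, staying inside the convolution algebra of Schwartz functions supported in $-K$, rather than by division. Since both you and the paper ultimately rest on Rychkov's theorem, this is a misdescription of the cited argument rather than a gap in the paper's logic, but as a self-contained proof strategy the ``solve for $\cf\psi_0$'' step would not go through.
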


\begin{satz}\label{thm3-23x}
Let $\vz_0,\ \vz,\ \psi_0,\ \psi\in\cs(\rn)$ be as in Lemma \ref{lem3-22x}.
Then the map $E $ defined by
\begin{equation}\label{3-23x}
E f :=  \sum_{j=0}^\infty \psi_j\ast (\vz_j\ast f)_{\Omega}\, , \qquad f \in \cs'(\Omega),
\end{equation}
yields a linear and bounded extension operator from $\ce_{u,p,q}^s(\Omega)$
into $\ce_{u,p,q}^s(\rn)$ for all admissible values of $p,\ q,\ u$ and $s$.
\end{satz}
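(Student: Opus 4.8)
The plan is to show that the proof of Theorem \ref{thm3-19x} applies almost verbatim, the key observation being that the parameter restrictions imposed there become vacuous for the new kernels. First I would record that, by Remark \ref{rem3-23x}, $\ce^s_{u,p,q}(\Omega)\subset\cs'(\Omega)$, so that for $f\in\ce^s_{u,p,q}(\Omega)$ every convolution $\vz_j\ast f$ is well defined on $\Omega$: since $\supp\vz_j\subset-K$ we have $\supp\vz_j(x-\cdot)\subset x+K\subset\Omega$ for each $x\in\Omega$, and the finite-order, polynomial-growth condition defining $\cs'(\Omega)$ makes the pairing $\langle f,\vz_j(x-\cdot)\rangle$ meaningful even though $-K$ is unbounded (see \cite[p.\,250]{ry99}). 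Hence $Ef$ as in \eqref{3-23x} is a well-defined object for every $f\in\ce^s_{u,p,q}(\Omega)$.

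Since $L_\vz=L_\psi=\infty$, the hypotheses $[s]\le L_\vz$ and $\min(p,q)>d/L_\psi$ used in Theorem \ref{thm3-19x} now hold for all admissible $s,p,q,u$. I would therefore rerun the three steps of that proof with $\vz_0,\vz,\psi_0,\psi$ replaced by the Schwartz functions of Lemma \ref{lem3-22x}. In Step 1 the only input is the almost-diagonal estimate \eqref{3-19z} on $2^{ls}|\vz_l\ast\psi_j\ast g_j|$, which in the present situation is available with an arbitrarily large decay exponent $\sigma$; this yields convergence of $\sum_j\psi_j\ast g_j$ in $\cs'(\rn)$ together with the bound on the sequence space $X$, giving \eqref{3-20x}. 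In Step 2 one reuses without change Rychkov's pointwise control \cite[p.\,248]{ry99} of the Peetre maximal function of $(\vz_j\ast f)_\Omega$ by that of $\vz_j\ast g$ on $\Omega$ (respectively at the reflected point $\widetilde x$), the identity $\vz_j\ast f=\vz_j\ast g$ in $\Omega$, the change of variables for $T(x)=\widetilde x$ (whose Jacobian is still $1$ almost everywhere), and the comparison of Morrey norms on $B(z,r)$ and $B(\widetilde z,\max(2A,1)r)$, to conclude $\|Ef\,|\ce^s_{u,p,q}(\rn)\|\ls\|f\,|\ce^s_{u,p,q}(\Omega)\|$. Finally, Step 3 uses only that $\psi_0,\psi$ are supported in $-K$ together with the reproducing formula \eqref{3-22x}, which by Lemma \ref{lem3-22x} holds in $\cd'(\Omega)$ for every $f\in\cs'(\Omega)$; this gives $Ef_{|_\Omega}=f$.

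I do not expect a genuinely new difficulty. The one point requiring care is that the kernels are no longer compactly supported but merely rapidly decreasing on the cone $-K$, so convergence of the (conical, unbounded-support) convolutions has to be checked; this is handled precisely by working inside $\cs'(\Omega)$ and quoting the quantitative estimates of \cite{ry99}, which were already proved for Schwartz functions with infinitely many vanishing moments. All remaining verifications coincide with the routine computations carried out in the proof of Theorem \ref{thm3-19x}.
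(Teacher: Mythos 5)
Your proposal is correct and follows essentially the same route as the paper: the paper's proof likewise reruns the argument of Theorem \ref{thm3-19x}, replacing the almost-diagonal estimate by the bound $\int_{\rn}|\vz_l\ast\psi_j(z)|(1+2^j|z|)^N\,dz\ls 2^{-|l-j|M}$ (valid for arbitrary $M$ since $L_\vz=L_\psi=\fz$, via \cite[Lemma 2.1]{bpt96}), and uses $\ce^s_{u,p,q}(\Omega)\subset\cs'(\Omega)$ together with the support condition $\supp\psi_0,\supp\psi\subset -K$ to justify the reproducing formula and the identity $E(f)_{|_\Omega}=f$. No substantive difference.
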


\begin{proof}
The proof is based on that of Theorem \ref{thm3-19x} and similar to that of
\cite[Theorem 4.1(b)]{ry99}. Let $f\in \ce_{u,p,q}^s(\Omega)$. Then $f\in \cs'(\Omega)$ follows,
see Remark \ref{rem3-23x}. By Lemma \ref{lem3-22x} we have
\begin{equation*}
\sum_{j=0}^\infty \psi_j\ast \vz_j \ast f =f \qquad \mbox{in} \quad \cd'(\Omega).
\end{equation*}
Moreover, since the supports of $\psi_0$ and $\psi$ lie in $-K$, it follows that
\[
E (f)_{|_\Omega} = \sum_{j=0}^\infty \psi_j\ast \vz_j\ast f =f.
\]
It remains to prove that the series in \eqref{3-23x} converges in $\cs'(\rn)$
and
\begin{equation}\label{3-23y}
\|\, E(f) \, |{\ce_{u,p,q}^s(\rn)}\| \ls \|\, f \, |{\ce_{u,p,q}^s(\Omega)}\|\, .
\end{equation}
Observe that for any $l,\ j\in\N_0$ and $x\in\rn$ we have
\begin{align*}
\lf|\vz_l\ast\psi_j\ast(\vz_j\ast f)_{\Omega}(x)\r|
&\le \int_\rn |\vz_l\ast\psi_j(z)||(\vz_j\ast f)_{\Omega}(x-z)|\,dz\\
&\le G((\vz_j\ast f)_{\Omega})(x)\int_\rn |\vz_l\ast\psi_j(z)|(1+2^j|z|)^N\,dz\, ,
\end{align*}
where $N$ is chosen as in \eqref{wss-05}.
By \cite[Lemma 2.1]{bpt96}, see also \cite[(4.8)]{ry99}, we know that
for any $M\in \N$ and any $l,\ j\in\N_0$,
$$\int_\rn |\vz_l\ast\psi_j(z)|(1+2^j|z|)^N\,dz
\ls 2^{-|l-j|M}.$$
Thus, there is a $\sigma >0$ such that
\begin{align*}
2^{ls}\lf|\vz_l\ast\psi_j\ast(\vz_j\ast f)_{\Omega}(x)\r|
\ls 2^{-|l-j|\sigma}2^{js}G((\vz_j\ast f)_{\Omega})(x), \qquad x \in \R\, .
\end{align*}
Now by an argument similar to that used in the proof of Theorem \ref{thm3-19x} above,
we conclude that the series in \eqref{3-23x} converges in $\cs'(\rn)$
and that \eqref{3-23y} holds.
\end{proof}

Since multiplication by smooth functions in $\cd(\Omega)$ preserves $\ce_{u,p,q}^s(\rn)$
(see \cite[Theorem 6.1]{ysy}), a standard procedure (see \cite{Stein} or
\cite[p.\,244]{ry99}) allows to reduce the case of a bounded Lipschitz domain to  a special Lipschitz domain.
Now we are in position to formulate the final result of this subsection.

\begin{cor}\label{ryfinal}
Let $\Omega \subset \R $ be  either a  bounded Lipschitz domain if $d\ge 2$
or a bounded interval if $d=1$.
Then there exists  a linear and bounded extension operator $E_\Omega$ such that
\[
E_\Omega \in \cl (\ce_{u,p,q}^s(\Omega)\to \ce_{u,p,q}^s(\R))
\]
simultaneously for  all admissible values of $p,\ q,\ u$ and $s$.
In addition,   for any $f\in\cs'(\Omega)$ we have  $ E_\Omega (f)_{|_\Omega}=f$ in $\cd'(\Omega)$.
\end{cor}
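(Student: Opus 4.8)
The idea is the classical localization argument that reduces a bounded Lipschitz domain to finitely many special Lipschitz domains, for which the universal operator $E$ from Theorem \ref{thm3-23x} is already available. Since $\partial\Omega$ is compact, cover it by finitely many open balls $B_1,\ldots,B_M$ such that, after a suitable rotation $R_k$, the set $R_k(\Omega\cap B_k)$ is contained in a special Lipschitz domain $\Omega_k$ with associated cone $K_k$; add one more open set $B_0$ with $\overline{B_0}\subset\Omega$ so that $\{B_k\}_{k=0}^M$ covers $\overline{\Omega}$. Pick a smooth partition of unity $\{\varphi_k\}_{k=0}^M$ subordinate to this cover (with $\varphi_k\in\cd(B_k)$ and $\sum_k\varphi_k\equiv 1$ on $\overline{\Omega}$), and auxiliary functions $\widetilde\varphi_k\in\cd(B_k)$ with $\widetilde\varphi_k\equiv 1$ on $\supp\varphi_k$.

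Given $f\in\ce_{u,p,q}^s(\Omega)$, note first that $f\in\cs'(\Omega)$ by Remark \ref{rem3-23x}, so all convolutions below are meaningful. Write $f=\sum_{k=0}^M\varphi_k f$ in $\cd'(\Omega)$. For $k=0$ the function $\varphi_0 f$ has compact support inside $\Omega$, so its extension by zero lies in $\ce_{u,p,q}^s(\R)$ with controlled norm, because multiplication by a fixed function in $C_0^\infty(\R)$ is bounded on $\ce_{u,p,q}^s(\R)$; see \cite[Theorem 6.1]{ysy}. For $k\ge 1$, the function $(\varphi_k f)\circ R_k^{-1}$ lives on $\Omega_k$; apply the special-domain universal extension operator $E_{\Omega_k}$ from Theorem \ref{thm3-23x}, multiply the result by $(\widetilde\varphi_k\circ R_k^{-1})$ to cut off support back into $R_k(B_k)$, and undo the rotation. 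Concretely, set
\[
 E_\Omega f := (\varphi_0 f)_{\Omega} \;+\; \sum_{k=1}^M \Big[\,\big(\widetilde\varphi_k\circ R_k^{-1}\big)\cdot E_{\Omega_k}\big((\varphi_k f)\circ R_k^{-1}\big)\,\Big]\circ R_k\, .
\]
Each summand lies in $\ce_{u,p,q}^s(\R)$: $E_{\Omega_k}$ is bounded by Theorem \ref{thm3-23x}, multiplication by the fixed smooth cutoff $\widetilde\varphi_k\circ R_k^{-1}$ is bounded on $\ce_{u,p,q}^s(\R)$ by \cite[Theorem 6.1]{ysy}, rotations act isometrically on the Fourier side and hence preserve $\ce_{u,p,q}^s(\R)$, and multiplication by $\varphi_k$ is bounded on $\ce_{u,p,q}^s(\Omega)$ (again \cite[Theorem 6.1]{ysy}, in the restricted formulation). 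Summing the finitely many estimates gives $\|\,E_\Omega f\,|\ce_{u,p,q}^s(\R)\|\ls\|\,f\,|\ce_{u,p,q}^s(\Omega)\|$, and linearity is clear. Crucially, none of the geometric data $R_k,\varphi_k,\widetilde\varphi_k$ depends on $p,q,u,s$, and $E_{\Omega_k}$ is itself universal, so $E_\Omega$ works simultaneously for all admissible parameters.

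It remains to check the reproducing identity $E_\Omega(f)_{|_\Omega}=f$ in $\cd'(\Omega)$. Restricting to $\Omega$, the rotated cutoff $\widetilde\varphi_k\circ R_k^{-1}$ equals $1$ on the relevant support, and $E_{\Omega_k}(g)_{|_{\Omega_k}}=g$ by Theorem \ref{thm3-23x}; hence the $k$-th term restricts to $\varphi_k f$ on $\Omega$, and the $k=0$ term restricts to $\varphi_0 f$. Summing over $k$ and using $\sum_{k=0}^M\varphi_k\equiv 1$ on $\overline{\Omega}$ yields $E_\Omega(f)_{|_\Omega}=f$. For the one-dimensional convention, a bounded interval is handled directly as a (pair of) special Lipschitz domain(s) in the trivial way. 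I expect no genuine obstacle here: the substance is entirely contained in Theorems \ref{thm3-19x} and \ref{thm3-23x}, and the only points requiring a line of care are the boundedness of pointwise multiplication by smooth cutoffs on $\ce_{u,p,q}^s$ (cited) and the rotation invariance of the scale (immediate from Fourier-analytic definition, since a rotation commutes with the radial Littlewood--Paley pieces $\varphi_k$ and with the Morrey norm).
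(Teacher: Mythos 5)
Your proposal is correct and is exactly the route the paper takes: the paper disposes of this corollary in one sentence by invoking the "standard procedure" of Stein and Rychkov (localization via a partition of unity, rotation to special Lipschitz domains, application of Theorem \ref{thm3-23x}, and boundedness of multiplication by smooth cutoffs from \cite[Theorem 6.1]{ysy}), which is precisely what you have written out in detail. No discrepancies to report.
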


\begin{rem}
 \rm
 A different extension operator for Morrey smoothness spaces has been investigated in   Moura, Neves, Schneider \cite{MNS},
but restricted to a smaller class of domains.
\end{rem}

Let $\Omega \subset \R$ be a bounded domain.
We shall call $\Omega$ an extension domain for ${\ce}^{s}_{u,p,q}(\R)$
if there exists a linear and continuous extension operator
$E \in \cl ({\ce}^{s}_{u,p,q} (\Omega) \to {\ce}^{s}_{u,p,q} (\R))$.

%&&&&&&&&&&&&&&&&&&&&&&&&&&&&&&&&&&&&&&&&&&&&&&&&&&&&&&&&&&&&&&&&&&&&
%&&&&&&&&&&&&&&&&&&&&&&&&&&&&&&&&&&&&&&&&&&&&&&&&&&&&&&&&&&&&&&&&&&&

\section{Interpolation of Lizorkin-Triebel-Morrey spaces}\label{s-interpolation}

%&&&&&&&&&&&&&&&&&&&&&&&&&&&&&&&&&&&&&&&&&&&&&&&&&&&&&&&&&&&&&&&&&&&&&
%&&&&&&&&&&&&&&&&&&&&&&&&&&&&&&&&&&&&&&&&&&&&&&&&&&&&&&&&&&&&&&&&&&&

In this section we will prove our main results. For that purpose we have to deal with complex interpolation. Let $(X_0,X_1)$ be an interpolation couple of Banach spaces.
By $[X_0,X_1]_\Theta$ we denote the result of the complex interpolation of these spaces.
We refer to Calder{\'o}n \cite{ca64}, Bergh, L\"ofstr\"om \cite{BL},
Kre{\u{\i}}n, Petunin, Semenov  \cite{KPS}, Lunardi \cite{lun} and Triebel
\cite{t78} for the basics.
All our investigations will be based on the following essentially  known formulas.

\begin{prop}\label{shestakov}
Let $\tz\in(0,1)$, $s_i\in\rr$,
$p_i \in [1,\infty)$, $q_i\in [1,\fz]$ and $u_i\in[p_i,\fz)$ with $i\in\{0,1\}$. Let $p_0\, u_1=p_1\, u_0$ and $s=(1-\tz)s_0+\tz s_1$ as well as
\[
\frac1p=\frac{1-\tz}{p_0}+\frac\tz{p_1}\, , \quad \frac1q=\frac{1-\tz}{q_0}+\frac\tz{q_1}\, \quad
\mbox{and} \quad \frac1u=\frac{1-\tz}{u_0}+\frac{\tz}{u_1}\, .
\]
{\rm (i)} Then we have
\be\label{wss-06}
 \big[\ce_{u_0,p_0,q_0}^{s_0}(\R),  \ce_{u_1,p_1,q_1}^{s_1}(\R)\big]_\Theta  =  \overline{\ce_{u_0,p_0,q_0}^{s_0}(\R)\cap  \ce_{u_1,p_1,q_1}^{s_1}(\R)}^{\|\, \cdot \, |\ce_{u,p,q}^{s}(\R)\|}.
\ee
{\rm (ii)} Let $\Omega \subset \R$ be either  a bounded Lipschitz domain
if $d\ge 2$ or a bounded interval if $d=1$.
Then
\beqq
 \big[ \ce_{u_0,p_0,q_0}^{s_0}(\Omega),  \ce_{u_1,p_1,q_1}^{s_1}(\Omega)\big]_\Theta
= \overline{\ce_{u_0,p_0,q_0}^{s_0}(\Omega)\cap  \ce_{u_1,p_1,q_1}^{s_1}(\Omega)}^{\|\, \cdot \, |\ce_{u,p,q}^{s}(\Omega)\|}
\eeqq
holds.
\end{prop}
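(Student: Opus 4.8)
The plan is to deduce (i) by transporting an abstract interpolation principle of Shestakov, together with the Lemari\'e--Rieusset computation of a Calder\'on product, from the level of Morrey sequence spaces up to the Lizorkin--Triebel--Morrey scale; part (ii) then follows from (i) by means of the universal extension operator of Corollary~\ref{ryfinal}.

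For (i), first I would realise each endpoint space $\ce_{u_i,p_i,q_i}^{s_i}(\R)$ as a \emph{retract} of a Morrey-type sequence space $m_{u_i,p_i,q_i}^{s_i}$. Using the Littlewood--Paley decomposition of Definition~\ref{LTMS} (or, more cleanly, the wavelet/$\varphi$-transform characterisation, cf.\ \cite{ysy}) one obtains a common analysis operator $A$ and a common synthesis operator $S$, bounded on the whole couple as well as on the ``middle'' spaces, with $S\circ A=\mathrm{id}$; the boundedness of $S$ rests on the multiplier estimate of Lemma~\ref{lsat}. Since the first complex method commutes with retractions (Triebel~\cite{t78}, Bergh--L\"ofstr\"om~\cite{BL}), identity~\eqref{wss-06} reduces to its sequence-space analogue. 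To the couple of Banach \emph{lattices} $\big(m_{u_0,p_0,q_0}^{s_0},m_{u_1,p_1,q_1}^{s_1}\big)$, which enjoy the Fatou property, I would apply Shestakov's formula \cite{s74,s74m} to identify $[m_{u_0,p_0,q_0}^{s_0},m_{u_1,p_1,q_1}^{s_1}]_\Theta$ with the closure of the intersection in the Calder\'on product $\big(m_{u_0,p_0,q_0}^{s_0}\big)^{1-\Theta}\big(m_{u_1,p_1,q_1}^{s_1}\big)^{\Theta}$; the computation of this Calder\'on product \cite{LR,ysy3}, which is exactly where the hypothesis $p_0u_1=p_1u_0$ enters, shows that it coincides with $m_{u,p,q}^{s}$. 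Transferring back through $S$ is then routine: by (the sequence-space version of) Lemma~\ref{step1} the intersections lie continuously in the respective ``middle'' spaces, $S$ maps $m_{u_0,p_0,q_0}^{s_0}\cap m_{u_1,p_1,q_1}^{s_1}$ onto $\ce_{u_0,p_0,q_0}^{s_0}(\R)\cap\ce_{u_1,p_1,q_1}^{s_1}(\R)$, and $S$ is continuous $m_{u,p,q}^{s}\to\ce_{u,p,q}^{s}(\R)$ while $A$ is continuous in the reverse direction with $S\circ A=\mathrm{id}$; hence applying $S$ to the sequence-space identity, and $A$ to a $\ce_{u,p,q}^{s}$-Cauchy sequence for the converse inclusion, yields \eqref{wss-06}.

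For (ii), I would reduce to (i) using Corollary~\ref{ryfinal}. The restriction map $R\colon g\mapsto g_{|_\Omega}$ is bounded from $\ce_{u,p,q}^{s}(\R)$ onto $\ce_{u,p,q}^{s}(\Omega)$ for every admissible parameter tuple, and $E_\Omega$ is a single bounded operator in the reverse direction, simultaneously for all parameters, with $R\circ E_\Omega=\mathrm{id}$. Thus $\big(\ce_{u_0,p_0,q_0}^{s_0}(\Omega),\ce_{u_1,p_1,q_1}^{s_1}(\Omega)\big)$ is a retract of $\big(\ce_{u_0,p_0,q_0}^{s_0}(\R),\ce_{u_1,p_1,q_1}^{s_1}(\R)\big)$, and the retraction theorem together with (i) gives
\[
\big[\ce_{u_0,p_0,q_0}^{s_0}(\Omega),\ce_{u_1,p_1,q_1}^{s_1}(\Omega)\big]_\Theta
= R\Big(\big[\ce_{u_0,p_0,q_0}^{s_0}(\R),\ce_{u_1,p_1,q_1}^{s_1}(\R)\big]_\Theta\Big)
= R\Big(\overline{\ce_{u_0,p_0,q_0}^{s_0}(\R)\cap\ce_{u_1,p_1,q_1}^{s_1}(\R)}^{\,\|\, \cdot \, |\ce_{u,p,q}^{s}(\R)\|}\Big).
\]
It remains to interchange $R$ with the closure. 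Since $E_\Omega$ maps $\ce_{u_0,p_0,q_0}^{s_0}(\Omega)\cap\ce_{u_1,p_1,q_1}^{s_1}(\Omega)$ into $\ce_{u_0,p_0,q_0}^{s_0}(\R)\cap\ce_{u_1,p_1,q_1}^{s_1}(\R)$ and $R$ maps the latter onto the former, and since $E_\Omega$ is bounded $\ce_{u,p,q}^{s}(\Omega)\to\ce_{u,p,q}^{s}(\R)$, a sequence in the intersection downstairs that converges in $\ce_{u,p,q}^{s}(\Omega)$ is carried by $E_\Omega$ to a Cauchy, hence convergent, sequence upstairs, while $R$ carries a convergent sequence in $\ce_{u,p,q}^{s}(\R)$ to a convergent one in $\ce_{u,p,q}^{s}(\Omega)$ with the correct limit. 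This establishes both inclusions and proves the formula.

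The main obstacle is the second step of (i): checking that Shestakov's formula genuinely applies to the (non-separable) vector-valued Morrey lattices, and, above all, computing the Calder\'on product. This is precisely the point where condition~(c) is indispensable -- without $p_0u_1=p_1u_0$ the Calder\'on product is strictly larger than the Morrey space in question and \eqref{wss-06} fails. The remaining ingredients -- the retraction theorem, the density of $X_0\cap X_1$ in $[X_0,X_1]_\Theta$, and the bookkeeping with $E_\Omega$ and $R$ -- are soft.
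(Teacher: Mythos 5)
Your proposal is correct and follows essentially the same route as the paper: reduction to sequence spaces via the wavelet/$\varphi$-transform, Shestakov's closure formula for the complex interpolation of Banach lattices combined with the Calder\'on-product computation (where $p_0u_1=p_1u_0$ enters), and the retraction argument with the universal extension operator of Corollary~\ref{ryfinal} for part (ii). The only cosmetic difference is that the paper transfers the sequence-space identity back by invariance of complex interpolation under the wavelet \emph{isomorphisms}, whereas you phrase the transfer as a retraction with $S\circ A=\mathrm{id}$; both versions of the bookkeeping are valid.
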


\begin{proof}
Essentially \eqref{wss-06} is proved in \cite{ysy3}. However, the last step, i.e.,
 writing down the explicit formula, has not been done there.
For convenience of the reader we will sketch a proof.
\\
{\em Step 1.}  Proof of (i). We need to switch to the associated sequence spaces
$e_{u_0,p_0,q_0}^{s_0}(\rn)$  based on appropriate wavelet isomorphisms.
We refer to Rosenthal \cite{ro13},  Sawano \cite{sa0}  and Triebel \cite{t14} for more details and proofs. The advantage of these sequence spaces $e^{s}_{u,p,q}(\R)$ compared with the function spaces
$\ce^{s}_{u,p,q}(\R)$
is that they are Banach lattices. Calder{\'o}n
products $X_0^{1-\Theta} \, X_1^\Theta$ are well-defined for Banach lattices,
see Calder{\'o}n \cite{ca64}.
Shestakov \cite{s74,s74m} has proved the following useful identity.
Let $(X_0,X_1)$ be an interpolation couple of Banach lattices and $\Theta \in (0,1)$. Then
\[
[X_0,X_1]_\Theta =
{\overline{X_0\cap X_1}}^{\|\, \cdot \, |X_0^{1-\Theta} \, X_1^\Theta \|}
\, .
\]
Because of
\[
e_{u_0,p_0,q_0}^{s_0}(\rn)^{1-\Theta} \, e_{u_1,p_1,q_1}^{s_1}(\rn)^\Theta =
e_{u,p,q}^{s}(\rn)\, ,
\]
see Yang, Yuan, Zhuo  \cite{yyz}, we find
under the same restrictions as in Proposition \ref{shestakov}
\[
[ e_{u_0,p_0,q_0}^{s_0}(\rn), e_{u_1,p_1,q_1}^{s_1}(\rn)]_\Theta =
{\overline{e_{u_0,p_0,q_0}^{s_0}(\rn) \cap e_{u_1,p_1,q_1}^{s_1}(\rn)}}^{\|\, \cdot \, |e_{u,p,q}^{s}(\rn)\|} \, .
\]
Complex interpolation spaces are invariant under isomorphisms.
Again based on appropriate wavelet isomorphisms
we can turn back to the spaces $\ce_{u,p,q}^{s}(\rn)$. This proves (i).
\\
{\em Step 2.} Proof of (ii). We employ a standard method, see e.g.,
\cite[Thm.~6.4.2]{BL},   \cite[Thm.~1.2.4]{t78} or \cite{t02}.
 Suppose that $E$ is our universal  extension operator with respect to $\Omega$ that was constructed in Corollary \ref{ryfinal}. Then we have
 $E \in \cl (\ce_{u_0,p_0,q_0}^{s_0}(\Omega) \to
 \ce_{u_0,p_0,q_0}^{s_0}(\R))$ and $E \in \cl (  \ce_{u_1,p_1,q_1}^{s_1}(\Omega) \to
 \ce_{u_1,p_1,q_1}^{s_1}(\R)) $ as well as  $E \in \cl (  \ce_{u,p,q}^{s}(\Omega) \to
 \ce_{u,p,q}^{s}(\R)) $.
 It follows that $E$ is a coretraction to the restriction $R$ with respect to $\Omega$.
It is $R\circ E = I$. Here $I$ denotes the identity on the space defined on the domain.
 At the same time $E$ is a linear and continuous extension operator in $\cl (X \to Y)$ where
 \[
 X:=
 \overline{\ce_{u_0,p_0,q_0}^{s_0}(\Omega)\cap  \ce_{u_1,p_1,q_1}^{s_1}(\Omega)}^{\|\, \cdot \, |\ce_{u,p,q}^{s}(\Omega)\|}
 \]
 and
 \[
 Y:=
 \overline{\ce_{u_0,p_0,q_0}^{s_0}(\R)\cap  \ce_{u_1,p_1,q_1}^{s_1}(\R)}^{\|\, \cdot \, |\ce_{u,p,q}^{s}(\R)\|}
 \, .
 \]
 Furthermore, the restriction $R$ applied to $Y$ leads to $X$.
  Hence, Theorem 1.2.4 in \cite{t78} together with Step 1 yield (ii).
   \end{proof}

\begin{rem}
 \rm
 {\rm (i)}
 The formula \eqref{wss-06} itself is explicitely stated in
Hakim, Nogayama, Sawano \cite[Thm.~1.5]{hns19}, but under slightly more restrictive conditions.
Whereas Hakim et al \cite{hns19} reduced  \eqref{wss-06} to results on the
second complex interpolation method of Calder{\'o}n and an abstract result of Bergh \cite{Bergh},
we employed Calder{\'o}n products   and an abstract result of Shestakov \cite{s74,s74m}.
\\
{\rm (ii)}
The interesting formula  \eqref{wss-06} has several forerunners.
It has been used before  in  Lu, Yang, Yuan \cite{lyy}  (restricted to Morrey spaces),
in Sickel, Skrzypczak, Vyb\'\i ral
\cite[4.3]{ssv} (restricted to the classical situation $p=u$) and in Yuan, Sickel, Yang \cite[2.4.3]{ysy3} (general case).
\end{rem}

%&&&&&&&&&&&&&&&&&&&&&&&&&&&&&&&&&&&&&&&&&&&&&&&&&&&&&&&&&&&&&&&&&&&&
%&&&&&&&&&&&&&&&&&&&&&&&&&&&&&&&&&&&&&&&&&&&&&&&&&&&&&&&&&&&&&&&&&&&

\subsection{Proof of Theorem \ref{main1}}\label{main1proof}

%&&&&&&&&&&&&&&&&&&&&&&&&&&&&&&&&&&&&&&&&&&&&&&&&&&&&&&&&&&&&&&&&&&&&&
%&&&&&&&&&&&&&&&&&&&&&&&&&&&&&&&&&&&&&&&&&&&&&&&&&&&&&&&&&&&&&&&&&&&

To prove Theorem \ref{main1} we need the counterpart of Definition \ref{diamond7} for domains. Let $ s \geq 0  $, $ 1 \leq p < u < \infty   $ and $ 1 \leq q \leq \infty   $. We put
\[
 E^s_{u,p,q} (\Omega):= \Big\{f \in  \cd' (\Omega) : ~  \exists \ g \in E^s_{u,p,q} (\R) \quad
 \mbox{such that} \quad f = g \quad \mbox{on} \quad  \Omega \Big\}\, .
\]
It is not difficult to see that we also can write
\[
 E^s_{u,p,q} (\Omega)= \Big\{f \in \ce^s_{u,p,q} (\Omega): ~ D^\alpha f \in \ce^s_{u,p,q} (\Omega)\quad
 \mbox{for all} \quad \alpha \in \N_0^d  \Big\}\, .
\]
But we know even more. There is the following counterpart of Proposition 4.21 in \cite{t08} with almost identical proof.

\begin{lem}\label{hilfe}
 Let $\Omega \subset \R$ be either  a bounded Lipschitz domain
if $d\ge 2$ or a bounded interval if $d=1$.
Let  $ s \geq 0 $, $1\le p < u< \infty$ and $ 1 \leq q \leq \infty $.
Then the set $E^s_{u,p,q} (\Omega)$ is independent of the parameters $s,u,p$ and $q$. Indeed, it holds
\[
 E^s_{u,p,q} (\Omega) = \Big\{f \in C^\infty  (\Omega): ~ D^\alpha f \in L_\infty (\Omega)\quad
 \mbox{for all} \quad \alpha \in \N_0^d  \Big\}\, .
\]
\end{lem}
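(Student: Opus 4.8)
The plan is to prove the two claimed set equalities by a chain of inclusions, exploiting the universal extension operator from Corollary \ref{ryfinal} together with the mapping properties of $S^N$ collected in Lemma \ref{properties} and the embedding $\ce^s_{u,p,q}(\R)\hookrightarrow B^{s-d/u}_{\infty,\infty}(\R)$ of Lemma \ref{emb}. Denote by
\[
\wt{E}(\Omega):=\Big\{f\in C^\infty(\Omega):~D^\alpha f\in L_\infty(\Omega)\ \mbox{for all}\ \alpha\in\N_0^d\Big\}\, .
\]
First I would record the easy description $E^s_{u,p,q}(\Omega)=\{f\in\ce^s_{u,p,q}(\Omega):~D^\alpha f\in\ce^s_{u,p,q}(\Omega)\ \forall\alpha\}$: the inclusion ``$\subset$'' is immediate since restriction commutes with $D^\alpha$ and maps $\ce^s_{u,p,q}(\R)$ into $\ce^s_{u,p,q}(\Omega)$ (Definition \ref{d5.12}), while ``$\supset$'' follows because on a bounded Lipschitz domain the extension operator $E_\Omega$ maps $\ce^s_{u,p,q}(\Omega)$ into $\ce^s_{u,p,q}(\R)$, and one can, if needed, first multiply by a fixed cut-off $\chi\in\cd(\R)$ with $\chi\equiv 1$ on $\overline\Omega$ to land in $E^s_{u,p,q}(\R)$ after applying $S^N$; but in fact it is cleaner to verify the final identity directly.

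The substantive claim is $E^s_{u,p,q}(\Omega)=\wt{E}(\Omega)$, and I would prove it by showing that both sides equal the $s,u,p,q$-independent set $\wt{E}(\Omega)$. \textbf{Step 1 (``$\subset$'').} Let $f\in E^s_{u,p,q}(\Omega)$, so $D^\alpha f\in\ce^s_{u,p,q}(\Omega)$ for every $\alpha$. Pick any $g\in\ce^s_{u,p,q}(\R)$ with $g_{|_\Omega}=f$ and of the stronger form $g\in E^s_{u,p,q}(\R)$ — to get this, take any extension $g_0$ and replace it by $E_\Omega f$; since $E_\Omega f$ need not itself lie in $E^s_{u,p,q}(\R)$, instead argue locally: by Lemma \ref{lift2}, $D^\alpha g\in\ce^{s-|\alpha|}_{u,p,q}(\R)$, and Lemma \ref{emb} together with Remark \ref{embbb} gives, for $|\alpha|$ large enough that $s-|\alpha|-d/u$ is still controlled, only $B^{s-|\alpha|-d/u}_{\infty,\infty}$ regularity, which is not yet $L_\infty$. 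So the correct route is: since $D^\beta f\in\ce^s_{u,p,q}(\Omega)$ for \emph{all} $\beta$, for any fixed $\alpha$ and any $m\in\N$ we have $D^\alpha f\in\ce^s_{u,p,q}(\Omega)$ and all its derivatives up to order $m$ lie in $\ce^s_{u,p,q}(\Omega)$; extending via $E_\Omega$ and invoking Lemma \ref{lift} iteratively shows the extension can be taken in $\ce^{s+m}_{u,p,q}(\R)$ for every $m$, hence by Lemma \ref{emb} in $\bigcap_{m}B^{s+m-d/u}_{\infty,\infty}(\R)\subset C^\infty(\R)$ with all derivatives bounded on $\Omega$; restricting back gives $f\in\wt{E}(\Omega)$. \textbf{Step 2 (``$\supset$'').} Let $f\in\wt{E}(\Omega)$. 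Multiply by a cut-off $\chi\in\cd(\R)$ that is $1$ on a neighbourhood of $\overline\Omega$ to obtain $\chi f\in C^\infty_0(\R)$ with all derivatives bounded; then $\chi f$ and all its derivatives lie in every $\ce^s_{u,p,q}(\R)$ (a compactly supported $C^\infty$ function lies in every such space, e.g. by Proposition \ref{diff} plus the $L_\infty$-bounds on differences, exactly as in Step 1 of the proof of Lemma \ref{count_bound1}), so $\chi f\in E^s_{u,p,q}(\R)$ and $(\chi f)_{|_\Omega}=f$, giving $f\in E^s_{u,p,q}(\Omega)$.

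The main obstacle I anticipate is the bookkeeping in Step 1: one must pass from ``$D^\alpha f\in\ce^s_{u,p,q}(\Omega)$ for all $\alpha$'' to a \emph{single} extension lying in $\ce^{s+m}_{u,p,q}(\R)$ for all $m$ simultaneously. The clean way is to use the universal extension operator $E_\Omega$ of Corollary \ref{ryfinal}, which is bounded on $\ce^\sigma_{u,p,q}(\Omega)$ for all admissible $\sigma$ at once, together with the lifting Lemma \ref{lift}: since $f\in\bigcap_{\sigma}\ce^{\sigma}_{u,p,q}(\Omega)$ (because all its derivatives on $\Omega$ are bounded, hence in $\cm^u_p(\Omega)$, so $f\in W^m\cm^u_p(\Omega)\hookrightarrow\ce^{m-1}_{u,p,q}(\Omega)$ for every $m$ by a domain version of Lemma \ref{LP}/Lemma \ref{lift}), the single distribution $E_\Omega f$ lies in $\ce^\sigma_{u,p,q}(\R)$ for all $\sigma$, hence in $C^\infty(\R)$ with bounded derivatives, and $D^\alpha(E_\Omega f)_{|_\Omega}=D^\alpha f\in\ce^s_{u,p,q}(\Omega)$, so $f\in E^s_{u,p,q}(\Omega)$ with the same witness working for every parameter choice; this also makes the independence of $s,u,p,q$ transparent. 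The only point requiring care is that $\ce^\sigma_{u,p,q}(\Omega)\hookrightarrow$ the space of smooth functions with bounded derivatives requires $\sigma$ taken arbitrarily large and then Lemma \ref{emb}; since $E_\Omega$ is universal this is harmless. Everything else reduces to the cited lemmas, so this is essentially a matter of assembling them in the right order, exactly as in the proof of \cite[Prop.~4.21]{t08}.
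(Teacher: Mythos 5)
The paper offers no proof of this lemma at all --- it simply asserts that the argument is ``almost identical'' to \cite[Prop.~4.21]{t08} --- so your proposal has to be judged on its own merits. Your overall strategy is the intended one: for ``$\subset$'' lift the smoothness via Lemma \ref{lift} to get a single witness $g\in\bigcap_{m}\ce^{s+m}_{u,p,q}(\R)$ and then use Lemma \ref{emb} together with Remark \ref{embbb} to conclude $D^\alpha g\in L_\infty$; for ``$\supset$'' extend, cut off, and observe that a compactly supported $C^\infty$ function with bounded derivatives lies in every $\ce^\sigma_{u,p,q}(\R)$. Step 1, although written in a roundabout way (the detour through $E_\Omega$ is unnecessary: the definition of $E^s_{u,p,q}(\Omega)$ already hands you a global witness $g\in E^s_{u,p,q}(\R)$, and Lemma \ref{lift} applied to $g$ on $\R$ does the rest), is correct in substance.

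Step 2 contains a genuine gap. As literally written, ``multiply by a cut-off $\chi\in\cd(\R)$ with $\chi\equiv 1$ on a neighbourhood of $\overline\Omega$ to obtain $\chi f\in C^\infty_0(\R)$'' does not make sense: $f$ is defined only on $\Omega$, and where $\chi\neq 0$ outside $\Omega$ the product is undefined. The whole content of this direction is precisely the assertion that an $f\in C^\infty(\Omega)$ with all derivatives bounded admits a \emph{single} extension to a function in $C^\infty(\R)$ with bounded derivatives near $\overline\Omega$; this is where the Lipschitz hypothesis enters (it fails, e.g., on slit or cusp domains), and it requires the Whitney/Stein extension theorem applied uniformly over all orders --- having a $C^m(\overline\Omega)$-extension for each $m$ separately is not enough, since you need one $g$ with $D^\alpha g\in\ce^s_{u,p,q}(\R)$ for \emph{all} $\alpha$. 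Your fallback via the universal operator $E_\Omega$ is circular: to apply Corollary \ref{ryfinal} you must first know $f\in\ce^\sigma_{u,p,q}(\Omega)$ for arbitrarily large $\sigma$, but by Definition \ref{d5.12} membership in $\ce^\sigma_{u,p,q}(\Omega)$ already means that an extension in $\ce^\sigma_{u,p,q}(\R)$ exists, which is what you are trying to produce; the appeal to ``a domain version of Lemma \ref{LP}/Lemma \ref{lift}'' hides exactly the same extension problem, since those domain versions are themselves proved via extension operators. The fix is to quote the classical fact (Whitney/Stein, as in \cite[VI.3]{Stein} and in the proof of \cite[Prop.~4.21]{t08}) that on a bounded Lipschitz domain the quasiconvexity of $\Omega$ forces each $D^\alpha f$ to extend Lipschitz-continuously to $\overline\Omega$ and that the resulting jet admits a $C^\infty$-extension $g_0$ to $\R$ with locally bounded derivatives; then $g:=\chi g_0\in\cd(\R)$ is a legitimate witness, and your argument via Proposition \ref{diff} finishes the proof.
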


Let us continue with the proof of the main Theorem \ref{main1}.

\begin{proof}
{\em Step 1.}
Based on Proposition \ref{shestakov} we have to calculate
\[
 \overline{\ce_{u_0,p_0,q_0}^{s_0}(\Omega)\cap  \ce_{u_1,p_1,q_1}^{s_1}(\Omega)}^{\|\, \cdot \, |\ce_{u,p,q}^{s}(\Omega)\|} \, .
 \]
Lemma \ref{hilfe} yields
\[
  E^s_{u,p,q} (\Omega) = E_{u_0,p_0,q_0}^{s_0}(\Omega)\cap  E_{u_1,p_1,q_1}^{s_1}(\Omega).
\]
Therefore just by the definition of the space $\accentset{\diamond}\ce_{u,p,q}^{s}(\Omega) $, Definition \ref{diamond7} and the trivial embeddings $E_{u_i,p_i,q_i}^{s_i}(\Omega) \hookrightarrow
\ce_{u_i,p_i,q_i}^{s_i}(\Omega)$ with $i \in \{0,1\}$ we find
\be
\accentset{\diamond}\ce_{u,p,q}^{s}(\Omega) = \overline{E^s_{u,p,q} (\Omega)}^{\|\, \cdot \, |\ce^s_{u,p,q} (\Omega)\|} \hookrightarrow
\overline{\ce_{u_0,p_0,q_0}^{s_0}(\Omega)\cap  \ce_{u_1,p_1,q_1}^{s_1}(\Omega)}^{\|\, \cdot \, |\ce_{u,p,q}^{s}(\Omega)\|}\, .
\ee
{\em Step 2.} Recall, we assume that either $0 \le s_0 < s_1$ or $0< s_0 = s_1$ and
$q_1\le q_0$, i.e., the conditions of Lemma \ref{step2} are  satisfied.
We claim that
\be\label{wss-09}
 \ce_{u_0,p_0,q_0}^{s_0}(\Omega)\cap  \ce_{u_1,p_1,q_1}^{s_1}(\Omega) \hookrightarrow
 \accentset{\diamond}\ce_{u,p,q}^{s}(\Omega)\, .
\ee
Let $E$ denote the common extension operator.
Let $ f \in \ce_{u_0,p_0,q_0}^{s_0}(\Omega)\cap  \ce_{u_1,p_1,q_1}^{s_1}(\Omega)$. Then
$Ef \in \ce_{u_0,p_0,q_0}^{s_0}(\R)\cap  \ce_{u_1,p_1,q_1}^{s_1}(\R) $.
Let $\psi$ be a function in $\cd (\R)$ such that $\psi (x)= 1$ on $\overline{\Omega}$. Then
the operator $h \mapsto \psi \, \cdot \, h$ belongs to $\cl (\ce_{x,y,z}^{\sigma}(\R) \rightarrow \ce_{x,y,z}^{\sigma}(\R) )$
for all admissible tuples  $(\sigma,x,y,z)$. Hence
$g:= \psi \, \cdot \, Ef \in  \ce_{u_0,p_0,q_0}^{s_0}(\R)\cap  \ce_{u_1,p_1,q_1}^{s_1}(\R)$.
Obviously $(\psi \, \cdot \, Ef)_{|_\Omega} = f$ in $\cd' (\Omega)$.
Let $B$ be a ball such that $\overline{\Omega} \subset \supp \psi \subset B$.
Hence
\[
g \in \ce_{u_0,p_0,q_0}^{s_0}(\R;B)\cap  \ce_{u_1,p_1,q_1}^{s_1}(\R;B) \hookrightarrow
 \accentset{\diamond}\ce_{u,p,q}^{s}(\R)\, ,
\]
see Lemma \ref{step2}. Obviously this means
$f \in \accentset{\diamond}\ce_{u,p,q}^{s}(\Omega)$ and this proves \eqref{wss-09}.
Step 1 and Step 2 combined with Theorem \ref{thm3-23x} prove Theorem \ref{main1}.
\end{proof}

\noindent
{\em Proof of Corollary \ref{MSM}}.
The Corollary is a direct consequence of Theorem \ref{main1} and Lemma \ref{LP}.
{\hspace*{\fill} \qed   \\ }

%&&&&&&&&&&&&&&&&&&&&&&&&&&&&&&&&&&&&&&&&&&&&&&&&&&&&&&&&&&&&&&&&&&&&
%&&&&&&&&&&&&&&&&&&&&&&&&&&&&&&&&&&&&&&&&&&&&&&&&&&&&&&&&&&&&&&&&&&&

\subsection{Proof of Theorem \ref{main1b}}

%&&&&&&&&&&&&&&&&&&&&&&&&&&&&&&&&&&&&&&&&&&&&&&&&&&&&&&&&&&&&&&&&&&&&&
%&&&&&&&&&&&&&&&&&&&&&&&&&&&&&&&&&&&&&&&&&&&&&&&&&&&&&&&&&&&&&&&&&&&

First we recall some well-known embedding relations.
The new restriction (d') guarantees the continuous embedding
\[
\ce_{u_0,p_0,q_0}^{s_0}(\R) \hookrightarrow \ce_{u_1,p_1,q_1}^{t}(\R)
\hookrightarrow \ce_{u_1,p_1,q_1}^{s_1}(\R)
\, ,
\qquad t:= s_0 - d\, \Big(\frac 1{u_0} - \frac 1{u_1}\Big)\, ,
\]
we refer to \cite[Cor.~2.2]{ysy} and \cite{HaSk}.
In addition we get
\[
\ce_{u_0,p_0,q_0}^{s_0}(\R) \hookrightarrow \ce_{u,p,q}^{t_\Theta}(\R)
\hookrightarrow \ce_{u,p,q}^{s}(\R)\, ,
\qquad t_\Theta := s_0 - d\, \Big(\frac 1{u_0} - \frac 1{u}\Big)\, ,
\]
since $p_0 < p < p_1$, $u_0 < u < u_1$, $u_0  \, p =  p_0  \, u$
and
\[
s_1-\frac{d}{u_1}<
 s-\frac{d}{u} = (1-\Theta)\Big(s_0-\frac{d}{u_0}\Big) +
 \Theta \, \Big(s_1-\frac{d}{u_1}\Big)< s_0-\frac{d}{u_0}\, .
\]
Because of $t_\Theta >s$  we may apply Proposition \ref{diamond9} and obtain
\[
\ce_{u_0,p_0,q_0}^{s_0}(\R)
\hookrightarrow \ce_{u,p,q}^{t_\Theta}(\R) \hookrightarrow
\accentset{\diamond}\ce_{u,p,q}^{s}(\R) \, .
\]
Consequently we have
\[
\Big(
 \ce_{u_0,p_0,q_0}^{s_0}(\R) \cap \ce_{u_1,p_1,q_1}^{s_1}(\R)\Big) =
 \ce_{u_0,p_0,q_0}^{s_0}(\R)  \hookrightarrow \accentset{\diamond}\ce_{u,p,q}^{s}(\R) \, .
\]
Hence
\be\label{wss-200}
\overline{\ce_{u_0,p_0,q_0}^{s_0}(\R)\cap  \ce_{u_1,p_1,q_1}^{s_1}(\R)}^{\|\, \cdot \, |\ce_{u,p,q}^{s}(\R)\|}\hookrightarrow \accentset{\diamond}\ce_{u,p,q}^{s}(\R) \, .
\ee
Employing the universal extension operator $E$ from Corollary \ref{ryfinal} we conclude

\[
\overline{\ce_{u_0,p_0,q_0}^{s_0}(\Omega)\cap  \ce_{u_1,p_1,q_1}^{s_1}(\Omega)}^{\|\, \cdot \, |\ce_{u,p,q}^{s}(\Omega)\|}\hookrightarrow \accentset{\diamond}\ce_{u,p,q}^{s}(\Omega) \, .
\]
To prove the reverse embedding we argue as before.
By Lemma \ref{hilfe} we have
\beqq
E_{u,p,q}^{s}(\Omega) & = & \Big(
 E_{u_0,p_0,q_0}^{s_0}(\Omega) \cap E_{u_1,p_1,q_1}^{s_1}(\Omega)\Big)
 \subset  \Big(
 \ce_{u_0,p_0,q_0}^{s_0}(\Omega) \cap \ce_{u_1,p_1,q_1}^{s_1}(\Omega)\Big) ,
\eeqq
which yields
\[
 \accentset{\diamond}\ce_{u,p,q}^{s}(\Omega) \hookrightarrow
  [\ce_{u_0,p_0,q_0}^{s_0}(\Omega), \ce_{u_1,p_1,q_1}^{s_1}(\Omega)]_\Theta \, .
\]
{\hspace*{\fill}  \qed \\ }
{~}\\
\noindent
{\em Proof of Corollary \ref{MSM2}}.
The Corollary is a direct consequence of Theorem \ref{main1b} and Lemma \ref{LP}.
{\hspace*{\fill}  \qed \\ }

%&&&&&&&&&&&&&&&&&&&&&&&&&&&&&&&&&&&&&&&&&&&&&&&&&&&&&&&&&&&&&&&&&&&&
%&&&&&&&&&&&&&&&&&&&&&&&&&&&&&&&&&&&&&&&&&&&&&&&&&&&&&&&&&&&&&&&&&&&

\subsection{Proof of Proposition \ref{main3}} \label{mainrestproof}

%&&&&&&&&&&&&&&&&&&&&&&&&&&&&&&&&&&&&&&&&&&&&&&&&&&&&&&&&&&&&&&&&&&&&&
%&&&&&&&&&&&&&&&&&&&&&&&&&&&&&&&&&&&&&&&&&&&&&&&&&&&&&&&&&&&&&&&&&&&

For the complex method it is well-known that $X_0 \cap X_1$ is a dense subset of
$[X_0,X_1]_\Theta$, see, e.g., \cite[Thm.~4.2.2]{BL} or \cite[Thm.~1.9.3]{t78}. Let the restrictions of Proposition \ref{main3} with respect to
$p_0,p_1, u_0,u_1,q_0,q_1,s_0,s_1 $ and $\Theta $ be satisfied.
The parameters $p,u, q$ and $s$ are then fixed as well.
Without loss of generality we may assume that $\Omega$ contains the ball
$B(0,2)$.
Now we employ Lemma \ref{test3}.
The results immediately carry over to the spaces defined on domains.
Therefore we choose $\alpha:= \frac d{u_0} -s_0$. By assumption
$\alpha >0$ and
$\alpha = \frac d{u_1} -s_1 = \frac d{u} -s$.
Thus Lemma \ref{test3} implies
\[
 f_\alpha \in \ce_{u_0,p_0,q_0}^{s_0}(\Omega)\cap  \ce_{u_1,p_1,q_1}^{s_1}(\Omega)
 \qquad \mbox{and}\qquad f_\alpha \not\in \accentset{\diamond}\ce_{u,p,q}^{s}(\Omega).
\]
This proves the claim.
{\hspace*{\fill}  \qed \\ }

%&&&&&&&&&&&&&&&&&&&&&&&&&&&&&&&&&&&&&&&&&&&&&&&&&&&&&&&&&&&&&&&&&&&&
%&&&&&&&&&&&&&&&&&&&&&&&&&&&&&&&&&&&&&&&&&&&&&&&&&&&&&&&&&&&&&&&&&&&

\subsection{Proof of Proposition \ref{main4}}

%&&&&&&&&&&&&&&&&&&&&&&&&&&&&&&&&&&&&&&&&&&&&&&&&&&&&&&&&&&&&&&&&&&&&&
%&&&&&&&&&&&&&&&&&&&&&&&&&&&&&&&&&&&&&&&&&&&&&&&&&&&&&&&&&&&&&&&&&&&

{\em Step 1.} Proof of (i).
It will be enough to show that there exists a function $ h \in \accentset{\diamond}\ce_{u,p,q}^{s}(\R)    $ such that $ h \not \in  [ \ce_{u_0,p_0,q_0}^{s_0}(\R), \ce_{u_1,p_1,q_1}^{s_1}(\R)]_\Theta $.
Therefore we will work with the family of test functions $h_u$ we investigated  in
Lemma \ref{count_bound1}. Let $ \sigma > 0  $, $ 1 \leq y \leq x < \infty    $ and $ 1 \leq z \leq \infty   $. Then there is the embedding $ \mathcal{E}^{\sigma}_{x,y,z}(\R) \hookrightarrow \mathcal{M}^{x}_{y}(\R)    $. Hence we find
\begin{align*}
\Big(\ce_{u_0,p_0,q_0}^{s_0}(\R) \cap \ce_{u_1,p_1,q_1}^{s_1}(\R)\Big)
\subset \Big(\mathcal{M}^{u_0}_{p_0}(\R) \cap \mathcal{M}^{u_1}_{p_1}(\R)\Big)\, .
\end{align*}
Moreover we observe
\begin{align*}
\overline{\ce_{u_0,p_0,q_0}^{s_0}(\R) \cap \ce_{u_1,p_1,q_1}^{s_1}(\R)}^{\|\, \cdot \, |\ce^s_{u,p,q}(\R)\|} & \subset \overline{\mathcal{M}^{u_0}_{p_0}(\R) \cap \mathcal{M}^{u_1}_{p_1}(\R)}^{\|\, \cdot \, |\ce^s_{u,p,q}(\R)\|} \\
& \subset \overline{\mathcal{M}^{u_0}_{p_0}(\R) \cap \mathcal{M}^{u_1}_{p_1}(\R)}^{\|\, \cdot \, |\mathcal{M}^{u}_{p}(\R)\|}.
\end{align*}
Lemma  \ref{count_bound1} yields $h_u \in \accentset{\diamond}\ce_{u,p,q}^{s}(\R)$. But we have

\[ h_u \not \in  \overline{\mathcal{M}^{u_0}_{p_0}(\R) \cap \mathcal{M}^{u_1}_{p_1}(\R)}^{\|\, \cdot \, |\mathcal{M}^{u}_{p}(\R)\|}     \, .
\]
This has been proved in  \cite{ysy3}, see the proof of
Corollary 2.38, Step 3, page 1891.
\\
{\em Step 2.} Proof of (ii).
This has been proved in \eqref{wss-200}.
{\hspace*{\fill} \qed  \\ }

%&&&&&&&&&&&&&&&&&&&&&&&&&&&&&&&&&&&&&&&&&&&&&&&&&&&&&&&&&&&&&&&&&&&&
%&&&&&&&&&&&&&&&&&&&&&&&&&&&&&&&&&&&&&&&&&&&&&&&&&&&&&&&&&&&&&&&&&&&

\section{A few comments to related  results}
\label{overview}

%&&&&&&&&&&&&&&&&&&&&&&&&&&&&&&&&&&&&&&&&&&&&&&&&&&&&&&&&&&&&&&&&&&&&&
%&&&&&&&&&&&&&&&&&&&&&&&&&&&&&&&&&&&&&&&&&&&&&&&&&&&&&&&&&&&&&&&&&&&

In this section we will collect some more material concerning interpolation of Morrey spaces, smoothness Morrey spaces and their relatives. Let us start with two papers of Lemari{\'e}-Rieusset
\cite{LR,LR2}. Based on earlier work, see
Ruiz, Vega \cite{rv95} and
Blasco, Ruiz, Vega \cite{brv},
he was able to show the importance of the restriction
$u_0  \, p_1 = u_1 \,  p_0$.
Under the restrictions
$1 < p_0 \leq u_0 < \infty$, $1 < p_1 \leq u_1 < \infty$,
$0 < \Theta <1$,  $ \frac1p:=\frac{1-\tz}{p_0}+\frac{\tz}{p_1}$
and $\frac1u:=\frac{1-\tz}{u_0}+\frac{\tz}{u_1}$
he proved that
there exists an interpolation functor F of exponent
$\Theta$ such that $F ( \cm_{p_0}^{u_0}(\R), \cm_{p_1}^{u_1} (\R)) = \cm_p^u(\R)$
if and only if $u_0 \, p_1 = u_1 \,  p_0$. In the meanwhile two interpolation functors are known which have this property, namely
the $\pm$ method of Gustavsson and Peetre and the second complex interpolation method
introduced by Calder{\'o}n. We refer to  Lu, Yang, Yuan \cite{lyy}
and Lemari{\'e}-Rieusset \cite{LR2}, respectively.
Concerning the $\pm$ method, denoted by
$\lf\laz \, \cdot \, , \, \cdot \, ,\tz\r\raz$,
Yuan, Sickel, Yang, \cite{ysy3} have shown
that
$$
\lf\laz \ce_{u_0,p_0,q_0}^{s_0}(\rn), \ce_{u_1,p_1,q_1}^{s_1}(\rn),\tz\r\raz=
\ce_{u,p,q}^{s}(\rn)
$$
holds subject to the restrictions
\begin{itemize}
 \item[(a)] $0 <p_0< p_1<\infty$, $p_0 \le u_0<\infty$, $p_1 \le  u_1 <\infty$;
 \item[(b)] $0 < q_0\, ,  q_1 \le \infty$;
 \item[(c)] $p_0 \, u_1 = p_1 \, u_0$;
 \item[(d)] $s_0,s_1\in  \re $;
  \item[(e)] $0 < \Theta <1$,  $ \frac1p:=\frac{1-\tz}{p_0}+\frac{\tz}{p_1}$,
$\frac1u:=\frac{1-\tz}{u_0}+\frac{\tz}{u_1}$,
$\frac1q:=\frac{1-\tz}{q_0}+\frac{\tz}{q_1}$,
$s:= (1-\tz)s_0 + \tz s_1$.
\end{itemize}
Concerning the  second complex interpolation method, denoted by
$[\, \cdot \, , \, \cdot \, ]^\Theta$,
Hakim, Nagoyama and Sawano \cite{hns19}
proved
$$
[ \ce_{u_0,p_0,q_0}^{s_0}(\rn), \ce_{u_1,p_1,q_1}^{s_1}(\rn)]^\Theta =
\ce_{u,p,q}^{s}(\rn)\, ,
$$
provided that (a)-(e) are satisfied and in addition $p_0,p_1,q_0,q_1 \in (1,\infty)$.
\\
Let us come back to the first complex interpolation method.
Together with the real interpolation method of Lions-Peetre
it is the most important interpolation method. Therefore it is of interest for its own
to understand the spaces
$[ \ce_{u_0,p_0,q_0}^{s_0}(\rn), \ce_{u_1,p_1,q_1}^{s_1}(\rn)]_\Theta$.
In case of the Morrey spaces different
characterizations of $[ \cm^{u_0}_{p_0}(\rn), \cm^{u_1}_{p_1}(\rn)]_\Theta$ can be found in Yuan, Sickel, Yang \cite{ysy3} and Hakim, Nakamura, Sawano \cite{hns17}.
There is a certain number of publications dealing with the interpolation
of subspaces of either  Morrey or of
Lizorkin-Triebel-Morrey spaces.
In particular (but not only),

\begin{itemize}
 \item $\lf\laz \mathring{\ce}_{u_0,p_0,q_0}^{s_0}(\rn), \ce_{u_1,p_1,q_1}^{s_1}(\rn),\tz\r\raz$, $\lf\laz \mathring{\ce}_{u_0,p_0,q_0}^{s_0}(\rn),
\mathring{\ce}_{u_1,p_1,q_1}^{s_1}(\rn),\tz\r\raz$;
\item
$[\mathring{\ce}_{u_0,p_0,q_0}^{s_0}(\rn), \ce_{u_1,p_1,q_1}^{s_1}(\rn)]_\tz$,
$[\mathring{\ce}_{u_0,p_0,q_0}^{s_0}(\rn),
\mathring{\ce}_{u_1,p_1,q_1}^{s_1}(\rn)]_\tz$;
\item
$[\mathring{\ce}_{u_0,p_0,q_0}^{s_0}(\rn), \ce_{u_1,p_1,q_1}^{s_1}(\rn)]^\tz$,
$[\mathring{\ce}_{u_0,p_0,q_0}^{s_0}(\rn),
\mathring{\ce}_{u_1,p_1,q_1}^{s_1}(\rn)]^\tz$;
\item
$[\accentset{\diamond}{\ce}_{u_0,p_0,q_0}^{s_0}(\rn),
\accentset{\diamond}\ce_{u_1,p_1,q_1}^{s_1}(\rn)]^\tz$;
\item
$[\accentset{\diamond}{\ce}_{u_0,p_0,q_0}^{s_0}(\rn),
\accentset{\diamond}{\ce}_{u_1,p_1,q_1}^{s_1}(\rn)]_\tz$
\end{itemize}
and similarly for Morrey spaces, we refer to \cite{lyy}, \cite{yyz}, \cite{ysy3},
\cite{HS16},
\cite{HS17}, \cite{hns17},
\cite{hak18}, \cite{hns19}, \cite{HS20} and \cite{HS202}.
\\
Probably it is of certain interest to notice that the diamond spaces on domains
form a scale under complex interpolation, i.e.,
\be\label{new}
[\accentset{\diamond}{\ce}_{u_0,p_0,q_0}^{s_0}(\Omega),
\accentset{\diamond}{\ce}_{u_1,p_1,q_1}^{s_1}(\Omega)]_\tz =
\accentset{\diamond}{\ce}_{u,p,q}^{s}(\Omega)\, ,
\ee
at least under the restrictions in Theorem
\ref{main1} or in Theorem \ref{main1b}.
This follows from
\[
 E^s_{u,p,q}(\Omega) = E_{u_0,p_0,q_0}^{s_0}(\Omega) \cap  E_{u_1,p_1,q_1}^{s_1}(\Omega)\subset
 [{\ce}_{u_0,p_0,q_0}^{s_0}(\Omega),
{\ce}_{u_1,p_1,q_1}^{s_1}(\Omega)]_\tz =
\accentset{\diamond}{\ce}_{u,p,q}^{s}(\Omega)\, ,
\]
see Lemma \ref{hilfe},
\[
\accentset{\diamond}{\ce}_{u,p,q}^{s}(\Omega) =
  \overline{E^s_{u,p,q}(\Omega)}^{\|\, \cdot \, |{\ce}_{u,p,q}^{s}(\Omega) \|}
 \quad \mbox{and}\quad
  \overline{\accentset{\diamond}{\ce}_{u,p,q}^{s}(\Omega)}^{\|\, \cdot \, |{\ce}_{u,p,q}^{s}(\Omega) \|}=\accentset{\diamond}{\ce}_{u,p,q}^{s}(\Omega)\, .
\]

Let us add a few references to the real method as well.
First results on real interpolation of Besov-Morrey spaces can be found in
Kozono, Yamazaki \cite{KY}.
Mazzucato \cite{ma03} was the first who had dealt with the real interpolation
of Sobolev-Morrey spaces $W^m\cm_p^u (\R)$ and their generalizations to the classes
 $\ce^s_{u,p,2}(\R)$ with $1 <p< u< \infty$. Her result is contained in

\[
\cn^s_{u,p,q}(\R) =  (\ce^{s_0}_{u,p,q_0}(\R), \ce^{s_1}_{u,p,q_1}(\R))_{\theta,q}
\]
if
$s_0,s_1 \in \re$, $s_0 < s_1$, $0< p< u < \infty$, $0 < q_0,q_1,q \le \infty$
and $0 < \Theta < 1$, see \cite{s011a}.
Recently Burenkov, Ghorbanalizadeh, Sawano \cite{BGS} described the $K$-functional for the pair
$(\cm_p^u (a,b), \dot{W}^m\cm_p^u (a,b))$. Here $\dot{W}^m\cm_p^u (a,b)$
refers to the homogeneous Sobolev space.
In \cite{BDN}, \cite{BNC} Burenkov et al studied the real interpolation of slightly modified spaces,
so-called local Morrey spaces. They behave much better under real interpolation than the original Morrey spaces.

Finally, we mention that the interpolation property has been investigated, e.g., in
Adams, Xiao \cite{AX}, Adams \cite{Adams} and
Yuan, Sickel, Yang, \cite{ysy3}, where also further references can be found.

%&&&&&&&&&&&&&&&&&&&&&&&&&&&&&&&&&&&&&&&&&&&&&&&&&&&
%&&&&&&&&&&&&&&&&&&&&&&&&&&&&&&&&&&&&&&&&&&&&&&&&&&&

\section{Some open problems}
\label{Ende}

%&&&&&&&&&&&&&&&&&&&&&&&&&&&&&&&&&&&&&&&&&&&&&&&&&&
%&&&&&&&&&&&&&&&&&&&&&&&&&&&&&&&&&&&&&&&&&&&&&&&&&&

At the end of our paper we would like to address a few open problems which could be of certain interest.

\begin{enumerate}
\item
A general question is about the role of the
Lemari{\'e}-Rieusset condition $u_0\,  p_1 = p_0 \, u_1$.
How do the interpolation spaces look like if this condition is violated?
There are special cases which one should investigate  first like the following.
Let $p_0 = p_1$ and  $u_0 < u_1$.
How do the interpolation spaces
\[
[W^{m_0} \cm_{p_{0}}^{u_0}(\Omega), W^{m_1} \cm_{p_0}^{u_{1}}(\Omega)]_\tz
\]
look like in the case $m_0 < m_1$ ?

\item
What happens if $s_0 -d(\frac{1}{u_0} - \frac{1}{u_1}) < s_1 <s_0$ and
 $u_0\,  p_1 = p_0 \, u_1$ ?
These cases are not treated in the Theorems \ref{main1} and \ref{main1b}.
We refer to the picture at the end of the Introduction.

\item
Find a characterization of $
[\ce^{s_0}_{u_0,p_0,q_0}(\R),\ce^{s_1}_{u_1,p_1,q_1}(\R)]_\tz $ for all admissible constellations of the parameters.
The answer could become technical.

\item We always had to exclude the case $q_0 = q_1 = \infty$.
Under necessary additional restrictions it is known that
\[
 [\ce_{p_0,p_0,\fz}^{s_0}(\R),
\ce_{p_1,p_1,\fz}^{s_1}(\R)]_\Theta =
[F_{p_0,\fz}^{s_0}(\R),F_{p_1,\fz}^{s_1}(\R)]_\Theta= \accentset{\diamond}F_{p_0,\fz}^{s_0}(\R)\,,
\]
see \cite{ssv} and \cite{ysy3}.
So the question is about the characterization of  \\
$[\ce_{u_0,p_0,\fz}^{s_0}(\Omega),
\ce_{u_1,p_1,\fz}^{s_1}(\Omega)]_\Theta\, .$
\item
In contrast to the classical case there are two Besov counterparts of the Lizorkin-Triebel-Morrey spaces,
namely $B_{p,q}^{s,\tau}(\Omega)$ and $\cn_{u,p,q}^{s}(\Omega)$, respectively.
In case of the so-called Besov-Morrey spaces $\cn_{u,p,q}^{s}(\Omega)$
one knows the counterpart of Theorem \ref{main1}, see Theorem 2.45 and Corollary 2.65
in \cite{ysy3}. Let $\Omega \subset \rn$ be a bounded  interval if $d=1$ or a bounded
Lipschitz domain if $d\ge 2$.
Assume that  $0 < p_i \le u_i <\infty$, $s_0,\,s_1 \in \rr$ and   $q_i \in (0,\infty)$, $i\in\{0,1\}$.
Let $s:= (1-\Theta)\, s_0 + \Theta\, s_1$, $\frac 1p : =\frac{1-\tz}{p_0}+ \frac \tz{p_1}$ and $\frac 1q : =\frac{1-\tz}{q_0}+ \frac \tz{q_1}$.
If $u_0 p_1 = u_1 p_0$, then
\[
[ \cn_{u_0,p_0,q_0}^{s_0}(\Omega), \cn_{u_1,p_1,q_1}^{s_1}(\Omega)]_\tz = \accentset{\diamond}{\cn}^{s}_{u,p,q} (\Omega)
\]
holds true for all $\tz \in (0,1)$. There is a surprising difference to the case of the Lizorkin-Triebel-Morrey spaces.
We do not have an influence of the relation between $s_0$ and $s_1$.
The main reason for this more simple behavior can be found in
\[
\accentset{\diamond}{\cn}^{s}_{u,p,q} (\rn) =  {\cn}^{s}_{u,p,q} (\rn) \qquad \mbox{if and only if}
\qquad q\in(0,\infty).
\]
The behavior of the
Besov-type spaces  $B_{p,q}^{s,\tau}(\Omega)$ under complex interpolation seems to be widely open.

\item Let us turn to Corollary \ref{MSM}. Obviously the case  $p_0=1$ has been left out.
What happens if $p_0=1$ ?

\item Probably even more difficult is the question around the use of the
extension property of our function spaces on domains.
Is there a wider class of domains than bounded Lipschitz domains allowing the validity of Theorem \ref{main1} ?

\item
We concentrated on Banach spaces in our paper. There is a well developed theory
of the function spaces also for
values $u,p,q\in (0,1)$, see \cite{ysy}, \cite{s011,s011a} and \cite{t14}.
Extensions of the complex method to quasi-Banach spaces are known as well, we refer to
\cite{k86b}, \cite{km98}, \cite{kmm} and \cite{y14}.
\end{enumerate}

%&&&&&&&&&&&&&&&&&&&&&&&&&&&&&&&&&&&&&&&&&&&&&&&&&&&
%&&&&&&&&&&&&&&&&&&&&&&&&&&&&&&&&&&&&&&&&&&&&&&&&&&&

\noindent\textbf{Acknowledgements}\quad

Ciqiang Zhuo is supported by the Construct Program of the Key Discipline in
Hunan Province, the National Natural Science Foundation of China (Grant Nos. 11701174, 11831007, 11871100 and China Scholarship Council (Grant No. 201906725036).

Marc Hovemann is funded by a Landesgraduiertenstipendium which is a scholarship from the Friedrich-Schiller university and the Free State of Thuringia.

%&&&&&&&&&&&&&&&&&&&&&&&&&&&&&&&&&&&&&&&&&&&&&&&&&&&
%&&&&&&&&&&&&&&&&&&&&&&&&&&&&&&&&&&&&&&&&&&&&&&&&&&&

\end{document}